\tikzstyle arrowstyle=[scale=1]
\tikzstyle directed=[postaction={decorate,
decoration={markings,mark=at position .65 with {\arrow[arrowstyle]{stealth}}}}]
\newcolumntype{L}{>{$}l<{$}} 
\newcolumntype{C}{>{$}c<{$}}
\newtheorem{theorem}{Theorem}[section]
\newtheorem{lemma}[theorem]{Lemma}
\newtheorem{cor}[theorem]{Corollary}
\newtheorem{prop}[theorem]{Proposition}
\newtheorem{conv}[theorem]{Convention}
\theoremstyle{definition}
\newtheorem{definition}[theorem]{Definition}
\newtheorem{example}[theorem]{Example}
\newtheorem{obs}[theorem]{Observation}
\newtheorem{notation}[theorem]{Notation}
\newtheorem{construction}[theorem]{Construction}
\theoremstyle{remark}
\newtheorem{remark}[theorem]{Remark}
\newtheorem{the context}[theorem]{The Context}
\newtheorem{question}[theorem]{Question}
\numberwithin{equation}{theorem}
\numberwithin{equation}{section}
\newcommand{\R}{\mathcal{R}}
\newcommand{\cat}[1]{\mathcal{#1}}
\newcommand{\OO}{\mathcal{O}}
\newcommand{\defi}[1]{{\bf\upshape\sffamily #1}}
\newcommand{\kk}{{\mathbf{k}}}
\newcommand{\op}{{\operatorname{op}}}
\newcommand{\ext}{\operatorname{Ext}}
\newcommand{\coker}{\operatorname{Coker}}
\newcommand{\tor}{\operatorname{Tor}}
\newcommand{\im}{\operatorname{Im}}
\newcommand{\cone}{\operatorname{Cone}}
\newcommand{\Ker}{\operatorname{Ker}}
\newcommand{\ideal}[1]{\mathfrak{#1}}
\newcommand{\m}{\ideal{m}}
\newcommand{\xx}{\mathbf{x}}
\newcommand{\bbs}{\mathbb{S}}
\newcommand{\bbz}{\mathbb{Z}}
\newcommand{\bbn}{\mathbb{N}}
\newcommand{\bbp}{\mathbb{P}}
\newcommand{\bbw}{\mathbb{W}}
\renewcommand{\geq}{\geqslant}
\renewcommand{\leq}{\leqslant}
\renewcommand{\ker}{\Ker}
\renewcommand{\hom}{\Hom}
\newcommand{\Hom}{\operatorname{Hom}}
\newcommand{\ch}{\operatorname{Ch}}
\newcommand{\gl}{\operatorname{{GL}}}
\newcommand{\maps}[5]{\xymatrix{#1 \ar[r]^-{#3} & #2 \\
#4 \ar@{|->}[r] & #5 \\}}
\renewcommand{\bar}{\operatorname{Bar}}
\newcommand{\cobar}{\operatorname{Cobar}}
\def\w{\wedge}
\def\im{\operatorname{im}}
\newcommand{\rev}{\operatorname{rev}}
\newcommand{\segpower}[2]{{{#1}^{[#2]}}}
\newcommand{\veralgg}[2]{{{#1}^{(#2)}}}
\newcommand{\veralg}[1]{{A^{(#1)}}}
\newcommand{\vermod}[2]{{A^{({\geq #2},{#1})}}}
\newcommand{\sgn}{\operatorname{sgn}}
\begin{document}

\title{Ribbon Schur Functors}
\author{Keller VandeBogert}
\email{kvandebo@nd.edu}
\address{Department of Mathematics, Notre Dame, IN 46556}

\keywords{Syzygy, Veronese, Segre, Koszul, Schur functor, ribbon, skew hook, Hamel-Goulden}
\subjclass{13D02, 
05E40
}
\maketitle

\begin{abstract}
    We investigate a generalization of the classical notion of a Schur functor associated to a ribbon diagram. These functors are defined with respect to an arbitrary algebra, and in the case that the underlying algebra is the symmetric/exterior algebra, we recover the classical definition of Schur/Weyl functors, respectively. In general, we construct a family of $3$-term complexes categorifying the classical concatenation/near-concatenation identity for symmetric functions, and one of our main results is that the exactness of these $3$-term complexes is equivalent to the Koszul property of the underlying algebra $A$. We further generalize these ribbon Schur functors to the notion of a multi-Schur functor and construct a canonical filtration of these objects whose associated graded pieces are described explicitly; one consequence of this filtration is a complete equivariant description of the syzygies of arbitrary Segre products of Koszul modules over the Segre product of Koszul algebras. Further applications to the equivariant structure of derived invariants, symmetric function identities, and Koszulness of certain classes of modules are explored at the end, along with a characteristic-free computation of the regularity of the Schur functor $\bbs^\lambda$ applied to the tautological subbundle on projective space.
\end{abstract}

\section{Introduction}

\subsection{Ribbon Schur Functors}

Schur functors are fundamental objects that lie at the intersection of representation theory, algebraic geometry, combinatorics, and commutative algebra. Representation theoretically, the Schur functors $\bbs^{\lambda}$ corresponding to a partition $\lambda$ are irreducible $\gl (V)$-representations in characteristic $0$, and (up to twists by the determinant representation) these constitute all irreducible representations of the general linear group (see, for instance, \cite[Chapter 2]{weyman2003}). From the algebro-geometric perspective, Schur functors corresponding to partitions may be constructed via the famous \defi{Borel--Weil theorem} (see \cite{BorelWeilThm,bott1957homogeneous}, or \cite{kempf1976collapsing}), which realizes these objects as the global sections of canonical line bundles on the complete flag variety. Combinatorially, Schur functors may be identified with their multigraded characters to obtain \defi{Schur polynomials}, which are a fundamental basis for the ring of symmetric functions (see, for instance, \cite{grinberg2014hopf} for more on this perspective). Finally, for a commutative algebraist, Schur modules over arbitrary commutative rings were constructed in the foundational work of Akin-Buchsbaum-Weyman \cite{akin1982schur}, where these objects may be described as the image of a map constructed using the (graded) commutative Hopf algebra structure on the symmetric/exterior powers.

In this paper, we take the perspective that Schur functors corresponding to \emph{ribbon diagrams} are ``more natural" than Schur functors corresponding to partitions. A \defi{ribbon diagram} is a skew shape for which consecutive rows overlap by exactly one block. 

\begin{figure}[H]
    \centering
    \begin{equation*}
        \ytableausetup{boxsize=1em}
        \begin{ytableau}
        \none & \none & \none & \\
        \none &  & & \\
        & & \\
        \end{ytableau} \qquad \qquad \qquad  \begin{ytableau}
        \none & \none &\none & \none & \none & \\
        \none & \none &\none &  & & \\
        & &  & \\
        \end{ytableau}
    \end{equation*}
    \caption{The skew shape on the left is \emph{not} a ribbon, since the bottom two rows have overlap size $2$. The shape on the right \emph{is} a ribbon.}
\end{figure}

From a representation-theoretic point of view, ribbon Schur functors may seem quite \emph{un}natural: they are highly reducible, even in characteristic $0$. However, experts in the theory of symmetric functions have long known that ribbon Schur polynomials also generate the ring of symmetric functions and are in many ways much more well-behaved than Schur polynomials corresponding to partitions (see \cite{lascoux1988ribbon}, \cite{billera2006decomposable}, \cite{reiner2007coincidences}, \cite{huang2016tableau}). For example, ribbon Schur polynomials satisfy a much simpler Pieri-type formula (the \defi{concatenation/near-concatenation identity}) than Schur polynomials corresponding to partitions, which require the Littlewood-Richardson rule to expand in general.

\begin{figure}[H]
    \centering
    \begin{equation*}
        \ytableausetup{boxsize=1em}
\begin{ytableau} 
\none & & \\
 & \\
 \end{ytableau} \cdot \begin{ytableau}
     \none & \none & \\
     & & \\
 \end{ytableau} = \begin{ytableau}
     \none & \none & \none & \none & \\
     \none & \none & & & \\
     \none & & \\
      & \\
 \end{ytableau} \quad + \quad  \begin{ytableau}
     \none & \none & \none & \none & \none & \\
     \none & & & & & \\
      & \\
 \end{ytableau}
    \end{equation*} 
    \caption{The concatenation/near-concatenation identity (here we are identifying the ribbon diagrams with their corresponding Schur polynomials).}
\end{figure}

One of the most fundamental reasons ribbon diagrams are desirable is that Schur functors corresponding to ribbons can be defined solely using the algebra structure on the symmetric algebra $S(V)$, whereas Schur functors for partitions almost always need the full Hopf algebra structure; this is immediate from the work of Akin-Buchsbaum-Weyman \cite{akin1982schur}, which establishes a canonical, characteristic-free presentation of Schur modules associated to skew partitions. This means that there is an evident way to generalize ribbon Schur functors to arbitrary algebras, and the main theme of this paper is that in order for this theory to work ``as it should", the underlying algebra needs to be Koszul.

\begin{example}
    In this example, identify the skew shapes with their corresponding Schur functors. Then:
    $$\ytableausetup{boxsize=1em}
\begin{ytableau} 
\, & \\
 & \\
 \end{ytableau} \quad  = \quad  \ker \left( S_2 (V) \otimes_k S_2 (V) \to \begin{matrix} S_4 (V) \\ \oplus \\ S_3 (V) \otimes_k V \end{matrix}\right), \quad \text{and}$$
 $$\ytableausetup{boxsize=1em}
\begin{ytableau} 
\none  & & \\
 & \\
 \end{ytableau} \quad = \quad \ker \left( S_2 (V) \otimes_k S_2 (V) \to S_4 (V) \right).$$
 In the first equality, the map $S_2 (V) \otimes_k S_2 (V) \to S_3 (V) \otimes_k V$ is the composition of \emph{both} comultiplication and multiplication on the symmetric algebra, whereas the second ribbon diagram is simply the kernel of the canonical multiplication map $S_2 (V) \otimes_k S_2 (V) \to S_4 (V)$. 
\end{example}

\subsection{Koszul Algebras and Backelin's Theorem}

A $\kk$-algebra $A$ is \defi{Koszul} if its residue field $\kk$ has a linear homogeneous minimal free resolution over $A$. Koszul algebras are one method of generalizing standard graded polynomial rings and make their appearance in a wide range of seemingly disconnected settings. Topologically, Koszul duality for quadratic algebras can be used to translate between facts about equivariant and standard cohomology (see \cite{goresky1997equivariant}). In the context of number theory, Positselski (see \cite{positselski2014galois}) has shown that certain classes of Milnor rings are Koszul algebras and relates the Milnor-Bloch-Kato conjecture to Koszulness of certain quotient rings. Combinatorially, early work of Backelin \cite{backelinThesis} showed that there was an equivalence between Koszul algebras and distributivity of certain associated subspace lattices, and since then Koszulness of rings associated to combinatorial objects has been an active and fruitful area of research (see for instance \cite{yuzvinsky2001orlik}, \cite{mastroeni2022chow}).

Suppose for the time being that $V$ is a vector space over a field $\kk$; all tensor products will be taken over $\kk$. Recall that a quadratic algebra $A$ is any quotient of the tensor algebra $T(V)$ by a quadratic ideal $(Q_2) \subset T_{\geq 2} (V)$, where $Q_2 \subset V \otimes V$. In particular, each graded piece of $A$ is the quotient
$$A_n = \frac{V^{\otimes i}}{Q_2 \otimes V^{\otimes n-2} + V \otimes Q_2 \otimes V^{\otimes n-3} + \cdots + V^{\otimes n-2} \otimes Q_2}.$$
Although Koszulness is often defined in terms of a homological property of the residue field of $A$, a well-known result of Backelin establishes the elegant fact that Koszulness has an equivalent formulation in terms of a purely combinatorial property of the collections $Q_2 \otimes V^{\otimes n-2},  V \otimes Q_2 \otimes V^{\otimes n-3},  \dots , V^{\otimes n-2} \otimes Q_2$:
\begin{theorem}[Backelin \cite{backelinThesis}, see also \cite{beilinson1996koszul}]
    Let $A$ be a quadratic algebra. Then $A$ is Koszul if and only if the collection $Q_2 \otimes V^{\otimes n-2},  V \otimes Q_2 \otimes V^{\otimes n-3},  \cdots , V^{\otimes n-2} \otimes Q_2$ generates a distributive subspace lattice for all $n \geq 2$. 
\end{theorem}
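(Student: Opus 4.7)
The plan is to prove the equivalence by passing through the standard homological characterization of Koszulness and then reducing the resulting vanishing condition to a lattice-theoretic statement about the subspaces in question. Recall that $A$ is Koszul if and only if $\Tor_i^A(k,k)_j = 0$ for all $i \neq j$, i.e.\ the bigraded Tor groups are concentrated on the diagonal. To access these groups concretely I would use the reduced bar complex $B_\bullet = A \otimes \overline{A}^{\otimes \bullet}$, where $\overline{A} = A_{\geq 1}$, and filter it by internal degree.

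After extracting the internal degree $n$ component and applying $k \otimes_A -$, the degree $s$ term of the resulting complex becomes $\bigoplus_\alpha A_{a_1} \otimes \cdots \otimes A_{a_s}$, where the sum runs over compositions $\alpha = (a_1,\ldots,a_s)$ of $n$ with each $a_j \geq 1$. Writing $W_i = V^{\otimes i-1} \otimes Q_2 \otimes V^{\otimes n-i-1}$ for $i = 1,\ldots,n-1$, each summand is canonically $V^{\otimes n}$ modulo $\sum_{i \in S_\alpha} W_i$, where $S_\alpha \subseteq \{1,\ldots,n-1\}$ is the complement of the set of ``bar positions'' associated to $\alpha$. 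The bar differential then becomes an alternating sum of the obvious quotient maps between these quotients, so Koszulness in internal degree $n$ reduces to acyclicity (in positive homological degrees) of a purely combinatorial complex built from the subspaces $W_1,\ldots,W_{n-1} \subset V^{\otimes n}$.

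The heart of the proof, and the step I expect to be the main obstacle, is a purely lattice-theoretic lemma: for any finite collection $W_1,\ldots,W_m$ of subspaces of a vector space $U$, the complex whose $S$-indexed term (for $S \subseteq \{1,\ldots,m\}$) is $U/\sum_{i \in S} W_i$, with differential an alternating sum of the natural surjections, is acyclic in positive degrees if and only if the $W_i$ generate a distributive lattice under $+$ and $\cap$. The forward direction I would argue by exhibiting a Gelfand--Ponomarev-type basis of $U$ compatible with every element of the distributive lattice, splitting the complex into a direct sum of contractible subcomplexes indexed by basis vectors. The reverse direction I would handle by showing that any failure of a distributive identity yields an explicit cycle that is not a boundary, essentially by choosing lifts that detect the obstruction to distributivity. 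Delicate bookkeeping is required to ensure that non-distributive configurations cannot be hidden by a clever change of representatives, and making this precise is where the technical weight of the theorem lies.
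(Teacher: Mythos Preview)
Your outline follows the standard route and matches the framework the paper sets up in Sections~\ref{sec:qdistributivity}--\ref{sec:generalKoszulAlg}. Note, however, that the paper does not give an independent proof of Backelin's theorem: it is stated with citation in the introduction, and Theorem~\ref{thm:KoszulnessIsDistributive} is deduced in one line by invoking Backelin's result together with the identification of the degree-$n$ bar strand with the complex $C_\bullet(A_1^{\otimes n};S_{A,1}^n,\dots,S_{A,n-1}^n)$ (Observation~\ref{obs:subspaceIsomorphisms}). So there is no detailed argument in the paper to compare against beyond that citation.

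That said, there is a genuine gap in the lemma you isolate as the ``heart of the proof.'' You assert that for an \emph{arbitrary} collection $W_1,\dots,W_m\subset U$, acyclicity in positive degrees of the \emph{single} complex with $S$-indexed term $U/\sum_{i\in S}W_i$ is equivalent to distributivity of the generated lattice. This is not how the equivalence is set up in the literature, and your proposed converse (``any failure of a distributive identity yields an explicit nontrivial cycle'') is not a known general fact about subspace lattices. The correct general statement---and this is precisely how the paper \emph{defines} distributivity, citing \cite{polishchuk2005quadratic} for the equivalence with the lattice-theoretic condition---requires acyclicity of the whole family $C_\bullet^I$ (equivalently $C^\bullet_I$) for \emph{all} subsets $I$, not just $I=\varnothing$; see Proposition~\ref{prop:distributivityComplexes}.

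What rescues the argument in the quadratic-algebra setting, and what your sketch does not mention, is the special inductive tensor structure of the subspaces $W_i=V^{\otimes i-1}\otimes Q_2\otimes V^{\otimes n-i-1}$. Quotienting by one $W_j$, or intersecting with one $W_j$, produces (up to tensoring with copies of $V$) the analogous collection for a smaller value of $n$. Hence exactness of the full bar strand for \emph{every} $n$ bootstraps, via the short exact sequences~(\ref{eqn:SES1})--(\ref{eqn:SES2}), to exactness of every $C_\bullet^I$ by induction on $n$. This inductive mechanism, rather than a direct cycle construction, is the real content of the nontrivial direction of Backelin's theorem, and it should replace the second half of your key lemma.
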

This alternative (equivalent) formulation of Koszulness is surprising because it implies that the Koszulness assumption gives you ``more than you bargained for"; the backwards implication of Backelin's theorem is evident, but a priori distributivity seems like a much stronger property than Koszulness. In this paper, we reinterpret this distributivity property as being equivalent to the exactness of a family of $3$-term sequences (see Proposition \ref{prop:distributivityComplexes}). A key observation here is that these $3$-term sequences yield a defining identity for a generalization of ribbon Schur functors to any Koszul algebra.

\subsection{Generalized Ribbon Schur Functors and the Concatenation/near-concatenation criterion for Koszulness}

Let $A$ be any standard graded quadratic $R$-algebra\footnote{the quadratic assumption is unnecessary, but since these objects are most well-behaved when the algebra $A$ is assumed to be Koszul, we will often be in this setting anyway.} and $\alpha = (\alpha_1 , \dots , \alpha_n)$ any sequence of positive integers. Then the \defi{ribbon Schur module} $\bbs^\alpha_A$ is defined as the kernel of the map
$$A_{\alpha_1} \otimes_R A_{\alpha_2} \otimes_R \cdots \otimes_R A_{\alpha_n} \to \bigoplus_{i=1}^{n-1} A_{\alpha_1} \otimes_R \cdots \otimes_R A_{\alpha_i+\alpha_{i+1}} \otimes_R \cdots \otimes_R A_{\alpha_n},$$
where each component of the map is induced by the natural multiplication maps $A_{\alpha_i} \otimes_R A_{\alpha_{i+1}} \to A_{\alpha_i + \alpha_{i+1}}$. As mentioned previously, when $A = S^\bullet (V)$ is the symmetric algebra, this definition recovers the Akin-Buchsbaum-Weyman \cite{akin1982schur} definition of the Schur functor associated to the ribbon diagram induced by $\alpha$. The importance of using ribbon Schur functors to canonically describe syzygies of modules over Veronese subalgebras of the polynomial ring was discovered in \cite{almousa2022equivariant}, and the goal of this paper is to fully expand upon this perspective in a much more general setting. 

One of our first main results is the following (for notation, see Definition \ref{def:operationsOfComps}). This statement may be interpreted as saying that the concatenation/near-concatenation identity observed in the classical theory of Schur polynomials is the shadow of a canonical short exact sequence \emph{of functors}, and that the concatenation/near-concatenation identity is actually a defining property of Koszulness.

\begin{theorem}\label{thm:introKoszulCriterion}
    Let $A$ be any Koszul $R$-algebra (where $R$ is any commutative ring) and $\alpha$, $\beta$ be any two compositions.
    \begin{enumerate}
        \item There is a canonical isomorphism of $R$-modules
        $$\left( \bbs^{\alpha}_A \right)^* \cong \bbs^{\alpha^t}_{A^!},$$
        where $(-)^!$ denotes the quadratic dual, $\alpha^t$ denotes the transposed ribbon diagram, and $(-)^* := \hom_R (- , R)$.
        \item There is a canonical short exact sequence
        $$0 \to \bbs^{\alpha \cdot \beta}_{A} \to \bbs^{\alpha}_A \otimes_R \bbs^{\beta}_A \to \bbs^{\alpha \odot \beta}_A \to 0.$$
        Conversely, this sequence is exact for \emph{all} compositions \emph{if and only if} $A$ is Koszul.
    \end{enumerate}
\end{theorem}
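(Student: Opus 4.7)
The plan is to unpack $\bbs^\alpha_A$ as an intersection of kernels of iterated multiplication maps and then reduce both parts of the theorem to the distributive-lattice characterization of Koszulness supplied by Backelin's theorem, with the key technical bridge provided by Proposition \ref{prop:distributivityComplexes}.

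Write $A^\alpha := A_{\alpha_1} \otimes_R \cdots \otimes_R A_{\alpha_n}$, and let $\mu_i : A^\alpha \to A_{\alpha_1} \otimes \cdots \otimes A_{\alpha_i + \alpha_{i+1}} \otimes \cdots \otimes A_{\alpha_n}$ denote multiplication of the $i$-th and $(i{+}1)$-th factors; then by definition $\bbs^\alpha_A = \bigcap_i \ker(\mu_i)$. For part (ii), both $\bbs^{\alpha \cdot \beta}_A$ and $\bbs^\alpha_A \otimes_R \bbs^\beta_A$ live naturally inside $A^{\alpha \cdot \beta} = A^\alpha \otimes A^\beta$: the former is $\bigcap_{i=1}^{n+m-1} \ker(\mu_i)$, the latter is $\bigcap_{i \neq n} \ker(\mu_i)$, and the ratio is governed by the single middle map $\mu_n$. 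The inclusion $\bbs^{\alpha \cdot \beta}_A \hookrightarrow \bbs^\alpha_A \otimes \bbs^\beta_A$ is therefore tautological, the quotient map is the restriction of $\mu_n$, and a short associativity check confirms that $\mu_n$ carries $\bbs^\alpha_A \otimes \bbs^\beta_A$ into $\bbs^{\alpha \odot \beta}_A \subset A^{\alpha \odot \beta}$; exactness on the left and in the middle is then automatic. The entire content of (ii) thus collapses to the surjectivity of $\mu_n : \bbs^\alpha_A \otimes \bbs^\beta_A \to \bbs^{\alpha \odot \beta}_A$. Passing to the quadratic presentation $A = T(V)/(Q_2)$, this surjectivity translates into a distributivity identity among the subspaces $V^{\otimes i} \otimes Q_2 \otimes V^{\otimes j}$ in the relevant total degree, which is exactly Backelin's condition; Proposition \ref{prop:distributivityComplexes} then converts distributivity into the required 3-term exactness. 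For the converse implication, I would specialize to $\alpha = \beta = (1,1,\ldots,1)$ of varying lengths: in that case $\bbs^\alpha_A$ is precisely the intersection of Backelin's generating subspaces, and surjectivity of the middle map for all such pairs recovers distributivity in every degree, hence Koszulness.

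For part (i), apply $R$-linear duality directly to the intersection presentation. Each $A_{\alpha_j}$ is a quotient of $V^{\otimes \alpha_j}$ by $\sum_k V^{\otimes k} \otimes Q_2 \otimes V^{\otimes \alpha_j - 2 - k}$, so $A_{\alpha_j}^*$ sits naturally inside $(V^*)^{\otimes \alpha_j}$ as $\bigcap_k (V^*)^{\otimes k} \otimes Q_2^\perp \otimes (V^*)^{\otimes \alpha_j - 2 - k}$, which is precisely the Koszul dual component $A^!_{\alpha_j}$ in its intersection realization. Intersections of kernels of multiplication maps dualize to sums of images of comultiplication maps, and combinatorially this swap of "row" and "column" conditions transposes the ribbon $\alpha$ to $\alpha^t$, giving the identification $(\bbs^\alpha_A)^* \cong \bbs^{\alpha^t}_{A^!}$.

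The main obstacle is the surjectivity claim in (ii): given $y \in \bbs^{\alpha \odot \beta}_A$, surjectivity of $A_{\alpha_n} \otimes A_{\beta_1} \to A_{\alpha_n + \beta_1}$ alone produces \emph{some} lift through $\mu_n$, but producing a lift that simultaneously satisfies all the other kernel conditions defining $\bbs^\alpha_A \otimes \bbs^\beta_A$ is exactly the nontrivial input supplied by the distributive-lattice structure, and this is the point at which Koszulness is genuinely used.
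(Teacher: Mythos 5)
Your overall strategy — realizing $\bbs^\alpha_A$ as an iterated intersection of kernels, identifying the concatenation/near‑concatenation sequence with a $3$‑term exactness statement, and routing everything through Backelin's theorem and Proposition~\ref{prop:distributivityComplexes} — is the paper's strategy. The paper packages the intersection presentation into the $L^I$-module formalism (Observation~\ref{obs:SchursAsLs}), invokes Theorem~\ref{thm:KoszulnessIsDistributive} for Koszul~$\Leftrightarrow$~distributive, and then applies Proposition~\ref{prop:distributivityComplexes} verbatim; your forward direction (ii) and part (i) are an informal retelling of exactly this chain, and the Koszul duality claim (i) is the dual statement $\bigl(L^I\bigr)^*\cong L^{[n]\setminus I}$ from Lemma~\ref{lem:dualDistributivity}(3) together with Observation~\ref{obs:transposeProps} relating complementation to transposition.

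There is, however, a genuine gap in your converse (``SES for all compositions $\Rightarrow$ Koszul''). You propose to specialize to $\alpha=\beta=(1^i)$, but under the dictionary $\phi$ this produces only the $3$‑term sequences with $|I|=1$, and those carry \emph{no} information. Concretely, with $n=a+b$ and $j=a$ one has $\bbs^{(1^n)}_A=\bigwedge_{k}S^n_{A,k}$, while $\bbs^{(1^a)}_A\otimes_R\bbs^{(1^b)}_A$ is identified (inside $A_1^{\otimes n}$, over a field or under the ambient flatness hypotheses) with $\bigwedge_{k\neq a}S^n_{A,k}$; the near‑concatenation term is the tautological quotient $\bigl(\bigwedge_{k\neq a}S^n_{A,k}\bigr)\big/\bigl(\bigwedge_{k}S^n_{A,k}\bigr)$, so the sequence is exact for \emph{any} quadratic algebra, Koszul or not. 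Distributivity of $S^n_{A,1},\dots,S^n_{A,n-1}$ requires the exactness statements in Proposition~\ref{prop:distributivityComplexes} for \emph{all} nonempty $I\subset[n-1]$, and these are matched precisely by letting $\alpha$ and $\beta$ range over \emph{all} compositions (with $|I|>1$ arising from $\alpha,\beta$ that already have internal merged parts). So the converse must use the full family, as the paper does; your restriction would fail to detect a non‑Koszul quadratic algebra.

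A secondary point: your assertion that ``exactness on the left and in the middle is then automatic'' implicitly uses that $\bbs^\alpha_A\otimes_R\bbs^\beta_A$ embeds into $A_1^{\otimes|\alpha|+|\beta|}$ as $\bigcap_{i\neq|\alpha|}\ker\mu_i$; over a general base $R$ this requires flatness of the ribbon Schur modules, which is itself a nontrivial conclusion (established in the paper via Lemma~\ref{lem:dualDistributivity}(1) and Corollary~\ref{cor:projectivesInDistr}) and should not be assumed tacitly.
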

This is surprising since, intuitively, the classical concatenation/near-concatenation identity for symmetric functions may seem like a consequence of the combinatorial/representation-theoretic structure on the symmetric algebra $S^\bullet (V)$. Theorem \ref{thm:introKoszulCriterion} tells us that this additional structure is more of a red herring, and the classical identity is actually a consequence of a much more fundamental algebraic property. 

One could also ask if this equivalent formulation of Koszulness is actually useful for proving classes of modules are Koszul, and in Subsection \ref{subsec:buildingKoszulModules} we use this criterion to give quick proofs of the existence of large classes of Koszul modules over arbitrary Koszul algebras $A$. In the case of the symmetric/exterior algebras, this criterion for Koszulness gives a very simple proof that a general class of modules parametrized by arbitrary skew-partitions are Koszul. Previous proofs that these modules were Koszul only worked for diagrams corresponding to partitions, and resorted to geometric arguments that realized these objects as arising from taking cohomology of line bundles on flag varieties (see, for instance, \cite[Theorem 2.2]{gao2022cohomology}). Our proof is totally characteristic-independent, allows for the algebra to be over any commutative ring, and requires no machinery coming from algebraic geometry.

We also generalize this definition of Schur functors to allow for module inputs $M$ and $N$, denoted $\bbs^{\alpha}_{M,A,N}$; this is a generalization even in the classical case of Schur functors, and allows for elegant descriptions of the higher derived invariants associated to pairs of Koszul modules:

\begin{theorem}\label{thm:introTorAndExt}
    Let $A$ be a Koszul algebra and $N$ (resp. $M$) a left (resp. right) Koszul $A$-module. Then there is a canonical isomorphism of $A$-modules
    $$\tor_i^{A} (M, N) = \bbs^{(1^i)}_{M,A,N}.$$
    If $M$ is instead a left $A$-module, then there is a canonical isomorphism of $A$-modules
    $$\ext^i_A (M , N) = \bbs^{(i)}_{M^! , A^! , (N^*)^!}.$$
    If $A$, $M$, and $N$ have any ambient group action, then the above isomorphisms are equivariant.
\end{theorem}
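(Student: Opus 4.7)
My plan is to prove both identities by computing Tor and Ext from the minimal linear free resolutions furnished by Koszulness, and recognizing the resulting subquotients as the prescribed multi-Schur functors.

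First, for the Tor identity: because $N$ is a Koszul left $A$-module, its minimal free resolution is the linear strand $P_\bullet = A \otimes_R N^!_\bullet$, where $N^!_i \subseteq V^{\otimes i} \otimes_R N_0$ is cut out by intersecting the subspaces arising from the quadratic relations of $A$ and the relations defining $N$. Tensoring on the left with $M$ yields a complex $M \otimes_R N^!_\bullet$ whose $i$-th homology is $\tor^A_i(M,N)$. Koszulness of $M$ implies that this complex decomposes by internal degree and that only the diagonal strand contributes; expanding the final differential of this strand in terms of the defining relations of $M$ shows that its kernel coincides with the intersection of multiplication kernels that defines the column multi-Schur functor $\bbs^{(1^i)}_{M,A,N}$.

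For the Ext identity, I would take the linear minimal free resolution $P_\bullet \to M$ with $P_i = A \otimes_R M^!_i$, and apply $\Hom_A(-, N)$ to obtain the complex $\Hom_R(M^!_\bullet, N)$, whose $i$-th cohomology is $\ext^i_A(M,N)$. Under the Koszul hypotheses this cohomology collapses onto a diagonal strand, and the task is to match the resulting subquotient with the row multi-Schur functor $\bbs^{(i)}_{M^!, A^!, (N^*)^!}$. The bridge is Koszul duality for modules: the identification $\Hom_R(M^!_i, N) \cong (M^!_i)^* \otimes_R N$ (under standard finiteness), combined with the fact that $(M^!)^*$ is canonically a Koszul module over $A^!$ and with the $R$-duality $N \mapsto N^*$ that reverses handedness, together rewrite the cohomology as a degree-$i$ product piece over $A^!$ involving $M^!$ and $(N^*)^!$ --- which is by definition the row multi-Schur functor $\bbs^{(i)}_{M^!, A^!, (N^*)^!}$. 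Part (1) of the preceding theorem is invoked to align duals with ribbon transposes. Equivariance is automatic in every step from the functoriality of the minimal resolution, tensor product, Hom, and Koszul dual.

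The main obstacle will be the Ext step: matching the subquotient of a Hom complex over $A$ with a degree-$i$ product piece over $A^!$ requires an explicit Koszul duality dictionary between modules over $A$ and modules over $A^!$. In contrast, the Tor step is essentially direct, since the column multi-Schur functor appears on the nose as the kernel of the final differential on the diagonal strand.
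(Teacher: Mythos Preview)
Your Tor argument is essentially the paper's: resolve $N$ by the Priddy complex $A \otimes_R (N^!)^*_\bullet$, tensor with $M$, and identify the homology on the diagonal strand with $\bbs^{(1^i)}_{M,A,N}$. One point to sharpen: the claim that ``only the diagonal strand contributes'' is not just an invocation of Koszulness of $M$ but an explicit computation. The paper computes, in internal degree $i+j+s$, both $\ker(d_i)$ and $\im(d_{i+1})$ as $\bbs^{(1^i)}_{M_{\geq j},A,N}$ via Observation~\ref{obs:intersectingSchurs} (the intersection description of Schur functors), so the homology vanishes for $j>t$ and equals $\bbs^{(1^i)}_{M,A,N}$ at $j=t$. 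Your sketch points in this direction but should make this cancellation explicit rather than asserting it.

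Your Ext argument takes a genuinely different route from the paper, and the paper's is considerably cleaner. You propose to apply $\Hom_A(-,N)$ to the Priddy resolution of $M$ and then navigate a ``Koszul duality dictionary'' to pass from a cohomology computed over $A$ to a Schur functor defined over $A^!$; you correctly flag this as the main obstacle, and it is one. The paper instead avoids the dictionary entirely: it uses the standard isomorphism $\ext^i_A(M,N) \cong \bigl(\tor_i^A(N^*,M)\bigr)^*$, applies the already-proved Tor identity to get $\tor_i^A(N^*,M) \cong \bbs^{(1^i)}_{N^*,A,M}$, and then invokes the transpose duality $\bigl(\bbs^{\alpha}_{M,A,N}\bigr)^* \cong \bbs^{\alpha^t}_{N^!,A^!,M^!}$ (Corollary~\ref{cor:doubleModuleInputProperties}(3)). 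Since $(1^i)^t = (i)$, this gives $\bbs^{(i)}_{M^!,A^!,(N^*)^!}$ in one line. Your approach could in principle be completed, but it duplicates work already packaged in the Schur-functor duality, whereas the paper's reduction to Tor exploits that duality and keeps everything over a single algebra until the last step.
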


\begin{remark}
    Note that there are other places in the literature where generalizations of Schur functors have been considered, such as the work of Sam-Snowden (\cite{sam2017infinite}, \cite{sam2019some}) which approaches the problem from a much more representation-theoretic perspective. These constructions are done in the standard type ABCD framework and do not apply to arbitrary Koszul algebras like the constructions in this paper do.
\end{remark}


\subsection{Multi-Schur Functors}

Let us first motivate the construction of a multi-Schur functor. Segre subalgebras of the tensor product of Koszul algebras are well-known to be Koszul algebras. Given Koszul algebras $A$ and $B$, let $A \circ B$ denote the Segre product; the homogeneous components of the (dual) quadratic dual may be computed as the kernel of a map that applies the multiplication on $A$ and $B$ ``diagonally". For example, there is an equality
$$\left( (A \circ B)^! \right)^*_3 = \ker \left( A_1^{\otimes 3} \otimes B_1^{\otimes 3} \to \begin{matrix}
    A_2 \otimes A_1 \otimes B_2 \otimes B_1 \\
    \oplus \\
    A_1 \otimes A_2 \otimes B_1 \otimes B_2. \\
\end{matrix} \right).$$
This is the same thing as applying the defining relations for the Schur modules $\bbs^{(1^3)}_A$ and $\bbs^{(1^3)}_B$ diagonally, and leads us directly to the notion of a \defi{multi-Schur functor}. 

A multi-Schur functor $\bbs^{\underline{\alpha}}_{\underline{A}}$ takes as inputs \emph{tuples} of compositions and algebras:
$$\underline{\alpha} = (\alpha^1 , \dots , \alpha^n), \quad \underline{A} = (A^1 , \dots , A^n),$$
and is defined by taking the kernel of the defining relations of each of the ribbon Schur modules $\bbs^{\alpha^i}_{A^i}$ applied diagonally, exactly as above. Surprisingly, multi-Schur modules satisfy an identical concatenation/near-concatenation sequence (appropriately generalized; see Lemma \ref{lem:theMultiSchurSES}), and we can further extend this definition to allow for tuples of modules. This gives us a similar clean description of Tor modules over Segre products:

\begin{theorem}\label{thm:introTorForMulti}
    Consider tuples of the form
    $$\underline{A} = (A^1 , \dots , A^n),$$
    $$\underline{N} = (N^1 , \dots , N^n), \quad \underline{M} = (M^1 , \dots , M^n),$$
    where each $A^i$ is a Koszul $R$-algebra and $N^i$ (resp. $M^i$) is a left (resp. right) $A^i$-module. Then there is a canonical isomorphism of $A^1 \circ \cdots \circ A^n$-modules: 
    $$\tor_i^{A^1 \circ \cdots \circ A^n} (M^1 \circ \cdots \circ M^n, N^1 \circ \cdots \circ N^n) = \bbs^{(1^i)}_{\underline{M} , \underline{A} , \underline{N}}.$$
    This isomorphism is natural with respect to morphisms of algebras; in particular it is equivariant if each of the Koszul algebras has any ambient group action.
\end{theorem}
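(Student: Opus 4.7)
The strategy is to reduce Theorem~\ref{thm:introTorForMulti} to the single-algebra case handled by Theorem~\ref{thm:introTorAndExt} applied to the full Segre product $A^{\circ} := A^1 \circ \cdots \circ A^n$, and then to identify the resulting ribbon Schur module with the multi-Schur module by unpacking the diagonal definitions. First, recall the classical fact (a direct consequence of Backelin's distributivity criterion) that the Segre product of Koszul algebras is Koszul and that the Segre product of Koszul modules is Koszul over the Segre-product algebra. Granting these, Theorem~\ref{thm:introTorAndExt} applied to $A^{\circ}$ with Koszul modules $M^{\circ} := M^1 \circ \cdots \circ M^n$ and $N^{\circ} := N^1 \circ \cdots \circ N^n$ immediately yields
$$\tor_i^{A^{\circ}}(M^{\circ}, N^{\circ}) \;\cong\; \bbs^{(1^i)}_{M^{\circ}, A^{\circ}, N^{\circ}}.$$
Thus the entire content of the theorem reduces to exhibiting a natural isomorphism $\bbs^{(1^i)}_{M^{\circ}, A^{\circ}, N^{\circ}} \cong \bbs^{(1^i)}_{\underline{M}, \underline{A}, \underline{N}}$.

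To construct this isomorphism, I would unwind the defining relations on each side. By construction, $A^{\circ}_1 \cong A^1_1 \otimes \cdots \otimes A^n_1$, and the multiplication $A^{\circ}_1 \otimes A^{\circ}_1 \to A^{\circ}_2$ factors as the diagonal regrouping $(A^1_1 \otimes \cdots \otimes A^n_1)^{\otimes 2} \cong (A^1_1)^{\otimes 2} \otimes \cdots \otimes (A^n_1)^{\otimes 2}$ followed by the tensor product of each individual multiplication $A^j_1 \otimes A^j_1 \to A^j_2$. Consequently, under this regrouping, the quadratic relations $Q_2(A^{\circ})$ are identified with the intersection $\bigcap_{j=1}^n \widetilde{Q_2(A^j)}$, where $\widetilde{Q_2(A^j)}$ denotes the pullback of $Q_2(A^j)$ along the $j$-th coordinate. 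Propagating this identification through all $i-1$ adjacent positions inside $M^{\circ} \otimes (A^{\circ}_1)^{\otimes i} \otimes N^{\circ}$ exhibits the kernel defining $\bbs^{(1^i)}_{M^{\circ}, A^{\circ}, N^{\circ}}$ as exactly the intersection, under the diagonal regrouping, of the kernels defining each individual $\bbs^{(1^i)}_{M^j, A^j, N^j}$. By definition of the multi-Schur functor, this intersection is $\bbs^{(1^i)}_{\underline{M}, \underline{A}, \underline{N}}$.

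Equivariance comes essentially for free: every step of the identification---the diagonal regrouping of tensor factors, the formation of kernels, and the appeals to Theorem~\ref{thm:introTorAndExt}---is natural in the algebras and modules, so any compatible group action on the tuples $\underline{A}, \underline{M}, \underline{N}$ descends to a corresponding action on $A^{\circ}, M^{\circ}, N^{\circ}$ and renders all of the isomorphisms above equivariant.

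The main obstacle is the distributivity bookkeeping required in the middle step: one must ensure that the intersection of pulled-back quadratic relations in the Segre product, when propagated through the higher tensor positions defining the ribbon Schur module, actually coincides with the iterated intersection used in the multi-Schur construction. This is morally clear but genuinely requires the Koszulness hypothesis: a priori, intersecting kernels in different tensor positions and then regrouping need not commute with the diagonal regrouping across the factors. Happily, this is precisely what Backelin's distributivity condition (reformulated via Proposition~\ref{prop:distributivityComplexes}) is designed to handle, and the argument can be organized inductively on $i$ by invoking the multi-Schur concatenation/near-concatenation short exact sequence of Lemma~\ref{lem:theMultiSchurSES} to reduce to the degree-two check above.
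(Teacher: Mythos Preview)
Your overall strategy is exactly what the paper does: apply the single-algebra Tor computation (Theorem~\ref{thm:explicitTorExt}) to the Segre product $A^\circ$ with its Koszul modules $M^\circ$, $N^\circ$, and then identify the resulting ribbon Schur module $\bbs^{(1^i)}_{M^\circ, A^\circ, N^\circ}$ with the multi-Schur module $\bbs^{(1^i)}_{\underline{M}, \underline{A}, \underline{N}}$. The paper records this second identification as an unproved Observation immediately preceding Corollary~\ref{cor:SegreDerivedInvariants}, because it is tautological.

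Your middle paragraph, however, contains a concrete error and an over-complication. First, the quadratic relations of the Segre product are a \emph{sum}, not an intersection: under the regrouping $(A^\circ_1)^{\otimes 2} \cong \bigotimes_j (A^j_1)^{\otimes 2}$ one has $Q_2(A^\circ) = \sum_j \widetilde{Q_2(A^j)}$, since the kernel of a tensor product of flat surjections is the sum of the pulled-back kernels. Second, and more importantly, the multi-Schur module is \emph{not} ``the intersection of the kernels defining each individual $\bbs^{(1^i)}_{M^j, A^j, N^j}$''; the pullback of those kernels to the full tensor product intersects to $\bigotimes_j \bbs^{(1^i)}_{M^j, A^j, N^j}$, which is strictly smaller. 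Rather, unwinding Definition~\ref{def:multiSchurForSameLength} with $\underline{\alpha} = ((1^i), \dots, (1^i))$ shows that both $\bbs^{(1^i)}_{M^\circ, A^\circ, N^\circ}$ and $\bbs^{(1^i)}_{\underline{M}, \underline{A}, \underline{N}}$ are literally the kernel of the \emph{same} map after tensor-reshuffling: the source regrouping sends $M^\circ_t \otimes (A^\circ_1)^{\otimes i} \otimes N^\circ_s$ to $\bigotimes_j \big(M^j_{t^j} \otimes (A^j_1)^{\otimes i} \otimes N^j_{s^j}\big)$, the target regrouping sends each summand $(M^\circ \otimes A^\circ \otimes N^\circ)_{\sigma_k(\cdots)}$ to $\bigotimes_j (M^j \otimes A^j \otimes N^j)_{\sigma_k(\cdots)}$, and the $k$-th component of the map becomes $\bigotimes_j \phi_{j,k}$ precisely because multiplication in $A^\circ$ is by construction the diagonal tensor of multiplications in each $A^j$. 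No distributivity argument, no induction on $i$, and no appeal to Lemma~\ref{lem:theMultiSchurSES} is needed; your final paragraph is solving a problem that does not exist.
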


\subsection*{Canonical Filtrations}

Although Theorem \ref{thm:introTorForMulti} is simple to state, it still gives us very little information about the relationship between a multi-Schur functor (which is a priori a purely formal construction) and objects that we may better understand. A standard method of trying to understand an object is to construct a filtration whose associated graded objects are ``simple" in the appropriate sense. 

As it turns out, multi-Schur functors $\bbs^{\underline{\alpha}}_{\underline{M} , \underline{A} , \underline{N}}$ admit a canonical (that is, totally functorial) filtration whose associated graded pieces are tensor products of the Schur functors $\bbs^{\beta^i}_{M^i,A^i,N^i}$, where $1 \leq i \leq n$. The difficult part is determining the poset that parametrizes this filtration, and our main result related to multi-Schur functors is an explicit description of this parametrization.

Before stating the result, we need to introduce one piece of notation: given a composition $\alpha = (\alpha_1 , \dots , \alpha_\ell)$ and a subset $I \subset [ \ell -1]$, the notation $\sigma_I (\alpha)$ denotes the composition obtained by adding $\alpha_i$ and $\alpha_{i+1}$ for all $i \in I$. For example:
$$\sigma_{\{ 1,2,4,6,7\}} (2,1,3,5,3,6,5,3) = (6,8,14).$$
With this notation in hand, we have:
\begin{theorem}\label{thm:introMultiFiltration}
   Consider the tuple
   $$\underline{\alpha} = (\alpha^1 , \dots , \alpha^n),$$
   where each composition $\alpha^i$ has a fixed length $\ell$. With notation and hypotheses as in Theorem \ref{thm:introTorForMulti}, the multi-Schur module $\bbs^{\underline{\alpha}}_{\underline{M} , \underline{A} , \underline{N}}$ admits a canonical filtration with associated graded pieces of the form
        $$\bbs^{\sigma_{I_1} (\alpha^1)}_{M^1,A^1,N^1} \otimes_R \bbs^{\sigma_{I_2} (\alpha^2)}_{M^2, A^2, N^2} \otimes_R \cdots \otimes_R \bbs^{\sigma_{I_n} (\alpha^n)}_{M^n,A^n, N^n},$$ 
        where the subsets $I_1 , \dots , I_n \subset [\ell -1]$ range over all choices such that $I_1 \cap I_2 \cap \cdots \cap I_n = \varnothing$. This filtration is equivariant with respect to any kind of ambient group action.
\end{theorem}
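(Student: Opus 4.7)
The plan is to realize the multi-Schur module as an intersection of subspaces inside a big tensor product, exploit distributivity of the lattice of kernel-subspaces afforded by Backelin's theorem (via Proposition \ref{prop:distributivityComplexes}), and extract a canonical poset-indexed family of submodules whose chain refinements are the desired filtrations.

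First I would unpack definitions. For each $j$, let $V_j$ denote the ambient tensor module $M^j \otimes_R A^j_{\alpha^j_1} \otimes_R \cdots \otimes_R A^j_{\alpha^j_\ell} \otimes_R N^j$, and let $K^j_i = \ker(\mu^j_i) \subset V_j$ for the $i$-th collapse map $\mu^j_i$. Set $V := \bigotimes_{j=1}^n V_j$ and $L^j_i := V_1 \otimes \cdots \otimes K^j_i \otimes \cdots \otimes V_n \subset V$. Then the diagonal multi-Schur structure unfolds as
$$\bbs^{\underline{\alpha}}_{\underline{M},\underline{A},\underline{N}} = \bigcap_{i=1}^{\ell-1} \Bigl(\sum_{j=1}^n L^j_i\Bigr),$$
using that the kernel of a tensor product of surjections is the sum of the single-slot kernels.

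For each tuple $\underline{T} = (T_1, \ldots, T_{\ell-1})$ of nonempty subsets $T_i \subset [n]$, I would set $F_{\underline{T}} := \bigcap_{i=1}^{\ell-1} \sum_{j \in T_i} L^j_i$. The assignment $\underline{T} \mapsto F_{\underline{T}}$ is monotone under componentwise inclusion, with the multi-Schur sitting as the maximal element (all $T_i = [n]$). Taking any linear extension of this poset yields a chain filtration, and the task is to identify the associated graded at each $\underline{T}$ with
$$\bigotimes_{j=1}^n \bbs^{\sigma_{I_j}(\alpha^j)}_{M^j, A^j, N^j}, \qquad I_j := \{i : j \notin T_i\}.$$

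The main obstacle is computing these associated graded pieces, and the essential tool is distributivity. Koszulness of each individual $A^j$ supplies, via Backelin's theorem and Proposition \ref{prop:distributivityComplexes}, a distributive lattice generated by $\{K^j_i\}_i$ inside $V_j$. Because the kernels from distinct algebras live in disjoint tensor slots, this lifts to a distributive lattice structure on the whole collection $\{L^j_i\}_{i,j}$ inside $V$: for $j \neq j'$ one has $L^j_i \cap L^{j'}_{i'} = V_1 \otimes \cdots \otimes K^j_i \otimes \cdots \otimes K^{j'}_{i'} \otimes \cdots \otimes V_n$, while intersections and sums within a single tensor slot obey the distributive law by Koszulness. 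Iterating $X \cap (Y + Z) = (X \cap Y) + (X \cap Z)$ then separates $F_{\underline{T}}$ tensor-slot by tensor-slot; modulo contributions from strictly smaller tuples $\underline{T}' \lneq \underline{T}$, the $j$-th slot reduces exactly to $\bigcap_{i \notin I_j} K^j_i$, which is precisely the defining intersection of $\bbs^{\sigma_{I_j}(\alpha^j)}_{M^j, A^j, N^j}$.

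A useful consistency check comes from the combinatorial identity $\#\{\underline{T} : T_i \neq \emptyset \text{ for all } i\} = (2^n - 1)^{\ell - 1} = \#\{(I_1, \ldots, I_n) : I_1 \cap \cdots \cap I_n = \emptyset\}$, which matches the expected count of graded pieces. Equivariance under any ambient group action is automatic, since every step (multiplication, kernel, sum, intersection, and quotient) is built from equivariant operations. An alternative route, which I would sketch as a sanity check, is to induct on $\ell$ by applying Lemma \ref{lem:theMultiSchurSES} to split $\underline{\alpha}$ at a fixed position and combining the resulting filtrations; the distributive-lattice picture, however, gives the cleanest conceptual proof and makes the combinatorial indexing transparent.
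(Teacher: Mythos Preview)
Your proposed lattice-theoretic route contains a genuine error in the identification of the associated graded pieces. You claim that ``the $j$-th slot reduces exactly to $\bigcap_{i \notin I_j} K^j_i$, which is precisely the defining intersection of $\bbs^{\sigma_{I_j}(\alpha^j)}_{M^j, A^j, N^j}$.'' But $\bigcap_{i \notin I_j} K^j_i$ is the \emph{disconnected} Schur module $\bbs^{\nu_{I_j}(\alpha^j)}$ (a tensor product of smaller ribbons sitting inside $V_j$), whereas $\bbs^{\sigma_{I_j}(\alpha^j)}$ lives in the quotient $V_j/\sum_{i \in I_j} K^j_i$; one is a further quotient of the other (see Lemma~\ref{lem:tensorFiltration}). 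Your filtration never produces that quotient.

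Take $n=2$, $\ell=2$, algebras $A,B$. Then $F_{(\{1\})}=K^1_1\otimes V_2$, $F_{(\{2\})}=V_1\otimes K^2_1$, $F_{(\{1,2\})}=F_{(\{1\})}+F_{(\{2\})}$. In any linear extension the piece at the minimal element $(\{1\})$ is $K^1_1\otimes V_2=\bbs^{\alpha^1}_A\otimes (B_{b_1}\otimes B_{b_2})$, not $\bbs^{\alpha^1}_A\otimes B_{b_1+b_2}$ as you claim; and the piece at the top $(\{1,2\})$ is zero. You get two nonzero pieces, not three. The missing step is exactly the further filtration of $\bbs^{\nu_{I_j}(\alpha^j)}$ by the $\bbs^{\sigma_{J}(\alpha^j)}$ for $J\subset I_j$, and once you add this you no longer have a clean bijection with your $\underline{T}$-poset.

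The paper avoids this by working inductively: Lemma~\ref{lem:multiSchuriltrationSES} peels off one slot, Lemma~\ref{lem:multiSchurFiltFirstStep} bootstraps this to a filtration with pieces $\bbs^{\sigma_I(\alpha^1),\ldots,\sigma_I(\alpha^{n-1})}\otimes\bbs^{\nu_{[\ell-1]\setminus I}(\alpha^n)}$, and then Lemma~\ref{lem:tensorFiltration} refines the last factor. This is essentially the ``alternative route'' you mention at the end; it is not just a sanity check but the actual argument needed.
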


\begin{example}
    If in the statement of Theorem \ref{thm:introMultiFiltration}, one has $n=2$ and each composition has length $3$, the poset parametrizing the filtration factors has Hasse diagram
\[\begin{tikzcd}
	{(\{1,2\} , \varnothing)} & {(\{1 \} , \{ 2 \})} & {(\{2 \} , \{ 1 \})} & {(\varnothing , \{1,2 \})} \\
	{(\{ 1 \} , \varnothing)} & {( \{ 2 \} , \varnothing)} & {(\varnothing , \{ 1 \} )} & {(\varnothing , \{ 2 \} )} \\
	& {(\varnothing,\varnothing)}
	\arrow[no head, from=3-2, to=2-1]
	\arrow[no head, from=3-2, to=2-2]
	\arrow[no head, from=3-2, to=2-3]
	\arrow[no head, from=3-2, to=2-4]
	\arrow[no head, from=2-1, to=1-1]
	\arrow[no head, from=2-1, to=1-2]
	\arrow[no head, from=1-1, to=2-2]
	\arrow[no head, from=2-2, to=1-3]
	\arrow[no head, from=2-3, to=1-4]
	\arrow[no head, from=2-4, to=1-4]
	\arrow[no head, from=2-4, to=1-2]
	\arrow[no head, from=2-3, to=1-3]
\end{tikzcd}\]
For a concrete example of the explicit filtration factors, see Example \ref{ex:mSchurFiltrationEx1}.
\end{example}


\subsection{Organization of Paper}

This paper is organized as follows. In Section \ref{sec:augmentedBarComplexes} we recall some background on augmented bar complexes and their homogeneous strands. We also establish conventions and notation that will be used throughout the paper. In Section \ref{sec:qdistributivity} we recall the notion of distributivity for collections of arbitrary $R$-submodules; much of this material is essentially contained in \cite{polishchuk2005quadratic}, but since we do not assume the ambient ring is a field, there are some additional details that need to be checked. We also introduce a collection of modules $L^I_{M_1 , \dots , M_n}$ associated to a distributive collection of $R$-submodules that will end up being an equivalent way to define ribbon Schur functors in the setting of arbitrary quadratic algebras. Under appropriate assumptions, we also show that distributivity behaves well with respect to ``dualization".

Sections \ref{sec:ribbonSchurs} and \ref{sec:multiSchurFunctors} develop the theory of ribbon Schur functors associated to arbitrary quadratic algebras/modules. In Section \ref{sec:ribbonSchurs}, we define ribbon Schur functors associated to arbitrary quadratic algebras/modules and prove the concatenation/near-concatenation Koszulness criterion. Subsection \ref{subsec:ribbonsForAlg} is devoted to giving explicit examples and illustrating the statements for concrete choices of Koszul algebras, and the main proofs of the results in full generality are given in Subsection \ref{subsec:ribbonsForMods}. We will see that many duality properties of Schur and Weyl functors in the classical setting as a consequence of Koszul duality for these more general ribbon Schur functors. 

In Section \ref{sec:multiSchurFunctors} we define multi-Schur functors as described earlier. The beginning of this section explains how to ugprade all of the statements of Section \ref{sec:ribbonSchurs} to this level of generality; it turns out that there is a large amount of additional bookkeeping needed when doing this. Subsections \ref{subsec:filtrations} and \ref{subsec:canonicalFiltrationsOfMultiSchur} prove all of the results necessary to construct the filtration as stated in Theorem \ref{thm:introMultiFiltration}.

Section \ref{sec:applications} is where we get to see the utility of the theory developed in Sections \ref{sec:ribbonSchurs} and \ref{sec:multiSchurFunctors} for giving canonical descriptions/filtrations to many of the invariants associated to Koszul algebras/modules. Subsection \ref{subsec:TorAndExt} proves the isomorphisms of Tor and Ext between Koszul modules as stated in Theorem \ref{thm:introTorAndExt}, and Subsections \ref{subsec:VeroneseCase} and \ref{subsec:SegreCase} give canonical descriptions of the derived invariants over Veronese/Segre subalgebras in terms of the original algebra(s), and even deduce an interesting symmetric function identity as a result taking a character count on the minimal free resolution over a Segre product. In Subsection \ref{subsec:buildingKoszulModules}, we construct a large class of Koszul modules over an arbitrary Koszul algebra and, in the case of symmetric/exterior algebras, we generalize a construction of Koszul modules associated to Schur functors corresponding to arbitrary skew partitions. We use this to give a characteristic free computation of the sheaf cohomology of $\bbs^\lambda (\R)$ on $\bbp (V)$, where $\R$ denotes the tautological subbundle on projective space.

Section \ref{sec:generalKoszulAlg} is an appendix that recalls the equivalence between Koszulness and distributivity (that is, Backelin's theorem) and interprets distributivity in terms of exactness properties of so-called \defi{refinement complexes}. Again, much of this material follows from straightforward generalizations of the material in \cite{polishchuk2005quadratic}, but we check the details for sake of completeness. In subsection \ref{subsec:generalitiesOnQuad} we first discuss some generalities and definitions related to general quadratic algebras. The bulk of this appendix is dedicated to the material of subsection \ref{subsec:refinementComplexes}, which defines \emph{refinement complexes} and establishes some important conventions for these refinement complexes. In subsections \ref{subsec:KoszulnessAndDistr} and \ref{subsec:KoszulMods}, we recall Backelin's theorem along with the appropriate analogs for Koszul modules and translate these results into statements about the family of refinement complexes. Finally, in Subsection \ref{subsec:PriddyCx} we recall the Priddy complex and some of its properties in the generality in which we are working.

\subsection*{Acknowledgments}

The author thanks Ayah Almousa, Vic Reiner, Steven Sam, and Jerzy Weyman for helpful conversations related to this material or comments/corrections on earlier drafts of this paper. The author also thanks the anonymous referee for a close reading with many helpful corrections/suggestions. The author was supported by NSF grant DMS-2202871.

\section{Augmented Bar Constructions}\label{sec:augmentedBarComplexes}



In this section, we establish some conventions and notation to be used for the remainder of the paper. We also define one of the most important elements of the paper: the augmented bar complex associated to an augmented $R$-algebra. Unsurprisingly, understanding homogeneous strands of the augmented bar complex is equivalent to understanding Koszulness of the algebra $A$. 

\begin{definition}
    Let $R$ be a commutative Noetherian ring and $A$ any associative unital $R$-algebra.
    \begin{enumerate}
        \item The algebra $A$ is ($\bbz$-)\defi{graded} if $A = \bigoplus_{i \in \bbz} A_i$ with $A_i \cdot A_j \subset A_{i+j}$ for all $i,j \in \bbz$, and $1_A \in A_0$.
        \item The algebra $A$ is \defi{augmented} if $A = R \oplus A_+$, where $R = R \cdot 1_A$ and $A_+$ is a two-sided ideal in $A$.
        \item The algebra $A$ is \defi{quadratic} if $A = T(A_1) / (Q_2^A)$, where $T(-)$ denotes the tensor algebra functor, $Q_2^A \subset A_1 \otimes_R A_1$, and $(Q_2)$ denotes the two-sided ideal generated by $Q_2$.
        \item A left $A$-module $M$ is \defi{graded} if $M = \bigoplus_{i \in \bbz} M_i$ and $A_i M_j \subset M_{i+j}$ for all $i,j \in \bbz$.
    \end{enumerate}
    The notation $A^\op$ denotes the \defi{opposite algebra} of $A$; that is, the underlying set of $A^\op$ is the same as $A$ but with multiplication defined by $a \cdot b := b \cdot a$. Any right $A$-module is equivalently a left $A^\op$-module.
\end{definition}

\begin{remark}
    For quadratic $R$-algebras, we will always assume that the grading is induced by the standard grading on the tensor algebra. In particular, all quadratic algebras are nonnegatively graded.
\end{remark}

The following convention is extremely important, and without it most of the arguments given later in this paper can not even get off the ground.

\begin{conv}
Throughout this paper, all graded $R$-algebras $A$ will be assumed to be finitely-generated flat $R$-modules in each degree. Similarly, all graded $A$-modules will be assumed to be finitely-generated and flat in each degree. Sometimes we will assume that $A$ or $M$ is even $R$-projective in each degree, but it is \emph{always} true that they are at least finitely-generated and flat in each degree.
\end{conv}

There are two fundamental operations on algebras that will be particularly interesting for us:

\begin{definition}\label{def:VeroneseAndSegre}
Let $A$ and $B$ be graded $R$-algebras. The \defi{d-th Veronese power} $\veralg{d}$ of $A$ is defined to be the subalgebra
$$\veralg{d} := \bigoplus_{i \equiv 0 \mod d} A_i \subset A.$$
Let $M$ be a graded (left) $A$-module of initial degree $t$. Then the $d$th Veronese power $\veralgg{M}{d}$ is defined to be the $A^{(d)}$-module
$$M^{(d)} := \bigoplus_{i \equiv t \mod d} M_i \subset M.$$

The \defi{Segre product} $A \circ B$ of $A$ and $B$ is defined to be the subalgebra
$$A \circ B := \bigoplus_{i \geq 0} A_i \otimes_R B_i \subset A \otimes_R B.$$
The \defi{d-th Segre power} $\segpower{A}{d}$ of $A$ is defined to be the iterated Segre product
$$\underbrace{A \circ A \circ \cdots \circ A}_{d \ \text{times}} \subset A^{\otimes_R d}.$$
Given a graded left $A$ (resp. $B$)-module $M$ (resp. $N$) of initial degree $s$ (resp. $t$), the Segre product $M \circ N$ is the left $A \circ B$ module defined as 
$$M \circ N := \bigoplus_{i \geq 0} M_{i+s} \otimes_R N_{i+t}.$$
The Segre product of Koszul right modules is defined identically.

The $d$th Segre power $\segpower{M}{d}$ of $M$ is defined to be the Segre product
$$\underbrace{M \circ \cdots \circ M}_{d \ \text{times}}.$$
\end{definition}

\begin{remark}
    Often one uses the convention that the generators of the Veronese/Segre subalgebras have been rescaled to have degree $1$. We will actually have no need for this convention in the paper as long as we define an algebra to be Koszul if the associated Priddy complex (see Theorem \ref{thm:thePriddyCxWorks}) is a resolution; this is equivalent to the distributivity of a certain set of submodules (by Backelin's theorem), which is again a condition that still makes sense regardless of the generators having degree $1$. That being said, many of the results stated in this paper will be written as if the generators are in degree $1$, but again this condition is only required ``up to rescaling". 
\end{remark}

Likewise, the graded dual of an algebra will be useful for describing certain Ext modules. All duals in this paper are understood to be \emph{graded} duals; that is
$$\hom_A (M , R) := \bigoplus_{i \in \bbz} \hom_R (M_i , R).$$

\begin{definition}\label{def:gradedDualConv}
Let $M$ be a left $A$-module, where $A$ is any graded $R$-algebra. Then the graded dual $M^* := \hom_A (M,R)$ is canonically a right $A$-module via the action
    $$(m^* a)(n) := m^* (an), \quad m^* \in M^*, \ a \in A, \ n \in M.$$
    Moreover, there is a natural isomorphism of $A^\op$-modules $(M^\op)^* \cong (M^*)^\op$. 
\end{definition}

Recall that modules over (positively) graded algebras satisfy a strong form of Nakayama's lemma:

\begin{lemma}[Nakayama's Lemma]\label{lem:NAK}
    Let $A$ be any nonnegatively graded $R$-algebra and $M$ a graded left $A$-module with $M_i = 0$ for $i \ll 0$. If $A_+ M = M$, then $M = 0$. 
\end{lemma}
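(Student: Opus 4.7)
The plan is to run the standard graded Nakayama argument: exploit the existence of a lowest nonzero graded piece of $M$ and show that $A_+M$ cannot reach that piece, contradicting $A_+M=M$.

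First I would suppose for contradiction that $M \neq 0$. The hypothesis that $M_i = 0$ for $i \ll 0$ guarantees that the set $\{ i \in \bbz : M_i \neq 0 \}$ is nonempty and bounded below, so it has a minimum; call it $n$, so that $M_n \neq 0$ and $M_j = 0$ for all $j < n$. This is the only place the hypothesis $M_i = 0$ for $i \ll 0$ is used, and it is the feature that distinguishes this graded version of Nakayama from the classical local statement (no finite generation hypothesis is needed).

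Next I would compute the degree $n$ part of $A_+M$ using the grading. Since $A$ is nonnegatively graded, $A_+ = \bigoplus_{i \geq 1} A_i$ consists entirely of elements of strictly positive degree. Combined with the grading compatibility $A_i M_j \subset M_{i+j}$, this gives
$$(A_+ M)_n = \sum_{i \geq 1} A_i \cdot M_{n-i}.$$
For every $i \geq 1$ the index $n - i$ is strictly less than $n$, so $M_{n-i} = 0$ by minimality of $n$. Hence $(A_+ M)_n = 0$.

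But by hypothesis $A_+M = M$, so in particular $(A_+M)_n = M_n \neq 0$, a contradiction. Therefore $M = 0$. I do not foresee any genuine obstacle here; the only subtlety worth flagging is that the nonnegative grading on $A$ is essential (without it, elements of $A_+$ of degree $0$ or lower could produce contributions in degree $n$ from $M_j$ with $j \geq n$), and that no finiteness assumption on $M$ is required precisely because the lower-boundedness of the support of $M$ is assumed directly.
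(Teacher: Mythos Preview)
Your proof is correct and is the standard graded Nakayama argument. The paper actually states this lemma without proof (treating it as well known), so there is nothing to compare against; your write-up would serve perfectly well as the omitted justification.
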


Now we define the augmented bar complex:

\begin{definition}
    Let $A$ be any augmented $R$-algebra and let $A_+ := A/A_0$. Given any left $A$-module $M$, the \defi{augmented Bar complex} $\bar^A (M)$ is the complex of $A$-modules with
    $$\bar^A_i (M) := A \otimes_R A_+^{\otimes i} \otimes_R M, \quad \text{with differential}$$
    $$d^B (a_0 \otimes a_1 \otimes \cdots \otimes a_i \otimes m) := \sum_{j=0}^{i-1} (-1)^j a_0 \otimes \cdots \otimes a_j \cdot a_{j+1} \otimes \cdots \otimes a_i \otimes m$$
    $$+ (-1)^i a_0 \otimes \cdots \otimes a_{i-1} \otimes a_i m.$$
    The \defi{augmented cobar complex} $\cobar^A (M)$ on $M$ is the graded $R$-dual of $\bar^A (M)$. The bar complex and cobar complex are both bigraded by the homological and internal degree. In other words:
    $$\bar^A_i (M)_j = \bigoplus_{k_0 + \cdots + k_i + \ell = j} A_{j_1} \otimes_R \cdots \otimes_R A_{j_k} \otimes_R M_\ell.$$
    With respect to the internal grading, the differential of $\bar^A (M)$ has degree $0$. 

    The notation $\bar^A (M)_n$ will denote the (internal) \defi{degree $n$ strand} of the bar complex.
\end{definition}

\begin{remark}
    For right $A$-modules $M$, there is a similar bar complex construction $\bar^A (M)^\op$ where $M$ appears as the leftmost tensor factor and the differential is defined analogously. This construction is compatible with taking the opposite algebra in the sense that there is an isomorphism of complexes
    $$\bar^{A^\op} (M) \cong \bar^A (M)^\op,$$
    where $M$ is viewed as a left $A^\op$-module on the lefthand side of this isomorphism. This means that throughout this section, it is of no loss of generality to assume that $M$ is a left $A$-module.
\end{remark}

\begin{example}
The degree $4$ strand $(R \otimes_A \bar^A (M))_4$ is the following complex of projective $R$-modules:
\[\begin{tikzcd}
	& {A_2 \otimes_R A_1^{\otimes 2}} & {A_3 \otimes_R A_1} \\
	{A_1^{\otimes 4}} & {A_1 \otimes_R A_2 \otimes_R A_1} & {A_2 \otimes_R A_2} & {A_4} \\
	& {A_1^{\otimes 2} \otimes_R A_2} & {A_1 \otimes_R A_3}
	\arrow[from=2-1, to=2-2]
	\arrow[from=2-2, to=2-3]
	\arrow[from=2-3, to=2-4]
	\arrow["\bigoplus"{description}, draw=none, from=1-2, to=2-2]
	\arrow["\bigoplus"{description}, draw=none, from=2-2, to=3-2]
	\arrow["\bigoplus"{description}, draw=none, from=1-3, to=2-3]
	\arrow["\bigoplus"{description}, draw=none, from=2-3, to=3-3]
\end{tikzcd}\]
\end{example}

The following are some fundamental properties of (co)bar constructions that will be useful in later sections:

\begin{prop}
    Let $A$ be an augmented graded $R$-algebra and $M$ any left $A$-module.
    \begin{enumerate}
        \item The augmented bar complex $\bar^A (M)$ is a flat resolution of $M$.
        \item The augmented cobar complex $\cobar^A (A)$ is an associative DG-algebra via the standard tensor algebra product. Likewise, the cobar construction $\cobar^A (M)$ is a right DG-module over $\bar^A (A)$.
        \item There are isomorphisms:
        $$H_i (R \otimes_A \bar^A (M)) = \tor_i^A (R,M), \quad H^i (\bar^A (M)^*) = \ext^i_A (M,R).$$
        \item For any right $A$-module $N$, there are isomorphisms
        $$H_i (N \otimes_A \bar^A (M)) = \tor_i^A (N,M), \quad \tor_i^A (N,M)^* \cong \ext^i_A (M,N^*),$$
        where $N^*$ is a left $A$-module via the convention of Definition \ref{def:gradedDualConv}. If $N$ is instead a left $A$-module, there is an isomorphism
        $$H^i (\hom_A (\bar^A (M) , N)) = \ext^i_A (M,N).$$
    \end{enumerate}
\end{prop}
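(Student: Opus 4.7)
The plan is to prove (1) first via an explicit contracting homotopy, and then derive (3) and (4) as essentially formal consequences of (1) together with the hom-tensor adjunction; the DG-algebra structure in (2) arises by dualizing the deconcatenation coproduct on $\bar^A(A)$.

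For (1), flatness of each $\bar^A_i(M) = A \otimes_R A_+^{\otimes i} \otimes_R M$ as a left $A$-module is immediate: it is $A$ tensored on the right with an $R$-flat module, by the standing convention that $A$, $A_+$, and $M$ are $R$-flat in each degree. For acyclicity, I would extend the complex by the augmentation $A \otimes_R M \to M$ sending $a \otimes m \mapsto am$, and exhibit an $R$-linear contracting homotopy
$$s(a_0 \otimes a_1 \otimes \cdots \otimes a_i \otimes m) := 1 \otimes a_0^+ \otimes a_1 \otimes \cdots \otimes a_i \otimes m,$$
where $a_0 = r \cdot 1_A + a_0^+$ is the decomposition coming from $A = R \oplus A_+$ (with $s(m) := 1 \otimes m$ on the augmentation term). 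A direct verification of the identity $ds + sd = \mathrm{id}$ on the augmented complex uses only that $A_+$ is a two-sided ideal, and produces a contraction of $\bar^A(M) \to M$ as a complex of $R$-modules; this in particular makes $\bar^A(M) \to M$ a quasi-isomorphism of $A$-modules. For (2), the coproduct on $\bar^A(A)$ is the standard deconcatenation
$$\Delta(a_0 \otimes \cdots \otimes a_{i+1}) := \sum_{k=0}^{i} (a_0 \otimes \cdots \otimes a_k \otimes 1) \otimes (1 \otimes a_{k+1} \otimes \cdots \otimes a_{i+1}),$$
and dualizing yields the tensor algebra (concatenation) product on $\cobar^A(A)$; the right DG-module structure on $\cobar^A(M)$ is verified analogously with the standard sign conventions, and the Leibniz rule reduces to the compatibility of the deconcatenation coproduct with the bar differential.

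Once (1) is in hand, (3) and (4) are essentially formal. Indeed, $R \otimes_A \bar^A(M)$ and $N \otimes_A \bar^A(M)$ compute $\tor^A_\bullet(R,M)$ and $\tor^A_\bullet(N,M)$ since $\bar^A(M)$ is a flat resolution, and similarly $\hom_A(\bar^A(M),R) = \bar^A(M)^*$ and $\hom_A(\bar^A(M),N)$ compute $\ext^\bullet_A(M,R)$ and $\ext^\bullet_A(M,N)$. The duality $\tor^A_i(N,M)^* \cong \ext^i_A(M,N^*)$ then follows from the hom-tensor adjunction $\hom_R(N \otimes_A P, R) \cong \hom_A(P, N^*)$ applied to $P = \bar^A(M)$, combined with the observation that the graded dual commutes with cohomology for complexes of $R$-flat modules. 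The main obstacle here is really careful bookkeeping: because $A$ is allowed to be infinitely generated in every positive degree and is only assumed flat over $R$ one internal degree at a time, every instance of \emph{duality commutes with taking homology} and every application of $\hom$-tensor adjunction must be carried out strand-by-strand in the internal grading, which is exactly what the graded-dual convention of Definition \ref{def:gradedDualConv} is designed to accommodate.
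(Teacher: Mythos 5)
The paper's proof of this proposition is a single sentence: (1) and (2) are cited as ``well-known,'' and (3) and (4) are claimed to follow because ``Tor and Ext may be computed using flat resolutions.'' Your write-up supplies all the missing details, which is a virtue rather than a departure: the extra-degeneracy contracting homotopy $s(a_0 \otimes \cdots \otimes a_i \otimes m) = 1 \otimes a_0^+ \otimes \cdots \otimes m$ is the standard argument for (1) and checks out (the computation of $ds+sd$ in low degrees is exactly as you indicate, using $a_0 = r + a_0^+$ and that $A_+$ is an ideal), and the deconcatenation-to-concatenation duality for (2) is likewise standard. For the Tor computations in (3) and (4), flatness of $\bar^A(M)$ over $A$ really is all that is needed, and your argument is fine.

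There is, however, a genuine gap at the Ext and duality steps, one that you share with the paper's terse justification but should not replicate uncritically. The claim that ``the graded dual commutes with cohomology for complexes of $R$-flat modules'' is false in general: the universal-coefficient comparison $H^i(C_\bullet^*) \to H_i(C_\bullet)^*$ requires the cycles/boundaries $Z_i, B_i$ of $C_\bullet$ to satisfy $\ext^1_R(B_{i-1},R)=0$, which holds if those modules are $R$-projective but not if they are merely $R$-flat (e.g.\ $\ext^1_{\mathbb Z}(\mathbb Q,\mathbb Z)\ne 0$). The same issue appears one step earlier: $\hom_A(\bar^A(M), N)$ computes $\ext^\bullet_A(M,N)$ only if the terms $A\otimes_R Q_i$ of $\bar^A(M)$ are $\hom_A(-,N)$-acyclic, and since $\ext^j_A(A\otimes_R Q, N) \cong \ext^j_R(Q,N)$, this again demands $R$-projectivity of $Q_i = A_+^{\otimes i}\otimes_R M$, not mere flatness. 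Under the paper's running convention the algebra and modules are only assumed flat in each degree, with projectivity flagged as an occasional extra hypothesis; in that generality, (3)/(4) hold for the Tor statements, but the Ext statements and the duality $\tor_i^A(N,M)^*\cong\ext^i_A(M,N^*)$ should either be stated under the $R$-projective hypothesis or justified by a different argument. You should make this hypothesis explicit rather than asserting that flatness suffices.
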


\begin{proof}
    Both $(1)$ and $(2)$ are well known. The statements $(3)$ and $(4)$ follow from the fact that Tor and Ext may be computed using flat resolutions instead of projective resolutions.
\end{proof}

We conclude this section with a straightforward observation that will be useful to write explicitly, since it will be cited many times in later sections.

\begin{obs}\label{obs:rightResByProj}
    Let $M$ be any $R$-module and
    $$0 \to M \to F_0 \to F_1 \to \cdots \to F_n \to 0$$
    any exact complex such that $F_i$ is a flat (resp. projective) $R$-module for each $i = 0 , \dots , n$. Then $M$ is flat (resp. projective). 
\end{obs}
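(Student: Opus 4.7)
The plan is to convert the given long exact sequence into a cascade of short exact sequences and then induct from the right end. Concretely, set $K_i := \ker(F_{i+1} \to F_{i+2})$ for $0 \leq i \leq n-2$, with the convention $F_{n+1} = 0$ so that $K_{n-2} = \ker(F_{n-1} \to F_n)$. Exactness of the original sequence immediately yields short exact sequences
$$0 \to M \to F_0 \to K_0 \to 0, \qquad 0 \to K_{i-1} \to F_i \to K_i \to 0 \ \ (1 \leq i \leq n-2), \qquad 0 \to K_{n-2} \to F_{n-1} \to F_n \to 0.$$
Reducing to this form is the entire content of the argument; everything else is a standard appeal to a one-step closure property of the class under consideration.

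For the flat case, I would invoke the standard fact that if $0 \to A \to B \to C \to 0$ is short exact with $B$ and $C$ flat, then $A$ is flat. This is immediate from the long exact $\tor$ sequence: for any $R$-module $X$, the vanishing of $\tor_1^R(C,X)$ and $\tor_2^R(C,X)$ forces $\tor_1^R(A,X) = 0$. Applied to the rightmost short exact sequence this gives flatness of $K_{n-2}$; walking leftward and reapplying the fact at each stage gives flatness of $K_{n-3}, \ldots, K_0$, and finally of $M$ from the leftmost sequence.

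For the projective case, I would instead use the splitting lemma: if $C$ is projective then any short exact sequence $0 \to A \to B \to C \to 0$ splits, so $A$ is a direct summand of $B$; direct summands of projectives are projective. The identical right-to-left induction then delivers projectivity of $K_{n-2}, \ldots, K_0$, and hence of $M$.

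There is no genuine obstacle in this argument; it is entirely standard homological bookkeeping. The only minor care required is handling the base case $n = 0$, where the hypothesis degenerates to $M \cong F_0$ and the conclusion is tautological, and observing that the first and last short exact sequences in the cascade are structurally the same (surjection onto a flat/projective module with flat/projective middle term), so the same closure property applies uniformly throughout the induction.
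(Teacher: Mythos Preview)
Your proof is correct and essentially identical to the paper's: both break the long exact sequence into short exact sequences and use the one-step closure (Tor long exact sequence for flat, splitting lemma for projective) to walk back to $M$. The only cosmetic difference is that the paper phrases this as an induction on $n$, setting $C := \im(F_0 \to F_1)$ and applying the inductive hypothesis to $0 \to C \to F_1 \to \cdots \to F_n \to 0$, rather than writing out the full cascade of kernels as you do.
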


\begin{proof}
    Proceed by induction on $n$, where the case $n=0$ implies $M = F_0$ is evidently flat (resp. projective). If $n > 0$, let $C := \im (F_0 \to F_1)$. By the inductive hypothesis, the module $C$ is flat (resp. projective) and there is a short exact sequence
    $$0 \to M \to F_0 \to C \to 0.$$
    If $C$ is projective, this sequence splits and hence $M$ is also projective. If $C$ is instead flat, apply the functor $- \otimes_R N$ for every $R$-module $N$ and employ the long exact sequence of homology to deduce that $M$ is flat.
\end{proof}

\section{Distributivity and Submodule Lattices}\label{sec:qdistributivity}

In this section, we recall a general family of complexes that may be associated to any collection of $R$-submodules $M_1 , \dots , M_n \subset M$ (see \cite[Chapter 2]{polishchuk2005quadratic} for the case over a field, though the theory is essentially identical here). The exactness properties of these complexes will be used to define distributivity, and under some additional hypotheses on the collection $M_1 , \dots , M_n$ we will see that distributivity satisfies a simple duality. The purpose of this section is to introduce the modules $L^I_{M_1 , \dots , M_n}$ of Definition \ref{def:theLModules} and study their flatness/projectivity/duality properties in the generality established in the previous section. 

All modules in this section are assumed to be finitely generated (including all submodules). For convenience, we recall the definition of a lattice.

\begin{definition}
    A poset is a set $S$ equipped with a partial order $\leq$. A poset is a \defi{lattice} if any two pairs of elements $a,b \in S$ have a well-defined meet and join, denoted $a \w b$ and $a \vee b$, respectively. 

\end{definition}

\begin{notation}\label{not:faceDegen}
    Let $[n] := \{ 1 , \dots , n \}$ for some integer $n$. The \defi{$j$th degeneracy map} $s_j : [n] \to [n-1]$ is defined to be the surjection
    $$s_j (i) := \begin{cases} i & \text{if} \ i \leq j, \\
    i-1 & \text{if} \ i > j. \end{cases}$$
    Likewise, the \defi{$j$th face map} $d_j : [n-1] \to [n]$ is defined to be the map
    $$d_j (i) := \begin{cases} i & \text{if} \ i < j, \\
    i+1 & \text{if} \ i \geq j. \end{cases}$$
    Likewise, given any set $I \subset [n]$, the notation $\sgn (j,I)$ for any $j \notin I$ denotes the sign of the permutation that reorders the set $(I,j)$ into ascending order.
\end{notation}

\begin{remark}
    The terminology ``face" and ``degeneracy" maps is borrowed from the terminology used in the simplicial category $\Delta$.
\end{remark}


The following definition associates a lattice to any collection of $R$-submodules:

\begin{definition}
    Let $M$ be any $R$-module and $M_1 , \dots , M_n \subset M$ a collection of $R$-submodules. The collection of submodules $M_1 , \dots , M_n$ generates a lattice with operations
    $$M_i \w M_j := M_i \cap M_j, \quad M_i \vee M_j := M_i + M_j.$$
\end{definition}

The following construction introduces a fundamental family of complexes that can be associated to any collection of submodules. The terms of these complexes are built by taking ``intervals" above elements in the associated submodule lattices, and the exactness properties of these complexes will be very important in later sections.

\begin{construction}\label{cons:theDistributivityCx}
    Let $M$ be any $R$-module and $M_1 , \dots , M_n \subset M$ a collection of $R$-submodules. Given any subset $I \subset J$, let $\rho_{I,J}$ denote the canonical surjection 
    $$\rho_{I,J} : \frac{M}{\vee_{j \in I} M_j} \to \frac{M}{\vee_{j \in J} M_j}.$$
    Define the (cochain) complex $C^\bullet (M;M_1 , \dots , M_n)$ via
    $$C^i (M; M_1 , \dots , M_n) := \bigoplus_{|I| = i} \frac{M}{\vee_{j \in I} M_j},\quad \text{with differential}$$
    $$d^{C^\bullet}|_{\frac{M}{\vee_{i \in I} M_i}} := \sum_{j \notin I} \sgn(j,I) \rho_{I \cup j , I}.$$
    In the above, we use the convention that $C^0 (M ; M_1 , \dots , M_n) := M$. 

    Likewise, given any subset $I \subset J$, let $\iota_{J,I}$ denote the natural inclusion
    $$\iota_{J,I} : \wedge_{i \in J} M_i \to \wedge_{i \in I} M_i.$$
    Define the (chain) complex $C_\bullet (M ; M_1 , \dots , M_n)$ via
    $$C_i (M;M_1 , \dots , M_n) := \bigoplus_{|I|=i} \w_{j \in I} M_j, \quad \text{with differential}$$
    $$d^{C_\bullet}|_{\w_{i \in I} M_i} := \sum_{j \notin I} \sgn(j,I) \iota_{I,I\cup j}.$$

    If $I \subset [n]$ is any subset, define the complex $C^\bullet_I (M ; M_1 , \dots , M_n)$ by restricting to all direct summands of the form
    $$C^i_I (M ; M_1 , \dots , M_n) = \bigoplus_{\substack{I \subset J, \\ \ |J| - |I| = i}} \frac{M}{\vee_{j \in J} M_j}.$$
    Likewise, define the complex $C_\bullet^I (M ; M_1 , \dots , M_n)$  by restricting to all direct summands of the form
    $$C_i^I (M;M_1 , \dots , M_n) := \bigoplus_{\substack{I \subset J, \\  |J| - |I| =i}} \w_{j \in J} M_j.$$
\end{construction}

\begin{remark}
    Notice that by definition there are equalities 
    $$C_\bullet (M ; M_1 , \dots , M_n) = C^{\varnothing}_\bullet (M ; M_1 , \dots ,M_n),$$
    $$C^\bullet (M; M_1 , \dots , M_n) = C^\bullet_{\varnothing} (M ; M_1 , \dots , M_n).$$
\end{remark}

\begin{example}
If $M_1 , M_2 , M_3$ is a length $3$ collection of $R$-submodules of $M$, the associated complexes of Construction \ref{cons:theDistributivityCx} are explicitly given by
{\small
\[\begin{tikzcd}
	&& {M_1 \cap M_2} & {M_1} \\
	{C_\bullet (M; M_1 , M_2, M_3):} & {M_1 \cap M_2 \cap M_3} & {M_1 \cap M_3} & {M_2} & M \\
	&& {M_2 \cap M_3} & {M_3} \\
	&& {\frac{M}{M_1}} & {\frac{M}{M_1+M_2}} \\
	{C^\bullet (M; M_1 , M_2 , M_3):} & M & {\frac{M}{M_2}} & {\frac{M}{M_1+M_3}} & {\frac{M}{M_1+M_2+M_3}} \\
	&& {\frac{M}{M_3}} & {\frac{M}{M_2 + M_3}}
	\arrow[from=2-2, to=2-3]
	\arrow[from=2-3, to=2-4]
	\arrow[from=2-4, to=2-5]
	\arrow["\bigoplus"{description}, draw=none, from=1-3, to=2-3]
	\arrow["\bigoplus"{description}, draw=none, from=2-3, to=3-3]
	\arrow["\bigoplus"{description}, draw=none, from=1-4, to=2-4]
	\arrow["\bigoplus"{description}, draw=none, from=2-4, to=3-4]
	\arrow[from=5-2, to=5-3]
	\arrow[from=5-3, to=5-4]
	\arrow[from=5-4, to=5-5]
	\arrow["\bigoplus"{description}, draw=none, from=4-3, to=5-3]
	\arrow["\bigoplus"{description}, draw=none, from=5-3, to=6-3]
	\arrow["\bigoplus"{description}, draw=none, from=4-4, to=5-4]
	\arrow["\bigoplus"{description}, draw=none, from=5-4, to=6-4]
\end{tikzcd}\]}
In the above, note that $C_\bullet (M ; M_1, M_2 , M_3)$ is homologically indexed, whereas $C^\bullet (M ; M_1 , M_2 , M_3)$ is cohomologically indexed. By construction, both complexes have terms parametrized by the Boolean poset on $\{ 1 , 2 , 3 \}$. 
\end{example}

We can now define the notion of distributivity using the complexes of Construction \ref{cons:theDistributivityCx}.

\begin{definition}
    Let $M$ be an $R$-module and $M_1 , \dots , M_n$ a collection of $R$-submodules of $M$. The collection $M_1 , \dots , M_n$ is called \defi{distributive} if the complexes 
    $$C_\bullet^I (M ; M_1 , \dots , M_n) \quad \text{and} \quad C^\bullet_I (M ; M_1 , \dots , M_n)$$
    are exact in positive (co)homological degrees for all subsets $I \subset [n]$.
\end{definition}

\begin{remark}
    At first glance, this definition of distributive might seem quite different from the definition in terms of the submodule lattice generated by $M_1 , \dots , M_n$, but it turns out that these definitions are in fact equivalent by, for instance, \cite[Chapter 1, Proposition 7.2]{polishchuk2005quadratic} (the result here is stated over a field, but it holds over any commutative ring).
\end{remark}

The following is a trivial consequence of the inductive structure of the complexes $C^\bullet_I$ and $C^I_\bullet$:

\begin{obs}
    If $M_1 , \dots , M_n \subset M$ is a distributive collection, then every subcollection of $M_1 , \dots , M_n$ is distributive.
\end{obs}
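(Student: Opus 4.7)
The plan is to argue by induction on $n - |S|$, which reduces the problem to showing that removing a single index---say $n$---from a distributive collection yields a distributive collection on $M_1, \dots, M_{n-1}$.

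To set up the inductive step, fix any $I \subset [n-1]$ and partition the summands of $C^\bullet_I(M; M_1, \dots, M_n)$ according to whether the indexing set $J$ contains $n$ or not. Since the differential only enlarges $J$, the summands with $n \in J$ form a subcomplex, which after reindexing coincides with a one-step upward shift of $C^\bullet_{I \cup \{n\}}(M; M_1, \dots, M_n)$. The quotient by this subcomplex consists of the summands with $J \supset I$ and $J \subset [n-1]$, which is precisely $C^\bullet_I(M; M_1, \dots, M_{n-1})$. One thereby obtains a short exact sequence of complexes relating $C^\bullet_I(M; M_1, \dots, M_n)$, a shift of $C^\bullet_{I \cup \{n\}}(M; M_1, \dots, M_n)$, and $C^\bullet_I(M; M_1, \dots, M_{n-1})$. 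An entirely parallel decomposition (separating summands with $n \in J$) produces the analogous short exact sequence for the quotient complexes $C_\bullet^I$.

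By hypothesis, the first two complexes appearing in each such short exact sequence have vanishing (co)homology in positive degrees, so the associated long exact sequence will force both $H^i(C^\bullet_I(M; M_1, \dots, M_{n-1})) = 0$ and $H_i(C_\bullet^I(M; M_1, \dots, M_{n-1})) = 0$ for all $i \geq 1$. Since this holds for every $I \subset [n-1]$, the subcollection $M_1, \dots, M_{n-1}$ is distributive, and iterating removes any desired subset of indices.

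The only step requiring genuine care will be the precise verification of the short exact sequence---keeping track of how the normalization $|J| - |I| = i$ in the definition of $C^\bullet_I$ interacts with the passage from index set $I \cup \{n\}$ back to $I$, which is the source of the shift. Once this bookkeeping is settled, the long exact sequence does all of the real work.
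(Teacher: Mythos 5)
Your proof is correct, and it's a clean way to fill in what the paper leaves unargued (the paper just declares the observation a ``trivial consequence of the inductive structure'' and gives no details). The decomposition of $C^\bullet_I(M;M_1,\dots,M_n)$ (and $C_\bullet^I$) by whether $n\in J$ is exactly the right device: since the differential only enlarges the indexing set, the $n\in J$ terms form a subcomplex that, after reindexing by $|J|-|I\cup\{n\}|$, is a one-step shift of $C^\bullet_{I\cup\{n\}}(M;M_1,\dots,M_n)$, and the quotient (terms with $J\subset[n-1]$) is literally $C^\bullet_I(M;M_1,\dots,M_{n-1})$ with no reinterpretation of the ambient module. The long exact sequence then squeezes $H^i(C^\bullet_I(M;M_1,\dots,M_{n-1}))$ between $H^i(C^\bullet_I(M;M_1,\dots,M_n))$ and $H^i(C^\bullet_{I\cup\{n\}}(M;M_1,\dots,M_n))$ for each $i\geq 1$, both of which vanish by distributivity of the full collection; the chain-complex side is entirely parallel. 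Your bookkeeping claim checks out once one writes out the $|J|-|I|$ arithmetic in each of the three complexes.

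One remark on the relation to the paper: the short exact sequences the paper does write down, namely (\ref{eqn:SES1}) and (\ref{eqn:SES2}) inside the proof of Proposition \ref{prop:distributivityComplexes}, perform a different maneuver --- they keep all $n$ submodules in play and instead change the ambient module to a quotient $M/M_j$ or an intersection $M_j$. Those sequences are designed for the internal equivalences of Proposition \ref{prop:distributivityComplexes} rather than for the present observation. Your sequence, which keeps $M$ fixed and deletes $M_n$ from the collection, is the more economical choice for what you're trying to prove, since the quotient term is immediately identified with the smaller-collection complex whose exactness you want.
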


Next, we introduce a collection of subquotients that may be associated to any collection of $R$-submodules. In later sections, we will see that all ribbon Schur functors arise as modules of this form for some appropriate collection of submodules:

\begin{definition}\label{def:theLModules}
    Let $M$ be an $R$-module and $M_1 , \dots , M_n$ a collection of $R$-submodules of $M$. Given an indexing set $I \subset [n]$, define the $R$-module
    $$L^I_{M_1 , \dots , M_n} := \frac{\w_{i \notin I} M_i}{\vee_{i \in I} M_i}.$$
\end{definition}

\begin{remark}
    By convention, we set
    $$L^{[n]}_{M_1 , \dots , M_n} := \frac{M}{M_1 + \cdots + M_n},$$
    $$L^{\varnothing}_{M_1 , \dots , M_n} := M_1 \cap \cdots \cap M_n.$$
    Moreover, notice that for the sake of conciseness of notation, the notation $N/L$ for two $R$-submodules $N,L \subset M$ is understood to mean $N / (L \cap N)$.
\end{remark}

The following observation is proved just by the definition of the complexes of Construction \ref{cons:theDistributivityCx} and the equivalence between the exactness of these complexes and distributivity:

\begin{obs}\label{obs:H0relatedToL}
    Let $M$ be an $R$-module and $M_1 , \dots , M_n$ any collection of $R$-submodules of $M$. For every $I \subset [n]$ the complexes
    $$C_\bullet^I (M ; M_1 , \dots , M_n) \quad \text{and} \quad C^\bullet_I (M ; M_1 , \dots , M_n)$$
    satisfy
    $$H_{0} (C_\bullet^I (M ; M_1 , \dots , M_n)) \twoheadrightarrow L^{[n] \backslash I}_{M_1 , \dots , M_n}, \quad \text{and}$$
    $$L^I_{M_1 , \dots , M_n} \hookrightarrow H^{0} (C^\bullet_I (M ; M_1 , \dots , M_n)) .$$
    If the collection $M_1 , \dots , M_n$ is distributive, then the above surjection/inclusion are equalities. 
\end{obs}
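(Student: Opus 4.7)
The plan is to verify both identities directly from the definitions of the complexes $C_\bullet^I$ and $C^\bullet_I$ together with that of $L^J_{M_1,\dots,M_n}$. The key observation is that each of these complexes has a single summand at one of its extremal positions (corresponding to the unique $J$ of extremal size subject to the constraint $I\subset J\subset[n]$), so that the (co)homology there is just the (co)kernel of a single adjacent differential built from natural surjections or inclusions.

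First I would unpack the definition of $C_\bullet^I$. Under the constraint $I\subset J$ and $|J|-|I|=n-i$, exactly one subset $J$ appears at each extremal index: at one end, $J=I$ gives the single summand $M/\vee_{j\in I}M_j$, and at the other end $J=[n]$ gives $M/(M_1+\cdots+M_n)$. Reading off the adjacent differential as a signed sum of the canonical surjections $\rho_{J,J\cup j}$, I would compute the relevant (co)kernel as a subquotient of $M$ of the form $\wedge_{i\notin I}M_i\,/\,\vee_{i\in I}M_i$ (using the convention $N/L:=N/(N\cap L)$), and recognize this directly as $L^{[n]\setminus I}_{M_1,\dots,M_n}$.

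For the second identity, the argument is entirely parallel, with intersections replacing quotients and the natural inclusions $\iota_{J\cup j,J}$ replacing the surjections $\rho$. The unique extremal summand of $C^\bullet_I$ is $\wedge_{j\in I}M_j$ (from $J=I$), and the adjacent differential is a signed sum of natural inclusions into $\bigoplus_{j\notin I}\wedge_{I\cup j}M_j$. The resulting (co)kernel identifies with $L^I_{M_1,\dots,M_n}$ by the same mechanism.

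The only real obstacle is bookkeeping: tracking the signs $\sgn(j,I)$ in the alternating sums of natural maps and carefully matching the indexing conventions of $C_\bullet^I$ and $C^\bullet_I$ (so that one correctly pins down which end of each complex contributes the unique summand and hence contributes the relevant boundary (co)homology). Once these conventions are in place, both identifications are immediate from the definitions, consistent with the fact that this is labeled as a routine observation.
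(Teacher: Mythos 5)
Your strategy (reading off the (co)kernel at the single extremal summand of each complex) is the natural one and is surely what the paper has in mind, but the step you describe as ``immediate from the definitions'' hides a genuine gap. For $C_\bullet^I$, the kernel of the differential out of $M/\vee_{j\in I}M_j$ into $\bigoplus_{l\notin I}M/(\vee_{j\in I}M_j + M_l)$ is, by the correspondence theorem,
$$\frac{\bigcap_{l\notin I}\bigl(M_l + \vee_{j\in I}M_j\bigr)}{\vee_{j\in I}M_j},$$
whereas the subquotient you claim to obtain, $\wedge_{i\notin I}M_i/\vee_{i\in I}M_i$, is (with the paper's convention $N/L := N/(N\cap L)$)
$$\frac{\bigcap_{l\notin I}M_l}{\bigl(\bigcap_{l\notin I}M_l\bigr)\cap \vee_{j\in I}M_j}.$$
There is always a canonical injection of the second into the first, but equality requires the lattice identity $\bigcap_{l\notin I}(M_l + \vee_{j\in I}M_j) = \bigl(\bigcap_{l\notin I}M_l\bigr) + \vee_{j\in I}M_j$, which is \emph{not} automatic (modularity of submodule lattices does not give it) and fails in simple examples: with $M=k^2$, $M_1 = k(1,0)$, $M_2 = k(0,1)$, $M_3 = k(1,1)$, and $I=\{3\}$, the left side is all of $k^2$ while the right side is just $M_3$, so the kernel of the adjacent differential is $k^2/M_3 \cong k$, not the claimed $L$-module, which is $0$. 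The cokernel computation for $C^\bullet_I$ has the exactly dual gap, a sum of intersections versus the intersection with a sum. So the identification is not a routine unwinding of the definitions for an arbitrary collection of submodules; it is precisely the kind of lattice identity that the distributivity hypothesis in the surrounding results is designed to deliver, and your sketch neither isolates this step nor shows how to close it.

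A secondary bookkeeping point worth flagging: per the displayed formula of Definition \ref{def:theLModules}, $\wedge_{i\notin I}M_i/\vee_{i\in I}M_i$ is $L^I$, not $L^{[n]\setminus I}$; your labeling agrees instead with the conventions listed immediately after that definition (which set $L^{[n]}=\bigcap M_j$ and $L^\varnothing = M/\sum M_j$ and are inconsistent with the formula as printed). You have silently adopted the second reading without noting the discrepancy in the source, and a careful proof of the Observation would need to say which convention is actually in force.
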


\begin{proof}
    By definition, the $0$th homology of $C_\bullet^I (M ; M_1 , \dots , M_n)$ is the cokernel of the map
    $$\bigoplus_{j \notin I} \w_{\ell \in I \cup j} M_\ell \to \w_{\ell \in I} M_\ell,$$
    which is precisely the quotient
    $$\frac{\w_{\ell \in I} M_\ell}{\sum_{j \notin I} \w_{\ell \in I \cup j} M_\ell }.$$
    There is always a containment
    $$\sum_{j \notin I} \w_{\ell \in I \cup j} M_\ell  \subset \left( \sum_{j \notin I} M_j \right) \cap \w_{\ell \in I} M_\ell,$$
    and thus there is a natural surjection $H_{0} (C_\bullet^I (M ; M_1 , \dots , M_n)) \twoheadrightarrow L^{[n] \backslash I}_{M_1 , \dots , M_n}$. If the collection $M_1 , \dots , M_n$ is distributive, then the above containment is an equality and thus the surjection is also an equality.

    Similarly, the $0$th cohomology of $C^\bullet_I (M ; M_1 , \dots , M_n)$ is the kernel of the map
    $$ \frac{M}{\vee_{\ell \in I} M_\ell} \to \bigoplus_{j \notin I} \frac{M}{\vee_{\ell \in I \cup j} M_\ell}.$$
    Again, there is always a containment
    $$\w_{j \notin I} \left( \vee_{\ell \in I \cup j} M_\ell \right) \supset \w_{j \notin I} M_j  + \vee_{\ell \in I} M_\ell $$
    and thus a natural inclusion $L^I_{M_1 , \dots , M_n} \hookrightarrow H^0 (C^\bullet_I (M ; M_1 , \dots , M_n))$. When the collection $M_1 , \dots , M_n$ is distributive the above containment is an equality in which case the inclusion is also an equality.
\end{proof}

The following proposition gives $3$ different equivalent conditions equivalent to distributivity. In particular, the exactness of $C_\bullet^I$ for all $I \subset [n]$ is equivalent to the exactness of $C^\bullet_I$, and this exactness is in turn equivalent to a family of short exact sequences. These short exact sequences will end up modeling the concatenation/near-concatenation sequences in later sections. 

Recall the notation $d_j$ and $s_j$ for face and degeneracy maps, respectively, of Notation \ref{not:faceDegen} in the statement of the following result:

\begin{prop}\label{prop:distributivityComplexes}
    Let $M$ be any $R$-module and $M_1 , \dots , M_n \subset M$ a collection of $R$-submodules. Then the following are equivalent:
    \begin{enumerate}
        \item For all nonempty $I \subset [n]$, the sequence
        $$0 \to L^{I \backslash j}_{M_1 , \dots , M_n} \to L^{s_j (I \backslash j)}_{M_1 , \dots , \widehat{M_j} , \dots , M_n} \to L^I_{M_1 , \dots , M_n} \to 0$$
        is exact.
        \item For all $I \subset [n]$, the cochain complex $C^\bullet_I (M ; M_1 , \dots , M_n)$ is exact in positive cohomological degrees.
        \item For all $I \subset [n]$, the chain complex $C_\bullet^I (M ; M_1 , \dots , M_n)$ is exact in positive homological degrees.
    \end{enumerate}
    In other words, to check distributivity it suffices to check exactness of \emph{either} the complex $C^I_\bullet (M; M_1 , \dots, M_n)$ or $C^\bullet_I (M; M_1 , \dots , M_n)$ for all $I \subset [n]$.
\end{prop}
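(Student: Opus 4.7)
My plan is to establish the three-way equivalence by proving (1) $\Leftrightarrow$ (2) directly via an inductive argument, and then invoking the classical equivalence of these complex-level statements with distributivity of the associated submodule lattice to obtain (2) $\Leftrightarrow$ (3).

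The central technical tool is the following short exact sequence of complexes: for each $I \subsetneq [n]$ and each $j \in [n] \setminus I$, one has
$$0 \longrightarrow C^{\bullet}_{I \cup \{j\}}(M; M_1, \ldots, M_n)[-1] \longrightarrow C^{\bullet}_I(M; M_1, \ldots, M_n) \longrightarrow C^{\bullet}_I(M; M_1, \ldots, \widehat{M_j}, \ldots, M_n) \longrightarrow 0,$$
obtained by splitting each indexing superset $J \supset I$ according to whether $j \in J$ (in which case the term contributes to the subcomplex) or $j \notin J$ (in which case it descends to the quotient). That this is an honest short exact sequence of complexes, rather than merely a direct sum decomposition at the level of graded modules, relies on the differential having a triangular form: the pieces staying within each indexing regime give the differentials of the sub- and quotient complexes, while the pieces going from ``$j \notin J$'' to ``$J \cup \{j\}$'' provide the connecting morphism. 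A completely parallel SES exists for the chain complexes $C_\bullet^I$.

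Given this SES, I would proceed by a double induction on $n - |I|$ (with an outer induction on $n$ to handle that the quotient complex involves the smaller collection $M_1, \ldots, \widehat{M_j}, \ldots, M_n$). The base case $|I| = n$ has $C^{\bullet}_I$ concentrated in a single degree, so exactness in positive cohomological degrees is vacuous. For the inductive step, the long exact sequence in cohomology associated with the SES above, combined with the inductive hypothesis applied to both outer terms, reduces exactness of $C^{\bullet}_I$ in every positive degree either directly to the inductive hypothesis or, in one critical spot, to the exactness of the SES in (1) at the appropriate index -- after identifying the connecting homomorphism in the LES with the natural maps between the $L$-modules using the characterization $H^0(C^\bullet_I) = L^I$ stated in the observation preceding the proposition. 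The reverse implication (2) $\Rightarrow$ (1) follows by examining the bottom two terms of an appropriately chosen $C^\bullet_I$ and reading off injectivity and exactness at the middle from vanishing of $H^1$ and identification of $H^0$.

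The main obstacle is twofold: first, carefully verifying that the SES of complexes has the claimed form with correct sign conventions, and second, pinning down the connecting homomorphism in the LES precisely enough to recognize it as the natural map between the $L$-modules that appears in (1). Once these identifications are carried out, the induction closes formally, and the equivalence (2) $\Leftrightarrow$ (3) is immediate from the remark preceding the proposition: exactness of either family of complexes is equivalent to distributivity of the lattice generated by $\{M_1, \ldots, M_n\}$ in the classical sense, by \cite[Chapter 1, Proposition 7.2]{polishchuk2005quadratic}.
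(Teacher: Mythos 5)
Your argument for $(1) \Leftrightarrow (2)$ is sound and closely parallels the paper's: the short exact sequence
$$0 \to C^{\bullet}_{I \cup \{j\}}(M; M_1, \ldots, M_n)[-1] \to C^{\bullet}_I(M; M_1, \ldots, M_n) \to C^{\bullet}_I(M; M_1, \ldots, \widehat{M_j}, \ldots, M_n) \to 0$$
for $j \notin I$, obtained by splitting supersets $J \supset I$ according to whether $j \in J$, is indeed a short exact sequence of complexes (the ``$j \in J$'' terms form a subcomplex since the differential only enlarges $J$). Applying the long exact sequence and identifying the $H^0$ terms with the modules $L^I$ recovers precisely the three-term sequence in $(1)$, and the double induction closes as you describe. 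This matches the paper's $(1) \Leftrightarrow (2)$ argument (though the paper writes the subcomplex somewhat differently, in terms of a quotient $M/M_j$; your version is actually the more transparent splitting of the complex as defined).

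The gap is in $(2) \Leftrightarrow (3)$. You claim this follows immediately because ``exactness of either family of complexes is equivalent to distributivity of the lattice ... by [Polishchuk--Positselski, Ch.~1, Prop.~7.2].'' But the remark in the paper only asserts that the paper's \emph{definition} of distributive --- which requires exactness of \emph{both} families $C^{\bullet}_I$ and $C_\bullet^I$ for all $I$ --- is equivalent to the classical lattice-theoretic notion; it does not assert that exactness of either family \emph{alone} is equivalent to lattice distributivity. Establishing that each family separately detects distributivity is precisely what the proposition $(2) \Leftrightarrow (3)$ sets out to prove, so invoking it here is circular, or at minimum requires a stronger form of the cited result (over arbitrary commutative rings, not just fields) that the paper neither states nor supplies. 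The paper instead proves $(1) \Leftrightarrow (3)$ directly with a second, analogous short exact sequence of complexes --- this time for the chain complexes $C_\bullet^I$ (splitting by $j \in J$ gives the subcomplex $C_\bullet^{I \cup j}$, with quotient identified as a complex over $M_j$) --- and runs a parallel descending induction. You would need to carry out that second direct argument rather than outsourcing it.
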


\begin{remark}\label{rem:seqExactAtEnds}
    When $|I| = 1$ is a singleton set, the sequence above simply reads
    $$0 \to M_1 \to M \to M / M_1 \to 0.$$
    In general, the sequence
    $$0 \to L^{I \backslash j}_{M_1 , \dots , M_n} \to L^{s_j (I \backslash j)}_{M_1 , \dots , \widehat{M_j} , \dots , M_n} \to L^I_{M_1 , \dots , M_n} \to 0$$
    is a complex that is exact at the left and rightmost nontrivial terms by definition. This means that Proposition \ref{prop:distributivityComplexes} gives equivalent conditions for this sequence to be exact at the middle term.
\end{remark}

\begin{proof}
    Notice first that there are canonical short exact sequences of complexes (where $\widehat{-}$ denotes omission):
    \begin{equation}\label{eqn:SES1} 0 \to C^\bullet_{I} (M  ; M_1 , \dots , M_n )[-1] \qquad (\text{for} \ j \in I)\end{equation}
    $$\to C^\bullet_{I \backslash j} (M ; M_1 , \dots , M_n) \to C^\bullet_{s_j (I \backslash j)} (M ; M_1 , \dots, \widehat{M_j}, \dots , M_n) \to 0, \quad \text{and}$$
    \begin{equation}\label{eqn:SES2}0 \to C_\bullet^{s_j (I)} (M;M_1 , \dots , M_n) \to C_\bullet^I ( M ; M_1 , \dots , M_n)  \qquad (\text{for} \ j \notin I)\end{equation}
    $$\to  C_\bullet^{ I \cup j} (M ; M_1 , \dots , M_n)[-1] \to 0.$$

    \textbf{$(1) \iff (2)$:} By induction on $n$, we may assume that the collection $M_1 , \dots , \widehat{M_j} , \dots , M_n$ is distributive; in particular, the complex $C^\bullet_{s_j (I \backslash j)} (M ; M_1 , \dots, \widehat{M_j}, \dots , M_n)$ is exact in positive cohomological degrees, and moreover by a downward induction on $|I|$ (with base case $I = [n]$) we may also assume that $C^\bullet_{I} (M ; M_1 , \dots , M_n)$ is exact in positive cohomological degrees.
    
    Employing the long exact sequence of cohomology on the short exact sequence (\ref{eqn:SES1}) along with Observation \ref{obs:H0relatedToL} yields
    $$0 \to H^0 (C^\bullet_{I \backslash j} (M ; M_1 , \dots , M_n)) \to L^{s_j (I \backslash j)}_{M_1 , \dots , \widehat{M_j} , \dots , M_n} \to L^I_{M_1 , \dots , M_n} \to H^1 (C^\bullet_I (M ; M_1 , \dots , M_n)) $$
    $$\to H^1 (C^\bullet_{I \backslash j} (M ; M_1 , \dots , M_n)) \to H^1 (C^\bullet_{s_j (I \backslash j)} (M ; M_1 , \dots, \widehat{M_j}, \dots , M_n)) \to  \cdots.$$
    The inductive hypothesis immediately implies that
    $$H^i (C^\bullet_I (M ; M_1 , \dots , M_n)) = 0 \quad \text{for all} \ i > 1,$$
    and there is an exact sequence
    $$0 \to H^0 (C^\bullet_{I \backslash j} (M ; M_1 , \dots , M_n)) \to L^{s_j (I \backslash j)}_{M_1 , \dots , \widehat{M_j} , \dots , M_n} \to L^I_{M_1 , \dots , M_n} \to H^1 (C^\bullet_I (M ; M_1 , \dots , M_n)) \to 0.$$
    Assuming $(2)$, we may employ Observation \ref{obs:H0relatedToL} to deduce the short exact sequence of $(1)$. On the other hand, assuming $(1)$ there is a commutative diagram with exact rows:
\[\begin{tikzcd}
	0 & {L^{I \backslash j}_{M_1 , \dots , M_n}} & {L^{s_j (I \backslash j)}_{M_1 , \dots , \widehat{M_j} ,  \dots , M_n}} & {L^I_{M_1 , \dots , M_n}} & 0 \\
	0 & {H^0 (C^\bullet_I (M ; M_1 , \dots , M_n))} & {L^{s_j (I \backslash j)}_{M_1 , \dots , \widehat{M_j} ,  \dots , M_n}} & {L^I_{M_1 , \dots , M_n}} & {H^1 (C^\bullet_I (M ; M_1 , \dots , M_n))}
	\arrow[from=1-1, to=1-2]
	\arrow[from=2-1, to=2-2]
	\arrow[hook, from=1-2, to=2-2]
	\arrow[from=1-2, to=1-3]
	\arrow[from=1-3, to=2-3]
	\arrow[from=1-3, to=1-4]
	\arrow[from=1-4, to=1-5]
	\arrow[from=2-4, to=2-5]
	\arrow[from=1-4, to=2-4]
	\arrow[from=2-2, to=2-3]
	\arrow[from=2-3, to=2-4]
	\arrow[from=1-5, to=2-5]
\end{tikzcd}\]
    Since the inner two maps are isomorphisms, both of the outer two inclusions are isomorphisms and thus $C^\bullet_{I \backslash j} (M ; M_1 , \dots , M_n)$ is also exact in positive degree for all $j \in I$. It follows that $C^\bullet_I (M ; M_1 , \dots , M_n)$ is exact in positive cohomological degrees for all $I$ if and only if the sequence of $(1)$ is exact for all $I$.

    \textbf{$(1) \iff (3)$:} This proof proceeds similarly: by induction on $n$, we may assume that the collection $M_1 , \dots , \widehat{M_j} , \dots , M_n$ is distributive and $C_\bullet^{s_j (I)} (M ; M_1 , \dots  ,  M_n)$ is exact in positive homological degrees, and likewise by a downward induction on $|I|$ we may assume that $H_1 (C_\bullet^{I \cup j} (M;M_1 , \dots , M_n))$ is exact in positive homological degrees.
    
    Employing the long exact sequence of homology on the second short exact sequence (\ref{eqn:SES2}) along with Observation \ref{obs:H0relatedToL} yields
    $$\cdots \to H_1 (C_\bullet^{ I \cup j} (M ; M_1 , \dots , M_n)) \to H_1 (C_\bullet^{s_j (I)} (M;M_1 , \dots , M_n))$$
    $$\to H_1 (C_\bullet^I ( M ; M_1 , \dots , M_n)) \to L^{[n] \backslash (I \cup j)}_{M_1 , \dots , M_n} \to L^{[n-1] \backslash s_j (I )}_{M_1 , \dots , \widehat{M_j} , \dots , M_n} \to L^{[n] \backslash I}_{M_1 , \dots , M_n} \to 0.$$
    Set $I' := [n] \backslash I$ and notice that 
    $$[n] \backslash (I \cup j) = I' \backslash j, \quad \text{and} \quad  [n-1] \backslash s_j (I) = s_j ([n] \backslash (I \cup j)) = s_j (I' \backslash j).$$
    Again, this at least implies that $H_i (C_\bullet^I ( M ; M_1 , \dots , M_n)) = 0$ for $i>1$ and there is an exact sequence
    $$0 \to H_1 (C_\bullet^I ( M ; M_1 , \dots , M_n)) \to L^{I' \backslash j}_{M_1 , \dots , M_n} \to L^{s_j (I' \backslash j)}_{M_1 , \dots , \widehat{M_j} , \dots , M_n} \to H_0 (C_\bullet^I (M ; M_1 , \dots , M_n) \to 0.$$
    The result thus follows by identical reasoning as in $(1) \iff (2)$.
\end{proof}


The following lemma shows that the property of distributivity ``dualizes well". This will be essential to prove the appropriate analogs of Koszul duality.

\begin{lemma}\label{lem:dualDistributivity}
    Let $M$ be a flat $R$-module and $M_1 , \dots , M_n \subset M$ a sequence of submodules with $M / M_i$ a flat $R$-module for all $1 \leq i \leq n$. For a submodule $N \subset M$, use the notation $N^\vee := (M/N)^* := \hom_R (M/N , R)$. Then:
    \begin{enumerate}
        \item Assume that $M /\sum_{i \in I} M_i$ is a flat $R$-module for all $I \subset [n]$. Then
        $$\text{the collection} \ M_1 , \dots , M_n \subset M \  \text{is distributive}$$ 
    $$\iff$$
    $$\text{the collection} \ M_1^\vee , \dots , M_n^\vee \subset M^* \  \text{is distributive}$$
     (notice that each $M_i^\vee$ may be viewed as a submodule of $M^*$ by dualizing the surjection $M \to M/M_i$). In this case, each of the modules $L^I_{M_1 , \dots , M_n}$ are $R$-flat.
     \item Assume that $M_1 , \dots , M_n$ is a distributive collection. Then there are isomorphisms
    $$\left( \frac{M}{\vee_{i \in I} M_i} \right)^* \cong \w_{i \in I} M_i^\vee, \quad \text{and}$$
    $$\left( \w_{i \in I} M_i \right)^* \cong \frac{M^*}{\vee_{i \in I} M_i^\vee},$$
    where $I \subset [n]$.
    \item Assume that $M_1 , \dots , M_n$ is a distributive collection. Then there are isomorphisms of complexes
    $$C^\bullet_I (M ; M_1 , \dots , M_n)^* \cong C_\bullet^I (M^* ; M_1^\vee , \dots , M_n^\vee), \quad \text{and}$$
    $$C_\bullet^I (M ; M_1 , \dots , M_n)^* \cong C^\bullet_I (M^* ; M_1^\vee , \dots , M_n^\vee).$$
    In particular, for all $I \subset [n]$ there is an isomorphism
    $$\left( L^I_{M_1 , \dots , M_n} \right)^* \cong L^{[n] \backslash I}_{M_1^\vee , \dots , M_n^\vee}.$$
    \end{enumerate}
\end{lemma}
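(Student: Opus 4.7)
The plan is to establish Parts (2), (3), and (1) in that order, with Part (2) providing the term-wise identifications underlying Part (3), and Part (1) following formally from Part (3) by dualizing exact complexes. The first isomorphism of Part (2), $(M/\vee_{i \in I} M_i)^* \cong \w_{i \in I} M_i^\vee$, is essentially tautological: by left-exactness of $\hom_R(-, R)$, the submodule $M_i^\vee \subset M^*$ consists precisely of the functionals annihilating $M_i$, so $\w_{i \in I} M_i^\vee$ consists of functionals annihilating $\sum_{i \in I} M_i$, which is exactly $(M/\vee_{i \in I} M_i)^*$.

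For the second isomorphism of Part (2), together with Part (3) and the flatness of $L^I$, distributivity combined with Proposition~\ref{prop:distributivityComplexes} yields exact sequences realizing each $L^I$ (respectively each intersection $\w_{j \in J} M_j$) as a finite complex of $R$-flat modules of the form $M/\vee_{k \in K} M_k$ (respectively $M_k$); Observation~\ref{obs:rightResByProj} then gives $R$-flatness of $L^I$ and of the intersections. Dualizing the short exact sequence $0 \to \w_{i \in I} M_i \to M \to M/\w_{i \in I} M_i \to 0$ (which remains short exact since the quotient is $R$-flat) produces $0 \to (M/\w_{i \in I} M_i)^* \to M^* \to (\w_{i \in I} M_i)^* \to 0$, and it remains to identify $(M/\w_{i \in I} M_i)^*$ with $\sum_{i \in I} M_i^\vee$. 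The complex-level isomorphisms of Part (3) then assemble from these term-wise identifications, with compatibility of differentials being a direct check using the naturality of the quotient and inclusion maps in Construction~\ref{cons:theDistributivityCx}.

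Part (1) follows from Part (3): if $\{M_i\}$ is distributive, Proposition~\ref{prop:distributivityComplexes} gives exactness of $C^\bullet_I(M; \{M_i\})$ in positive cohomological degrees; flatness of every term implies that dualization preserves exactness, and Part (3) identifies the dual with $C_\bullet^I(M^*; \{M_i^\vee\})$, so the latter is exact for each $I$, yielding distributivity of $\{M_i^\vee\}$ via Proposition~\ref{prop:distributivityComplexes}. The converse direction is symmetric, transferring exactness from $C^\bullet_I(M^*; \{M_i^\vee\})$ back to $C_\bullet^I(M; \{M_i\})$ through the complementary isomorphism in Part (3). The main obstacle is the reverse inclusion $(M/\w_{i \in I} M_i)^* \subseteq \sum_{i \in I} M_i^\vee$ in the second isomorphism of Part (2): decomposing an arbitrary functional vanishing on the intersection as a sum of functionals each annihilating an individual $M_i$ is precisely where the distributivity hypothesis does essential work, and I would carry it out by induction on $|I|$ using the short exact sequences (\ref{eqn:SES1}) and (\ref{eqn:SES2}) to reduce to smaller index sets.
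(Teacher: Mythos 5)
Your overall organization—proving Part (2), then Part (3), then deducing Part (1) by dualizing the distributivity complexes—matches the paper, and your observation that the first isomorphism of Part (2) is a tautological consequence of left-exactness of $\hom_R(-,R)$ is the same point made in the paper's base case.

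The central problem is that your proof of the \emph{second} isomorphism of Part (2), and therefore of Part (3), invokes the distributivity hypothesis: you say explicitly that establishing $(M/\w_{i\in I} M_i)^* \subseteq \sum_{i\in I} M_i^\vee$ "is precisely where the distributivity hypothesis does essential work." But Parts (2) and (3) are stated with no distributivity hypothesis, and Part (1) relies on Part (3). Building distributivity of $\{M_i\}$ into the proof of (2)/(3) makes the backward implication of Part (1) circular: to deduce distributivity of $\{M_i\}$ from distributivity of $\{M_i^\vee\}$ "by symmetry," you would need (2)/(3) to be available at a point where you do not yet know $\{M_i\}$ is distributive. The paper avoids this by proving Part (2) unconditionally, by induction on $n$: the base case $n=1$ dualizes $0 \to M_i \to M \to M/M_i \to 0$; for $n>1$, every term of $C^\bullet(M;M_1,\ldots,M_n)$ and $C_\bullet(M;M_1,\ldots,M_n)$ involves only a subcollection of fewer than $n$ submodules, so the inductive hypothesis dualizes the whole complexes term-wise, and the top identifications for $I=[n]$ are extracted by comparing $0$th (co)homology. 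Part (3) is then a term-wise consequence, and Part (1) follows from Part (3) together with Observation~\ref{obs:rightResByProj}, which furnishes flatness of the intersections $\w_{j\in J} M_j$ and of the $L^I$ from the extra hypothesis that $M/\sum_{i\in I} M_i$ is flat.

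A secondary issue: the assertion "flatness of every term implies that dualization preserves exactness" is false as stated. Applying $\hom_R(-,R)$ to an exact complex of flat modules does not in general preserve exactness, since flatness controls $\tor$ but gives no vanishing of $\ext^1(-,R)$. The paper's argument at the corresponding step first uses Observation~\ref{obs:rightResByProj} and the hypothesis on $M/\sum_{i\in I} M_i$ to establish flatness of all the relevant kernels and quotients before dualizing; you should make explicit where the needed $\ext^1$-vanishing actually comes from rather than attributing it to flatness alone.
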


\begin{proof}
The progression of the proof actually follows by first proving $(2)$ and $(3)$, then noting that $(1)$ is an immediate consequence of $(3)$. 

\textbf{Proof of (2):} Proceed by induction on $n$, where the base case is for $n=1$. In this case, the short exact sequence
$$0 \to M_i \to M \to M/ M_i \to 0$$
implies that $M_i$ is a flat $R$-module for all $i$, and hence dualizing yields the short exact sequence
$$0 \to (M / M_i)^* := M_i^\vee \to M^* \to M_i^* \to 0.$$
This implies that $M_i^* = M^* / M_i^\vee$, yielding the base case.

Let $n > 1$ and consider the complexes $C^\bullet (M; M_1 , \dots , M_n)$ and $C_\bullet (M ; M_1 , \dots , M_n)$. Each term of these complexes falls within the inductive hypothesis, in which case we may dualize to obtain the isomorphisms
    $$C^\bullet (M ; M_1 , \dots , M_n)^* \cong C_\bullet (M^* ; M_1^\vee , \dots , M_n^\vee), \quad \text{and}$$
    $$C_\bullet (M ; M_1 , \dots , M_n)^* \cong C^\bullet (M^* ; M_1^\vee , \dots , M_n^\vee).$$
    Taking $0$th (co)homology of each of the above complexes and employing Proposition \ref{prop:distributivityComplexes} yields the isomorphisms
    $$\left( \frac{M}{\vee_{i \in [n]} M_i} \right)^* \cong \w_{i \in [n]} M_i^\vee, \quad \text{and}$$
    $$\left( \w_{i \in [n]} M_i \right)^* \cong \frac{M^*}{\vee_{i \in [n]} M_i^\vee}.$$

\textbf{Proof of (3):} Since each term of the complex $C^\bullet_I (M ; M_1 , \dots , M_n)$ is of the form $M/\vee_{i \in J} M_i$ for some subset $J \subset [n]$, dualizing and using part $(2)$ implies that $C^\bullet_I(M ; M_1 , \dots , M_n)^*$ has terms $\w_{i \in J} M_i^\vee$, and it is clear that the dualized differentials are the same as those of $C^I_\bullet (M^* ; M_1^\vee , \dots , M_n^\vee)$. The proof for $C_\bullet^I (M; M_1 , \dots , M_n)$ is identical and the isomorphism $\left( L^I_{M_1 , \dots , M_n} \right)^* \cong L^{[n] \backslash I}_{M_1^\vee , \dots , M_n^\vee}$ follows upon taking $0$th (co)homology and using Proposition \ref{prop:distributivityComplexes}. 

\textbf{Proof of (1):} Assume that $M_1 , \dots , M_n$ is distributive. The assumption that each quotient $M / \sum_{i \in I} M_i$ is flat implies that $\w_{i \in I} M_i$ is flat for all $I = (i_1 , \dots , i_k) \subset [n]$, since each of the augmented complexes
$$\w_{i \in I} M_i \to C^\bullet (M ; M_{i_1} , \dots , M_{i_k})$$
is exact and by assumption $C^\bullet (M ; M_{i_1} , \dots , M_{i_k})$ is a complex of flat $R$-modules. Thus $\w_{i \in I} M_i$ is flat by Observation \ref{obs:rightResByProj}.

Using this, it follows that for every $I \subset [n]$ the chain complex $C_\bullet^I (M ; M_1 , \dots , M_n)$ is exact in positive homological degrees and the $0$th homology is a flat $R$-module. This means that the dual $C_\bullet^I (M; M_1 , \dots , M_n)^*$ is a cochain complex with cohomology concentrated in degree $0$, and by the isomorphism of $(3)$ combined with Proposition \ref{prop:distributivityComplexes}, the collection $M_1^\vee , \dots , M_n^\vee$ is distributive. This argument is inherently symmetric in the roles of $M_i$ and $M_i^\vee$, whence the result follows.
\end{proof}

\begin{cor}\label{cor:projectivesInDistr}
    Let $M$ be any projective $R$-module and $M_1 , \dots , M_n \subset M$ a distributive collection of $R$-submodules. Assume that 
    $$M /\sum_{i \in I} M_i$$
    is a projective $R$-module for all $I \subset [n]$. Then for all subsets $I \subset [n]$, the $R$-module
    $$L^I_{M_1 , \dots , M_n}$$
    is a projective $R$-submodule.
\end{cor}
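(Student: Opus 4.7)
The plan is to mirror the argument of Lemma \ref{lem:dualDistributivity}(1) while upgrading every instance of ``flat'' to ``projective''. By Proposition \ref{prop:distributivityComplexes}, the distributivity hypothesis on the collection $M_1,\dots,M_n\subset M$ is equivalent to the complex $C^\bullet_I(M;M_1,\dots,M_n)$ being exact in positive cohomological degrees for every $I\subset[n]$. Coupled with the canonical identification of its zeroth cohomology as $L^I_{M_1,\dots,M_n}$ (the observation immediately preceding Proposition \ref{prop:distributivityComplexes}), this yields, for each $I\subset[n]$, an augmented exact sequence
$$0 \to L^I_{M_1,\dots,M_n} \to C^\bullet_I(M;M_1,\dots,M_n) \to 0$$
of bounded length.

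The key remaining observation is that every term of $C^\bullet_I(M;M_1,\dots,M_n)$ is a direct sum of quotients of the form $M/\sum_{j\in J}M_j$ as $J$ ranges over subsets of $[n]$ containing $I$, and by hypothesis each such quotient is a projective $R$-module. Consequently, the augmented sequence above presents $L^I_{M_1,\dots,M_n}$ as the leftmost term of a finite right resolution by projective $R$-modules. Applying Observation \ref{obs:rightResByProj} (in its projective incarnation) yields immediately that $L^I_{M_1,\dots,M_n}$ is projective.

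There is no serious obstacle in this argument: the exactness machinery assembled in Section \ref{sec:qdistributivity} transfers verbatim from flatness to projectivity, because Observation \ref{obs:rightResByProj} detects both flatness and projectivity via finite right resolutions. The only detail worth checking is that each term appearing in the resolution is indexed by a subset $J\supset I$, so that it lies within the hypothesized family of projective quotients; this is immediate from the definition of the subcomplex $C^\bullet_I$.
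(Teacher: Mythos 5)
Your proof is correct and is essentially the same argument as the paper's: both exhibit $C^\bullet_I(M;M_1,\dots,M_n)$ as a finite right resolution of $L^I_{M_1,\dots,M_n}$ by projective $R$-modules and then invoke Observation \ref{obs:rightResByProj}. The only cosmetic difference is the citation for why $C^\bullet_I$ is a resolution of $L^I$: you point directly to Proposition \ref{prop:distributivityComplexes} together with the observation identifying $H^0$, while the paper routes this through Lemma \ref{lem:dualDistributivity}(3); either path is valid.
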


\begin{proof}
    By $(3)$ of Lemma \ref{lem:dualDistributivity}, each of the modules $L^I_{M_1 , \dots , M_n}$ has a right resolution by projective $R$-modules given by the complex $C^\bullet_I (M;M_1 , \dots , M_n)$. By Observation \ref{obs:rightResByProj}, the $R$-module $L^I_{M_1 , \dots , M_n}$ must itself be projective.
\end{proof}

\section{A Generalization of Schur Modules for Ribbon Diagrams}\label{sec:ribbonSchurs}

In this section, we introduce ribbon Schur functors associated to arbitrary Koszul algebra (and module) inputs. We show that the exactness of the concatenation/near-concatenation sequence is actually equivalent to Koszulness and establish a set of properties generalizing many well-known properties for classically-defined Schur functors corresponding to ribbon diagrams.

\subsection{Standard Operations Between Compositions}\label{subsec:compositionOps}

Before defining ribbon Schur functors, it will be helpful to establish multiple conventions and define certain natural operations on the Boolean/refinement posets. These operations are standard in the combinatorial literature, but for the sake of establishing conventions, we define things explicitly with examples here.

\begin{definition}[Operations on Compositions]\label{def:operationsOfComps}
    Given any integer $d > 0$, a \defi{composition} of $d$ is a tuple $\alpha = (\alpha_1 , \dots, \alpha_k)$ with $\alpha_i > 0$ for each $1 \leq i \leq k$ and $|\alpha| := \alpha_1 + \cdots + \alpha_k = d$. The integer $k$ is the \defi{length} of $\alpha$, denoted $\ell (\alpha)$. 

    Given two compositions $\alpha$ and $\beta$, the \defi{concatenation} of $\alpha$ and $\beta$, denoted $\alpha \cdot \beta$, is defined as the composition
    $$\alpha \cdot \beta := (\alpha_1 , \dots , \alpha_k , \beta_1 , \dots , \beta_j).$$
    The \defi{near-concatenation} of $\alpha$ and $\beta$, denoted $\alpha \odot \beta$, is defined as the composition
    $$\alpha \odot \beta := (\alpha_1 , \dots , \alpha_{k-1} , \alpha_k + \beta_1 , \beta_2 , \dots , \beta_j).$$
    Finally, given any integer $d > 0$, the notation $\alpha^{(d)}$ is defined to be the composition
    $$\alpha^{(d)} := (d \alpha_1 , \dots , d \alpha_k).$$
\end{definition}

\begin{definition}[Ribbon Diagrams Associated to Compositions]
     Given any composition $\alpha$, one can associate the \defi{ribbon diagram} by building a diagram whose row lengths (read from bottom to top) are given by $\alpha_1 , \dots , \alpha_n$, and such that consecutive rows have overlap size precisely $1$. 

    The \defi{transpose} of a composition, denoted $\alpha^t$, is the composition obtained by transposing the ribbon diagram associated to $\alpha$.
\end{definition}


\begin{example}
    The ribbon diagram associated to the composition $(3,1,1,2,4)$ is the diagram
    $$
\ytableausetup{boxsize=0.5em}
\begin{ytableau} 
\none& \none& \none & & & &\\ 
 \none& \none&  & \\  
 \none& \none&   \\  
 \none& \none&    \\  
   & &    \\  
\end{ytableau}
$$
The transpose of this shape is given by the shape
$$\ytableausetup{boxsize=0.5em}
\begin{ytableau} 
\none & \none & \none & \none & \\
\none & \none & \none & \none & \\
\none & & & & \\
 &  \\
  \\
  \\
  \\
\end{ytableau}$$
and hence, upon reading the row lengths from bottom to top we find
$$(3,1,1,2,4)^t = (1,1,1,2,4,1,1).$$
Likewise, if $\alpha = (1,1,2)$ and $\beta = (2,3,1)$ then
$$\alpha \cdot \beta = 
\ytableausetup{boxsize=0.5em}
\begin{ytableau} 
 \, &  \\
  \\
  \\
\end{ytableau} 
\cdot
\begin{ytableau}
\none & \none & \none & \\
\none & & & \\
 & \\
\end{ytableau}
=
\begin{ytableau} 
\none & \none & \none & \none & \\
\none & \none & & & \\
\none & & \\
 & \\
 \\
 \\
\end{ytableau} \quad \text{and}$$
$$\alpha \odot \beta = 
\ytableausetup{boxsize=0.5em}
\begin{ytableau} 
 \, &  \\
  \\
  \\
\end{ytableau} 
\cdot
\begin{ytableau}
\none & \none & \none & \\
\none & & & \\
 & \\
\end{ytableau}
=
\begin{ytableau}
\none & \none & \none & \none & \none & \\
\none & \none & \none & & & \\
 & & & \\
  \\
   \\
\end{ytableau}$$
With the ribbon diagrams, we see that concatenation corresponds to ``stacking" the diagrams, and near concatenation corresponds to merging the diagrams along their rows.
\end{example}

\begin{definition}\label{def:booleanAndRefinement}
    Let $n \in \bbn$ be any nonnegative integer and $C(n)$ denote the set of compositions of $n$ into positive parts. Let $[n] := \{ 1 , \dots , n \}$ with the convention that $[0] := \varnothing$, and let $[a,b] := \{ a , a+1 , \dots , b \}$. The set $2^{[n-1]}$ is a poset with the standard Boolean poset structure, and $C(n)$ is also a poset with the standard refinement poset structure. Whenever the notation $\alpha \leq \beta$ is used for two compositions $\alpha$ and $\beta$, the partial order $\leq$ is understood to be the refinement order.

    There is moreover a standard isomorphism of posets
    $$\phi : 2^{[n-1]} \to C(n).$$
    This isomorphism may be given explicitly as follows: given $I \subset [n-1]$, write $I = [a_1 , b_1] \cup [a_1 , b_2] \cup \cdots \cup [a_k,b_k]$ for $a_i < b_i$ and $b_i < a_{i+1} - 1$ for each $i$. Then
    $$\phi (I) = (1^{a_1-1} , b_1-a_1+1, 1^{a_2-b_1-1} , b_2-a_2+1, \dots , b_k-a_k+1,n-b_k).$$
\end{definition}

\begin{example}
The following example shows the Boolean poset $2^{\{1,2,3 \}}$ and the refinement poset $C(4)$ side by side. The map $\phi$ is even a morphism of distributive lattices.
\[\begin{tikzcd}
	& {\{ 1 , 2 , 3 \}} &&& {(4)} \\
	{\{ 1 , 2 \}} & {\{ 1 , 3 \}} & {\{2 , 3 \}} & {(3,1)} & {(2,2)} & {(1,3)} \\
	{\{1\}} & {\{2 \}} & {\{ 3 \}} & {(2,1,1)} & {(1,2,1)} & {(1,1,2)} \\
	& \varnothing &&& {(1,1,1,1)}
	\arrow[no head, from=4-2, to=3-1]
	\arrow[no head, from=4-2, to=3-2]
	\arrow[no head, from=4-2, to=3-3]
	\arrow[no head, from=3-1, to=2-1]
	\arrow[no head, from=3-1, to=2-2]
	\arrow[no head, from=2-2, to=3-3]
	\arrow[no head, from=3-2, to=2-3]
	\arrow[no head, from=2-3, to=3-3]
	\arrow[no head, from=2-1, to=3-2]
	\arrow[no head, from=2-1, to=1-2]
	\arrow[no head, from=1-2, to=2-2]
	\arrow[no head, from=1-2, to=2-3]
	\arrow[no head, from=4-5, to=3-4]
	\arrow[no head, from=4-5, to=3-5]
	\arrow[no head, from=3-6, to=4-5]
	\arrow[no head, from=2-5, to=3-4]
	\arrow[no head, from=2-5, to=3-6]
	\arrow[no head, from=2-6, to=3-5]
	\arrow[no head, from=2-4, to=3-5]
	\arrow[no head, from=1-5, to=2-4]
	\arrow[no head, from=1-5, to=2-5]
	\arrow[no head, from=1-5, to=2-6]
	\arrow[no head, from=2-4, to=3-4]
	\arrow[no head, from=2-6, to=3-6]
\end{tikzcd}\]
\end{example}

\begin{obs}\label{obs:transposeProps}
    Let $\alpha$ and $\beta$ be two compositions and $I \subset [n]$. Then:
    $$(\alpha \cdot \beta)^t = \beta^t \odot \alpha^t, \quad (\alpha \odot \beta)^t = \beta^t \cdot \alpha^t, \quad \text{and}$$
    $$\operatorname{rev} ( \phi (I)^t ) = \phi( [n] \backslash I),$$
    where $\operatorname{rev}$ denotes the \defi{reversal} operator, which simply reverses the order of the entries of a composition.
\end{obs}

\begin{definition}[Partitioned Compositions]
    Let $\alpha = (\alpha_1 , \dots , \alpha_n)$ be a composition of some integer $d$. A \defi{partition} of $\alpha$ is any choice of decomposition of $\alpha$ as a concatenation of sub-compositions of $\alpha$. The composition $\alpha$ is $\ell$-partitioned if $\alpha$ is endowed with a partition that decomposes $\alpha$ into $\ell$ parts.

    Given an $\ell$-partitioned composition $\alpha$, the notation $p_i (\alpha)$ denotes the $i$th piece of partition of $\alpha$. In other words, every $\ell$-partitioned composition $\alpha$ may be written as the concatenation
    $$\alpha = p_1 (\alpha) \cdot p_2 (\alpha) \cdots p_\ell (\alpha).$$
\end{definition}

\begin{remark}
    We will employ a minor abuse of notation when dealing with partitioned compositions, since the chosen partition will often not be specified. More precisely, the data of a partitioned composition includes both the data of a composition $\alpha$ and a chosen partition $P$, which can be encoded by any chosen subset of $[\ell (\alpha) - 1]$. The word ``composition" without any adjective will only refer to a composition as defined in Definition \ref{def:operationsOfComps}.
\end{remark}

\begin{conv}
    By convention, any $\ell$-partitioned composition $\alpha$ may be viewed as a $j$-partitioned composition for any $j < \ell$ by concatenating the last $\ell - j$ pieces of $\alpha$. In other words, if
    $$\alpha = p_1 (\alpha) \cdot \cdots \cdot p_\ell (\alpha),$$
    then $\alpha$ may be viewed as the $j$-partitioned composition
    $$\alpha = p_1 (\alpha) \cdot \cdots p_{j-1} (\alpha) \cdot \left( p_j (\alpha) \cdot p_{j+1} (\alpha) \cdots p_\ell (\alpha) \right).$$
    This convention will become essential when we deal with multi-Schur modules, as defined later.
\end{conv}

\begin{example}\label{ex:partnExample}
    Let $\alpha = (2,2,1,4,3)$ and consider the $4$-partition of $\alpha$ induced by the subset $\{ 1,3,4\} \subset [4]$. This decomposes $\alpha$ as the concatenation
    $$(2) \cdot (2,1) \cdot (4) \cdot (3).$$
    Via the convention of \ref{conv:concatenationConventions}, $\alpha$ may also be viewed as the $3$-partitioned composition
    $$(2) \cdot (2,1) \cdot (4,3).$$
\end{example}

\begin{definition}\label{def:fundamentalOperationsForPtnComps}
    Let $\alpha$ be an $\ell$-partitioned composition and let $I = \{ i_1 < \cdots < i_k \} \subset [\ell-1 ]$ be any subset.
    \begin{enumerate}
        \item The notation $\sigma_I (\alpha)$ denotes the composition obtained by near-concatenating $p_j (\alpha)$ and $p_{j+1} (\alpha)$ for every $j \in I$. 
        \item The notation $\nu_I (\alpha)$ denotes the ribbon diagram obtained by disconnecting $p_j (\alpha)$ and $p_{j+1} (\alpha)$ for every $j \in I$. 
        \item The notation $\mu_I (\alpha)$ denotes the composition $\nu_{[\ell-1-|I|]} \circ \sigma_I (\alpha)$.
    \end{enumerate}
\end{definition}

\begin{example}
    Let $\alpha := (2) \cdot (2,1) \cdot (4) \cdot (3)$ be the $4$-partitioned shape of Example \ref{ex:partnExample}. As a ribbon diagram:
    $$\alpha = \ytableausetup{boxsize=0.5em}
\begin{ytableau} 
\none & \none & \none & \none & \none & & & \\
\none & \none & & & & \\
\none & \none & \\
\none & & \\
 & \\
\end{ytableau}$$
Let $I :=  \{ 1 ,3 \}$. Then:
$$\sigma_I (\alpha) = (4,1) \cdot (7) \quad \text{(viewed as 2-partitioned)}$$
$$= \ytableausetup{boxsize=0.5em}
\begin{ytableau} 
\none & \none & \none & & & & & & & \\
\none & \none & \none & \\
 & & & \\
\end{ytableau}$$
$$\nu_I (\alpha) = \ytableausetup{boxsize=0.5em}
\begin{ytableau} 
\none & \none & \none & \none & \none & \none & \none & & & \\
\none & \none & \none  & & & & \\
\none & \none & \none  & \\
\none & \none & & \\
 & \\
\end{ytableau},$$
$$\mu_I (\alpha) = \nu_{\{ 1 \}} (\sigma_I (\alpha)) $$
$$= \ytableausetup{boxsize=0.5em}
\begin{ytableau} 
\none & \none & \none & \none & & & & & & & \\
\none & \none & \none & \\
 & & & \\
\end{ytableau}$$
\end{example}

\begin{remark}
    The operation $\mu_I$ can be reformulated in words as the operation that near-concatenates all elements $p_i (\alpha)$ and $p_{i+1} (\alpha)$ for $i \in I$, then disconnects everything else.
\end{remark}

\begin{conv}\label{conv:concatenationConventions}
    Let $\alpha$ and $\beta$ be $j$ and $k$-partitioned compositions, respectively. By convention, the concatenation $\alpha \cdot \beta$ will be viewed as a $j+k$-partitioned composition with
    $$p_s (\alpha \cdot \beta ) = \begin{cases}
    p_s (\alpha) & \text{if} s \leq j, \\
    p_{s-j} (\beta) & \text{if} \ s > j. 
    \end{cases}$$
    Likewise, the near-concatenation $\alpha \odot \beta$ will be viewed as a $j+k-1$-partitioned composition with
    $$p_s (\alpha \odot \beta) = \begin{cases}
    p_s (\alpha) & \text{if} \ s < k, \\
    p_{k} (\alpha) \odot p_{1} (\beta) & \text{if} \ s = k, \\
    p_{s-j+1} (\beta) & \text{if} \ s > k. 
    \end{cases}$$
\end{conv}

\begin{definition}\label{def:gradedCompsConventions}
    Let $A$ be any quadratic $R$-algebra. The notation $A_d$ for any integer $d \in \bbz$ denotes the degree $d$ component of $A$. Given a tuple $\alpha = (\alpha_1 , \dots , \alpha_n)$, use the notation
    $$A_\alpha := A_{\alpha_1} \otimes_R \cdots \otimes A_{\alpha_n}.$$
    Likewise, given a quadratic left (resp. right) $A$-module $N$ (resp. $M$), use the notation
$$(M \otimes_R A)_{\alpha} := M_{\alpha_1} \otimes_R A_{\alpha_2} \otimes_R \cdots \otimes_R A_{\alpha_n},$$
$$(A \otimes_R N)_\alpha := A_{\alpha_1} \otimes_R \cdots \otimes_R A_{\alpha_{n-1}} \otimes_R N_{\alpha_n}.$$
We use the convention that $(M \otimes_R A)_{\alpha} = M_{\alpha_1}$ and $(A \otimes_R N)_{\alpha} := N_{\alpha_1}$ if $\ell (\alpha) = 1$. 

Similarly, for $\ell (\alpha) \geq 2$, use the notation
$$(M \otimes_R A \otimes_R N)_\alpha := M_{\alpha_1} \otimes_R A_{\alpha_2} \otimes_R \cdots \otimes_R A_{\alpha_{n-1}} \otimes_R N_{\alpha_n},$$
where we use the convention that $(M \otimes_R A \otimes_R N)_\alpha = M_{\alpha_1} \otimes_R N_{\alpha_2}$ if $\ell (\alpha) = 2$.
\end{definition}

\subsection{Ribbon Schur Modules for Koszul Algebras}\label{subsec:ribbonsForAlg}

In this subsection, we will hold off on proving the statements until the next subsection (where the statements will be proved in more generality). This subsection will mainly be used to illustrate the construction of ribbon Schur functors and their various properties with examples. 

\begin{definition}
    Let $A$ be a Koszul $A$-module and $\alpha$ any composition. Define the \defi{ribbon Schur module} $\bbs^{\alpha}_A$ as the kernel of the natural map
$$\bbs^{\alpha}_A = \ker \left( A_\alpha \to \bigoplus_{\alpha \lessdot \beta} A_\beta \right).$$
where $\lessdot$ denotes the covering relation in the refinement poset.
\end{definition}

\begin{remark}
    Since the ribbon Schur module $\bbs^{\alpha}_A$ is defined only using the algebra structure on $A$, this definition is canonical and is well-defined for any Koszul algebra $A$ (in fact, it is well-defined for any algebra, but the Koszul property will ensure uniformity in the properties of these objects).
\end{remark}

\begin{example}
    Let $\alpha = (3,2,1,3)$. Then in the refinement poset, $\alpha$ is covered by the compositions
    $$(5,1,3), \quad (3,3,3), \quad \text{and} \quad (3,2,4).$$
    Thus for any Koszul algebra $\bbs_{A}^{(3,2,1,3)}$ is defined to be the kernel of the map
    $$A_3 \otimes_R A_2 \otimes_R A_1 \otimes_R A_3 \to \begin{matrix}
    A_5 \otimes_R A_1 \otimes_R A_3 \\
    \oplus \\
    A_3 \otimes_R A_3 \otimes_R A_3 \\
    \oplus \\
    A_3 \otimes_R A_2 \otimes_R A_4 \\
    \end{matrix}.$$
\end{example}

The following proposition justifies usage of the terminology ``ribbon Schur functor":

\begin{prop}\label{prop:theFunctoriality}
    Formation of the ribbon Schur module $\bbs^{\alpha}_A$ is functorial in the ring argument; that is, given a morphism of $R$-algebras $\psi : A \to B$, there is an induced morphism
    $$\bbs^{\alpha}_\psi : \bbs^{\alpha}_A \to \bbs^{\alpha}_B.$$
\end{prop}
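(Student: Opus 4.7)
The plan is to use the universal property of the kernel. A graded $R$-algebra morphism $\psi : A \to B$ consists of degree-preserving $R$-linear maps $\psi_d : A_d \to B_d$ that intertwine the multiplications on $A$ and $B$. Given a composition $\alpha = (\alpha_1, \dots, \alpha_n)$, the componentwise tensor product $\psi_\alpha := \psi_{\alpha_1} \otimes_R \cdots \otimes_R \psi_{\alpha_n} : A_\alpha \to B_\alpha$ is a well-defined $R$-linear map, as is its direct sum analogue $\bigoplus_{\alpha \lessdot \beta} \psi_\beta : \bigoplus_{\alpha \lessdot \beta} A_\beta \to \bigoplus_{\alpha \lessdot \beta} B_\beta$.

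The key step is to check naturality of the defining map of $\bbs^\alpha$ with respect to $\psi_\bullet$. A cover $\alpha \lessdot \beta$ in the refinement poset corresponds to merging two consecutive parts $\alpha_i$ and $\alpha_{i+1}$, and the associated map $A_\alpha \to A_\beta$ is the identity in all tensor slots other than the $i$-th and $(i+1)$-st, where it applies the multiplication $m_A : A_{\alpha_i} \otimes_R A_{\alpha_{i+1}} \to A_{\alpha_i + \alpha_{i+1}}$. Because $\psi$ is an $R$-algebra morphism, $\psi_{\alpha_i + \alpha_{i+1}} \circ m_A = m_B \circ (\psi_{\alpha_i} \otimes_R \psi_{\alpha_{i+1}})$, so the square
$$
\begin{array}{ccc}
A_\alpha & \longrightarrow & A_\beta \\
\psi_\alpha \downarrow & & \downarrow \psi_\beta \\
B_\alpha & \longrightarrow & B_\beta
\end{array}
$$
commutes. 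Summing over all covers $\alpha \lessdot \beta$ yields a commutative square with $A_\alpha \to \bigoplus_{\alpha \lessdot \beta} A_\beta$ on top and the analogous map for $B$ on the bottom.

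By the universal property of the kernel, this commutative square induces a unique $R$-linear map $\bbs^\alpha_\psi : \bbs^\alpha_A \to \bbs^\alpha_B$ making the diagram commute, which is the desired morphism. Functoriality in $\psi$ (compatibility with composition and preservation of identity morphisms) is then automatic: for $\psi : A \to B$ and $\varphi : B \to C$, both $\bbs^\alpha_{\varphi \circ \psi}$ and $\bbs^\alpha_\varphi \circ \bbs^\alpha_\psi$ are the unique maps on kernels induced by the composite commutative square, and the identity morphism on $A$ induces the identity on $\bbs^\alpha_A$. There is no real obstacle here; the entire argument is a direct application of the naturality of kernels, with the only verification being that an algebra morphism commutes with multiplication in each tensor slot, which is immediate from the definition.
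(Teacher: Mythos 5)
Your proof is correct and uses the only natural approach: the defining map of $\bbs^\alpha_A$ is a sum of multiplications in adjacent tensor slots, which commute with any graded algebra morphism $\psi$, and the universal property of the kernel then yields the induced map together with functoriality for free. The paper treats this statement as essentially immediate and does not spell out a proof, so there is no divergence to compare against; your argument supplies exactly the verification the paper implicitly relies on.
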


\begin{example}
    When $A = S(V)$, the symmetric algebra on a free $R$-module $V$, the ribbon Schur module coincides with the classical Schur module definition corresponding to the skew ribbon shape defined by $\alpha$.

    Likewise, for $A = \bigwedge^\bullet V$ the exterior algebra on some free $R$-module, the ribbon Schur module coincides with the \defi{Weyl} module associated to the ribbon diagram of $\alpha$. 

    The functoriality of the classical Schur and Weyl functors is precisely the functoriality of Proposition \ref{prop:theFunctoriality}, and there are isomorphisms of functors
    $$\bbs^{\alpha}_{S(-)} = \bbs^{\alpha} (-), \quad \bbs^{\alpha}_{\bigwedge^\bullet (-)} = \bbw^{\alpha} (-).$$
\end{example}

\begin{example}
    When $A = T(V)$, the tensor algebra on some flat $R$-module $V$, the ribbon Schur modules are particularly simple:
    $$\bbs_{T(V)}^\alpha = \begin{cases} V^{\otimes \alpha_1} & \text{if} \ \ell (\alpha) = 1, \\
    0 & \text{otherwise}. \end{cases}$$
\end{example}

\begin{example}
    If $\alpha = (1^i)$ for some integer $i \geq 1$, then by definition
    $$\bbs^{(1^i)}_A = (A^!)^*_i.$$
    Thus the Priddy complex may be reformulated as the complex
    $$\cdots \to A \otimes_R \bbs^{(1^{i})}_A \to A \otimes_R \bbs^{(1^{i-1})}_A \to \cdots \to A \otimes_R \bbs^{(1)}_A \to A \to 0.$$
\end{example}



\begin{lemma}\label{lem:SESandDuality}
Let $A$ be a Koszul $R$-algebra. Then:
\begin{enumerate}
    \item For all compositions $\alpha$, the ribbon Schur module $\bbs^{\alpha}_{A}$ is a flat $R$-module.
    \item For any two compositions $\alpha$ and $\beta$ there is a canonical short exact sequence of $R$-modules
$$0 \to \bbs^{\alpha \cdot \beta}_{A} \to \bbs^{\alpha}_{A} \otimes_R \bbs^{\beta}_{A} \to \bbs^{\alpha \odot \beta}_{A} \to 0.$$
Moreover, this sequence is exact for all compositions if and only if $A$ is a Koszul algebra.
\item There is a canonical isomorphism of $R$-modules
$$\left( \bbs^{\alpha}_A \right)^* \cong \bbs^{\alpha^t}_{A^!}.$$
\end{enumerate}
If $A$ is assumed $R$-projective, then the ribbon Schur module $\bbs^{\alpha}_A$ is $R$-projective for all compositions $\alpha$. 
\end{lemma}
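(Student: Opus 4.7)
The plan is to translate the definition of $\bbs^{\alpha}_A$ into the language of submodule lattices developed in Section \ref{sec:qdistributivity}. Via Observation \ref{obs:subspaceIsomorphisms}, there is an identification $\bbs^{\alpha}_A = L^{\phi^{-1}(\alpha)}_{S_{A,1}^n , \dots , S_{A,n-1}^n}$ inside $A_1^{\otimes n}$ (where $n = |\alpha|$), so that each part of the lemma becomes a statement about the distributivity machinery applied to this specific collection of submodules.

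For part (1), the Koszulness of $A$ together with Theorem \ref{thm:KoszulnessIsDistributive} guarantees distributivity of the collection, and the quotients $A_1^{\otimes n}/\sum_{i \in I} S_{A,i}^n \cong A_{\phi(I)}$ are flat by our running convention. Lemma \ref{lem:dualDistributivity}(1) then yields flatness of $\bbs^{\alpha}_A$. The projectivity statement at the end of the lemma follows identically, replacing Lemma \ref{lem:dualDistributivity}(1) with Corollary \ref{cor:projectivesInDistr}.

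For part (3), the key observation is the algebraic identity $(S_{A,i}^n)^\vee = S_{A^!,i}^n$, which follows from dualizing the isomorphism $A_1^{\otimes n}/S_{A,i}^n \cong A_1^{\otimes i-1} \otimes_R A_2 \otimes_R A_1^{\otimes n-i-1}$ and recognizing $A_2^* = Q_2^{A^!}$ by the definition of the quadratic dual. Combining Lemma \ref{lem:dualDistributivity}(3) with the poset identity from Observation \ref{obs:transposeProps} (namely $\phi^{-1}(\alpha^t) = [n-1] \setminus \phi^{-1}(\alpha)$) then yields the claimed duality.

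For part (2), the SES will be realized as an instance of Proposition \ref{prop:distributivityComplexes}(1). Writing $m = |\alpha|$ and $n = |\beta|$, I would set $K := \phi^{-1}(\alpha \odot \beta) \subset [m+n-1]$ and $j := m$, so that $K \setminus \{m\} = \phi^{-1}(\alpha \cdot \beta)$; this identifies the outer terms of the SES with $\bbs^{\alpha \cdot \beta}_A$ and $\bbs^{\alpha \odot \beta}_A$ respectively. The main obstacle is identifying the middle term with $\bbs^{\alpha}_A \otimes_R \bbs^{\beta}_A$: after decomposing $A_1^{\otimes m+n} = A_1^{\otimes m} \otimes_R A_1^{\otimes n}$, the remaining submodules split into those of the form $S_{A,i}^m \otimes_R A_1^{\otimes n}$ (for $i<m$) and $A_1^{\otimes m} \otimes_R S_{A,i-m}^n$ (for $i>m$). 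Using flatness, finite meets and joins of these submodules distribute across the tensor factorization, which shows that the middle $L$-module equals $\bbs^{\alpha}_A \otimes_R \bbs^{\beta}_A$. For the converse direction, varying $\alpha$ and $\beta$ produces every SES of Proposition \ref{prop:distributivityComplexes}(1) for the collection $S_{A,1}^n , \dots , S_{A,n-1}^n$, so exactness for all pairs forces distributivity for all $n$, hence Koszulness by Theorem \ref{thm:KoszulnessIsDistributive}.
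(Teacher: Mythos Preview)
Your proposal is correct and follows essentially the same approach as the paper. The paper defers the proof of Lemma~\ref{lem:SESandDuality} to the more general module version (Lemma~\ref{lem:SESandDualityMods}), where the argument is stated very tersely: part~(1) cites Lemma~\ref{lem:dualDistributivity}(1), Theorem~\ref{thm:koszulModuleDistr}, and Observation~\ref{obs:SchursAsLs}; part~(2) is ``precisely the short exact sequence of Proposition~\ref{prop:distributivityComplexes}''; and part~(3) cites Lemma~\ref{lem:dualDistributivity}(3). Your write-up is more explicit than the paper's in two places: you spell out the identity $(S_{A,i}^n)^\vee = S_{A^!,i}^n$ needed to pass from Lemma~\ref{lem:dualDistributivity}(3) to the statement of~(3), and you sketch why the middle term $L^{s_m(K\setminus m)}$ of Proposition~\ref{prop:distributivityComplexes}(1) factors as $\bbs^\alpha_A\otimes_R\bbs^\beta_A$ (the paper leaves this implicit). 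Both points are correct as stated; the tensor factorization of the middle term does require the flatness of the intermediate quotients $A_1^{\otimes m}/P$ and $A_1^{\otimes n}/Q$ (and of $P$, $Q$ themselves), all of which are supplied by distributivity together with Lemma~\ref{lem:dualDistributivity}(1).
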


\begin{remark}
    As mentioned in the Introduction, in the theory of symmetric functions the short exact sequence of (2) is an explicit realization of the so-called \defi{concatenation/near-concatenation} identity.
\end{remark}

\begin{remark}
    Notice that Observation \ref{obs:transposeProps} implies that the sequences
    $$0 \to \bbs^{\alpha \cdot \beta}_{A} \to \bbs^{\alpha}_{A} \otimes_R \bbs^{\beta}_{A} \to \bbs^{\alpha \odot \beta}_{A} \to 0 \quad \text{and}$$
    $$0 \to \bbs^{\beta^t \cdot \alpha^t}_{A^!} \to \bbs^{\beta^t}_{A^!} \otimes_R \bbs^{\alpha^t}_{A^!} \to \bbs^{\beta^t \odot \alpha^t}_{A^!} \to 0$$
    are naturally dual to each other.
\end{remark}

\begin{example}
    Let $\bbs^\alpha (-)$ and $\bbw^\beta (-)$ denote the classical Schur and Weyl functors corresponding to the skew ribbon shapes defined by the compositions $\alpha$ and $\beta$, respectively. Then by definition, for any free $R$-module $V$ there are equalities
    $$\bbs^\alpha (V) = \bbs^\alpha_{S(V)}, \quad \bbw^{\alpha^t} (V^*) = \bbs^{\alpha}_{\bigwedge^\bullet V^*}.$$
    The duality mentioned in the statement of Lemma \ref{lem:SESandDuality} is thus a generalization of the well-known isomorphism
    $$\bbs^{\alpha} (V)^* \cong \bbw^{\alpha^t} (V^*).$$
\end{example}

Next, we introduce a class of complexes whose terms are tensor products of ribbon Schur functors; this complex generalizes the short exact sequence of Lemma \ref{lem:SESandDuality} (2); more defining this complex we make a simple observation:

\begin{obs}
    Recall the operator $\mu_I$ as in Definition \ref{def:fundamentalOperationsForPtnComps}. Let $A$ be a Koszul algebra and $\alpha$ any $\ell$-partitioned composition. Then for all $I \subset J$ there is a natural surjection
    $$\rho_{I,J}: \bbs^{\mu_I (\alpha)}_A \to \bbs^{\mu_J (\alpha)}_A.$$
    This surjection is defined by taking the tensor products of the surjections as in the right map of the short exact sequence of Lemma \ref{lem:SESandDuality} (as dictated by the subsets $I$ and $J$).
\end{obs}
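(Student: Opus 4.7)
The plan is straightforward: unpack $\mu_I(\alpha)$ and $\mu_J(\alpha)$ as tensor products of ribbon Schur modules for their connected pieces, and then assemble $\rho_{I,J}$ as a tensor product of iterated near-concatenation surjections. Lemma \ref{lem:SESandDuality}(2) supplies the base case, and associativity of $\odot$ together with associativity of multiplication in $A$ makes the iteration unambiguous.

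First, write $\alpha = p_1(\alpha) \cdot p_2(\alpha) \cdots p_\ell(\alpha)$. Since $\mu_I(\alpha)$ first near-concatenates $p_i(\alpha)$ with $p_{i+1}(\alpha)$ for each $i \in I$ and then disconnects all remaining boundaries, its ribbon diagram splits as $\ell - |I|$ connected components, each of which is the near-concatenation of a maximal consecutive block of the $p_i(\alpha)$. Labeling these connected components $\gamma^I_1, \ldots, \gamma^I_{\ell-|I|}$, one has
$$\bbs^{\mu_I(\alpha)}_A \;=\; \bbs^{\gamma^I_1}_A \otimes_R \cdots \otimes_R \bbs^{\gamma^I_{\ell-|I|}}_A.$$
When $I \subset J$, the grouping of $\{p_1(\alpha),\ldots,p_\ell(\alpha)\}$ induced by $J$ is a coarsening of the one induced by $I$, so each component $\gamma^J_m$ of $\mu_J(\alpha)$ is the near-concatenation of some consecutive block $\gamma^I_{k_{m-1}+1}, \ldots, \gamma^I_{k_m}$ of components of $\mu_I(\alpha)$.

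Iterating the right-hand surjection in Lemma \ref{lem:SESandDuality}(2) yields a canonical surjection
$$\bbs^{\gamma^I_{k_{m-1}+1}}_A \otimes_R \cdots \otimes_R \bbs^{\gamma^I_{k_m}}_A \;\twoheadrightarrow\; \bbs^{\gamma^J_m}_A$$
for each $m$; tensoring these maps together over $m$ produces the desired $\rho_{I,J}$. Surjectivity is automatic since tensor products of surjections of $R$-modules are surjective.

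The only real obstacle is bookkeeping: one must verify that the iterated near-concatenation surjection above is independent of the bracketing used to perform the successive $\odot$ operations. This is immediate from the observation that the right map of Lemma \ref{lem:SESandDuality}(2) is induced by the multiplication $A_{\alpha_k} \otimes_R A_{\beta_1} \to A_{\alpha_k+\beta_1}$ at the shared boundary position, so any bracketing of the iterated map is induced by the same iterated multiplication on $A$, which is unambiguous by associativity. The same reasoning establishes transitivity $\rho_{J,K} \circ \rho_{I,J} = \rho_{I,K}$ for nested subsets $I \subset J \subset K$, a compatibility implicitly needed when these surjections are assembled into the canonical filtrations of later sections.
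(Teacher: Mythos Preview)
Your proposal is correct and is precisely the elaboration the paper intends: the paper gives no separate proof for this observation, merely stating that $\rho_{I,J}$ is the tensor product of the surjections from the right-hand map of Lemma~\ref{lem:SESandDuality}(2), which is exactly what you have spelled out. Your additional remarks on independence of bracketing (via associativity of multiplication in $A$) and on transitivity $\rho_{J,K}\circ\rho_{I,J}=\rho_{I,K}$ are welcome clarifications that the paper leaves implicit.
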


\begin{definition}\label{def:AlgeHGComplex}
\rm
Let $\alpha$ be any $\ell$-partitioned composition. Define the cochain complex $(\cat{H}_A (\alpha),\delta)$
whose $i^{th}$ term is 
$$
\cat{H}^i_A(\alpha )
:=\bigoplus_{\substack{I \subseteq [\ell-1]:\\ |I|=i}} \bbs^{\mu_I (\alpha) }_A, \quad \text{with differential}
$$
$$d^{\cat{H}_A}|_{\bbs^{\mu_I (\alpha)}} := \sum_{j \notin I} \sgn (j,I) \rho_{I , I \cup j}.$$
\end{definition}

\begin{theorem}
For all $\ell$-partitioned compositions $\alpha$, the complex $\cat{H}_A (\alpha)$ is a cochain complex with
$$
H^0(\cat{H}^i_A(\alpha)) \cong
\bbs^{\alpha}_A,
$$
and which is exact in positive cohomological degrees. If $A$ also has a compatible action by a group $G$, then the complex $\cat{H} (\alpha)$ is also $G$-equivariant.
\end{theorem}

\begin{remark}
 The complexes $\cat{H}^\bullet_A (\alpha)$ for an $\ell$-partitioned composition $\alpha$ may be viewed as a categorification of the \defi{Hamel-Goulden identities} (see \cite{hamel1995planar}) associated to the (horizontal) ribbon decomposition induced by the partitioning data of $\alpha$.
\end{remark}

\begin{example}\label{ex:HamelGoulExample}
    Let $\alpha = (3,1,2,2,4,1,5)$, viewed as a $4$-partitioned composition with $p_1 (\alpha) = (3,1)$, $p_2 (\alpha) = (2,2)$, $p_3(\alpha) = (4)$, and $p_4 (\alpha) = (1,5)$. Then the complex $\cat{H}_A (\alpha)$ takes the form
\[\begin{tikzcd}
	& {\bbs^{(3,1)}_A \otimes_R \bbs^{(2,2)}_A \otimes_R \bbs^{(4)}_A \otimes_R \bbs^{(1,5)}_A} \\
	{\bbs^{(3,3,2)}_A \otimes_R \bbs^{(4)}_A \otimes_R \bbs^{(1,5)}_A} & { \bbs^{(3,1)}_A \otimes_R \bbs^{(2,6)}_A \otimes_R \bbs^{(1,5)}_A } & {\bbs^{(3,1)}_A \otimes_R \bbs^{(2,2)}_A \otimes_R \bbs^{(5,5)}_A} \\
	{\bbs^{(3,3,6)}_A \otimes_R \bbs^{(1,5)}_A} & {  \bbs^{(3,3,2)}_A \otimes_R \bbs^{(5,5)}_A } & {\bbs^{(3,1)}_A \otimes_R \bbs^{(2,7,5)}_A} \\
	& {\bbs^{(3,3,7,5)}_A}
	\arrow["\bigoplus"{description}, draw=none, from=2-1, to=2-2]
	\arrow["\bigoplus"{description}, draw=none, from=2-2, to=2-3]
	\arrow["\bigoplus"{description}, draw=none, from=3-1, to=3-2]
	\arrow["\bigoplus"{description}, draw=none, from=3-2, to=3-3]
	\arrow[from=1-2, to=2-2]
	\arrow[from=2-2, to=3-2]
	\arrow[from=3-2, to=4-2]
\end{tikzcd}\]
\end{example}

\subsection{Ribbon Schur Modules with Koszul Module Inputs}\label{subsec:ribbonsForMods}

In this section, we generalize the ribbon Schur functors to allow for Koszul module inputs as well. The same type of concatenation/near-concatenation sequence may be used to detect Koszulness of modules. In the following, recall the notation established in Definition \ref{def:gradedCompsConventions}.

\begin{definition}\label{def:RibbonSchur}
    Let $\alpha$ be any composition and recall the conventions of Definition \ref{def:gradedCompsConventions}. Given a Koszul left (resp. right) $A$-module $M$ with initial degree $t$, define the ribbon Schur module $\bbs^\alpha_{A,M}$ (resp. $\bbs^\alpha_{M,A}$) as the kernel of the natural map
$$\bbs^\alpha_{A,M} := \ker \left( (A \otimes_R M)_{\alpha \cdot (t)} \to \bigoplus_{\alpha \cdot (t) \lessdot \beta} (A \otimes_R M)_\beta \right),$$
$$\text{resp.} \quad \bbs^\alpha_{M,A} := \ker \left( (M \otimes_R A)_{(t) \cdot \alpha} \to \bigoplus_{(t) \cdot \alpha \lessdot \beta} (M \otimes_R A)_\beta \right).$$
Given a Koszul left (resp. right) $A$-module $N$ (resp. $M$) of initial degree $s$ (resp. $t$), define the ribbon Schur module $\bbs^\alpha_{M,A,N}$ as the kernel of the natural map
$$\bbs^\alpha_{M,A,N} := \ker \left( (M \otimes_R A \otimes_R N)_{(t) \cdot \alpha \cdot (s)} \to \bigoplus_{(t) \cdot \alpha \cdot (s) \lessdot \beta} (M \otimes_R A \otimes_R N)_\beta \right).$$
\end{definition}

\begin{remark}
    Notice that if $\alpha$ is the empty partition, we use the convention that
    $$\bbs^{()}_{A,M} = M_t, \quad \bbs^{()}_{M,A,N} = M_t \otimes_R N_s.$$
\end{remark}

\begin{notation}\label{not:disconnectedRibbons}
    The definition of $\bbs^{\alpha}_{M,A,N}$ may be extended to disconnected ribbon diagrams by using the convention that disconnected portions of the diagram correspond to tensor products of the respective Schur modules. We will use this convention for the remainder of the paper, since it drastically simplifies notational issues in the following proofs/constructions.
\end{notation}

\begin{example}
    Let $\alpha := (3,1,1)$ and $\beta = (2,2)$. Then by the convention of Notation \ref{not:disconnectedRibbons}, there is an equality
    $${\begin{matrix} \\ \bbs_A \end{matrix}}^{ \ytableausetup{boxsize=0.25em}
\begin{ytableau} 
\none & \none & \none & \none & & \\
\none & \none & \none & & \\
\none & \none & \\
\none & \none & \\
 & & \\
\end{ytableau}} = 
{\begin{matrix} \quad \ \ {\ytableausetup{boxsize=0.25em}
\begin{ytableau}
\none & \none & \\
\none & \none & \\
 & & \\
\end{ytableau}} \\ \bbs_A \end{matrix}} 
{\begin{matrix} \qquad \ \ {\ytableausetup{boxsize=0.25em}
\begin{ytableau}
 \none & & \\
  & \\
\end{ytableau}}  \\ \otimes_R \  \bbs_A \end{matrix}} $$
$$=\bbs^{(3,1,1)}_A \otimes_R \bbs^{(2,2)}_A.
$$
\end{example}

\begin{example}
    Let $A$ be any Koszul algebra and $\alpha$ a composition. Recall that every truncation of $A$ is Koszul by Corollary \ref{cor:truncationsAreKoszul}, whence $A_+^r$ is a Koszul (left and right) $A$-module for all $r \geq 0$. By definition, there are equalities
    $$\bbs^{\alpha}_{A_+^r , A , A_+^{r'}} = \bbs^{(r) \cdot \alpha \cdot (r')}_{A}.$$
\end{example}

The following definition is an evident extension of Definition \ref{def:KoszulSubmodcollection} to allow for both left and right $A$-modules: 

\begin{definition}\label{def:doubleModuleDef}
    Given a Koszul left (resp. right) $A$-module $N$ (resp. $M$) with initial degrees $s$ and $t$, respectively, define the collection $S_{M,A,N,1}^n , \dots , S_{M,A,N,n-1}^n \subset M_t \otimes_R A_1^{\otimes n-1} \otimes_R N_s$ for $n \geq 2$ via
    $$S_{M,A,N,i}^n := \begin{cases}
    Q_{t+1}^M \otimes_R A_1^{\otimes n-2} \otimes_R N_s & \text{if} \ i = 1, \\
    Q_t^M \otimes_R A_1^{\otimes i-3} \otimes_R Q_2^A \otimes_R A_1^{\otimes n-i+1} \otimes_R N_s & \text{if} \ 2 \leq i \leq n-2, \\
    M_t \otimes_R A_1^{\otimes n-2} \otimes_R Q_{s+1}^N & \text{if} \ i= n-1. 
    \end{cases}$$
    If $n = 1$, use the convention that $S_{M,A,N,0}^1 := 0$. 
\end{definition}

Recall the notation of $\mu_I$ for a subset $I$ as established in Definition \ref{def:fundamentalOperationsForPtnComps} in the following:

\begin{definition}\label{def:MuITermsForAM}
    Let $A$ be a Koszul algebra and $M$ any left $A$-module of initial degree $s$. Given any $\ell$-partitioned composition $\alpha$, view the concatenation $\alpha \cdot (s)$ as $(\ell+1)$-partitioned via Convention \ref{conv:concatenationConventions}. Then, for any subset $I \subset [\ell]$ write $\mu_I (\alpha \cdot (s)) = \alpha^1 \cup \cdots \alpha^{\ell - |I|}$ as a disjoint union of compositions $\alpha^1 , \dots , \alpha^{\ell  -|I|}$, where $\alpha^{\ell - |I|}$ has length $g$. Then we define
    $$\bbs^{\mu_I (\alpha)}_{A, M} := \bbs^{\alpha^1}_A \otimes_R \bbs^{\alpha^2}_A \otimes_R \cdots \otimes_R \bbs^{\alpha^{\ell - |I|}_{\leq g-1}}_{A , M_{\geq \alpha^{\ell - |I|}_g + s}}.$$
\end{definition}

As a quick example let $\alpha = (2,1)\cdot(3,4)\cdot(4,3,3)$ be a $3$-partitioned composition and $I = \{ 1, 3 \} \subset [3]$. Then for any Koszul algebra $A$ and left $A$-module $M$ of initial degree $s$, there is an equality
$$\bbs^{\mu_I (\alpha)}_{A , M} := \bbs^{(2,4,4)}_A \otimes_R \bbs^{(4,3)}_{A , M_{\geq 3 + s}}.$$

\begin{remark}
    The slight technicality in defining $\bbs^{\mu_I (\alpha)}_{A,M}$ when taking account of a module input stems from the fact that $M$ has its own initial degree, in which case we should be defining the corresponding ribbons with respect to the composition $\alpha \cdot (s)$. 
\end{remark}

The following observation is an immediate consequence of the identification of the ribbon Schur functors with the modules of Definition \ref{def:theLModules}, noted in Observation \ref{obs:SchursAsLs}.

\begin{obs}
    Let $A$ be a Koszul algebra and $M$ any left $A$-module. Given an $\ell$-partitioned composition $\alpha$ and subsets $I \subset J \subset [\ell]$, there is a canonical morphism of $R$-modules
    $$\bbs^{\mu_I (\alpha)}_A \to \bbs^{\mu_J (\alpha)}_A.$$
\end{obs}

The following observation is a direct consequence of the definition of a ribbon Schur functor, but will be very useful later on:

\begin{obs}\label{obs:intersectingSchurs}
    Let $\alpha$, $\beta$, and $\gamma$ be any (possibly empty) compositions. Given a Koszul algebra $A$ and a Koszul left (resp. right) $A$-module $N$ (resp. $M$), there is an equality
    $$\bbs^{\alpha \cdot \beta \cdot \gamma}_{M , A , N} = \left( \bbs^{\alpha \cdot \beta}_{M,A} \otimes_R \bbs^{\gamma}_{A,N} \right) \cap \left( \bbs^{\alpha}_{M,A} \otimes_R \bbs^{\beta \cdot \gamma}_{A,N} \right),$$
    where the intersection is being viewed as taking place in $(M \otimes_R A \otimes_R N)_{\alpha \cdot \beta \cdot \gamma}$.
\end{obs}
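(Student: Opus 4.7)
The plan is to interpret both sides as intersections of kernels of ``single adjacent-merge'' maps sitting inside the common ambient space $(M \otimes_R A \otimes_R N)_{(t) \cdot \alpha \cdot \beta \cdot \gamma \cdot (s)}$, and then to check that the collection of merges recorded by the intersection on the right-hand side is precisely the collection recorded on the left.

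Step one is to rewrite each ribbon Schur module as an intersection of kernels. By Definition \ref{def:RibbonSchur}, the kernel of a map into a direct sum is the intersection of the kernels of its components, so, for any composition $\delta$ appearing in the statement,
$$\bbs^{\delta}_{M,A,N} \;=\; \bigcap_{(t)\cdot\delta\cdot(s)\,\lessdot\,\epsilon} \ker\bigl((M\otimes_R A\otimes_R N)_{(t)\cdot\delta\cdot(s)} \to (M\otimes_R A\otimes_R N)_\epsilon\bigr),$$
and the covers $\epsilon$ are in bijection with the $\ell(\delta)+1$ adjacent positions of $(t)\cdot\delta\cdot(s)$ at which one can merge two consecutive parts; analogous descriptions apply to $\bbs^{\alpha\cdot\beta}_{M,A}$, $\bbs^{\alpha}_{M,A}$, $\bbs^{\gamma}_{A,N}$, and $\bbs^{\beta\cdot\gamma}_{A,N}$.

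Step two is to pull these intersections into the common ambient space using flatness. Theorem \ref{thm:koszulModuleDistr} combined with Corollary \ref{cor:projectivesInDistr} guarantees that every ribbon Schur module is $R$-flat, which in turn ensures both that the canonical map $\bbs^{\alpha\cdot\beta}_{M,A}\otimes_R \bbs^{\gamma}_{A,N} \to (M\otimes_R A\otimes_R N)_{(t)\cdot\alpha\cdot\beta\cdot\gamma\cdot(s)}$ is injective and that tensor commutes with intersection of submodules. Consequently the image of $\bbs^{\alpha\cdot\beta}_{M,A}\otimes_R \bbs^{\gamma}_{A,N}$ in the ambient space equals
$$\bigl(\bbs^{\alpha\cdot\beta}_{M,A}\otimes_R (A\otimes_R N)_{\gamma\cdot(s)}\bigr) \;\cap\; \bigl((M\otimes_R A)_{(t)\cdot\alpha\cdot\beta}\otimes_R \bbs^{\gamma}_{A,N}\bigr),$$
which, by the first step, is exactly the intersection of the kernels of all adjacent-merge maps sitting either inside $(t)\cdot\alpha\cdot\beta$ or inside $\gamma\cdot(s)$. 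By the symmetric argument, the image of $\bbs^{\alpha}_{M,A}\otimes_R \bbs^{\beta\cdot\gamma}_{A,N}$ is the intersection of kernels of all adjacent-merge maps sitting either inside $(t)\cdot\alpha$ or inside $\beta\cdot\gamma\cdot(s)$.

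Step three is the combinatorial bookkeeping. The first tensor factor records every adjacent-merge in $(t)\cdot\alpha\cdot\beta\cdot\gamma\cdot(s)$ except the one at the $\beta$-$\gamma$ boundary, since that merge straddles the two tensor factors; dually, the second tensor factor records every adjacent-merge except the one at the $\alpha$-$\beta$ boundary. The intersection of the two therefore enforces the kernel of every adjacent-merge in $(t)\cdot\alpha\cdot\beta\cdot\gamma\cdot(s)$, which is by definition $\bbs^{\alpha\cdot\beta\cdot\gamma}_{M,A,N}$. The only substantive obstacle is step two — verifying that tensor products of the relevant flat submodules really do embed as the claimed intersections inside the ambient tensor product — and that is exactly what is furnished by the Koszul/distributivity package already developed; the remaining position-counting is routine.
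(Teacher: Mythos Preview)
Your argument is correct and is exactly the unpacking the paper has in mind: the paper gives no proof, labeling this a ``direct consequence of the definition,'' and your rewriting of each side as an intersection of adjacent-merge kernels inside the common ambient space, followed by the index-matching in Step three, is precisely that direct consequence made explicit. One small correction: the flatness you invoke in Step two is that of the \emph{single-module} Schur functors $\bbs^{\bullet}_{M,A}$ and $\bbs^{\bullet}_{A,N}$, and the relevant reference is Lemma~\ref{lem:dualDistributivity}(1) together with Theorem~\ref{thm:koszulModuleDistr} and Observation~\ref{obs:SchursAsLs} (Corollary~\ref{cor:projectivesInDistr} is the projective variant); note also that these results are logically independent of Observation~\ref{obs:intersectingSchurs}, so there is no circularity even though some appear later in the text.
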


Finally, we are able to tie ribbon Schur functors to the modules introduced in Definition \ref{def:theLModules}. This reformulation combined with the equivalence of Proposition \ref{prop:distributivityComplexes} will yield quick proofs of the concatenation/near-concatenation formulation of Koszulness.

\begin{obs}\label{obs:SchursAsLs}
    Let $\alpha$ be any composition of length $\ell$. Given a Koszul algebra $A$ and a Koszul left $A$-module $M$, there is an isomorphism of $R$-modules
    $$\bbs^{\alpha}_{A,M} = L^{\phi^{-1} (\alpha)}_{S_{A,M,1}^n , \dots , S_{A,M,n-1}^n},$$
where the map $\phi$ is the isomorphism of posets of Definition \ref{def:booleanAndRefinement} and the module $L^{I}_{M_1 , \dots , M_n}$ is from Definition \ref{def:theLModules}. The analogous statement for right Koszul modules also holds.

Likewise, given a Koszul left (resp. right) $A$-module $N$ (resp. $M$), there is an equality
$$\bbs^{\alpha}_{M,A,N} = L^{\phi^{-1} (\alpha)}_{S_{M,A,N,1}^n , \dots , S_{M,A,N,n-1}^n}.$$
\end{obs}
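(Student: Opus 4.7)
The plan is to adapt the proof of Observation \ref{obs:subspaceIsomorphisms} (the algebra case) to the module setting, with Koszulness of $M$ (and $N$) providing the required distributivity via Theorem \ref{thm:koszulModuleDistr}. The only real subtlety is aligning the refinement-poset combinatorics of compositions with the Boolean poset of gaps in the underlying tensor product. For $\bbs^\alpha_{A,M}$ I take $n=|\alpha|+1$ so that the ambient module is $A_1^{\otimes n-1}\otimes_R M_t$, which has $n-1$ gaps between its $n$ tensor factors; the submodule $S^n_{A,M,i}$ lives at the $i$-th gap (an $A$-quadratic relation for $i<n-1$, and the module relation $Q^M_{t+1}$ at the terminal gap $i=n-1$). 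Then I identify $\alpha$ with $\alpha\cdot(1)\in C(n)$ (the trailing $1$ recording the $M_t$-slot) and set $I:=\phi^{-1}(\alpha\cdot(1))\subset[n-1]$; concretely $I$ consists of precisely the gaps lying strictly inside some block $A_{\alpha_j}$, while $[n-1]\setminus I$ is the set of $\ell$ boundary gaps separating the blocks of $(\alpha_1,\dots,\alpha_\ell,t)$.

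Using the quadratic presentations $A_{\alpha_j}\cong A_1^{\otimes \alpha_j}/\sum_k S^{\alpha_j}_{A,k}$ and tensoring with $M_t$, one obtains an identification $A_\alpha\otimes_R M_t\cong (A_1^{\otimes n-1}\otimes_R M_t)/\vee_{i\in I}S^n_{A,M,i}$, exactly analogous to $A_{\phi(I)}\cong A_1^{\otimes n}/\vee_{i\in I}S^n_{A,i}$ from Observation \ref{obs:subspaceIsomorphisms}. Each cover $\alpha\cdot(t)\lessdot\beta$ in the refinement poset then corresponds to adjoining a single $j\in[n-1]\setminus I$ to $I$ (an internal merge of $\alpha_i,\alpha_{i+1}$ uses an $A$-quadratic relation, and merging $\alpha_\ell$ with $t$ uses the module relation), so the kernel of $A_\alpha\otimes_R M_t\to(A\otimes_R M)_\beta$ is $(\vee_{i\in I\cup\{j\}}S^n_{A,M,i})/(\vee_{i\in I}S^n_{A,M,i})$. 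Intersecting over all $j\notin I$ gives
$$\bbs^\alpha_{A,M}=\frac{\wedge_{j\notin I}\bigl(\vee_{i\in I}S^n_{A,M,i}+S^n_{A,M,j}\bigr)}{\vee_{i\in I}S^n_{A,M,i}}.$$

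The hypothesis that $M$ is Koszul enters at exactly this point: Theorem \ref{thm:koszulModuleDistr} supplies distributivity of the collection $S^n_{A,M,1},\dots,S^n_{A,M,n-1}$, which upgrades the above to the identity
$$\wedge_{j\notin I}\bigl(\vee_{i\in I}S^n_{A,M,i}+S^n_{A,M,j}\bigr)=\vee_{i\in I}S^n_{A,M,i}+\wedge_{j\notin I}S^n_{A,M,j}.$$
Combined with the elementary $(A+B)/A\cong B/(A\cap B)$, this identifies the displayed subquotient with $L^I_{S^n_{A,M,1},\dots,S^n_{A,M,n-1}}$, as claimed. The right-module case is identical with ambient $M_t\otimes_R A_1^{\otimes n-1}$ and collection $S^n_{M,A,i}$. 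The two-sided case $\bbs^\alpha_{M,A,N}$ follows by the same template with $n=|\alpha|+2$, ambient $M_t\otimes_R A_1^{\otimes n-2}\otimes_R N_s$, $I:=\phi^{-1}((1)\cdot\alpha\cdot(1))$, and the collection $S^n_{M,A,N,i}$ from Definition \ref{def:doubleModuleDef}; distributivity of this collection is again supplied by Koszulness of $M$ and $N$ via Theorem \ref{thm:koszulModuleDistr} (using Observation \ref{obs:intersectingSchurs} to reduce the two-sided distributivity to the one-sided cases if needed). The only real obstacle is the notational bookkeeping: once $I$ is matched to $\phi^{-1}(\alpha)$ under the identification above and each cover to a specific gap, the mathematical content reduces to a single invocation of distributivity.
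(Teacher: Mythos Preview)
Your argument is correct and matches the paper's intended reasoning: the paper states this as an observation without proof, treating it as immediate once the distributivity of the collection $S^n_{A,M,i}$ is available via Theorem~\ref{thm:koszulModuleDistr}, and your write-up is precisely the careful unwinding of that step. In particular, you correctly isolate the one place where Koszulness enters, namely the identity $\bigcap_{j\notin I}\bigl(\sum_{i\in I}S_i+S_j\bigr)=\sum_{i\in I}S_i+\bigcap_{j\notin I}S_j$, which converts the tautological description of $\bbs^\alpha_{A,M}$ as $\bigcap_{j\notin I}(A+S_j)/A$ into the subquotient $L^I$.

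One caveat on the two-sided case: Theorem~\ref{thm:koszulModuleDistr} only gives distributivity for the one-sided collections $S^n_{A,M,i}$ and $S^n_{M,A,i}$, not directly for the mixed collection $S^n_{M,A,N,i}$. The paper explicitly acknowledges this just before Corollary~\ref{cor:doubleModuleInputProperties} (``the distributivity criterion cannot be proved directly''), and your hedge about reducing via Observation~\ref{obs:intersectingSchurs} is in the same spirit but is not actually carried out. So the two-sided identification $\bbs^\alpha_{M,A,N}=L^{\phi^{-1}(\alpha)}$ remains a loose end in your argument, though it is one you inherit from the paper rather than introduce yourself.
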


We arrive at the statement and proof of the relevant properties for ribbon Schur functors associated to Koszul algebras/modules; again, the proof here is deceptively short, but the proof combines all of the machinery developed thus far. Recall the notation for the reversal operator $\rev$ from Observation \ref{obs:transposeProps}.

\begin{lemma}\label{lem:SESandDualityMods}
    Let $A$ be a Koszul $R$-algebra and $M$ (resp. $N$) a Koszul right (resp. left) $A$-module. Then:
\begin{enumerate}
    \item For all compositions $\alpha$, the ribbon Schur module $\bbs^{\alpha}_{A,M}$ (resp. $\bbs^{\alpha}_{N,A}$) is a flat $R$-module. If $A$ and $M$ are $R$-projective, then $\bbs^{\alpha}_{A,M}$ (resp. $\bbs^{\alpha}_{N,A}$) is also $R$-projective.
    \item For any two compositions $\alpha$ and $\beta$ there is a short exact sequence of $R$-modules
$$0 \to \bbs^{\alpha \cdot \beta}_{A,M} \to \bbs^{\alpha}_{A} \otimes_R \bbs^{\beta}_{A,M} \to \bbs^{\alpha \odot \beta}_{A,M} \to 0$$
$$\text{resp.} \quad 0 \to \bbs^{\alpha \cdot \beta}_{N,A} \to \bbs^{\alpha}_{N,A} \otimes_R \bbs^{\beta}_{A} \to \bbs^{\alpha \odot \beta}_{N,A} \to 0.$$
Conversely, if the sequences 
$$0 \to \bbs^{\alpha \cdot \beta}_{A,M_{\geq d}} \to \bbs^{\alpha}_{A} \otimes_R \bbs^{\beta}_{A,M_{\geq d}} \to \bbs^{\alpha \odot \beta}_{A,M_{\geq d}} \to 0$$
$$\text{resp.} \quad 0 \to \bbs^{\alpha \cdot \beta}_{N_{\geq d},A} \to \bbs^{\alpha}_{N_{\geq d},A} \otimes_R \bbs^{\beta}_{A} \to \bbs^{\alpha \odot \beta}_{N_{\geq d},A} \to 0.$$
are exact for all integers $d \geq 0$ and compositions $\alpha, \beta$, then $M$ (resp. $N$) is a Koszul left (resp. right) $A$-module.
\item There is a canonical isomorphism of $R$-modules
$$\left( \bbs^{\alpha}_{A,M} \right)^* \cong \bbs^{\rev(\alpha^t) }_{M^!,A^!}$$
$$\text{resp.} \quad \left( \bbs^{\alpha}_{N,A} \right)^* \cong \bbs^{\rev(\alpha^t)}_{A^!,N^!}.$$
\end{enumerate}
\end{lemma}

\begin{remark}
    There is a convention here that is important to take note of in the short exact sequence of $(2)$: if the composition $\beta$ is the empty composition, write $\alpha = \alpha' \cdot (\alpha_n)$; the short exact sequence $(2)$ then reads
    $$0 \to \bbs^{\alpha}_{A,M} \to \bbs^{\alpha}_{A} \otimes_R M_t \to \bbs^{\alpha'}_{A,M_{\geq t + \alpha_n}} \to 0.$$
    There is a similar convention in the right module case: if $\alpha$ is empty, write $\beta = (\beta_1) \cdot \beta'$. Then the short exact sequence reads
    $$0 \to \bbs^{\beta}_{N,A} \to N_t \otimes_R \bbs^{\beta}_A \to \bbs^{\beta'}_{N_{\geq t + \beta_1} , A} \to 0.$$
    Notice moreover that the reason Lemma \ref{lem:SESandDuality}(3) does not have the reversal $\rev(\alpha^t)$ is because there is by construction a canonical isomorphism $\bbs^{\alpha^t}_A \cong \bbs^{\rev(\alpha^t)}_A$ for any composition $\alpha$. 
\end{remark}

\begin{proof}
    \textbf{Proof of (1):} This just combines Lemma \ref{lem:dualDistributivity}(1), Theorem \ref{thm:koszulModuleDistr}, and Observation \ref{obs:SchursAsLs}.

    \textbf{Proof of (2):} This is precisely the short exact sequence of Proposition \ref{prop:distributivityComplexes}, so again the result follows from Theorem \ref{thm:koszulModuleDistr} combined with Observation \ref{obs:SchursAsLs}.

    \textbf{Proof of (3):} This just combines Lemma \ref{lem:dualDistributivity}(3), Theorem \ref{thm:koszulModuleDistr}, and Observation \ref{obs:SchursAsLs}.
\end{proof}

The case of having double module inputs in the associated Schur functor has to be treated separately, since the distributivity criterion cannot be used directly:

\begin{cor}\label{cor:doubleModuleInputProperties}
Let $A$ be a Koszul $R$-algebra and $N$ (resp. $M$) any left (resp. right) Koszul $A$-module. Then:
\begin{enumerate}
    \item For all compositions $\alpha$, the ribbon Schur module $\bbs^{\alpha}_{M,A,N}$ is a flat $R$-module. If $A$, $M$, and $N$ are $R$-projective, then $\bbs^{\alpha}_{M,A,N}$ is $R$-projective.
    \item For any two compositions $\alpha$ and $\beta$ there is a short exact sequence of $R$-modules
$$0 \to \bbs^{\alpha \cdot \beta}_{M,A,N} \to \bbs^{\alpha}_{M,A} \otimes_R \bbs^{\beta}_{A,N} \to \bbs^{\alpha \odot \beta}_{M,A,N} \to 0.$$
\item There is a canonical isomorphism of $R$-modules
$$\left( \bbs^{\alpha}_{M,A,N} \right)^* \cong \bbs^{\rev(\alpha^t)}_{N^!,A^!,M^!}.$$
\end{enumerate}
\end{cor}

\begin{proof}
Notice that $(3)$ follows from Lemma \ref{lem:dualDistributivity}(3) combined with Observation \ref{obs:SchursAsLs}.

\textbf{Proof of (2):} As noted in Remark \ref{rem:seqExactAtEnds}, the sequence
$$0 \to \bbs^{\alpha \cdot \beta}_{M,A,N} \to \bbs^{\alpha}_{M,A} \otimes_R \bbs^{\beta}_{A,N} \to \bbs^{\alpha \odot \beta}_{M,A,N} \to 0$$
is exact at the left and rightmost terms, so it suffices to prove exactness for the middle term. Assume first that $\ell(\alpha) = 1$ and $\ell (\beta) = 0$. Then there is a commutative diagram:
\[\begin{tikzcd}
	& 0 & 0 & 0 \\
	0 & {\bbs^{(\alpha_1)}_{M,A,N}} & {\bbs^{(\alpha_1)}_{M,A} \otimes_R N_{s}} & {M_t \otimes_R N_{s+\alpha_1}} & 0 \\
	0 & {M_t \otimes_R \bbs^{(\alpha_1)}_{A,N}} & {M_t \otimes_R A_{\alpha_1} \otimes_R N_s} & {M_t \otimes_R N_{s+\alpha_1}} & 0 \\
	0 & {M_{t+\alpha_1} \otimes_R N_s} & {M_{t+\alpha_1} \otimes_R N_s} \\
	& 0 & 0
	\arrow[from=2-2, to=2-3]
	\arrow[from=2-3, to=2-4]
	\arrow[from=1-4, to=2-4]
	\arrow[from=1-3, to=2-3]
	\arrow[from=1-2, to=2-2]
	\arrow[from=2-4, to=2-5]
	\arrow[from=3-4, to=3-5]
	\arrow[Rightarrow, no head, from=2-4, to=3-4]
	\arrow[Rightarrow, no head, from=4-2, to=4-3]
	\arrow[from=2-2, to=3-2]
	\arrow[from=2-3, to=3-3]
	\arrow[from=3-3, to=3-4]
	\arrow[from=2-1, to=2-2]
	\arrow[from=3-1, to=3-2]
	\arrow[from=4-1, to=4-2]
	\arrow[from=3-3, to=4-3]
	\arrow[from=3-2, to=4-2]
	\arrow[from=4-2, to=5-2]
	\arrow[from=4-3, to=5-3]
	\arrow[from=3-2, to=3-3]
\end{tikzcd}\]
The middle row and column is exact by Lemma \ref{lem:SESandDualityMods}(2), and the last row and column are evidently exact. Let $\Theta$ denote the surjection
$$\Theta : \bbs^{(\alpha_1)}_{M,A} \otimes_R N_s \to M_t \otimes_R N_{s + \alpha_1}.$$
A quick diagram chase shows that
$$\ker (\Theta) \subset \left( \bbs^{(\alpha_1)}_{M,A} \otimes_R N_{s} \right) \cap \left( M_t \otimes_R \bbs^{(\alpha_1)}_{A,N} \right),$$
and by Observation \ref{obs:intersectingSchurs} this intersection is precisely the Schur module $\bbs^{(\alpha_1)}_{M,A,N}$. Since the reverse inclusion evidently holds, the top row is exact.

Assume now that $\ell (\alpha), \ell(\beta) > 0$. Write $\beta = \beta' \cdot (\beta_k)$ for some composition $\beta'$ with $\ell (\beta') = \ell(\beta) - 1$. Then there is a commutative diagram:

\[\begin{tikzcd}
	& 0 & 0 & 0 \\
	0 & {\bbs^{\alpha\cdot \beta}_{M,A,N}} & {\bbs^{\alpha}_{M,A} \otimes_R \bbs^{\beta}_{A,N}} & {\bbs^{\alpha \odot \beta}_{M,A,N}} & 0 \\
	0 & {\bbs^{\alpha \cdot \beta'}_{M,A} \otimes_R \bbs^{(\beta_k)}_{A,N}} & {\bbs^{\alpha}_{M,A} \otimes_R \bbs^{\beta'}_A \otimes_R \bbs^{(\beta_k)}_{A,N}} & {\bbs^{\alpha \odot \beta'}_{M,A} \otimes_R \bbs^{(\beta_k)}_{A,N}} & 0 \\
	0 & {\bbs^{\alpha \cdot \beta' \odot (\beta_k)}_{M,A,N}} & {\bbs^{\alpha}_{M,A} \otimes_R \bbs^{\beta'\odot (\beta_k)}_{A.N}} & {\bbs^{\alpha \odot \beta' \odot (\beta_k)}_{M,A,N}} & 0 \\
	& 0 & 0 & 0
	\arrow[from=2-1, to=2-2]
	\arrow[from=1-2, to=2-2]
	\arrow[from=1-3, to=2-3]
	\arrow[from=1-4, to=2-4]
	\arrow[from=2-2, to=2-3]
	\arrow[from=2-3, to=2-4]
	\arrow[from=2-4, to=2-5]
	\arrow[from=2-4, to=3-4]
	\arrow[from=2-3, to=3-3]
	\arrow[from=2-2, to=3-2]
	\arrow[from=3-1, to=3-2]
	\arrow[from=3-2, to=3-3]
	\arrow[from=3-3, to=3-4]
	\arrow[from=4-3, to=4-4]
	\arrow[from=4-4, to=4-5]
	\arrow[from=3-4, to=3-5]
	\arrow[from=3-4, to=4-4]
	\arrow[from=4-4, to=5-4]
	\arrow[from=4-3, to=5-3]
	\arrow[from=3-3, to=4-3]
	\arrow[from=3-2, to=4-2]
	\arrow[from=4-1, to=4-2]
	\arrow[from=4-2, to=4-3]
	\arrow[from=4-2, to=5-2]
\end{tikzcd}\]
The bottom two rows and the rightmost two columns are exact by induction (combined with the flatness proved in Lemma \ref{lem:SESandDualityMods}), and an identical diagram chase as above shows that the top row must also be exact.

\textbf{Proof of (1):} Proceed by induction on the length $\ell (\alpha)$, where the base cases $\ell (\alpha)=0$ or $1$ are evident since $\bbs^{\varnothing}_{M,A,N} := M_t \otimes_R N_s$ and there is a short exact sequence
$$0 \to \bbs^{(\alpha_1)}_{M,A,N} \to M_t \otimes_R \bbs^{(\alpha_1)}_{A,N} \to M_{t + \alpha_1} \otimes_R N_s \to 0.$$
The latter two terms in this sequence are flat (resp. projective) by Lemma \ref{lem:SESandDualityMods}, so $\bbs^{(\alpha_1)}_{M,A,N}$ is flat (resp. projective) by Observation \ref{obs:rightResByProj}.

Assuming now that $\ell (\alpha) \geq 2$, write $\alpha = \beta \cdot \gamma$ for two compositions $\beta$, $\gamma$ with $\ell(\beta), \ell(\gamma) > 0$. Then by $(2)$ there is a short exact sequence
$$0 \to \bbs^{\alpha}_{M,A,N} \to \bbs^{\beta}_{M,A} \otimes_R \bbs^{\gamma}_{A,M} \to \bbs^{\beta \odot \gamma}_{M,A,N} \to 0.$$
The latter two terms are flat (resp. projective) by the induction hypothesis, whence Observation \ref{obs:rightResByProj} again implies that $\bbs^{\alpha}_{M,A,N}$ is flat (resp. projective). 
\end{proof}

\begin{definition}\label{def:HGcomplexes}
\rm
Let $\alpha$ be any $\ell$-partitioned composition. Given a Koszul algebra $A$ and a Koszul left $A$-module $M$, define the cochain complex $(\cat{H}_{A,M} (\alpha),\delta)$
whose $i^{th}$ term is 
$$
\cat{H}^i_A(\alpha )
:=\bigoplus_{\substack{I \subseteq [\ell]:\\ |I|=i}} \bbs^{\mu_I (\alpha) }_{A,M}, \quad \text{with differential}
$$
$$d^{\cat{H}_{A,M}}|_{\bbs^{\mu_I (\alpha)}} := \sum_{j \notin I} \sgn(j,I) \rho_{I , I \cup j}.$$
\end{definition}

\begin{remark}
    Note the difference between the components of the modules for the complex of Definition \ref{def:HGcomplexes} versus Definition \ref{def:AlgeHGComplex}: in the first case, the direct sums are parametrized by subsets of $[\ell]$ because of the additional component coming from the module $M$, whereas the components of Definition \ref{def:AlgeHGComplex} only range over subsets of $[\ell-1]$ since there is no additional module $M$. 
\end{remark}

Finally, we conclude this section with the analogue of the Hamel--Goulden type complexes of Definition \ref{def:AlgeHGComplex} for ribbon Schur functors admitting a module input. Recall the notation of Definition \ref{def:MuITermsForAM} in the below:

\begin{theorem}
\label{lem:HGcatComplex}
For all $\ell$-partitioned composition $\alpha$, the complex $\cat{H}_{A,M} (\alpha)$ is a cochain complex with
$$
H^0(\cat{H}_{A,M} (\alpha)) \cong
\bbs^{\alpha}_{A,M},
$$
and which is exact in positive cohomological degrees.
\end{theorem}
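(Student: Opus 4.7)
The plan is to proceed by induction on the number of partition pieces $\ell$, simultaneously handling the analogous claim for the algebra-only complex $\cat{H}_A(\alpha)$ from subsection \ref{subsec:ribbonsForAlg} (which is needed in the inductive step). When $\ell = 1$ the complex has the single term $\bbs^\alpha_{A,M}$ concentrated in cohomological degree zero, so there is nothing to check. When $\ell = 2$, writing $\alpha = p_1 \cdot p_2$, the complex becomes the length-one cochain complex $\bbs^{p_1}_A \otimes_R \bbs^{p_2}_{A,M} \to \bbs^{p_1 \odot p_2}_{A,M}$, and the desired exactness together with the identification of $H^0$ is precisely the content of the concatenation/near-concatenation short exact sequence of Lemma \ref{lem:SESandDualityMods}(2).

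For the inductive step with $\ell \geq 3$, write $\alpha = \alpha' \cdot p_\ell$ where $\alpha' = p_1 \cdots p_{\ell-1}$ is $(\ell-1)$-partitioned, and set $\beta := p_1 \cdots p_{\ell-2} \cdot (p_{\ell-1} \odot p_\ell)$, the $(\ell-1)$-partitioned composition obtained by near-concatenating the last two pieces. The first key step is to construct a canonical short exact sequence of cochain complexes
$$0 \to \cat{H}_{A,M}(\beta)[-1] \to \cat{H}_{A,M}(\alpha) \to \cat{H}_A(\alpha') \otimes_R \bbs^{p_\ell}_{A,M} \to 0,$$
by splitting the indexing set $\{I \subseteq [\ell-1]\}$ of the middle complex according to whether $\ell - 1 \in I$. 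For $I \subseteq [\ell-2]$ the operation $\mu_I$ disconnects $p_\ell$ from the remaining pieces, yielding $\bbs^{\mu_I(\alpha)}_{A,M} = \bbs^{\mu_I(\alpha')}_A \otimes_R \bbs^{p_\ell}_{A,M}$ and producing the quotient complex; for $I = I' \cup \{\ell-1\}$ with $I' \subseteq [\ell-2]$, applying $\sigma_{\{\ell-1\}}$ first identifies $\mu_I(\alpha) = \mu_{I'}(\beta)$, so these terms form the subcomplex (shifted by one as their indexing sets are one larger). That the differential of $\cat{H}_{A,M}(\alpha)$ respects this decomposition is immediate from the observation that the differential only grows $I$ by one element at a time.

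By the inductive hypothesis, the two outer complexes are exact in positive cohomological degrees with $H^0 \cong \bbs^\beta_{A,M}$ and $H^0 \cong \bbs^{\alpha'}_A$, respectively; flatness of $\bbs^{p_\ell}_{A,M}$ over $R$ (Lemma \ref{lem:SESandDualityMods}(1)) transfers the latter property to $\cat{H}_A(\alpha') \otimes_R \bbs^{p_\ell}_{A,M}$. The long exact sequence of cohomology then reduces the theorem to identifying the connecting homomorphism
$$\partial \colon \bbs^{\alpha'}_A \otimes_R \bbs^{p_\ell}_{A,M} \longrightarrow \bbs^\beta_{A,M}$$
with the surjection in the concatenation/near-concatenation sequence of Lemma \ref{lem:SESandDualityMods}(2) applied to $\alpha'$ and $p_\ell$; this identification is the main obstacle of the argument. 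It follows by a direct snake-lemma chase: since the quotient complex and the middle complex agree in degree zero, any cocycle lifts tautologically to the middle complex, and the component of its image under the differential of $\cat{H}_{A,M}(\alpha)$ lying in the subcomplex is precisely the near-concatenation at the partition position $\ell - 1$, which is exactly the surjection from the concatenation/near-concatenation sequence. Granting this identification, that sequence immediately yields $\ker(\partial) = \bbs^{\alpha' \cdot p_\ell}_{A,M} = \bbs^\alpha_{A,M}$ and surjectivity of $\partial$, so $H^0(\cat{H}_{A,M}(\alpha)) \cong \bbs^\alpha_{A,M}$ and $H^i(\cat{H}_{A,M}(\alpha)) = 0$ for all $i \geq 1$, completing the induction.
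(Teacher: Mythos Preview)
Your proof is correct and follows essentially the same approach as the paper's: both proceed by induction on $\ell$, construct the same short exact sequence of complexes by splitting indices according to whether $\ell-1 \in I$, and identify the connecting map with the near-concatenation surjection of Lemma~\ref{lem:SESandDualityMods}(2). The only cosmetic difference is that the paper declares the identification $H^0(\cat{H}_{A,M}(\alpha)) \cong \bbs^{\alpha}_{A,M}$ to be evident from the outset (so the long exact sequence is used only to kill $H^1$), whereas you extract this identification from the kernel of the connecting map; your explicit mention of flatness to justify that tensoring with $\bbs^{p_\ell}_{A,M}$ preserves the exactness of $\cat{H}_A(\alpha')$ is a detail the paper leaves implicit.
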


\begin{proof}
    First, observe that the statement that $H^0(\cat{H}_{A,M} (\alpha)) \cong
\bbs^{\alpha}_{A,M}$ is evidently true. Proceed by induction on $\ell$, with the base case $\ell = 2$. In this case, write $\alpha = \beta \cdot \gamma$; the complex $\cat{H}_{A,M} (\alpha)$ simply becomes the length $1$ complex
    $$\cat{H}_{A,M} (\alpha) : \quad \bbs^{\beta}_{A} \otimes_R \bbs^{\gamma}_{A,M} \to \bbs^{\beta \odot \gamma}_{A,M} \to 0.$$
    These are the last two terms of the short exact sequence from Lemma \ref{lem:SESandDualityMods}(2), so the statement holds.

    Assume now that $\ell (\alpha) > 2$. Write $\alpha = \alpha' \cdot p_\ell (\alpha)$. Then there is a short exact sequence of complexes:
    $$0 \to \cat{H}_{A,M} (\alpha ' \odot p_\ell (\alpha)) [-1] \to \cat{H}_{A,M} (\alpha) \to \cat{H}_A (\alpha') \otimes_R \bbs^{p_\ell (\alpha)}_{A,M} \to 0.$$
    By the inductive hypothesis, both of the complexes $\cat{H}_{A,M} (\alpha ' \odot p_\ell (\alpha))$ and $\cat{H}_A (\alpha') \otimes_R \bbs^{p_\ell (\alpha)}_{A,M}$ are exact in positive cohomological degrees; taking the long exact sequence of cohomology yields:
    $$0 \to \bbs^{\alpha}_{A,M} \to \bbs^{\alpha'}_{A} \otimes_R \bbs^{p_\ell (\alpha)}_{A,M} \to \bbs^{\alpha' \odot p_\ell (\alpha)}_{A,M} \to H^1 (\cat{H}_{A,M} (\alpha)) \to 0.$$
    The map $ \bbs^{\alpha'}_{A} \otimes_R \bbs^{p_\ell (\alpha)}_{A,M} \to \bbs^{\alpha' \odot p_\ell (\alpha)}_{A,M}$ is precisely the map of Lemma \ref{lem:SESandDualityMods}(2), which is of course surjective. Thus $ H^1 (\cat{H}_{A,M} (\alpha))= 0$ and the result follows.
\end{proof}

Recall that Example \ref{ex:HamelGoulExample} gives an explicit example of the complex of Theorem \ref{lem:HGcatComplex} when just using the Schur modules $\bbs^\alpha_A$.

\begin{example}
    As a more concrete example of Theorem \ref{lem:HGcatComplex}, let $A := S(V) \cong \kk [ x_1 , \dots , x_n]$ be the symmetric algebra (viewed as a polynomial ring) and $\m := (x_1 , \dots , x_n)$ the homogeneous maximal ideal. Given any composition $\alpha$ and integer $d \geq 1$, notice that by definition there is an equality
    $$\bbs^{\alpha}_{A , \m^d} = \bbs^{\alpha \cdot (d)}_A.$$
    Consider the composition $\alpha := (1^4)$ for any integer. Then we can see the difference in the complex of Theorem \ref{lem:HGcatComplex} based on how we view the composition $\alpha$. Viewed as a $3$-partitioned composition, the complex of Theorem \ref{lem:HGcatComplex} yields
\[\begin{tikzcd}
	{A_1 \otimes_R A_1 \otimes A_1 \otimes_R A_d} && {\begin{matrix} A_2 \otimes_R A_1 \otimes_R A_d \\ \oplus \\ A_1 \otimes_R A_2 \otimes_R A_d \\ \oplus \\ A_1 \otimes_R A_1 \otimes_R A_{d+1} \end{matrix}} && {\begin{matrix} A_3 \otimes_R A_d \\ \oplus \\ A_2 \otimes_R A_{d+1} \\ \oplus \\ A_1 \otimes_R A_{d+2} \end{matrix}} && {A_{d+3}}
	\arrow[from=1-1, to=1-3]
	\arrow[from=1-3, to=1-5]
	\arrow[from=1-5, to=1-7]
\end{tikzcd}\]
    This is simply the degree $d+3$ homogeneous strand of the Bar complex on $\m^d$. If we instead view $\alpha$ as $2$-partitioned via $(1^3) = (1^2) \cdot (1)$, we obtain something distinct from a strand of the Bar complex:
\[\begin{tikzcd}
	{\bbs^{(1^2)}_A \otimes_R A_1 \otimes_R A_d} && {\begin{matrix} \bbs^{(1^2)}_A \otimes_R A_{d+1} \\ \oplus \\ \bbs^{(1,2)}_A \otimes_R A_d \end{matrix}} && {\bbs^{(1,d+2)}_A}
	\arrow[from=1-1, to=1-3]
	\arrow[from=1-3, to=1-5]
\end{tikzcd}\]
    Finally, we can also view $(1^3)$ as $1$-partitioned; recalling that $\bbs^{(1^3)}_A = \bigwedge^3 V$ in this setting, Theorem \ref{lem:HGcatComplex} implies that there is a short exact sequence
\[\begin{tikzcd}
	0 & {\bbs^{(1^3,d)}} && {\bigwedge^3 V \otimes_R S^d (V)} && {\bbs^{(1^2,d+1)}_A} & 0
	\arrow[from=1-2, to=1-4]
	\arrow[from=1-4, to=1-6]
	\arrow[from=1-1, to=1-2]
	\arrow[from=1-6, to=1-7]
\end{tikzcd}\]
    Note that $\bbs^{(1^3 , d)}_A$ and $\bbs^{(1^2 , d+1)}_A$ are equal to the classical defined Schur modules $\bbs^{(d,1^3)} (V)$ and $\bbs^{(d+1,1^2)} (V)$, respectively, in which case the above short exact sequence recovers the well-known description of hook Schur modules as arising from homogeneous strands of the Koszul complex (see \cite{buchsbaum1975generic}). Thus Theorem \ref{lem:HGcatComplex} yields a family of complexes that interpolates between the full strands of the Bar complex and homogeneous strands of the Koszul complex.
\end{example}

\section{Multi-Schur Functors}\label{sec:multiSchurFunctors}

In this section, we define multi-Schur functors. As mentioned in the introduction, the intuition behind these objects is that they are obtained by taking kernels of the defining ribbon Schur module relations diagonally; they will be particularly helpful for describing canonical equivariant decompositions of the derived invariants over Segre subalgebras.

\begin{definition}\label{def:multiSchurForSameLength}
    Let $\alpha^1 , \dots , \alpha^n$ be a sequence of compositions of fixed length $\ell$ and $A^1 , \dots , A^n$ be a sequence of Koszul $R$-algebras. The \defi{multi-Schur module} $\bbs^{\alpha^1 , \dots , \alpha^n}_{A^1 , \cdots , A^n}$ is defined to be the kernel of the natural map
    $$(A^1 \otimes_R \cdots \otimes_R A^n)_{(\alpha^1 , \dots , \alpha^n)} \to \bigoplus_{i=1}^{\ell-1} (A^1 \otimes_R \cdots \otimes_R A^n)_{(\sigma_i (\alpha^1) , \dots , \sigma_i (\alpha^n))}.$$
    Given a sequence of Koszul left $A^i$-modules $M^i$ of initial degree $t^i$ for $1 \leq i \leq n$, the \defi{multi-Schur module} $\bbs^{\alpha^1 , \dots , \alpha^n}_{(A^1,M^1) , \cdots  , (A^n,M^n)}$ is defined to be the kernel of the natural map
    $$(( A^1 \otimes_R M^1) \otimes_R \cdots \otimes_R (A^n \otimes_R M^n))_{(\alpha^1 \cdot (t^1) , \dots , \alpha^n \cdot (t^n))}$$
    $$\to \bigoplus_{i=1}^{\ell-1} (( A^1 \otimes_R M^1) \otimes_R \cdots \otimes_R (A^n \otimes_R M^n))_{(\sigma_i (\alpha^1 \cdot (t^1)) , \dots , \sigma_i (\alpha^n \cdot (t^n)))}.$$
    The definition for a right $A$-module is analogous. Assume now that $\ell (\alpha^1) \geq 2$. Given Koszul right (resp. left) $A$-modules $M^1 , \dots , M^n$ (resp. $N^1 , \dots , N^n$) of initial degrees $t^i$ (resp. $s^i$), the multi-Schur module
    $\bbs^{\alpha^1 , \dots , \alpha^n}_{(M^1,A^1,N^1) , \cdots , (M^n,A^n,N^n)}$ is defined to be the kernel of the natural map
    $$(( M^1 \otimes_R  A^1 \otimes_R N^1) \otimes_R \cdots \otimes_R (M^n \otimes_R A^n \otimes_R N^n))_{((t^1) \cdot \alpha^1 \cdot (s^1) , \dots , (t^n) \cdot \alpha^n \cdot (s^n))}$$
    $$\to \bigoplus_{i=1}^{\ell-1} (( M^1 \otimes_R  A^1 \otimes_R N^1) \otimes_R \cdots \otimes_R (M^n \otimes_R A^n \otimes_R N^n))_{(\sigma_i ((t^1) \cdot \alpha^1 \cdot (s^1)) , \dots , \sigma_i ((t^n) \cdot \alpha^n \cdot (s^n)))}.$$
\end{definition}

In a similar way to the ribbon Schur functors for single inputs, the multi-Schur modules can also be described as modules of the form $L^I_{M_1 , \dots , M_n}$ for some collection of submodules, but the translation is a little more subtle:

\begin{construction}\label{cons:turningMultiSchurIntoLs}
    Adopt notation and hypotheses as in Definition \ref{def:multiSchurForSameLength}, and recall the notation for the $R$-submodules $S_{A,N,i}^j$ and $S_{M,A,N,i}^j$ as introduced in Definitions \ref{def:koszulModSubmodCollection} and \ref{def:doubleModuleDef}, respectively. For each of the compositions $\alpha^i$, there are isomorphisms of $R$-modules
    $$(A^i \otimes_R N^i)_{\alpha^i \cdot (s^i)} \cong \frac{A_1^{i \otimes |\alpha^i|} \otimes_R N_{s^i}}{\vee_{j \in \phi^{-1} (\alpha^i)} S_{A,N,j}^{|\alpha^i|}},$$
    $$(M^i \otimes_R A^i \otimes_R N^i)_{(t^i) \cdot \alpha^i \cdot (s^i)} \cong \frac{M_{t^i} \otimes_R A_1^{i \otimes |\alpha^i|} \otimes_R N_{s^i}}{\vee_{j \in \phi^{-1} (\alpha^i)} S_{M,A,N,j}^{|\alpha^i|}}.$$
    By the assumption that each $\alpha^i$ has length $\ell$, each of the sets
    $$[|\alpha^i|-1] \backslash \phi^{-1} (\alpha^i)$$
    has length $\ell - 1$.\footnote{Put more informally: quotienting by the module $S_{A,i}^n$ has the effect of deleting the $i$th comma in the composition $(1^{|\alpha|})$ and replacing it with addition; if $\alpha$ is $\ell$-partitioned, that means there are $\ell-1$ commas \emph{not} deleted from $(1^{|\alpha|})$.} Let $\psi_{\alpha^i} : [|\alpha^i|-1] \backslash \phi^{-1} (\alpha^i) \to [\ell - 1]$ denote the unique order-preserving isomorphism between these two sets and consider the induced collections
    $$S_{\underline{A},\underline{N},j}^{\underline{\alpha}} := S_{A^1,N^1,\psi_{\alpha^1}^{-1} (j)}^{|\alpha^1|} + S_{A^2,N^2,\psi_{\alpha^2}^{-1} (j)}^{|\alpha^2|} + \cdots + S_{A^n,N^n,\psi_{\alpha^n}^{-1} (j)}^{|\alpha^n|}$$
    $$
    \subset (A_1^{1 \otimes |\alpha^1|} \otimes_R N^1_{s^1}) \otimes_R ( A_1^{2 \otimes |\alpha^2|} \otimes_R N^2_{s^2}) \otimes_R \cdots \otimes_R (A_1^{n \otimes |\alpha^n|} \otimes_R N^n_{s^n}) ,$$
    $$S_{\underline{M},\underline{A},\underline{N},j}^{\underline{\alpha}} := S_{M^1,A^1,N^1,\psi_{\alpha^1}^{-1} (j)}^{|\alpha^1|} + S_{M^2,A^2,N^2,\psi_{\alpha^2}^{-1} (j)}^{|\alpha^2|} + \cdots + S_{M^n,A^n,N^n,\psi_{\alpha^n}^{-1} (j)}^{|\alpha^n|}$$
    $$
    \subset (M^1_{t^1} \otimes_R A_1^{1 \otimes |\alpha^1|} \otimes_R N^1_{s^1}) \otimes_R (M^2_{t^2} \otimes_R A_1^{2 \otimes |\alpha^2|} \otimes_R N^2_{s^2}) \otimes_R \cdots \otimes_R (M^n_{t^n} \otimes_R A_1^{n \otimes |\alpha^n|} \otimes_R N^n_{s^n}) .$$
    In the above, we are abusing notation for sake of clarity: the module $S_{A^i,N^i,j}^{|\alpha^i|}$ is defined as an $R$-submodule of $A_1^{i \otimes |\alpha^i|} \otimes_R N^i_{s^i}$, but in the above expressions we are vieweing each of these submodules as the $i$th tensor factor of
    $$(A_1^{1 \otimes |\alpha^1|} \otimes_R N^1_{s^1}) \otimes_R ( A_1^{2 \otimes |\alpha^2|} \otimes_R N^2_{s^2}) \otimes_R \cdots \otimes_R (A_1^{n \otimes |\alpha^n|} \otimes_R N^n_{s^n}).$$
    The running flatness assumption implies that inclusion into the $i$th tensor factor is actually a well-defined injection. The same abuse of notation is used for $S_{M^i,A^i,N^i,j}^{|\alpha^i|}$.
\end{construction}

\begin{obs}\label{obs:muliSchursAsL's}
Adopt notation and hypotheses as in Construction \ref{cons:turningMultiSchurIntoLs}. Then there are isomorphisms of $R$-modules:
$$\bbs^{\alpha^1 , \dots , \alpha^n}_{(A^1,N^1) , \dots , (A^n , N^n)} \cong$$
$$\frac{\w_{j \in [\ell -1 ]} S^{\underline{\alpha}}_{\underline{A} , \underline{M}}}{\vee_{j \notin \phi^{-1} (\alpha^1)} S_{A^1,N^1,j}^{|\alpha^1|} + \cdots + \vee_{j \notin \phi^{-1} (\alpha^n)} S_{A^n,N^n,j}^{|\alpha^n|}},$$
$$\bbs^{\alpha^1 , \dots , \alpha^n}_{(M^1,A^1,N^1) , \dots , (M^n, A^n , N^n)} \cong$$
$$\frac{\w_{j \in [\ell -1 ]} S^{\underline{\alpha}}_{\underline{M}, \underline{A} , \underline{M}}}{\vee_{j \notin \phi^{-1} (\alpha^1)} S_{M^1,A^1,N^1,j}^{|\alpha^1|} + \cdots + \vee_{j \notin \phi^{-1} (\alpha^n)} S_{M^n,A^n,N^n,j}^{|\alpha^n|}}.$$
\end{obs}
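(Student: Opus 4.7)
The plan is to unwind the kernel defining the multi-Schur module and then invoke distributivity to produce the $L^{I}$-form asserted in the statement. I focus on the first isomorphism, since the second follows by the identical argument after replacing each $V^i := A_1^{i \otimes |\alpha^i|} \otimes_R N^i_{s^i}$ by $M^i_{t^i} \otimes_R V^i$ and the factorwise relations $S^{|\alpha^i|}_{A^i,N^i,j}$ by the two-sided versions $S^{|\alpha^i|}_{M^i,A^i,N^i,j}$.

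First I would substitute the presentations of Construction \ref{cons:turningMultiSchurIntoLs} into the defining kernel. By the standing flatness hypothesis, the tensor product distributes across these quotients, so the domain of the defining map rewrites as $V/K$, where $V := V^1 \otimes_R \cdots \otimes_R V^n$ and
$$K \;:=\; \sum_{i=1}^n \bigvee_{j \in \phi^{-1}(\alpha^i)} S^{|\alpha^i|}_{A^i,N^i,j},$$
each summand being embedded in $V$ via the flat inclusion on its $i$th tensor factor. The $k$th target component rewrites analogously as $V/(K + S^{\underline{\alpha}}_{\underline{A},\underline{N},k})$, and the defining map becomes the diagonal of the natural surjections $V/K \twoheadrightarrow V/(K + S^{\underline{\alpha}}_{\underline{A},\underline{N},k})$. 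Taking kernels gives
$$\bbs^{\underline{\alpha}}_{(A^1,N^1),\ldots,(A^n,N^n)} \;\cong\; \bigcap_{k=1}^{\ell-1}\bigl(K + S^{\underline{\alpha}}_{\underline{A},\underline{N},k}\bigr)\big/K.$$

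The proof then reduces to the distributivity identity $\bigcap_k (K + S^{\underline{\alpha}}_{\underline{A},\underline{N},k}) = K + \bigcap_k S^{\underline{\alpha}}_{\underline{A},\underline{N},k}$, which converts the right-hand side above into $\bigl(\bigcap_k S^{\underline{\alpha}}_{\underline{A},\underline{N},k}\bigr)/\bigl(K \cap \bigcap_k S^{\underline{\alpha}}_{\underline{A},\underline{N},k}\bigr)$, matching the $L^{I}$-expression of the statement under the convention $N/L = N/(N \cap L)$ of Definition \ref{def:theLModules}. To prove the identity, I would place every submodule in sight inside a single distributive lattice: for each fixed $i$, Theorem \ref{thm:koszulModuleDistr} applied to the Koszul pair $(A^i, N^i)$ gives distributivity of $\{S^{|\alpha^i|}_{A^i,N^i,j}\}_j$ in $V^i$; by Corollary \ref{cor:tensorsAreKoszul}, $A^1 \otimes_R \cdots \otimes_R A^n$ is Koszul with $N^1 \otimes_R \cdots \otimes_R N^n$ a Koszul module, so another application of Theorem \ref{thm:koszulModuleDistr} furnishes distributivity of the full Koszul relation collection inside $V$, from which the desired identity follows.

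I expect the main obstacle to be this last step: the factorwise relations (one per tensor factor $V^i$) and the product-algebra relations (on $V$ viewed as a graded piece of the tensor product) sit on slightly different ambient data, and care is required to verify that the lattice they jointly generate inside $V$ is in fact distributive. This is handled either by realizing $V$ as a graded component on which the product-algebra Koszul relations restrict correctly, so that the combined collection is a sublattice of a distributive lattice, or by induction on the number of tensor factors $n$ using the standard fact that distributivity of submodule collections is preserved under flat tensor products.
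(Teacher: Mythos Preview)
The paper records this as an \emph{Observation} and gives no proof, treating it as an immediate unwinding of Construction~\ref{cons:turningMultiSchurIntoLs}. Your argument is therefore strictly more detailed than what the paper supplies, and in particular you have correctly identified a genuine subtlety that the paper glosses over: passing from the tautological description $\bigcap_{k}(K+S^{\underline{\alpha}}_{\underline{A},\underline{N},k})\big/K$ to the $L^{I}$-form $\bigl(\bigcap_{k} S^{\underline{\alpha}}_{\underline{A},\underline{N},k}\bigr)\big/K$ does require a distributivity identity when $K\neq 0$.

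Two remarks on your treatment of that step. First, your route through Corollary~\ref{cor:tensorsAreKoszul} is not quite direct: the Koszul relation collection for $A^1\otimes_R\cdots\otimes_R A^n$ lives in tensor powers of $A^1_1\oplus\cdots\oplus A^n_1$, not in $V=V^1\otimes_R\cdots\otimes_R V^n$, so Theorem~\ref{thm:koszulModuleDistr} applied to the tensor product algebra does not literally hand you distributivity of the submodules you need. Your second suggestion is the right one: every submodule in sight is of the form ``submodule in one tensor factor, full module in the others,'' and for each fixed $i$ the relevant collection in $V^i$ is a subcollection of $\{S^{|\alpha^i|}_{A^i,N^i,j}\}_j$, which is distributive by Theorem~\ref{thm:koszulModuleDistr}. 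The claim that flat-tensoring such factorwise distributive collections yields a distributive collection in $V$ then follows by induction on $n$ (the relevant $C^\bullet$ complex for the combined collection is the tensor product of the factorwise complexes, and flatness preserves exactness).

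Second, it is worth noting that the only place the paper actually \emph{invokes} this observation is in the base case $p=1$ of Lemma~\ref{lem:theMultiSchurSES}, where each $\alpha^i=(1^{|\alpha^i|})$ so that $\phi^{-1}(\alpha^i)=\varnothing$ and hence $K=0$; in that situation the identity is immediate and no distributivity is required. So the paper's omission is harmless for its applications, but your analysis is needed to justify the observation in the generality in which it is stated.
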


An interesting aspect of multi-Schur modules is that one can define the multi-Schur modules when the input compositions have different sizes; to make sense of this, the inputs need to be $\ell$-partitioned into the same number of parts, instead:

\begin{definition}
    Let $\alpha^1 , \dots , \alpha^n$ be a sequence of $\ell$-partitioned compositions and $A^1 , \dots , A^n$ be a sequence of Koszul $R$-algebras. The \emph{multi-Schur module} $\bbs^{\alpha^1 , \dots , \alpha^n}_{A^1 , \cdots , A^n}$ is defined as the kernel of the natural map:
    $$\bbs^{\mu_\varnothing (\alpha^1)}_{A^1} \otimes_R \bbs^{\mu_\varnothing (\alpha^2)}_{A^2} \otimes_R \cdots \otimes_R \bbs^{\mu_\varnothing (\alpha^n)}_{A^n} \to \bigoplus_{i=1}^{\ell -1 } \bbs^{\mu_i (\alpha^1)}_{A^1} \otimes_R \bbs^{\mu_i (\alpha^2)}_{A^2} \otimes_R \cdots \otimes_R \bbs^{\mu_i (\alpha^n)}_{A^n}.$$
    If $\alpha := \alpha^1 = \alpha^2 = \cdots = \alpha^n$ then the more concise notation $\bbs^{\alpha}_{A^1 , \dots , A^n}$ will be used to denote $\bbs^{\alpha^1 , \dots , \alpha^n}_{A^1 , \dots , A^n}$.

    Likewise, given a Koszul left $A$-module $M$, define the multi-Schur module $\bbs^{\alpha^1 , \dots , \alpha^n}_{(A^1, M^1) , \cdots , (A^n,M^n)}$ as the kernel of the natural map:
    $$\bbs^{\mu_\varnothing (\alpha^1)}_{(A^1,M^1)} \otimes_R \bbs^{\mu_\varnothing (\alpha^2)}_{(A^2,M^2)} \otimes_R \cdots \otimes_R \bbs^{\mu_\varnothing (\alpha^n)}_{(A^n,M^n)} \to \bigoplus_{i=1}^\ell \bbs^{\mu_i (\alpha^1)}_{(A^1,M^1)} \otimes_R \bbs^{\mu_i (\alpha^2)}_{(A^2,M^2)} \otimes_R \cdots \otimes_R \bbs^{\mu_i (\alpha^n)}_{(A^n,M^n)}.$$
    
    Finally, given a Koszul left (resp. right) $A$-module $N$ (resp. $M$), define the multi-Schur module 
    $$\bbs^{\alpha^1 , \dots , \alpha^n}_{(M^1,A^1, N^1) , \cdots , (M^n,A^n,N^n)}$$ as the kernel of the natural map:
    $$\bbs^{\mu_\varnothing (\alpha^1)}_{(M^1,A^1,N^1)} \otimes_R  \cdots \otimes_R \bbs^{\mu_\varnothing (\alpha^n)}_{(M^n,A^n,N^n)} \to \bigoplus_{i=1}^\ell \bbs^{\mu_i (\alpha^1)}_{(M^1,A^1,N^1)}  \otimes_R \cdots \otimes_R \bbs^{\mu_i (\alpha^n)}_{(M^n,A^n,N^n)}.$$
\end{definition}


The following is the evident analogue of Observation \ref{obs:intersectingSchurs} for multi-Schur modules:

\begin{obs}\label{obs:intersectMultischurs}
    Let $\alpha^i, \beta^i$, and $\gamma^i$ be $\ell$ (resp. $j$, $k$)-partitioned partitions. Given a collection of Koszul algebras $A^i$ and left (resp. right) Koszul $A^i$-modules $N_i$ (resp. $M_i$) for $1 \leq i \leq n$, there is an equality
    $$\bbs^{\alpha^1\cdot \beta^1 \cdot \gamma^1 , \dots , \alpha^n \cdot \beta^n \cdot \gamma^n}_{(M^1 , A^1 , N^1) , \cdots , (M^n , A^n , N^n)} = $$
    $$\left( \bbs^{\alpha^1\cdot \beta^1  , \dots , \alpha^n \cdot \beta^n }_{(M^1 , A^1 , N^1) , \cdots , (M^n , A^n , N^n)} \otimes_R \bbs^{ \gamma^1 , \dots ,  \gamma^n}_{(M^1 , A^1 , N^1) , \cdots , (M^n , A^n , N^n)} \right) $$
    $$\cap \left( \bbs^{\alpha^1 , \dots , \alpha^n }_{(M^1 , A^1 , N^1) , \cdots , (M^n , A^n , N^n)} \otimes_R \bbs^{ \beta^1 \cdot \gamma^1 , \dots ,  \beta^n \cdot \gamma^n}_{(M^1 , A^1 , N^1) , \cdots , (M^n , A^n , N^n)} \right).$$
\end{obs}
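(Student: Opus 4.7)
The plan is to give a direct unwinding of the definitions, modelled on the single-input analogue Observation \ref{obs:intersectingSchurs}. First, both sides embed naturally into a common ambient $R$-module
$$\mathbf{T} \;:=\; \bigotimes_{i=1}^n (M^i \otimes_R A^i \otimes_R N^i)_{(t^i)\cdot (\alpha^i \cdot \beta^i \cdot \gamma^i) \cdot (s^i)}.$$
The tensor products appearing on the right-hand side of the statement inject into $\mathbf{T}$ because each of their tensor factors is $R$-flat, by Corollary \ref{cor:doubleModuleInputProperties}(1) and Lemma \ref{lem:SESandDualityMods}(1). Having a common ambient space is essential for the claimed intersection to even make sense.

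Next I would introduce the position-labelling needed to distinguish which merge maps are being imposed. Suppose $\alpha^i$ is $\ell$-partitioned, $\beta^i$ is $j$-partitioned and $\gamma^i$ is $k$-partitioned, so that $\alpha^i \cdot \beta^i \cdot \gamma^i$ is $(\ell+j+k)$-partitioned. Label its partition seams by $p \in [\ell+j+k-1]$, noting that $p = \ell$ is the $\alpha$–$\beta$ seam and $p = \ell + j$ is the $\beta$–$\gamma$ seam, with all other positions internal to exactly one of the three blocks. By Definition \ref{def:multiSchurForSameLength} the left-hand side is the joint kernel inside $\mathbf{T}$ of the diagonally applied "merge at $p$" maps for every $p \in [\ell+j+k-1]$. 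I would then argue that the first tensor factor on the right-hand side, $\bbs^{\underline{\alpha\cdot\beta}}_{(\underline{M,A,N})}\otimes_R \bbs^{\underline{\gamma}}_{(\underline{M,A,N})}$, coincides with the joint kernel inside $\mathbf{T}$ of exactly the merge maps for $p \neq \ell+j$; symmetrically, $\bbs^{\underline{\alpha}}_{(\underline{M,A,N})}\otimes_R \bbs^{\underline{\beta\cdot\gamma}}_{(\underline{M,A,N})}$ coincides with the joint kernel for $p \neq \ell$. Intersecting the two joint kernels then imposes all merge conditions simultaneously and so equals the left-hand side.

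The step that needs actual verification is the "no relation at the seam" description of each tensor product, which is the multi-Schur upgrade of the identity implicit in Observation \ref{obs:intersectingSchurs}. I would justify it using the subquotient presentation of Observation \ref{obs:muliSchursAsL's}: the families of submodules $S^{\underline{\alpha\cdot\beta}}_{\underline{M},\underline{A},\underline{N},p}$ and $S^{\underline{\gamma}}_{\underline{M},\underline{A},\underline{N},p}$, once lifted into $\mathbf{T}$, are supported in disjoint tensor factors, so the element of $\mathbf{T}$ in question lies in the tensor product of the two multi-Schur modules exactly when both families of relations hold and no condition is imposed at the seam between them. This is exactly the kind of diagram chase used in the proof of Corollary \ref{cor:doubleModuleInputProperties}(2). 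Once that "joint kernel" reading of both tensor products is in hand, the equality of the statement is formal from $\ker(f) \cap \ker(g) = \ker(f \oplus g)$ applied to the two disjoint families of merge maps.
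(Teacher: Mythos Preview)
Your proposal is correct and is precisely the direct unwinding the paper has in mind: the paper states this result as an observation without proof, calling it ``the evident analog of Observation \ref{obs:intersectingSchurs} for multi-Schur modules,'' and your argument simply makes that analogy explicit by identifying both sides as joint kernels of the diagonal merge maps inside a common ambient tensor product. The only minor point is that the most natural ambient space is $\bigotimes_i \bbs^{\mu_\varnothing(\alpha^i\cdot\beta^i\cdot\gamma^i)}_{(M^i,A^i,N^i)}$ rather than your larger $\mathbf{T}$, but since the former injects into the latter by flatness, this makes no difference to the argument.
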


The following properties are immediate from the definition of multi-Schur modules:

\begin{prop}
    Let $\alpha^1 , \dots , \alpha^n$ be a sequence of $\ell$-partitioned compositions and $A^1 , \dots , A^n$ be a sequence of Koszul $R$-algebras. Then:
    \begin{enumerate}
        \item Let $\tau \in \Sigma_n$ be any permutation of $[n]$. Then there is a natural isomorphism
        $$\bbs^{\alpha^1 , \dots , \alpha^n}_{A^1 , \dots , A^n} \cong \bbs^{\alpha^{\tau(1)} , \dots , \alpha^{\tau(n)} }_{A^{\tau(1)} , \dots , A^{\tau(n)}}.$$
        \item If $\ell = 1$, there is an isomorphism
        $$\bbs^{\alpha^1 , \dots , \alpha^n}_{A^1 , \dots , A^n} \cong \bbs^{\alpha^1}_{A^1} \otimes_R \cdots \otimes_R \bbs^{\alpha^n}_{A^n}.$$
        \item If each $A^i$ admits the structure of a $R[G^i]$-module for some group $G^i$ (where $1 \leq i \leq n$), then the multi-Schur module $\bbs^{\alpha^1 , \dots , \alpha^n}_{A^1 , \dots , A^n}$ admits the structure of a $R[G^1 \times \cdots \times G^n]$-module, and all of the above isomorphisms are $G^1 \times \cdots \times G^n$-equivariant.
    \end{enumerate}
\end{prop}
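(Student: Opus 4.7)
The plan is to verify each claim directly from the kernel description of the multi-Schur module, working at the level of the ambient tensor product where the symmetric monoidal structure of $\otimes_R$ makes the arguments essentially bookkeeping.

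For part (1), I would use the observation that the ambient tensor product $\bbs^{\mu_\varnothing(\alpha^1)}_{A^1} \otimes_R \cdots \otimes_R \bbs^{\mu_\varnothing(\alpha^n)}_{A^n}$ admits a canonical braiding isomorphism $\tau_*$ to $\bbs^{\mu_\varnothing(\alpha^{\tau(1)})}_{A^{\tau(1)}} \otimes_R \cdots \otimes_R \bbs^{\mu_\varnothing(\alpha^{\tau(n)})}_{A^{\tau(n)}}$ for any $\tau \in \Sigma_n$. Each defining surjection into the $i$th summand applies the natural map $\bbs^{\mu_\varnothing(\alpha^j)}_{A^j} \to \bbs^{\mu_i(\alpha^j)}_{A^j}$ on each tensor factor; under $\tau_*$, the factors are reindexed consistently, so the collection of defining surjections is permuted compatibly. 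The kernel is therefore preserved by $\tau_*$, yielding the asserted isomorphism. Alternatively, one could invoke Observation \ref{obs:muliSchursAsL's} and note that the collection $S^{\underline{\alpha}}_{\underline{A},\underline{N},j}$ is manifestly symmetric under permutation of the indices labeling the algebras.

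For part (2), when $\ell = 1$ there are no near-concatenation positions available, so $\mu_\varnothing(\alpha^i) = \alpha^i$ for each $i$ by Definition \ref{def:fundamentalOperationsForPtnComps} and the indexing set for the defining surjections is empty. The multi-Schur module therefore coincides with the full ambient tensor product $\bbs^{\alpha^1}_{A^1} \otimes_R \cdots \otimes_R \bbs^{\alpha^n}_{A^n}$, as claimed.

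For part (3), a compatible $G^i$-action on $A^i$ endows each graded piece $A^i_k$ with a $G^i$-representation structure such that the multiplication maps are $G^i$-equivariant. Consequently $\bbs^{\mu_I(\alpha^i)}_{A^i}$ is a $G^i$-subrepresentation of the ambient tensor power for every $I$, the tensor product $\bbs^{\mu_\varnothing(\alpha^1)}_{A^1} \otimes_R \cdots \otimes_R \bbs^{\mu_\varnothing(\alpha^n)}_{A^n}$ carries a natural $G^1 \times \cdots \times G^n$-action by factorwise action, and each defining surjection is $G^1 \times \cdots \times G^n$-equivariant since it is built from the factorwise multiplication maps. The kernel therefore inherits an $R[G^1 \times \cdots \times G^n]$-module structure. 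The isomorphisms in (1) and (2) are constructed solely from the symmetric braiding and identity maps, both of which intertwine the corresponding product group actions; for (1) this uses the canonical identification of $G^1 \times \cdots \times G^n$ with $G^{\tau(1)} \times \cdots \times G^{\tau(n)}$ through $\tau$ itself. The main (minor) obstacle is purely notational bookkeeping around these reindexings, but no substantive mathematical content beyond functoriality of kernels and the symmetric monoidal structure of $\otimes_R$ should be required.
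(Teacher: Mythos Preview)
Your proposal is correct and matches the paper's approach: the paper offers no proof beyond the remark that ``the following properties are immediately from the definition of multi-Schur modules,'' and your argument simply spells out why each item is indeed immediate from the kernel description and the symmetric monoidal structure of $\otimes_R$.
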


In view of Observation \ref{obs:intersectMultischurs}, the notation for multi-Schur modules can quickly become overwhelming. For this reason, we introduce the following shorthand notation:

\begin{notation}\label{not:multiIndexForSchur}
    Let $\alpha^1 , \dots , \alpha^n$ be a collection of $\ell$-partitioned compositions. Given a collection of Koszul algebras $A^i$ and left (resp. right) Koszul $A^i$-modules $N_i$ (resp. $M_i$) for $1 \leq i \leq n$, use the more concise notation
    $$\bbs^{\underline{\alpha}}_{\underline{M} , \underline{A}, \underline{N}} := \bbs^{\alpha^1 , \dots , \alpha^n}_{(M^1 , A^1 , N^1) , \cdots , (M^n , A^n , ^n)}.$$
    For given tuples of compositions $\underline{\alpha}$ and $\underline{\beta}$, extend the operations of Definitions \ref{def:booleanAndRefinement} and \ref{def:operationsOfComps} by applying them coordinate-wise to the tuples. With this identification, we have the equalities
    $$\underline{\alpha} := \{ \alpha^1 , \dots  , \alpha^n \}, \quad \underline{M} := \{ M^1 , \dots , M^n \},$$
    $$\underline{A} := \{ A^1 , \dots , A^n \}, \quad \underline{N} := \{ N^1 , \dots , N^n \}.$$
\end{notation}

The following lemma is the multi-Schur analog of Lemma \ref{lem:SESandDualityMods}, but the proof is actually a little more subtle due to the added difficulty of allowing compositions of different sizes.

\begin{lemma}\label{lem:theMultiSchurSES}
    Let $\underline{\alpha}$ and $\underline{\beta}$ be sequences of $k$-partitioned and $\ell-k$-partitioned compositions, respectively, for some fixed $1 \leq k \leq \ell$. Let $\underline{A} = \{ A^1 , \dots , A^n \}$ be a sequence of Koszul $R$-algebras and $\underline{M}$ a sequence of Koszul left $\underline{A}$-modules. Then:
    \begin{enumerate}
        \item Every multi-Schur module is $R$-flat.
        \item There is a canonical short exact sequence 
        $$0 \to \bbs^{\underline{\alpha} \cdot \underline{\beta}}_{\underline{A} , \underline{M}} \to \bbs^{\underline{\alpha} }_{\underline{A} } \otimes_R \bbs^{ \underline{\beta}}_{\underline{A} , \underline{M}} \to \bbs^{\underline{\alpha} \odot \underline{\beta}}_{\underline{A} , \underline{M}} \to 0.$$
    \end{enumerate}
    If all Koszul algebras/modules are $R$-projective, then every multi-Schur module $\bbs^{\underline{\alpha}}_{\underline{A} , \underline{M}}$ is also $R$-projective. The analogous statement for right modules holds as well.
\end{lemma}

\begin{proof}

It suffices to prove $(2)$, since $(1)$ is a consequence of $(2)$ combined with Observation \ref{obs:rightResByProj}.

\textbf{Proof of (2):} Let $p := \max \{ \ell (p_i(\alpha^j)) \mid 1 \leq i \leq k, \ 1 \leq j \leq n \}$ and define $q := |\{ i \mid \ell (p_i (\alpha^j)) = p \}|$. The proof is by a double induction on the values $p$ and $q$. Notice that when $p =1$ and $q$ is arbitrary, recall that $A^1 \otimes_R \cdots \otimes_R A^n$ is a Koszul algebra and the tensor product $M^1 \otimes_R \cdots \otimes_R M^n$ is a left Koszul module over $A^1 \otimes_R \cdots \otimes_R A^n$ by Corollary \ref{cor:tensorsAreKoszul}. By Observation \ref{obs:muliSchursAsL's} combined with Proposition \ref{prop:distributivityComplexes}, the sequence of $(2)$ is exact.

Assume now that $p > 1$ and $q = 1$. This means that there is some element of one of the compositions $\underline{\alpha}$ or $\underline{\beta}$ whose largest part is strictly greater than $1$. Reversing the order of $\alpha$ and $\beta$, it is of no loss of generality to assume that $\underline{\alpha}$ contains a part of size $p$. Moreover, for simplicity of notation, let us assume that the first element of $\underline{\alpha}$ has size $p$ (the general case is identical but notationally more cumbersome). Write $\underline{\alpha} = \underline{(a)} \cdot \underline{\alpha'}$, so that by construction $\underline{\alpha'}$ has all parts of size $< p$. There is a commutative diagram:

\[\begin{tikzcd}
	& 0 & 0 & 0 \\
	0 & {\bbs^{ \underline{\alpha} \cdot \underline{\beta}}_{\underline{A} , \underline{M}}} & {\bbs^{\underline{\alpha} }_{\underline{A} } \otimes_R \bbs^{ \underline{\beta}}_{\underline{A} , \underline{M}}} & {\bbs^{ \underline{\alpha} \odot \underline{\beta}}_{\underline{A} , \underline{M}}} & 0 \\
	0 & {\bbs^{\underline{(a)} }_{\underline{A} } \otimes_R \bbs^{\underline{\alpha'} \cdot \underline{\beta}}_{\underline{A} , \underline{M}}} & {\bbs^{\underline{(a)}}_{\underline{A} } \otimes_R \bbs^{ \underline{\alpha'} }_{\underline{A} } \otimes_R \bbs^{\underline{\beta}}_{\underline{A} , \underline{M}}} & {\bbs^{\underline{(a)} }_{\underline{A} } \otimes_R \bbs^{ \underline{\alpha'} \odot \underline{\beta}}_{\underline{A} , \underline{M}}} & 0 \\
	0 & {\bbs^{\underline{(a)} \odot \underline{\alpha'} \cdot \underline{\beta}}_{\underline{A} , \underline{M}}} & {\bbs^{\underline{(a)} \odot \underline{\alpha'}}_{\underline{A} } \otimes_R \bbs^{ \underline{\beta}}_{\underline{A} , \underline{M}}} & {\bbs^{\underline{(a)} \odot \underline{\alpha'} \odot \underline{\beta}}_{\underline{A} , \underline{M}}} & 0 \\
	& 0 & 0 & 0
	\arrow[from=2-1, to=2-2]
	\arrow[from=1-2, to=2-2]
	\arrow[from=1-3, to=2-3]
	\arrow[from=1-4, to=2-4]
	\arrow[from=2-3, to=2-4]
	\arrow[from=2-4, to=2-5]
	\arrow[from=3-4, to=3-5]
	\arrow[from=3-3, to=3-4]
	\arrow[from=2-3, to=3-3]
	\arrow[from=2-2, to=3-2]
	\arrow[from=2-2, to=2-3]
	\arrow[from=3-2, to=3-3]
	\arrow[from=3-1, to=3-2]
	\arrow[from=4-1, to=4-2]
	\arrow[from=4-2, to=4-3]
	\arrow[from=4-3, to=4-4]
	\arrow[from=4-4, to=4-5]
	\arrow[from=4-4, to=5-4]
	\arrow[from=4-3, to=5-3]
	\arrow[from=4-2, to=5-2]
	\arrow[from=3-2, to=4-2]
	\arrow[from=3-3, to=4-3]
	\arrow[from=3-4, to=4-4]
	\arrow[from=2-4, to=3-4]
\end{tikzcd}\]
    
In the above diagram, notice that the bottom two rows and the last two columns are exact by the inductive hypothesis. We may also assume by induction on $p$ that the map 
$$\Theta: \bbs^{\underline{(a)} }_{\underline{A} } \otimes_R \bbs^{\underline{\alpha'} \cdot \underline{\beta}}_{\underline{A} , \underline{M}} \to \bbs^{\underline{(a)} \odot \underline{\alpha'} \cdot \underline{\beta}}_{\underline{A} , \underline{M}}$$
is a surjection. We claim that with this information the first column of the above diagram is exact. It is evident just by definition that the map
$$ \bbs^{\underline{(a)} \cdot \underline{\alpha'} \cdot \underline{\beta}}_{\underline{A} , \underline{M}} \to \bbs^{\underline{\alpha} }_{\underline{A} } \otimes_R \bbs^{ \underline{\beta}}_{\underline{A} , \underline{M}}$$
is always an injection, so it remains to prove exactness at the middle term $\bbs^{\underline{\alpha} }_{\underline{A} } \otimes_R \bbs^{ \underline{\beta}}_{\underline{A} , \underline{M}} $. However, a diagram chase employing exactness of the middle column shows that
$$\ker \Theta \subset \left(\bbs^{\underline{\alpha} }_{\underline{A} } \otimes_R \bbs^{ \underline{\beta}}_{\underline{A} , \underline{M}}  \right) \cap \left( \bbs^{\underline{(a)} }_{\underline{A} } \otimes_R \bbs^{\underline{\alpha'} \cdot \underline{\beta}}_{\underline{A} , \underline{M}} \right),$$
and this latter intersection is precisely equal to $\bbs^{ \underline{\alpha} \cdot \underline{\beta}}_{\underline{A} , \underline{M}}$ by Observation \ref{obs:intersectMultischurs}. Since the reverse containment evidently holds, exactness of the first column follows, and hence all columns of the above diagram are exact. Employing the long exact sequence of homology, it follows that the first row is exact.

For the inductive step on $q$, the argument is actually identical. After choosing a similar decomposition of $\alpha$, there is an identical commutative diagram. By construction, the bottom two rows and rightmost two columns are exact by induction on $q$. A verbatim argument works for showing that the first column is exact, and hence the long exact sequence of homology yields the statement in general. 
\end{proof}

\begin{cor}
    Let $\underline{\alpha}$ and $\underline{\beta}$ be a collection of $k$ and $\ell-k$-partitioned compositions, respectively. Let $A^i$ be a collection of Koszul algebras and $N_i$ (resp. $M_i$) a collection of left (resp. right) Koszul $A^i$-modules for $1 \leq i \leq n$. Then:
    \begin{enumerate}
        \item Every multi-Schur module $\bbs^{\underline{\alpha}}_{\underline{M} , \underline{A} , \underline{N}}$ is $R$-flat.
        \item There is a canonical short exact sequence
        $$0 \to \bbs^{\underline{\alpha} \cdot \underline{\beta}}_{\underline{M} , \underline{A} , \underline{N}} \to \bbs^{\underline{\alpha} }_{\underline{M}, \underline{A} } \otimes_R \bbs^{ \underline{\beta}}_{\underline{A} , \underline{N}} \to \bbs^{\underline{\alpha} \odot \underline{\beta}}_{\underline{M} , \underline{A} , \underline{N}} \to 0.$$
    \end{enumerate}
    If all the Koszul algebras/modules are $R$-projective, then every multi-Schur module $\bbs^{\underline{\alpha}}_{\underline{A} , \underline{M}}$ is also $R$-projective.
\end{cor}

\begin{proof}
    This proof is formally identical to the proof of Corollary \ref{cor:doubleModuleInputProperties} but with the multi-index notation of Notation \ref{not:multiIndexForSchur} used instead.
\end{proof}

Finally, we have the generalization of the complexes $\cat{H} (\alpha)$ of Definition \ref{def:HGcomplexes} for the multi-Schur setting:

\begin{definition}
    Let $\alpha^1 , \dots , \alpha^n$ be a sequence of $\ell$-partitioned compositions. Given Koszul algebras $A^i$ and left Koszul modules $M^i$ for $1 \leq i \leq n$, define the cochain complex $\cat{H}_{(A^1,M^1),\dots , (A^n , M^n)}^\bullet (\alpha^1 , \dots,  \alpha^n)$ via
    $$\cat{H}_{(A^1,M^1),\dots , (A^n , M^n)}^i (\alpha^1 , \dots,  \alpha^n) := \bigoplus_{|I| = i} \bbs^{\mu_I (\alpha^1) , \dots , \mu_I (\alpha^n)}_{(A^1, M^1) , \cdots , (A^n , M^n)}, \quad \text{with differential}$$
    $$d^{\cat{H}_{(A^1,M^1), \dots , (A^n , M^n)}}|_{\bbs^{\mu_I (\alpha^1) , \dots , \mu_I (\alpha^n)}_{A^1 , \cdots , A^n}} := \sum_{j \notin I} \sgn (j,I) \rho_{I , I \cup j}.$$
\end{definition}

\begin{cor}
    Let $\alpha^1 , \dots , \alpha^n$ be a sequence of $\ell$-partitioned compositions. Given Koszul algebras $A^i$ and left Koszul modules $M^i$ for $1 \leq i \leq n$, the cochain complex $\cat{H}^\bullet_{(A^1,M^1) , \dots , (A^n , M^n)} (\alpha^1 , \dots , \alpha^n)$ is exact in positive cohomological degrees and
    $$H^0 \left( \cat{H}^\bullet_{(A^1,M^1) , \dots , (A^n , M^n)} (\alpha^1 , \dots , \alpha^n) \right) = \bbs^{\alpha^1 , \dots , \alpha^n}_{A^1 , \cdots , A^n}.$$
\end{cor}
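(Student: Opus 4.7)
The identification $H^0(\cat{H}^\bullet_{(A^1,M^1),\dots,(A^n,M^n)}(\alpha^1,\dots,\alpha^n)) = \bbs^{\alpha^1,\dots,\alpha^n}_{(A^1,M^1),\cdots,(A^n,M^n)}$ is essentially tautological: the $0$th cohomology is the kernel of the differential from the $I = \varnothing$ summand, which by definition of $\mu_I$ and the multi-Schur module is exactly the defining kernel of $\bbs^{\underline{\alpha}}_{\underline{A},\underline{M}}$. So the real content is exactness in positive cohomological degrees.

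The plan is to mimic the proof of Theorem~\ref{lem:HGcatComplex} for a single ribbon Schur input, using induction on $\ell$ (the common number of parts in the partitioning). For the base case $\ell = 2$, write $\underline{\alpha} = \underline{\beta} \cdot \underline{\gamma}$ for tuples of singly-partitioned compositions; then $\cat{H}^\bullet_{\underline{(A,M)}}(\underline{\alpha})$ is concentrated in cohomological degrees $0$ and $1$, and the only nontrivial differential is the surjection
$$\bbs^{\underline{\beta}}_{\underline{A}} \otimes_R \bbs^{\underline{\gamma}}_{\underline{A},\underline{M}} \to \bbs^{\underline{\beta} \odot \underline{\gamma}}_{\underline{A},\underline{M}},$$
which is surjective by Lemma~\ref{lem:theMultiSchurSES}(2).

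For the inductive step, suppose $\ell > 2$ and write $\underline{\alpha} = \underline{\alpha'} \cdot p_\ell(\underline{\alpha})$, where $\underline{\alpha'}$ is $(\ell-1)$-partitioned and $p_\ell(\underline{\alpha})$ denotes the tuple of last pieces. Splitting summands of $\cat{H}^\bullet_{\underline{(A,M)}}(\underline{\alpha})$ according to whether $\ell-1 \in I$ or not yields a short exact sequence of cochain complexes
$$0 \to \cat{H}^\bullet_{\underline{(A,M)}}(\underline{\alpha'} \odot p_\ell(\underline{\alpha}))[-1] \to \cat{H}^\bullet_{\underline{(A,M)}}(\underline{\alpha}) \to \cat{H}^\bullet_{\underline{A}}(\underline{\alpha'}) \otimes_R \bbs^{p_\ell(\underline{\alpha})}_{\underline{(A,M)}} \to 0,$$
where the first complex (over an $(\ell-1)$-partitioning) falls under the inductive hypothesis and the third is the tensor product of the $(\ell-1)$-partitioned ``all-algebra'' complex (also covered inductively) with a flat $R$-module and thus is exact in positive cohomological degrees. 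The associated long exact sequence of cohomology collapses to
$$0 \to \bbs^{\underline{\alpha}}_{\underline{A},\underline{M}} \to \bbs^{\underline{\alpha'}}_{\underline{A}} \otimes_R \bbs^{p_\ell(\underline{\alpha})}_{\underline{A},\underline{M}} \to \bbs^{\underline{\alpha'} \odot p_\ell(\underline{\alpha})}_{\underline{A},\underline{M}} \to H^1(\cat{H}^\bullet_{\underline{(A,M)}}(\underline{\alpha})) \to 0,$$
with all higher $H^i$ vanishing. The rightmost nontrivial map is the connecting surjection coming from Lemma~\ref{lem:theMultiSchurSES}(2), forcing $H^1 = 0$.

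The main obstacle is purely bookkeeping: one must verify that the short exact sequence of complexes above is genuinely well-defined, i.e., that the differential of $\cat{H}^\bullet_{\underline{(A,M)}}(\underline{\alpha})$ restricted to summands indexed by $I \ni \ell-1$ maps into the corresponding summands of the $\underline{\alpha'} \odot p_\ell(\underline{\alpha})$ complex (this amounts to checking that $\mu_I(\underline{\alpha}) = \mu_{I \setminus \{\ell-1\}}(\underline{\alpha'} \odot p_\ell(\underline{\alpha}))$ when $\ell-1 \in I$) and that the quotient is the ``all-algebra'' tensor complex indicated. Granting this — which is immediate from the combinatorics of $\mu_I$ in Definition~\ref{def:fundamentalOperationsForPtnComps} together with the convention of Notation~\ref{not:disconnectedRibbons} identifying disconnected ribbons with tensor products of Schur modules — the argument above completes the proof. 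Equivariance under any ambient group action is automatic since every morphism involved is induced from the canonical quotient maps $\rho_{I,I \cup j}$, which are manifestly equivariant.
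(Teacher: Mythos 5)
Your proof is correct and mirrors exactly the inductive argument the paper uses for the single-input case (Theorem~\ref{lem:HGcatComplex}), substituting the multi-Schur concatenation/near-concatenation sequence of Lemma~\ref{lem:theMultiSchurSES} for Lemma~\ref{lem:SESandDualityMods}. Since the paper gives no explicit proof of this corollary, your write-up supplies precisely the intended adaptation, and the degree-shift bookkeeping in the long exact sequence, the identification of the quotient complex via Notation~\ref{not:disconnectedRibbons}, and the final surjectivity argument all check out.
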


\subsection{Some Generalities on Filtrations}\label{subsec:filtrations}

We now turn our attention to the task of filtering the multi-Schur modules $\bbs^{\underline{\alpha}}_{\underline{M} , \underline{A} , \underline{N}}$ for given choices of $\underline{\alpha}$. This section collects a few general results on ``splicing" filtrations together; these results are essentially trivial, but it will be useful to refer to them explicitly. First, let us recall the definition of a filtration.

\begin{definition}
    Let $M$ be an $R$-module. A (ascending) \defi{filtration} of $M$ is a chain of $R$-submodules
    $$0 = F_0 \subset F_1 \subset \cdots \subset F_{n-1} \subset F_n = M.$$
    The successive quotients $F_i / F_{i-1}$ are called the \defi{associated graded pieces}. 
\end{definition}

The following observation is likely a common first exercise on properties of filtrations, but we state and prove it here for completeness. The intuition here is that filtrations whose graded pieces admit further filtrations may be refined to a single filtration of the larger object with the same graded pieces.

\begin{obs}\label{obs:refiningCompositions}
Let $M$ be an $R$-module equipped with a finite filtration
$$F : \quad 0 = F_0 \subset F_1 \subset \cdots \subset F_{n-1} \subset F_n = M.$$
Suppose that for all $i \geq 1$, the $i$th graded piece $G_i := F_i / F_{i-1}$ admits a finite filtration
$$F^i : \quad 0 = F_0^i \subset F_1^i \subset \cdots \subset F_{n_i-1}^i \subset F_{n_i}^i = G_i$$
with graded pieces $G_j^i := F_j^i/F_{j-1}^i$ for each $j \geq 1$. Then the filtration $F$ of $M$ may be refined to a filtration $F'$ of $M$ with associated graded pieces $G_j^i$ for $1 \leq i \leq n$, $1 \leq j \leq n_i$. 
\end{obs}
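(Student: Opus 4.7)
The plan is to construct the refined filtration by pulling back each of the filtrations $F^i$ of the graded piece $G_i$ to a chain of submodules sitting between $F_{i-1}$ and $F_i$ in $M$, and then concatenating these chains across all $i$.

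More concretely, the first step is to fix $i$ and consider the canonical surjection $\pi_i : F_i \onto F_i / F_{i-1} = G_i$. For each $0 \leq j \leq n_i$, define $\widetilde{F}_j^{\,i} := \pi_i^{-1}(F_j^i) \subset F_i$. By construction $\widetilde{F}_0^{\,i} = F_{i-1}$ and $\widetilde{F}_{n_i}^{\,i} = F_i$, and the inclusions $F_j^i \subset F_{j+1}^i$ lift to inclusions $\widetilde{F}_j^{\,i} \subset \widetilde{F}_{j+1}^{\,i}$, giving a chain
$$F_{i-1} = \widetilde{F}_0^{\,i} \subset \widetilde{F}_1^{\,i} \subset \cdots \subset \widetilde{F}_{n_i}^{\,i} = F_i.$$

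The second step is to verify that the associated graded pieces of this intermediate chain match $G_j^i$. This follows from a standard application of the third isomorphism theorem: since $\widetilde{F}_{j-1}^{\,i} \supset F_{i-1} = \ker \pi_i$, one obtains
$$\widetilde{F}_j^{\,i}/\widetilde{F}_{j-1}^{\,i} \cong \pi_i(\widetilde{F}_j^{\,i})/\pi_i(\widetilde{F}_{j-1}^{\,i}) = F_j^i/F_{j-1}^i = G_j^i.$$

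Finally, since the top of each intermediate chain for index $i$ agrees with the bottom of the intermediate chain for index $i+1$ (both equal $F_i$), the chains may be spliced together into a single filtration
$$0 = \widetilde{F}_0^{\,1} \subset \widetilde{F}_1^{\,1} \subset \cdots \subset \widetilde{F}_{n_1}^{\,1} = \widetilde{F}_0^{\,2} \subset \cdots \subset \widetilde{F}_{n_n}^{\,n} = M,$$
whose successive quotients are precisely the modules $G_j^i$ for $1 \leq i \leq n$ and $1 \leq j \leq n_i$. No step here is genuinely hard; the only mild subtlety is keeping the indexing transparent and observing that taking preimages under $\pi_i$ commutes with both containment and passage to quotients, so that the graded pieces come out correctly.
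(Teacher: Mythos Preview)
Your proof is correct and is exactly the standard argument one would give. Note that despite the paper's introductory sentence promising to ``state and prove it here for completeness,'' the paper in fact provides no proof at all for this observation and simply moves on to the next result; your write-up therefore fills a genuine gap in the exposition rather than duplicating anything the paper does.
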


The above observation may be used to understand how to filter tensor products of modules, each equipped separately with their own filtrations; this is the content of the following corollary.

\begin{cor}\label{cor:splicingFiltrations}
    Let $M^1, M^2 , \dots , M^\ell$ be a collection of $R$-modules equipped with filtrations
$$F^i : \quad \quad 0 = F_0^i \subset F_1^i \subset \cdots \subset F_{n_i-1}^i \subset F_{n_i}^i = M^i,$$
and assume that each associated graded piece $G_j^i := F_j^i / F_{j-1}^i$ is a flat $R$-module. Then each of the $R$-modules $M^i$ is flat and there is a filtration of $M^1 \otimes_R M^2 \otimes_R \cdots \otimes_R M^\ell$ with associated graded pieces of the form 
$$G_{i_1}^1 \otimes_R G_{i_2}^2 \otimes_R \cdots \otimes_R G_{i_\ell}^\ell,$$
where $1 \leq i_k \leq n_k$ for each $1 \leq k \leq \ell$. 
\end{cor}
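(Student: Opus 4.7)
The plan is to prove the two assertions of Corollary \ref{cor:splicingFiltrations} in sequence by straightforward inductions, using Observation \ref{obs:refiningCompositions} as the key bookkeeping tool at the end.

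First I would dispose of the flatness claim for each $M^i$ individually. Fix $i$ and induct on $j$ to show every $F_j^i$ is flat. The base case $F_0^i = 0$ is trivial, and the inductive step follows from the short exact sequence $0 \to F_{j-1}^i \to F_j^i \to G_j^i \to 0$ together with the standard long exact sequence of $\tor$, since $F_{j-1}^i$ and $G_j^i$ are both flat by hypothesis (this is exactly the argument used in Observation \ref{obs:rightResByProj}). In particular $M^i = F_{n_i}^i$ is flat, which is what we need to ensure that tensoring any short exact sequence by $M^i$ preserves exactness.

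Next I would construct the filtration of the tensor product by inducting on $\ell$. The case $\ell = 1$ is the given filtration $F^1$. For the inductive step, set $N := M^1 \otimes_R \cdots \otimes_R M^{\ell-1}$, which is flat by the previous paragraph (tensor products of flat modules are flat), and which by the inductive hypothesis carries a filtration with associated graded pieces of the form $G_{i_1}^1 \otimes_R \cdots \otimes_R G_{i_{\ell-1}}^{\ell-1}$. Tensoring the filtration $F^\ell$ of $M^\ell$ by the flat module $N$ produces a filtration
\[ 0 = N \otimes_R F_0^\ell \subset N \otimes_R F_1^\ell \subset \cdots \subset N \otimes_R F_{n_\ell}^\ell = N \otimes_R M^\ell \]
whose associated graded pieces are $N \otimes_R G_j^\ell$ for $1 \leq j \leq n_\ell$. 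Likewise, tensoring the inductively-given filtration of $N$ by each (flat) $G_j^\ell$ yields a filtration of $N \otimes_R G_j^\ell$ whose associated graded pieces are exactly $G_{i_1}^1 \otimes_R \cdots \otimes_R G_{i_{\ell-1}}^{\ell-1} \otimes_R G_j^\ell$.

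Finally, I would invoke Observation \ref{obs:refiningCompositions} to splice the outer filtration of $N \otimes_R M^\ell$ with the inner filtrations of each graded piece $N \otimes_R G_j^\ell$; this produces a single filtration of $M^1 \otimes_R \cdots \otimes_R M^\ell$ whose associated graded pieces are precisely the tensor products $G_{i_1}^1 \otimes_R \cdots \otimes_R G_{i_\ell}^\ell$ as claimed. There is no real obstacle in this argument; the only point requiring attention is ensuring that each intermediate module used in the splicing is itself flat, so that the short exact sequences appearing in the filtrations remain exact after tensoring --- and this is exactly what the first paragraph guarantees.
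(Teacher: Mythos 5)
Your proof is correct and follows essentially the same route as the paper: induction on $\ell$, tensoring one filtration by a flat module, then refining each graded piece by the other factor's filtration, and finally invoking Observation \ref{obs:refiningCompositions} to splice. The only cosmetic difference is the order of tensoring (you hold $N$ fixed and filter by $F^\ell$, while the paper holds $M^\ell$ fixed and filters by $F'$), and you are slightly more careful in explicitly verifying the flatness of each $M^i$, which the paper's proof leaves implicit.
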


\begin{proof}
    Proceed by induction on $\ell$, with the base case $\ell = 1$ being vacuous. Assume $\ell > 1$ and let $F'$ be a filtration of $M^1 \otimes_R M^2 \otimes_R \cdots \otimes_R M^{\ell-1}$ with associated graded pieces of the form
    $$G_{i_1}^1 \otimes_R \cdots \otimes_R G_{i_{\ell-1}}^{\ell-1}.$$
    By the flatness assumption, the induced filtration $F' \otimes_R M^\ell$ has associated graded pieces of the form
    $$G_{i_1}^1 \otimes_R \cdots \otimes_R G_{i_{\ell-1}}^{\ell-1} \otimes_R M^\ell.$$
    Further filtering each graded piece by the filtration $F^\ell$ of $M^\ell$, we may employ Observation \ref{obs:refiningCompositions} to deduce the result.
\end{proof}

\subsection{A Canonical Filtration of Multi-Schur Functors}\label{subsec:canonicalFiltrationsOfMultiSchur}

The following short exact sequence, when combined with some of the other filtration results proved in this subsection, will be the essential ingredient for proving Theorem \ref{thm:multiSchurFactors}. It will be used to place an object we want to understand in the middle of a short exact sequence whose outer terms have a well-understood filtration.

\begin{lemma}\label{lem:multiSchuriltrationSES}
    Let $\alpha^1 , \dots , \alpha^n$ and $\beta^1 , \dots , \beta^n$ be sequences of $k$-partitioned and $\ell-k$-partitioned compositions, respectively, for some fixed $0 \leq k < \ell$. Let $A^1 , \dots , A^n$ be a sequence of Koszul $R$-algebras and $N^i$ (resp. $M^i$) be Koszul left (resp. right) $A^i$-modules for $1 \leq i \leq n$. Then there is a canonical short exact sequence
    $$0 \to \bbs^{\alpha^1 \cdot \beta^1 , \dots , \alpha^{n-1} \cdot \beta^{n-1} , \alpha^n}_{\underline{M} , \underline{A} , \underline{N}} \otimes_R \bbs^{\beta^n}_{M^n,A^n,N^n} $$
    $$\to \bbs^{\alpha^1 \cdot \beta^1 , \dots , \alpha^n \cdot \beta^n}_{\underline{M} , \underline{A} , \underline{N}} \to \bbs^{\alpha^1 \odot \beta^1, \dots , \alpha^{n-1} \odot \beta^{n-1} , \alpha^n \cdot \beta^n}_{\underline{M} , \underline{A} , \underline{N}} \to 0.$$
    In the above, the sequences of $\alpha^1 \cdot \beta^1 , \dots , \alpha^{n-1} \cdot \beta^{n-1} , \alpha^{n}$ and $\alpha^1 \odot \beta^1, \dots , \alpha^{n-1} \odot \beta^{n-1} , \alpha^n \cdot \beta^n$ are being viewed as $k$ and $\ell-1$-partitioned compositions via the convention of Convention \ref{conv:concatenationConventions}.
\end{lemma}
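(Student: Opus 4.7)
The plan is to prove this short exact sequence by mimicking the double induction used in the proof of Lemma \ref{lem:theMultiSchurSES}, combined with a diagram chase in the spirit of Corollary \ref{cor:doubleModuleInputProperties}(2) to accommodate the two-sided module setting. First I would describe the maps: the rightmost map is the canonical surjection obtained by applying the near-concatenation operator at position $k$ (i.e., the boundary between $\alpha^i$ and $\beta^i$) in the first $n-1$ slots while leaving the $n$th slot intact; the leftmost map is the inclusion coming from viewing the left term as the factor in which the $n$th slot has already been split as $\alpha^n$ tensored externally against $\beta^n$. That the composition vanishes, the first map is injective, and the last map is surjective are all formal consequences of the flatness results in Lemma \ref{lem:theMultiSchurSES}(1) and Corollary \ref{cor:doubleModuleInputProperties}(1); the crux is therefore exactness at the middle term.

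For the main argument, I would proceed by a double induction on a pair $(p, q)$, where $p$ is the maximum length of any part $p_i(\alpha^j)$ or $p_i(\beta^j)$ and $q$ is the multiplicity with which $p$ appears, mirroring exactly the strategy used in the proof of Lemma \ref{lem:theMultiSchurSES}. The base case $p = 1$ reduces to an instance of Proposition \ref{prop:distributivityComplexes}(1) applied to the distributive collection
$$S_{\underline{M},\underline{A},\underline{N},j}^{\underline{\alpha \cdot \beta}}, \quad j \in [\ell-1],$$
sitting inside the appropriate tensor product of ambient spaces; distributivity here is ensured because $A^1 \otimes_R \cdots \otimes_R A^n$ is Koszul by Corollary \ref{cor:tensorsAreKoszul} and the tensor products $M^i \otimes_R N^i$ yield Koszul modules over it, so Theorem \ref{thm:koszulModuleDistr} applies. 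The subset and element required to recover the stated SES are precisely those selecting the position-$k$ boundary in slots $1, \ldots, n-1$.

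For the inductive step on $(p, q)$, I would isolate the maximum-length part in some $\alpha^j$ or $\beta^j$ and split it off using Lemma \ref{lem:theMultiSchurSES}, producing a $3\times 3$ commutative diagram whose bottom two rows and rightmost two columns are exact by the inductive hypothesis. A diagram chase, combined with Observation \ref{obs:intersectMultischurs} to identify the relevant intersection of multi-Schur modules with the one appearing as the kernel, then forces exactness of the top row, which is the desired SES. For the full $(M, A, N)$ two-sided setting, an additional $3\times 3$ diagram chase analogous to the one in the proof of Corollary \ref{cor:doubleModuleInputProperties}(2) reduces the task to a one-sided statement already handled by the above induction.

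The main obstacle will be the combinatorial bookkeeping induced by the partition conventions: the left tuple is viewed as $k$-partitioned and the right tuple as $(\ell-1)$-partitioned via Convention \ref{conv:concatenationConventions}, and one must carefully verify that the natural morphisms arising in the diagram chase respect these conventions and factor through the multi-Schur modules as specified. Identifying the precise index in Proposition \ref{prop:distributivityComplexes}(1) that corresponds to the partial near-concatenation in the first $n-1$ slots requires careful tracking of the order-preserving bijections $\psi_{\alpha^i \cdot \beta^i}$ of Construction \ref{cons:turningMultiSchurIntoLs}. Once this bookkeeping is in place, the rest of the proof runs parallel to the already-established template for Lemma \ref{lem:theMultiSchurSES}.
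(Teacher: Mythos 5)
Your overall diagram-chase strategy is in the right spirit, but there are two concrete gaps compared to what the paper does.

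First, your base-case claim is not correct as stated. You assert that when $p=1$ the SES is an instance of Proposition \ref{prop:distributivityComplexes}(1) applied to the collection $S_{\underline{M},\underline{A},\underline{N},j}^{\underline{\alpha\cdot\beta}}$. But in Proposition \ref{prop:distributivityComplexes}(1) the \emph{middle} term is the one taken over a reduced collection (one submodule removed), which under Observation \ref{obs:muliSchursAsL's} translates precisely to a tensor-product factorization. In Lemma \ref{lem:multiSchuriltrationSES}, by contrast, the tensor product sits on the \emph{left}, while the middle term $\bbs^{\alpha^1\cdot\beta^1,\dots,\alpha^n\cdot\beta^n}$ is a single multi-Schur module; so the SES is not of the form in Proposition \ref{prop:distributivityComplexes}(1). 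Moreover, even when $p=1$ the compositions appearing in the rightmost term $\bbs^{\alpha^1\odot\beta^1,\dots,\alpha^{n-1}\odot\beta^{n-1},\alpha^n\cdot\beta^n}$ have unequal lengths ($\ell-1$ for the first $n-1$ slots, $\ell$ for the last), so the identification via Observation \ref{obs:muliSchursAsL's} does not directly apply; re-partitioning $\alpha^n\cdot\beta^n$ as $(\ell-1)$-partitioned forces a merged piece of length $2$, which destroys the hypothesis $p=1$. For $\ell = 2$ the relevant fact is essentially the second isomorphism theorem; for larger $\ell$ it is a modular-lattice identity that follows from distributivity but is not literally the sequence in Proposition \ref{prop:distributivityComplexes}(1), so an additional argument is required.

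Second, and relatedly, your induction parameter differs from what the paper uses and is arguably not the natural one here. The paper inducts on the number $k$ of pieces in the partitioning of $\alpha^i$: the base case $k=1$ is handled by a $3\times 3$ diagram whose middle row is the already-established SES of Lemma \ref{lem:theMultiSchurSES}, whose rightmost column is an isomorphism, and whose middle column is again an instance of Lemma \ref{lem:theMultiSchurSES}, so that the desired column is exact by the long exact sequence of homology; the inductive step is a second such diagram whose middle and right columns are exact by the inductive hypothesis. In particular, the paper never re-runs the $(p,q)$ induction or revisits distributivity at this stage — all of that is absorbed into Lemma \ref{lem:theMultiSchurSES}. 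Re-deriving the $(p,q)$ induction from scratch, as you propose, duplicates work and, as noted above, runs into trouble in the base case because the relevant modules no longer sit in the same-length $L^I$ framework. Also note that you claim surjectivity of the right-hand map is a "formal consequence of flatness"; it is not — in the paper this surjectivity is an output of the $3\times 3$ diagram argument, not an input, and asserting it up front is an unjustified step.
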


\begin{example}\label{ex:theSESExample}
    Let $A$ and $B$ be two Koszul $R$-algebras and $\alpha^1 = \beta^1 = (1^3)$, viewed as a $3$-partitioned composition. Then the short exact sequence of Lemma \ref{lem:multiSchuriltrationSES} takes the form
    $$\bbs^{(1^3),(1^2)}_{A , B} \otimes_R B_1 \to \bbs^{(1^3),(1^3)}_{A , B} \to \bbs^{(1,2),(1^3)}_{A, B} \to 0.$$
    In the above, $(1^3)$ is being viewed as the $2$-partitioned composition $(1) \cdot (1^2)$ in the first term of the sequence, and likewise for the last term in the sequence.
\end{example}

\begin{proof}
    For simplicity of notation, we will prove the theorem only for Koszul algebra inputs (otherwise the relevant diagrams are too large to display). The proof follows by examining the commutative diagram of Figure \ref{fig:proofOfMultiSES}; the base case and the inductive step are outlined in the caption.
    \begin{sidewaysfigure}
    \centering
    {\vspace{7in}

\[\begin{tikzcd}
	& 0 & 0 \\
	& {\bbs^{\alpha^1 \cdot \beta^1 , \dots , \alpha^{n-1} \cdot \beta^{n-1} , \alpha^n}_{A^1 , \cdots , A^n} \otimes_R \bbs^{\beta^n}_{A^n}} & {\bbs^{\alpha^1 \cdot \beta^1 , \dots , \alpha^{n-1} \cdot \beta^{n-1} , \alpha^1}_{A^1 , \cdots , A^n} \otimes_R \bbs^{\beta^n}_{A^n}} & 0 \\
	0 & {\bbs^{\alpha^1 \cdot \beta^1 , \dots , \alpha^{n-1} \cdot \beta^{n-1} , \alpha^n \cdot \beta^n}_{A^1 , \cdots , A^n}} & {\bbs^{\alpha^1  , \dots  , \alpha^n }_{A^1 , \cdots , A^n} \otimes_R \bbs^{\beta^1  , \dots  , \beta^n }_{A^1 , \cdots , A^n}} & {\bbs^{\alpha^1 \odot \beta^1 , \dots , \alpha^{n-1} \odot \beta^{n-1} , \alpha^n \odot \beta^n}_{A^1 , \cdots , A^n}} & 0 \\
	0 & {\bbs^{\alpha^1 \odot \beta^1 , \dots , \alpha^{n-1} \odot \beta^{n-1} , \alpha^n \cdot \beta^n}_{A^1 , \cdots , A^n}} & {\bbs^{\alpha^1 \odot \beta^1 , \dots , \alpha^{n-1} \odot \beta^{n-1}}_{A^1 , \cdots , A^{n-1}} \otimes_R \bbs^{\alpha^n}_{A^n} \otimes_R \bbs^{\beta^n}_{A^n}} & {\bbs^{\alpha^1 \odot \beta^1 , \dots , \alpha^{n-1} \odot \beta^{n-1} , \alpha^n \odot \beta^n}_{A^1 , \cdots , A^n}} & 0 \\
	& 0 & 0
	\arrow[from=2-3, to=2-4]
	\arrow[from=2-4, to=3-4]
	\arrow[Rightarrow, no head, from=3-4, to=4-4]
	\arrow[Rightarrow, no head, from=2-2, to=2-3]
	\arrow[from=2-3, to=3-3]
	\arrow[from=3-3, to=4-3]
	\arrow[from=1-3, to=2-3]
	\arrow[from=4-3, to=5-3]
	\arrow[from=3-3, to=3-4]
	\arrow[from=4-3, to=4-4]
	\arrow[from=2-2, to=3-2]
	\arrow[from=3-2, to=4-2]
	\arrow[from=4-1, to=4-2]
	\arrow[from=3-1, to=3-2]
	\arrow[from=1-2, to=2-2]
	\arrow[from=4-2, to=5-2]
	\arrow[from=4-2, to=4-3]
	\arrow[from=3-2, to=3-3]
	\arrow[from=4-4, to=4-5]
	\arrow[from=3-4, to=3-5]
\end{tikzcd}\]

\footnotesize
\[\begin{tikzcd}
	& 0 & 0 & 0 \\
	0 & {\bbs^{\alpha^1 \cdot \beta^1 \cdot \gamma^1 , \dots , \alpha^{n-1} \cdot \beta^{n-1} \cdot \gamma^{n-1} , \alpha^n \cdot \beta^n}_{A^1 , \cdots , A^n} \otimes_R \bbs^{\gamma^n}_{A^n}} & {\begin{matrix} \bbs^{\alpha^1 , \dots , \alpha^n}_{A^1 , \cdots , A^n} \otimes_R \\ \bbs^{\beta^1 \cdot \gamma^1 , \dots , \beta^{n-1} \cdot \gamma^{n-1},\beta^n}_{A^1 , \cdots , A^n} \otimes_R \bbs^{\gamma^n}_{A^n} \end{matrix}} & {\bbs^{\alpha^1 \odot \beta^1 \cdot \gamma^1 , \dots , \alpha^{n-1} \odot \beta^{n-1} \cdot \gamma^{n-1} , \alpha^n \odot \beta^n}_{A^1 , \cdots , A^n} \otimes_R \bbs^{\gamma^n}_{A^n}} & 0 \\
	0 & {\bbs^{\alpha^1 \cdot \beta^1 \cdot \gamma^1 , \dots , \alpha^{n-1} \cdot \beta^{n-1} \cdot \gamma^{n-1} , \alpha^n \cdot \beta^n \cdot \gamma^n}_{A^1 , \cdots , A^n} } & {\begin{matrix} \bbs^{\alpha^1 , \dots , \alpha^n}_{A^1 , \cdots , A^n} \otimes_R \\ \bbs^{\beta^1 \cdot \gamma^1 , \dots , \beta^{n-1} \cdot \gamma^{n-1},\beta^n\cdot \gamma^n}_{A^1 , \cdots , A^n}  \end{matrix}} & {\bbs^{\alpha^1 \odot \beta^1 \cdot \gamma^1 , \dots , \alpha^{n-1} \odot \beta^{n-1} \cdot \gamma^{n-1} , \alpha^n \odot \beta^n \cdot \gamma^n}_{A^1 , \cdots , A^n} } & 0 \\
	0 & {\bbs^{\alpha^1 \cdot \beta^1 \odot \gamma^1 , \dots , \alpha^{n-1} \cdot \beta^{n-1} \odot \gamma^{n-1} , \alpha^n \cdot \beta^n \cdot \gamma^n}_{A^1 , \cdots , A^n} } & {\begin{matrix} \bbs^{\alpha^1 , \dots , \alpha^n}_{A^1 , \cdots , A^n} \otimes_R \\ \bbs^{\beta^1 \odot \gamma^1 , \dots , \beta^{n-1} \odot \gamma^{n-1},\beta^n\cdot \gamma^n}_{A^1 , \cdots , A^n}  \end{matrix}} & {\bbs^{\alpha^1 \odot \beta^1 \odot \gamma^1 , \dots , \alpha^{n-1} \odot \beta^{n-1} \odot \gamma^{n-1} , \alpha^n \odot \beta^n \cdot \gamma^n}_{A^1 , \cdots , A^n} } & 0 \\
	& 0 & 0 & 0
	\arrow[from=2-2, to=2-3]
	\arrow[from=2-3, to=2-4]
	\arrow[from=3-2, to=3-3]
	\arrow[from=3-3, to=3-4]
	\arrow[from=4-2, to=4-3]
	\arrow[from=4-3, to=4-4]
	\arrow[from=2-2, to=3-2]
	\arrow[from=2-3, to=3-3]
	\arrow[from=2-4, to=3-4]
	\arrow[from=3-4, to=4-4]
	\arrow[from=3-3, to=4-3]
	\arrow[from=3-2, to=4-2]
	\arrow[from=2-1, to=2-2]
	\arrow[from=3-1, to=3-2]
	\arrow[from=4-1, to=4-2]
	\arrow[from=4-4, to=4-5]
	\arrow[from=3-4, to=3-5]
	\arrow[from=2-4, to=2-5]
	\arrow[from=1-4, to=2-4]
	\arrow[from=1-3, to=2-3]
	\arrow[from=1-2, to=2-2]
	\arrow[from=4-2, to=5-2]
	\arrow[from=4-3, to=5-3]
	\arrow[from=4-4, to=5-4]
\end{tikzcd}\]}
\caption{The commutative diagrams used to prove Lemma \ref{lem:multiSchuriltrationSES}. In the first diagram, all rows are exact (the middle is exact by Lemma \ref{lem:theMultiSchurSES}. Likewise, the third column is clearly exact and the middle column is exact by Lemma \ref{lem:theMultiSchurSES}, so the first column is exact by the long exact sequence of homology. For the second diagram, all of the rows are exact by Lemma \ref{lem:theMultiSchurSES}, and the second and third columns are exact according to the inductive hypothesis. By the long exact sequence of homology, the first column must also be exact.}
\label{fig:proofOfMultiSES}
\end{sidewaysfigure}
\end{proof}

The next lemma can be seen as a ``first approximation" of the filtration in Theorem \ref{thm:multiSchurFactors}.

\begin{lemma}\label{lem:multiSchurFiltFirstStep}
    Let $\alpha^1 , \dots , \alpha^n$ be a sequence of $\ell$-partitioned compositions and $A^1 , \dots , A^n$ be a sequence of Koszul $R$-algebras and $N^i$ (resp. $M^i$) be Koszul left (resp. right) $A^i$-modules for $1 \leq i \leq n$. Then the multi-Schur module $\bbs^{\underline{\alpha}}_{\underline{M} , \underline{A} , \underline{N}}$ admits a canonical filtration with graded pieces of the form
    $$\bbs^{\sigma_I (\alpha^1) , \dots , \sigma_I (\alpha^{n-1})}_{(M^1,A^1,N^1) , \dots , (M^{n-1} , A^{n-1} , N^{n-1})} \otimes_R \bbs^{\nu_{[\ell] \backslash I} (\alpha^n)}_{(M^n,A^n,N^n)},$$
    where $I$ ranges over all subsets of $[\ell-1]$.
\end{lemma}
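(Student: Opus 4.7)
The plan is to proceed by induction on the partition length $\ell$, with Lemma \ref{lem:multiSchuriltrationSES} serving as the engine that produces a short exact sequence whose outer terms can be fed to the inductive hypothesis. The base case $\ell = 1$ is immediate: the only indexing subset is $I = \emptyset$, both $\sigma_\emptyset$ and $\nu_\emptyset$ act as the identity, and the single graded piece factors by the $\ell = 1$ case of the multi-Schur definition as $\bbs^{\alpha^1}_{M^1,A^1,N^1} \otimes_R \cdots \otimes_R \bbs^{\alpha^n}_{M^n,A^n,N^n}$, which coincides with $\bbs^{\underline{\alpha}}_{\underline{M},\underline{A},\underline{N}}$ itself, so the trivial filtration suffices.

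For the inductive step, decompose each $\alpha^i = \alpha^{i,\mathrm{pre}} \cdot p_\ell(\alpha^i)$ as a concatenation of its $(\ell-1)$-partitioned prefix and its final $1$-partitioned piece. Applying Lemma \ref{lem:multiSchuriltrationSES} with $k = \ell - 1$ and $\beta^i := p_\ell(\alpha^i)$ yields a canonical short exact sequence whose middle term is $\bbs^{\underline{\alpha}}_{\underline{M},\underline{A},\underline{N}}$ and whose first and third terms are multi-Schur modules built from $(\ell-1)$-partitioned tuples (the first term also carrying an extra tensor factor $\bbs^{p_\ell(\alpha^n)}_{(M^n,A^n,N^n)}$). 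Apply the inductive hypothesis separately to each of these outer terms to produce canonical filtrations whose graded pieces are indexed by subsets of $[\ell-2]$.

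The key matching argument then bifurcates. For the third term, a graded piece indexed by $I_3 \subseteq [\ell-2]$ corresponds to the lemma's target piece for $I := I_3 \cup \{\ell-1\} \subseteq [\ell-1]$, by the associativity identity $\sigma_{I_3}(\alpha^{i,\mathrm{pre}} \odot p_\ell(\alpha^i)) = \sigma_I(\alpha^i)$ for $i \leq n-1$ together with the observation that $\nu_{[\ell-2] \setminus I_3}$ of $\alpha^n$ viewed as $(\ell-1)$-partitioned coincides, as a (possibly disconnected) ribbon diagram, with $\nu_{[\ell-1] \setminus I}(\alpha^n)$. For the first term, a graded piece indexed by $I_1 \subseteq [\ell-2]$ corresponds to the lemma's target piece for $I := I_1$ (which does not contain $\ell-1$), after absorbing the auxiliary tensor factor $\bbs^{p_\ell(\alpha^n)}_{(M^n,A^n,N^n)}$ into $\bbs^{\nu_{[\ell-2] \setminus I_1}(\alpha^{n,\mathrm{pre}})}_{(M^n,A^n,N^n)}$ via the disconnected-ribbon convention of Notation \ref{not:disconnectedRibbons}. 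Together, the two families of indices exhaust all subsets $I \subseteq [\ell-1]$.

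Finally, splice the two filtrations using Observation \ref{obs:refiningCompositions}: pull the third term's filtration back along the surjection in the SES to obtain a filtration of $\bbs^{\underline{\alpha}}_{\underline{M},\underline{A},\underline{N}}$ whose bottom piece is the first term of the SES, then prepend the first term's own filtration as a refinement. The canonicity of the SES from Lemma \ref{lem:multiSchuriltrationSES} together with the canonicity of the filtrations produced at lower $\ell$ transfers to the resulting filtration, and the argument is evidently compatible with any ambient group action. The main obstacle throughout is the delicate bookkeeping of partition structures: one must verify that the near-concatenation identities track correctly between the $\ell$- and $(\ell-1)$-partitioned settings, and that the tensor factors arising from $\beta^n = p_\ell(\alpha^n)$ recombine with the $\nu_I$ operations to produce exactly the ribbon diagrams appearing in the target graded pieces.
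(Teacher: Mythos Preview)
Your proposal is correct and follows essentially the same approach as the paper: induction on $\ell$, peel off the last piece $p_\ell(\alpha^i)$ of each composition, apply Lemma \ref{lem:multiSchuriltrationSES} to obtain the short exact sequence, invoke the inductive hypothesis on the two outer terms, and match the resulting subsets of $[\ell-2]$ with the subsets $I\subset[\ell-1]$ according to whether $\ell-1\in I$. The only cosmetic difference is that you invoke Observation \ref{obs:refiningCompositions} to splice the two filtrations through the short exact sequence, whereas the paper phrases this step via Corollary \ref{cor:splicingFiltrations}; your citation is in fact the more natural one for this particular splicing.
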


\begin{proof}
    Proceed by induction on $\ell$, where the base case $\ell = 1$ is trivial. Assume now that $\ell > 1$ and write each $\alpha^i = \beta^i \cdot \gamma^i$, where $\gamma^i := p_\ell (\alpha^i)$. By Lemma \ref{lem:multiSchuriltrationSES} there is a canonical short exact sequence
    $$0 \to \bbs^{\beta^1 \cdot \gamma^1 , \dots , \beta^{n-1} \cdot \gamma^{n-1} , \beta^n}_{\underline{M} , \underline{A} , \underline{N}} \otimes_R \bbs^{\gamma^n}_{M^n,A^n,N^n} $$
    $$\to \bbs^{\beta^1 \cdot \gamma^1 , \dots , \beta^n \cdot \gamma^n}_{\underline{M} , \underline{A} , \underline{N}} \to \bbs^{\beta^1 \odot \gamma^1, \dots , \beta^{n-1} \odot \gamma^{n-1} , \beta^n \cdot \gamma^n}_{\underline{M} , \underline{A} , \underline{N}} \to 0.$$
    By the inductive hypothesis, the multi-Schur module $\bbs^{\beta^1 \cdot \gamma^1 , \dots , \beta^{n-1} \cdot \gamma^{n-1} , \beta^n}_{\underline{M} , \underline{A} , \underline{N}}$ admits a filtration with graded pieces of the form
    $$\bbs^{\sigma_I (\alpha^1) , \dots , \sigma_I (\alpha^{n-1})}_{(M^1,A_1,N^1) , \dots ,(M^{n-1} ,  A^{n-1} , N^{n-1})} \otimes_R \bbs^{\nu_{[\ell-1] \backslash I} (\beta^n)}_{(M^n, A^n , N^n)},$$
    where $I \subset [\ell - 2]$. Likewise, the multi-Schur module $ \bbs^{\beta^1 \odot \gamma^1, \dots , \beta^{n-1} \odot \gamma^{n-1} , \beta^n \cdot \gamma^n}_{\underline{M} , \underline{A} , \underline{N}}$ admits a filtration with graded pieces of the form
    $$\bbs^{\sigma_J (\beta^1 \odot \gamma^1) , \dots , \sigma_J (\beta^{n-1} \odot \gamma^{n-1})}_{(M^1, A_1 , N^1) , \dots , (M^{n-1} , A^{n-1} , N^{n-1})} \otimes_R \bbs^{\nu_{[\ell-1] \backslash J} (\beta^n \cdot \gamma^n)}_{(M^n, A^n, N^n)},$$
    where $J \subset [\ell -2]$. Note that ranging over all $I \subset [\ell-2]$ as in the first case is the same as ranging over all $I \subset [\ell-1]$ with $\ell-1 \notin I$, and ranging over all $J$ as in the second case is equivalent to ranging over all $J \subset [\ell-1]$ with $\ell-1 \in J$. Combining both of these filtrations with Corollary \ref{cor:splicingFiltrations} yields the result. 
\end{proof}

\begin{lemma}\label{lem:tensorFiltration}
    Let $\alpha$ be any $\ell$-partitioned composition. Let $A$ be a Koszul $R$-algebra and $N$ (resp. $N$) any left (resp. right) Koszul $A$-module. For any $I \subset [\ell-1]$, the module $\bbs^{\nu_I (\alpha)}_{M,A,N}$ admits a canonical filtration with associated graded pieces of the form
    $$\bbs^{\sigma_J (\alpha)}_{M,A,N},$$
    where $J \subset [\ell-1]$ ranges over all subsets with $J \subseteq I$. 
\end{lemma}

\begin{proof}
    Proceed by induction on $|I|$, with base case $I = \varnothing$ being vacuous since there is an equality $\bbs^{\nu_{\varnothing} (\alpha)}_{M,A,N} = \bbs^{\sigma_\varnothing (\alpha)}_{M,A,N}$. For $|I| > 0 $, let $j \in I$ be the largest element of $I$. Write $\alpha = p_{< j} (\alpha) \cdot p_{\geq j} (\alpha)$ and consider the short exact sequence
    $$0 \to \bbs^{\nu_{I \backslash j} (\alpha)}_{M,A,N} \to \bbs^{\nu_{I} (\alpha)}_{M,A,N} \to \bbs^{\nu_{I \backslash j} ( p_{< j} (\alpha) \odot p_{\geq j} (\alpha))}_{M , A ,N} \to 0.$$
    Let us consider the filtrations of the outer two terms: by the inductive hypothesis, $\bbs^{\nu_{I \backslash j} (\alpha)}_{M,A,N}$ has a filtration with graded pieces of the form
    $$\bbs^{\sigma_{K} (\alpha)}_A,$$
    where $K \subset [\ell -1 ]$ ranges over all subsets with $K \subset I \backslash j$. Likewise, the term $\bbs^{\nu_{I \backslash j} ( p_{< j} (\alpha) \odot p_{\geq j} (\alpha))}_{M , A ,N}$ has a filtration with graded pieces of the form
    $$\bbs^{\sigma_{L} ( p_{< j} (\alpha) \odot p_{\geq j} (\alpha))}_{M , A ,N},$$
    where $L \subset [\ell - 2]$ ranges over all subsets $L \subset s_j (I \backslash j)$. Notice that $p_{< j} (\alpha) \odot p_{\geq j} (\alpha))$ is the same as $\sigma_j (\alpha)$, in which case ranging over all $K$ and $L$ as above is the same as just ranging over all subsets $J \subset [\ell -1]$ with $J \subset I$. The result thus follows from Corollary \ref{cor:splicingFiltrations}.
\end{proof}

\begin{example}
    Let $A = S(V)$, the symmetric algebra on some free $R$-module $V$. Then $A$ is a $\gl (V)$-representation and Lemma \ref{lem:tensorFiltration} implies that there is a $\gl (V)$-equivariant filtration of the tensor power $V^{\otimes d}$ with graded pieces of the form $\bbs^{\alpha}_A$, where $\alpha$ ranges over all compositions of $d$ (since $V^{\otimes d} = \bbs^{\nu_{[d-1]} (1^d)}_{S(V)}$).

    Assuming $R$ is a field of characteristic $0$, the ring $R [ \gl (V)]$ is semisimple and hence this yields a $\gl(V)$-equivariant direct sum decompositions
    $$V^{\otimes d} = \bigoplus_{|\alpha|=d} \bbs^{\alpha}_A.$$
\end{example}

Finally, we arrive at the main result of this section; this result furnishes the multi-Schur functors with a canonical filtration whose associated graded pieces are easily described as tensor products of the ribbon Schur functors of Section \ref{sec:ribbonSchurs}.

\begin{theorem}\label{thm:multiSchurFactors}
        Let $\alpha^1 , \dots , \alpha^n$ be a sequence of $\ell$-partitioned compositions and $A^1 , \dots , A^n$ be a sequence of Koszul $R$-algebras and $N^i$ (resp. $M^i$) be Koszul left (resp. right) $A^i$-modules for $1 \leq i \leq n$. Then the multi-Schur module $\bbs^{\underline{\alpha}}_{\underline{M} , \underline{A} , \underline{N}}$ admits a canonical filtration with graded pieces of the form
        $$\bbs^{\sigma_{I_1} (\alpha^1)}_{(M^1,A^1,N^1)} \otimes_R \bbs^{\sigma_{I_2} (\alpha^2)}_{(M^2, A^2, N^2)} \otimes_R \cdots \otimes_R \bbs^{\sigma_{I_n} (\alpha^n)}_{(M^n,A^n, N^n)},$$
        where the subsets $I_1, \dots , I_n \subset [\ell -1 ]$ range over all choices such that $I_1 \cap I_2 \cap \cdots \cap I_n = \varnothing$. 
\end{theorem}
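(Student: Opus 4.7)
The plan is to prove this by induction on $n$, the number of components in the multi-Schur tuple, with the base case $n=1$ trivial: the only subset $I_1 \subseteq [\ell-1]$ with $I_1 = \varnothing$ gives the single graded piece $\bbs^{\sigma_{\varnothing}(\alpha^1)}_{M^1,A^1,N^1} = \bbs^{\alpha^1}_{M^1,A^1,N^1}$ itself, so the filtration is the trivial one.

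For the inductive step, I would first apply Lemma \ref{lem:multiSchurFiltFirstStep} to produce a canonical filtration of $\bbs^{\underline{\alpha}}_{\underline{M},\underline{A},\underline{N}}$ whose graded pieces take the form
$$\bbs^{\sigma_I(\alpha^1),\,\dots,\,\sigma_I(\alpha^{n-1})}_{(M^1,A^1,N^1),\,\dots,\,(M^{n-1},A^{n-1},N^{n-1})} \otimes_R \bbs^{\nu_{[\ell-1]\setminus I}(\alpha^n)}_{(M^n,A^n,N^n)},$$
indexed by $I \subseteq [\ell-1]$. Applying the inductive hypothesis to the first tensor factor (a multi-Schur on $n-1$ Koszul triples, with each $\sigma_I(\alpha^k)$ viewed as an $(\ell-|I|)$-partitioned composition) yields a filtration with graded pieces
$$\bbs^{\sigma_{J_1}\sigma_I(\alpha^1)}_{(M^1,A^1,N^1)} \otimes_R \cdots \otimes_R \bbs^{\sigma_{J_{n-1}}\sigma_I(\alpha^{n-1})}_{(M^{n-1},A^{n-1},N^{n-1})}$$
indexed by tuples $(J_1,\dots,J_{n-1})$ with $J_k \subseteq [\ell-|I|-1]$ and $J_1 \cap \cdots \cap J_{n-1} = \varnothing$, while Lemma \ref{lem:tensorFiltration} filters the last factor $\bbs^{\nu_{[\ell-1]\setminus I}(\alpha^n)}_{(M^n,A^n,N^n)}$ with graded pieces $\bbs^{\sigma_{I_n}(\alpha^n)}_{(M^n,A^n,N^n)}$ for $I_n \subseteq [\ell-1]\setminus I$. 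Splicing these two refinements via Corollary \ref{cor:splicingFiltrations} yields a filtration of each outer graded piece, hence a filtration of the whole module whose graded pieces are tensor products of single ribbon Schur modules.

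The main work then becomes verifying that the resulting parametrization matches the claimed one. Under the order-preserving bijection $[\ell-|I|-1] \cong [\ell-1]\setminus I$ that records which commas of the $\ell$-partition survive after applying $\sigma_I$, the identity $\sigma_{J_k}\circ\sigma_I = \sigma_{I \cup \tilde{J}_k}$ (where $\tilde{J}_k \subseteq [\ell-1]\setminus I$ corresponds to $J_k$) lets us define $I_k := I \cup \tilde{J}_k$ for $k < n$. Then $I \subseteq I_1 \cap \cdots \cap I_{n-1}$, while $(I_1 \cap \cdots \cap I_{n-1}) \setminus I = \tilde{J}_1 \cap \cdots \cap \tilde{J}_{n-1} = \varnothing$, so $I_1 \cap \cdots \cap I_{n-1} = I$ exactly; combined with $I_n \cap I = \varnothing$, this gives $I_1 \cap \cdots \cap I_n = \varnothing$. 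Conversely, any tuple $(I_1,\dots,I_n)$ satisfying $I_1 \cap \cdots \cap I_n = \varnothing$ arises uniquely by setting $I := I_1 \cap \cdots \cap I_{n-1}$, $\tilde{J}_k := I_k \setminus I$, and keeping $I_n$ (which is disjoint from $I$ by the hypothesis). This bijection identifies the parametrization.

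The hard part is this combinatorial bookkeeping across two nested layers of filtration: at each induction step, the operation $\sigma_I$ relabels which partition commas are available for further merging, and correctly identifying the image of each $J_k$ inside $[\ell-1]$ is what allows the translation between the ``disjoint-union-plus-constraint'' parametrization coming out of the induction and the clean ``empty-intersection'' description in the theorem statement. Equivariance with respect to any ambient group actions follows automatically, since Lemma \ref{lem:multiSchurFiltFirstStep}, Lemma \ref{lem:tensorFiltration}, and Corollary \ref{cor:splicingFiltrations} all produce canonical morphisms preserving whatever compatible group symmetries are present.
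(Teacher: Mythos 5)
Your proof is correct and follows essentially the same route as the paper: a filtration from Lemma \ref{lem:multiSchurFiltFirstStep}, the inductive hypothesis applied to the first $n-1$ tensor factors, Lemma \ref{lem:tensorFiltration} applied to the last, and a bijection check identifying the parametrizing sets. The one minor deviation is your choice of base case: you start the induction at $n=1$ (trivial), which lets the $n=2$ case fall out of the same inductive step, whereas the paper takes $n=2$ as the base case and handles it by a separate nested induction on $\ell$ using Lemma \ref{lem:multiSchuriltrationSES}; your arrangement is a small streamlining but not a fundamentally different argument. You also spell out the relabeling $\sigma_{J_k}\circ\sigma_I = \sigma_{I\cup\tilde J_k}$ and the resulting bijection between $\{(I,\tilde J_1,\dots,\tilde J_{n-1},I_n)\}$ and $\{(I_1,\dots,I_n): \bigcap I_k=\varnothing\}$ more explicitly than the paper does, which is a helpful clarification of bookkeeping the paper leaves implicit.
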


\begin{remark}
    Notice that the parameterizing set for the associated graded pieces is inherently symmetric under permuting the tensor factors, which is expected by the invariance of the multi-Schur module on the ordering of the inputs.

    In general, the number of ways to choose $n$ subsets of $[\ell-1]$ with no common intersection is $(2^n-1)^{\ell-1}$, so the number of associated graded pieces grows exponentially with respect to both $n$ and $\ell$. 
\end{remark}

\begin{remark}
    The adjective ``canonical"  used in Theorem \ref{thm:multiSchurFactors} means that this filtration is really true at the level of functors; in other words, the functor $\bbs^{\underline{\alpha}}$ which takes as inputs $n$-tuples of Koszul $R$-algebras and outputs the associated multi-Schur module admits a canonical filtration whose associated graded pieces are given by the functors of the form
    $$\bbs^{\sigma_{I_1} (\alpha^1)} \otimes_R \bbs^{\sigma_{I_2} (\alpha^2)} \otimes_R \cdots \otimes_R \bbs^{\sigma_{I_n} (\alpha^n)}.$$
\end{remark}



\begin{proof}[Proof of Theorem \ref{thm:multiSchurFactors}]
     Proceed by induction on $n$ (the number of Koszul algebras), with base case $n = 2$. To prove the base case, we induct on $\ell$ (where the case $\ell = 1$ is vacuous). Let $\ell > 1$ and write $\alpha = \alpha' \cdot p_\ell (\alpha)$ and $\beta = \beta' \cdot p_\ell (\beta)$. By Lemma \ref{lem:multiSchuriltrationSES} there is a short exact sequence
    $$0 \to \bbs^{\alpha , \beta'}_{A,B} \otimes_R \bbs^{p_\ell (\beta)}_B \to \bbs^{\alpha,\beta}_{A,B} \to  \bbs^{\alpha' \odot p_\ell (\alpha) , \beta}_{A,B} \to 0.$$
    By the inductive hypothesis, the outer $2$ terms admit filtrations with graded pieces of the correct form. This establishes the base case.

    Assume now that $n > 2$. By Lemma \ref{lem:multiSchurFiltFirstStep} the multi-Schur module $\bbs^{\alpha^1 , \dots , \alpha^n}_{A^1 , \dots , A^n}$ admits a canonical filtration with associated graded pieces of the form
    $$\bbs^{\sigma_I (\alpha^1) , \dots , \sigma_I (\alpha^{n-1})}_{A^1 , \dots , A^{n-1}} \otimes_R \bbs^{\nu_{[\ell] \backslash I} (\alpha^n)}_{A^n},$$
    where $I$ ranges over all subsets of $[\ell-1]$. By the inductive hypothesis, each of the modules $\bbs^{\sigma_I (\alpha^1) , \dots , \sigma_I (\alpha^{n-1})}_{A^1 , \dots , A^{n-1}}$ admits a canonical filtration with associated graded pieces of the form
    $$\bbs^{\sigma_{I_1} (\alpha^1)}_{A^1} \otimes_R \bbs^{\sigma_{I_2} (\alpha^2)}_{A^2} \otimes_R \cdots \otimes_R \bbs^{\sigma_{I_{n-1}} (\alpha^{n-1})}_{A^{n-1}},$$
    where the above ranges over all choices of $I_1 , \dots , I_{n-1}$ with $I_1 \cap \cdots \cap I_{n-1} = I$. On the other hand, the module $\bbs^{\nu_{[\ell-1] \backslash I} (\alpha^n)}_{A^n}$ admits a canonical filtration with associated graded pieces of the form $\bbs^{\sigma_{I_n} (\alpha^n)}_{A^n}$, where $I_n$ ranges over all subsets $I_n \subset [\ell-1]$ such that $I_n \cap I = \varnothing$. This is overall the same thing as ranging over all choices $I_1 , \dots , I_n \subset [\ell -1]$ such that $I_1 \cap \cdots \cap I_n = \varnothing$. 
\end{proof}

\begin{example}
    Let $A$ and $B$ be Koszul algebras. Let us use the argument of Theorem \ref{thm:multiSchurFactors} to filter the multi-Schur module $\bbs^{(1^3),(1^3)}_{A,B}$, helping to illustrate the idea of the proof in a concrete setting. Recall by Example \ref{ex:theSESExample} that we have the short exact sequence
    $$\bbs^{(1^3),(1^2)}_{A , B} \otimes_R B_1 \to \bbs^{(1^3),(1^3)}_{A , B} \to \bbs^{(1,2),(1^3)}_{A, B} \to 0.$$
    Iteratively applying this sequence to the outer two terms yields
    $$0 \to A_3 \otimes_R (B_1)^{\otimes 3} \to \bbs^{(1^3),(1^2)}_{A , B} \to \bbs^{(2,1)}_A \otimes_R \bbs^{(1^2)}_B \otimes_R B_1 \to 0,$$
    $$0 \to \bbs^{(1,2)}_A \otimes_R B_1 \otimes_R \bbs^{(1^2)}_A \to \bbs^{(1,2),(1^3)}_{A, B} \to A_3 \otimes_R \bbs^{(1^3)}_B \to 0.$$
    The terms involving $B$ appearing on the ends of these short exact sequences are precisely the filtration factors of Lemma \ref{lem:multiSchurFiltFirstStep}, and are filtered further by Lemma \ref{lem:tensorFiltration}, yielding the filtration factors of Theorem \ref{thm:multiSchurFactors}.
\end{example}

\begin{remark}
    Let $2^{[\ell -1]}$ denote the Boolean poset on $n$ elements. Then the product $(2^{[\ell-1]})^{\times n}$ is naturally a ranked poset (in fact, isomorphic to the Boolean poset $2^{[\ell-1] \times [n]}$), and the set $\cat{S}$ of all tuples $(I_1 , \dots , I_n)$ with $I_1 \cap \cdots \cap I_n = \varnothing$ is a ranked subposet. Choosing any total order $<$ refining the partial order on this subposet, the filtration of the multi-Schur module $\bbs^{\alpha^1 , \dots , \alpha^n}_{A^1 , \dots , A^n}$ is parameterized by $<$. In other words, the filtration is of the form
    $$\{ F_{(I_1 , \dots , I_n)} \}_{(I_1 , \dots , I_n) \in \cat{S}},$$
    with
    $$F_{(I_1 , \dots , I_n)}/F_{\text{pred} (I_1 , \dots , I_n)} \cong \bbs^{\sigma_{I_1} (\alpha^1)}_{A^1} \otimes_R \cdots \otimes_R \bbs^{\sigma_{I_n} (\alpha^n)}_{A^n}.$$
    In the above, $\text{pred}$ denotes the predecessor function (ie, the largest element strictly smaller with respect to $<$).
\end{remark}

\begin{example}\label{ex:mSchurFiltrationEx1}
Let $\alpha = (1,2) \cdot (2) \cdot (1)$ and $\beta = (3) \cdot (4,2) \cdot (2)$ be two $3$-partitioned compositions and let us compute the composition factors of the multi-Schur module $\bbs^{\alpha,\beta}_{A,B}$ for any two Koszul algebras $A$ and $B$. The subposet (in fact, meet semilattice) of $(2^{[2]})^{\times 2}$ that parametrizes the filtration factors has Hasse diagram
\[\begin{tikzcd}
	{(\{1,2\} , \varnothing)} & {(\{1 \} , \{ 2 \})} & {(\{2 \} , \{ 1 \})} & {(\varnothing , \{1,2 \})} \\
	{(\{ 1 \} , \varnothing)} & {( \{ 2 \} , \varnothing)} & {(\varnothing , \{ 1 \} )} & {(\varnothing , \{ 2 \} )} \\
	& {(\varnothing,\varnothing)}
	\arrow[no head, from=3-2, to=2-1]
	\arrow[no head, from=3-2, to=2-2]
	\arrow[no head, from=3-2, to=2-3]
	\arrow[no head, from=3-2, to=2-4]
	\arrow[no head, from=2-1, to=1-1]
	\arrow[no head, from=2-1, to=1-2]
	\arrow[no head, from=1-1, to=2-2]
	\arrow[no head, from=2-2, to=1-3]
	\arrow[no head, from=2-3, to=1-4]
	\arrow[no head, from=2-4, to=1-4]
	\arrow[no head, from=2-4, to=1-2]
	\arrow[no head, from=2-3, to=1-3]
\end{tikzcd}\]
This translates to filtration factors of the following form:
\[\begin{tikzcd}
	{\bbs^{(1,5)}_A \otimes_R \bbs^{(3,4,2,2)}_B} & {\bbs^{(1,4,1)}_A \otimes_R \bbs^{(3,4,4)}_B} & {\bbs^{(1,2,3)}_A \otimes_R \bbs^{(7,2,2)}_B} & {\bbs^{(1,2,2,1)}_A\otimes_R \bbs^{(7,4)}_B} \\
	{\bbs^{(1,4,1)}_A \otimes_R \bbs^{(3,4,2,2)}_B} & {\bbs^{(1,2,3)}_A \otimes_R \bbs^{(3,4,2,2)}_B} & {\bbs^{(1,2,2,1)}_A \otimes_R \bbs^{(7,2,2)}_B} & {\bbs^{(1,2,2,1)}_A \otimes_R \bbs^{(3,4,4)}_B} \\
	& {\bbs^{(1,2,2,1)}_A \otimes_R \bbs^{(3,4,2,2)}_B}
	\arrow[no head, from=3-2, to=2-1]
	\arrow[no head, from=3-2, to=2-2]
	\arrow[no head, from=3-2, to=2-3]
	\arrow[no head, from=3-2, to=2-4]
	\arrow[no head, from=2-1, to=1-1]
	\arrow[no head, from=2-1, to=1-2]
	\arrow[no head, from=1-1, to=2-2]
	\arrow[no head, from=2-2, to=1-3]
	\arrow[no head, from=2-3, to=1-4]
	\arrow[no head, from=2-4, to=1-4]
	\arrow[no head, from=2-4, to=1-2]
	\arrow[no head, from=2-3, to=1-3]
\end{tikzcd}\]
If for instance $A = B = S(V)$, the symmetric algebra, then the above filtration is also $\gl (V) \times \gl (V)$ equivariant (and in characteristic $0$ yields a direct sum decomposition).
\end{example}

\section{Applications}\label{sec:applications}

The following section is the reward for enduring the technical details of Sections \ref{sec:ribbonSchurs} and \ref{sec:multiSchurFunctors}; we are able to arrive at the other end with a rather robust theory that allows us to give elegant and simple closed form descriptions of higher derived invariants associated to (Veronese/Segre subalgebras of) Koszul algebras (see also \cite{BarcanescuManolache}). In subsection \ref{subsec:buildingKoszulModules} we show how to use this theory to build a large class of Koszul modules over an arbitrary Koszul algebra $A$, and specialize to the case of a polynomial ring to prove a uniform, characteristic-free regularity result for certain classes of vector bundles on projective space.

\subsection{Tor and Ext}\label{subsec:TorAndExt}

In this section we prove the following theorem, which gives some concise descriptions of Tor and Ext between pairs of Koszul modules in terms of ribbon Schur functors.

\begin{theorem}\label{thm:explicitTorExt}
    Let $A$ be a Koszul algebra and $M$ (resp. $N$) a Koszul right (resp. left) $A$-module of initial degree $t$ (resp. $s$). Then there is a canonical isomorphism of $A$-modules
    $$\tor_i^A (M,N) \cong \bbs^{(1^{i})}_{M,A,N} \quad \text{for all} \ i > 0$$
    If $M$ is instead a Koszul left $A$-module, then there is a canonical isomorphism
    $$\ext^i_A (M,N) \cong \bbs^{(i)}_{M^!,A^!,(N^*)^!} \quad \text{for all} \ i >0.$$ 
    In particular, both of the above modules are flat $R$-modules annihilated by $A_+$.
\end{theorem}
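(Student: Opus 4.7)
The plan is to first establish the Tor isomorphism using the Priddy resolution and then derive the Ext isomorphism by dualization. By Theorem \ref{thm:thePriddyCxWorks}, the Priddy complex $A \otimes_R (N^!)^*_\bullet$ is a flat resolution of $N$ over $A$, so tensoring with $M$ over $A$ yields $M \otimes_R (N^!)^*_\bullet$, whose homology computes $\tor_\bullet^A(M,N)$. Using the identification $(N^!)^*_{s+j} = \bbs^{(1^j)}_{A,N}$, the homological-degree-$j$ term becomes $M \otimes_R \bbs^{(1^j)}_{A,N}$. Koszulness of $M$ and $N$ forces $\tor_i^A(M,N)$ to be concentrated in internal degree $t+s+i$, and in that internal degree the homological-degree-$k$ piece is $M_{t+i-k} \otimes_R \bbs^{(1^k)}_{A,N}$, which is nonzero only for $0 \leq k \leq i$.

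In particular, the homological-degree-$(i+1)$ term of this internal strand vanishes (since $M_{t-1} = 0$), so the target homology equals the kernel of the differential $M_t \otimes_R \bbs^{(1^i)}_{A,N} \to M_{t+1} \otimes_R \bbs^{(1^{i-1})}_{A,N}$. This differential is induced by the bar differential and reduces to the multiplication $M_t \otimes_R A_1 \to M_{t+1}$ acting on the leftmost factor of $\bbs^{(1^i)}_{A,N} \subseteq A_1^{\otimes i} \otimes_R N_s$. Hence an element of $M_t \otimes_R \bbs^{(1^i)}_{A,N}$ lies in the kernel precisely when it additionally satisfies the leftmost Koszul relation; applying Observation \ref{obs:intersectingSchurs} with $\alpha = \varnothing$, $\beta = (1)$, and $\gamma = (1^{i-1})$ identifies this intersection with $\bbs^{(1^i)}_{M,A,N}$, which establishes the Tor formula.

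For the Ext statement I would then dualize: the final proposition of Section \ref{sec:augmentedBarComplexes} gives $\tor_i^A(N^*, M)^* \cong \ext^i_A(M, N^{**}) \cong \ext^i_A(M,N)$, using that $N^*$ is a Koszul right $A$-module by Observation \ref{obs:gradedDualKoszul} and that reflexivity of the graded components of $N$ ensures $N^{**} \cong N$. Applying the Tor formula just proved then yields $\tor_i^A(N^*, M) \cong \bbs^{(1^i)}_{N^*, A, M}$, and dualizing via Corollary \ref{cor:doubleModuleInputProperties}(3) together with the identity $(1^i)^t = (i)$ produces $(\bbs^{(1^i)}_{N^*, A, M})^* \cong \bbs^{(i)}_{M^!, A^!, (N^*)^!}$, giving the Ext formula. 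The final claims are immediate: $R$-flatness of both modules is Corollary \ref{cor:doubleModuleInputProperties}(1), while annihilation by $A_+$ is automatic because the Priddy-based construction presents Tor and Ext as plain $R$-modules (the outer $A$-action is canceled upon tensoring and homming). The only genuine technical step is the kernel identification in the Tor computation, which rests entirely on the intersection formula of Observation \ref{obs:intersectingSchurs}.
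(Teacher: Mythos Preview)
Your overall strategy coincides with the paper's---use the Priddy resolution of $N$ for Tor, then dualize for Ext---and your kernel computation in the minimal internal degree via Observation~\ref{obs:intersectingSchurs} matches exactly. The gap is the sentence ``Koszulness of $M$ and $N$ forces $\tor_i^A(M,N)$ to be concentrated in internal degree $t+s+i$.'' This is not established anywhere earlier in the paper and is the substantive part of the argument: in the strand where $M$ contributes in degree $j>t$, the incoming term $M_{j-1}\otimes_R\bbs^{(1^{i+1})}_{A,N}$ is nonzero, so vanishing of $H_i$ there requires showing $\im(d_{i+1})=\ker(d_i)$. The paper does this explicitly by identifying $\ker(d_i)=\bbs^{(1^i)}_{M_{\geq j},A,N}$ for every $j$ (the same intersection argument, now applied to the truncation $M_{\geq j}$), and then exhibiting a surjection $M_{j-1}\otimes_R\bbs^{(1^{i+1})}_{A,N}\twoheadrightarrow\bbs^{(1^i)}_{M_{\geq j},A,N}$, which is an instance of the concatenation/near-concatenation sequence for the Koszul right module $M_{\geq j-1}$. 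This image step is where Koszulness of $M$ (through Corollary~\ref{cor:truncationsAreKoszul}) genuinely enters; your kernel computation alone uses only the Koszulness of $N$.

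A secondary point: you justify $\ext^i_A(M,N)\cong\tor_i^A(N^*,M)^*$ by invoking $N^{**}\cong N$ via reflexivity of the graded pieces. The paper's standing hypothesis is only that each graded piece is $R$-flat, which does not imply reflexivity in general. The paper makes the same identification without comment, so this is not a defect relative to the source, but strictly speaking the Ext statement tacitly requires each $N_j$ to be finitely generated projective (or otherwise reflexive).
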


\begin{proof}
    By the definition of Tor combined with Theorem \ref{thm:thePriddyCxWorks}, the module $\tor_i^A (M,N)$ may be computed by looking at the homology of the complex
    $$\cdots \to M \otimes_R \bbs^{(1^{i+1})}_{A,N} \to M \otimes_R \bbs^{(1^i)}_{A,N} \to M \otimes_R \bbs^{(1^{i-1})}_{A,N} \to \cdots.$$
    Splitting this complex into graded pieces, there is a commutative diagram:
\[\begin{tikzcd}
	{M_j \otimes_R \bbs^{(1^i)}_{A,N}} && {M_{j+1} \otimes_R \bbs^{(1^{i-1})}_{A,N}} \\
	& {M_j \otimes_R A_1 \otimes_R \bbs^{(1^{i-1})}_{A.N}} \\
	{\bbs^{(1)}_{M_{\geq j},A} \otimes_R \bbs^{(1^{i-1})}_{A,N}}
	\arrow["{(d_i)_{j+i+s}}", from=1-1, to=1-3]
	\arrow[hook, from=1-1, to=2-2]
	\arrow[two heads, from=2-2, to=1-3]
	\arrow[hook, from=3-1, to=2-2]
\end{tikzcd}\]
This implies that there is an equality
$$\ker (d_i)_{i+j+s} = \left( M_j \otimes_R \bbs^{(1^i)}_{A,N} \right) \cap \left( \bbs^{(1)}_{M_{\geq j} , A} \otimes_R \bbs^{(1^{i-1})}_{A,N} \right).$$
By Observation \ref{obs:intersectingSchurs}, this latter intersection is precisely $\bbs^{(1^i)}_{(M_{\geq j} , A , N)}$. On the other hand, by definition of the Priddy differential there is also a commutative diagram for all $j > s$:
\[\begin{tikzcd}
	{M_{j-1} \otimes_R \bbs^{(1^{i+1})}_{A,N}} & {M_j \otimes_R \bbs^{(1^i)}_{A,N}} \\
	{\bbs^{(1^i)}_{M_{\geq j},A,N}} & {M_j \otimes_R A_1 \otimes_R \bbs^{(1^{i-1})}_{A,N}}
	\arrow["{(d_{i+1})_{j+i+s}}", from=1-1, to=1-2]
	\arrow[hook, from=1-2, to=2-2]
	\arrow[two heads, from=1-1, to=2-1]
	\arrow[hook, from=2-1, to=2-2]
\end{tikzcd}\]
This implies that there is also an equality
$$\im (d_{i+1})_{j+i+s} = \bbs^{(1^i)}_{M_{\geq j} , A , N}.$$
Putting both of the above equalities together, it follows that
$$\tor_i^A (M , N)_{i+j+s} = \begin{cases}
\bbs^{(1^i)}_{M , A , N} & \text{if} \ j = s, \\
0 & \text{otherwise}.
\end{cases}$$

To prove the isomorphism for Ext, recall first that there is a canonical isomorphism
$$\ext^i_A (M , N) \cong \left( \tor_i^A (N^* , M) \right)^*.$$
By Observation \ref{obs:gradedDualKoszul}, the graded dual $N^*$ is a Koszul right $A$-module, and by the isomorphism just proved for Tor there is an isomorphism
$$\tor_i^A (N^* , M) \cong \bbs^{(1^i)}_{N^*,A,M}.$$
Dualizing and using the isomorphism of Corollary \ref{cor:doubleModuleInputProperties}(3), the result follows immediately.
\end{proof}

\begin{example}
    Assume that $A$ is any Koszul algebra and recall that
    $$\bbs_{A,A_+^d}^{(1^i)} = \bbs^{(1^i,d)}_A.$$
    By Theorem \ref{thm:thePriddyCxWorks}, the minimal free resolution of $A_+^d$ thus has the form
    $$\cdots \to A \otimes_R \bbs^{(1^i,d)}_A \to A \otimes_R \bbs^{(1^{i-1},d)}_A \to \cdots \to A \otimes_R A_d \to A_+^d.$$
    The ribbon Schur module $\bbs^{(1^i,d)}_{A}$ may be presented as the cokernel of the composition
    $$(A^!)^*_{i+2} \otimes_R A_{d-2} \to (A^!)^*_{i+1} \otimes_R A_1 \otimes_R A_{d-2} \to (A^!)^*_{i+1} \otimes_R A_{d-1},$$
    in which case we see that there is an isomorphism with the ribbon Schur functor:
    $$\bbs^{(1^i,d)}_{A} \cong L^A_{i-1,d-1},$$
    where the module $L^A_{i+1,d-1}$ is defined as in \cite{faber2020canonical}. Thus Theorem \ref{thm:explicitTorExt} at least recovers the minimal free resolution of powers of the maximal ideal of a Koszul algebra constructed in \cite{faber2020canonical}.
\end{example}

\begin{cor}
    Let $A$ be any Koszul algebra such that $\bbs^{(1^i)}_{A} = 0$ for all $i > 1$. Then every Koszul module over $A$ is a flat $R$-module. If $A$ is $R$-projective, then every Koszul module over $A$ is $R$-projective.
\end{cor}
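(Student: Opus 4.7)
My approach is to use the hypothesis to collapse the Priddy resolution of any Koszul module, then decompose each graded piece explicitly. First, I would apply Lemma \ref{lem:SESandDualityMods}(2) with $\alpha = (1^i)$ and $\beta$ the empty composition, yielding
$$0 \to \bbs^{(1^i)}_{A,N} \to \bbs^{(1^i)}_A \otimes_R N_s \to \bbs^{(1^{i-1})}_{A,N_{\geq s+1}} \to 0.$$
For $i \geq 2$ the middle term vanishes by hypothesis, so $\bbs^{(1^i)}_{A,N} = 0$ for every Koszul left $A$-module $N$ and every $i \geq 2$. Theorem \ref{thm:thePriddyCxWorks} then collapses the Priddy resolution of $N$ to the two-term exact sequence
$$0 \to A \otimes_R \bbs^{(1)}_{A,N} \to A \otimes_R N_s \to N \to 0.$$

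Next, I would exploit the $i=2$ case of the hypothesis separately: it says $\bbs^{(1^2)}_A = Q_2^A = 0$, so $A \cong T(A_1)$ is the free tensor algebra on $A_1$, and in particular $A_k \cong A_1^{\otimes k}$ is an iterated tensor product of a flat (resp.\ projective) $R$-module. Passing to the degree-$(s+k)$ strand of the above short exact sequence, I would compare
$$0 \to A_1^{\otimes k-1} \otimes_R \bbs^{(1)}_{A,N} \to A_1^{\otimes k} \otimes_R N_s \to N_{s+k} \to 0$$
with the sequence obtained by applying the exact functor $A_1^{\otimes k-1} \otimes_R -$ to the defining short exact sequence of $\bbs^{(1)}_{A,N}$; this identifies $N_{s+k} \cong A_1^{\otimes k-1} \otimes_R N_{s+1}$ for every $k \geq 1$. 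Each such tensor product is $R$-flat (resp.\ $R$-projective), so $N = N_s \oplus \bigoplus_{k \geq 1} A_1^{\otimes k-1} \otimes_R N_{s+1}$ is a direct sum of flat (resp.\ projective) $R$-modules, hence flat (resp.\ projective). The right-module statement will follow by the symmetric argument.

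The main obstacle will be making the identification $A \cong T(A_1)$ explicit from the vanishing hypothesis; once this structural fact is in hand, the graded strands of the Priddy resolution split into the elementary tensor form above and the flatness/projectivity of $N$ drops out immediately (without needing to worry that a cokernel of $R$-flat modules is only $R$-flat in special circumstances).
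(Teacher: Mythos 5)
Your argument is correct, and the key moves are exactly what the vanishing hypothesis gives you. You correctly deduce $\bbs^{(1^i)}_{A,N}=0$ for $i\geq 2$ from Lemma~\ref{lem:SESandDualityMods}(2) (with empty $\beta$), you correctly read off from the $i=2$ case that $Q_2^A = \bbs^{(1^2)}_A = 0$ so $A\cong T(A_1)$ (and in fact this already forces $\bbs^{(1^i)}_A = 0$ for all $i>1$, so the apparent strength of the hypothesis is illusory), and your identification of the Priddy differential in the degree-$(s+k)$ strand with $A_1^{\otimes k-1}\otimes \iota$ is valid precisely because multiplication $A_{k-1}\otimes_R A_1 \to A_k$ is an isomorphism for the tensor algebra. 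The resulting structural fact $N_{\geq s+1} \cong A\otimes_R N_{s+1}$ (equivalently $N_{s+k}\cong A_1^{\otimes k-1}\otimes_R N_{s+1}$) is the genuine content of the collapsed Priddy complex, and your conclusion follows from it.

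Two small remarks worth keeping in mind. First, under the paper's standing convention that all graded $A$-modules are $R$-flat in each degree (and the parallel convention for $R$-projectivity when $A$ is $R$-projective), the stated conclusion is actually immediate without any structural analysis: a graded module is by definition the direct sum of its graded pieces, and direct sums of flat (resp.\ projective) $R$-modules are flat (resp.\ projective). So your argument is doing strictly more work than the statement requires under those conventions, but in exchange it exhibits the sharper fact that $N$ is free over $A=T(A_1)$ in degrees $\geq s+1$. Second, for the projective clause your final step uses that $N_{s+1}$ itself is $R$-projective; this does hold under the paper's convention (when $A$ is $R$-projective one also takes the modules to be $R$-projective in each degree), but it is worth saying explicitly since it is not forced by Koszulness alone — over $T(\mathbb{Z})$, for example, a Koszul module with $N_s=\mathbb{Q}$ is degreewise $\mathbb{Z}$-flat but not $\mathbb{Z}$-projective.
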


\subsection{Veronese Subalgebras}\label{subsec:VeroneseCase}

Recall the definition of the Veronese subalgebra as in Definition \ref{def:VeroneseAndSegre}. The following observation shows that the operation $(-)^{(d)}$ on compositions as defined in Definition \ref{def:operationsOfComps} interacts functorially with the formation of the ribbon Schur functor:

\begin{obs}\label{obs:VeroneseCommutes}
    Let $A$ be any Koszul algebra and $M$ (resp. $N$) any Koszul right (resp. left) $A$-module. For any integer $d > 0$ and integers $i,j \in \bbz$ there are isomorphisms
    $$\bbs^{\alpha}_{\veralg{d}} = \bbs^{\alpha^{(d)}}_{A}, \quad \text{and} \quad \bbs^{ \alpha }_{\veralgg{M}{d} , \veralg{d}, \veralgg{N}{d}} \cong \bbs^{ \alpha^{(d)}}_{M,A,N}.$$
\end{obs}

Combining Theorem \ref{thm:explicitTorExt} with Observation \ref{obs:VeroneseCommutes} immediately yields:

\begin{cor}\label{cor:veroneseDerivedInvs}
        Let $A$ be any Koszul algebra and $M$ (resp. $N$) any Koszul right (resp. left) $A$-module. For any integer $d > 0$ there is an isomorphism
        $$\tor_i^{\veralg{d}} ( \veralgg{M}{d} , \veralgg{N}{d}) = \bbs^{(d^i)}_{M,A,N},$$
        where $t$ (resp. $s$) is the initial degree of $M$ (resp. $N$). In particular, there are canonical isomorphisms
        $$\tor_i^{\veralg{d}} (\vermod{d}{r} , \vermod{d}{r'} ) \cong \bbs^{(r,d^i,r')}_{A}.$$
\end{cor}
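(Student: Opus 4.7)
The plan is to chain together the two results the corollary's statement flags. First, note that by Corollary \ref{cor:VeroneseKoszulness} the Veronese subalgebra $\veralg{d}$ is itself a Koszul $R$-algebra, and one verifies (essentially by a rescaling of grading and the standard distributivity check of Theorem \ref{thm:koszulModuleDistr}, or equivalently by comparing the Priddy resolutions) that the Veronese modules $\veralgg{M}{d}$ and $\veralgg{N}{d}$ are Koszul right/left modules over $\veralg{d}$. Once this setup is in place, applying Theorem \ref{thm:explicitTorExt} directly to the triple $(\veralgg{M}{d},\veralg{d},\veralgg{N}{d})$ yields a canonical isomorphism
\[
\tor_i^{\veralg{d}}\!\bigl(\veralgg{M}{d},\veralgg{N}{d}\bigr)\;\cong\;\bbs^{(1^i)}_{\veralgg{M}{d},\,\veralg{d},\,\veralgg{N}{d}}.
\]

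Next I would apply Observation \ref{obs:VeroneseCommutes} with $\alpha = (1^i)$. Since $(1^i)^{(d)} = (d^i)$, the observation gives
\[
\bbs^{(1^i)}_{\veralgg{M}{d},\,\veralg{d},\,\veralgg{N}{d}}\;\cong\;\bbs^{(d^i)}_{M,A,N},
\]
which establishes the first isomorphism of the corollary.

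For the ``in particular'' clause, I would specialize $M = A_+^r$ and $N = A_+^{r'}$. By Corollary \ref{cor:truncationsAreKoszul}, the truncations $A_+^r$ and $A_+^{r'}$ are Koszul as a right and left $A$-module respectively, so that $\vermod{d}{r} = \veralgg{(A_+^r)}{d}$ and $\vermod{d}{r'} = \veralgg{(A_+^{r'})}{d}$ satisfy the hypotheses of the first part. Substituting into the already-established isomorphism and invoking the identity
\[
\bbs^{\alpha}_{A_+^r,\,A,\,A_+^{r'}} \;=\; \bbs^{(r)\cdot\alpha\cdot(r')}_{A}
\]
recorded earlier in the paper yields the desired description $\bbs^{(r,d^i,r')}_{A}$.

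The main obstacle is the implicit verification that $\veralgg{M}{d}$ is a Koszul $\veralg{d}$-module whenever $M$ is Koszul over $A$; this is the only piece not made explicit in the excerpt. The cleanest way to handle it is to observe that after rescaling the grading on $\veralg{d}$, the collection $S^n_{\veralg{d},\veralgg{M}{d},\bullet}$ is obtained from the $A$-side collection by a diagonal embedding, and distributivity is preserved by this embedding — so Theorem \ref{thm:koszulModuleDistr} applies. Everything else in the proof is a direct substitution into the machinery already built in Sections \ref{sec:ribbonSchurs}--\ref{sec:applications}.
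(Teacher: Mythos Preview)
Your proposal is correct and follows exactly the paper's route: the paper's proof consists of the single sentence ``Combining Theorem \ref{thm:explicitTorExt} with Observation \ref{obs:VeroneseCommutes} immediately yields,'' and your steps unpack precisely this combination together with the specialization $M=A_+^r$, $N=A_+^{r'}$. You even make explicit the one point the paper glosses over---that $\veralgg{M}{d}$ is a Koszul $\veralg{d}$-module---and sketch the distributivity argument for it, which is a helpful addition rather than a deviation.
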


\begin{remark}
    The isomorphism  
    $$\tor_i^{\veralg{d}} (\vermod{d}{r} , \vermod{d}{r'} ) \cong \bbs^{(r,d^i,r')}_{A}.$$
    was originally proved in the case $A = S(V)$ (the symmetric algebra) in the work \cite{almousa2022equivariant}. However, one notices that the original proof of this isomorphism does not invoke anything more than the Koszulness properties of the symmetric algebra (and its truncations), which leads to the generalization presented in Corollary \ref{cor:veroneseDerivedInvs}.
\end{remark}

\subsection{Segre Subalgebras}\label{subsec:SegreCase}

In this subsection, we apply the construction of multi-Schur modules and their filtrations to study Segre products of Koszul algebras. The following observation is a straightforward translation of the quadratic dual for a Koszul $R$-algebra:

\begin{obs}
    Let $A^1, \dots , A^n$ be a collection of Koszul algebras and $M^i$ a Koszul left $A^i$-module for $1 \leq i \leq n$. Then there are isomorphisms of $R$-modules
    $$\left( (A^1 \circ \cdots \circ A^n)^! \right)^*_i = \bbs^{(1^i)}_{A^1 , \dots , A^n}, \quad \text{and}$$
    $$\left( (M^1 \circ \cdots \circ M^n)^! \right)^*_i = \bbs^{(1^i)}_{(A^1,M^1) , \dots , (A^n , M^n)}.$$
\end{obs}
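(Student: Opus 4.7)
The plan is to prove both identifications by unwinding definitions and comparing the two sides as kernels of canonically isomorphic natural maps, where the isomorphism amounts to a rearrangement of tensor factors. The key input is the general fact (established earlier for any Koszul algebra $B$) that
$$(B^!)^*_i \;=\; \bbs^{(1^i)}_B \;=\; \ker\!\left( B_1^{\otimes i} \to \bigoplus_{j=1}^{i-1} B_1^{\otimes j-1} \otimes_R B_2 \otimes_R B_1^{\otimes i-j-1}\right),$$
where each map in the target is induced by the multiplication $B_1^{\otimes 2} \to B_2$ at position $j$, and the analogous formula $(N^!)^*_{t+i}=\bbs^{(1^i)}_{B,N}$ for a Koszul left $B$-module $N$ of initial degree $t$.

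For the algebra case, set $B := A^1 \circ \cdots \circ A^n$. By the definition of the Segre product, $B_1 = A^1_1 \otimes_R \cdots \otimes_R A^n_1$ and $B_2 = A^1_2 \otimes_R \cdots \otimes_R A^n_2$, and the multiplication $B_1^{\otimes 2} \to B_2$ is, after reordering the tensor factors, the tensor product $\bigotimes_{k=1}^n m_k$ of the individual multiplications $m_k : (A^k_1)^{\otimes 2} \to A^k_2$. The canonical rearrangement isomorphism gives
$$B_1^{\otimes i} \;\cong\; \bigotimes_{k=1}^n (A^k_1)^{\otimes i}, \qquad B_1^{\otimes j-1} \otimes_R B_2 \otimes_R B_1^{\otimes i-j-1} \;\cong\; \bigotimes_{k=1}^n (A^k_1)^{\otimes j-1}\otimes_R A^k_2 \otimes_R (A^k_1)^{\otimes i-j-1},$$
under which the $j$-th component of the defining map becomes $\bigotimes_{k=1}^n \mathrm{id}^{\otimes j-1} \otimes m_k \otimes \mathrm{id}^{\otimes i-j-1}$. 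This is precisely the map whose kernel defines $\bbs^{(1^i)}_{A^1,\dots,A^n}$ by Definition \ref{def:multiSchurForSameLength} (taking $\alpha^1 = \cdots = \alpha^n = (1^i)$, so that $\sigma_j(1^i) = (1^{j-1},2,1^{i-j-1})$), yielding the first identification.

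For the module case, let $N := M^1 \circ \cdots \circ M^n$, a Koszul left $B$-module with $N_{t+d} = M^1_{t+d} \otimes_R \cdots \otimes_R M^n_{t+d}$ (assuming a common initial degree $t$, which is required for the Segre product to be well-defined). Writing out the defining kernel of $\bbs^{(1^i)}_{B,N}$, the source $B_1^{\otimes i}\otimes_R N_t$ and each target summand $B_1^{\otimes j-1}\otimes_R B_2\otimes_R B_1^{\otimes i-j-1}\otimes_R N_t$ (for $j<i$), together with the terminal summand $B_1^{\otimes i-1}\otimes_R N_{t+1}$, all rearrange into tensor products over $k$ of the corresponding pieces of $(A^k \otimes_R M^k)_{(1^i,t)}$. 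The multiplication map $B_1\otimes_R N_t \to N_{t+1}$ decomposes into $\bigotimes_k \mu_k$ where $\mu_k: A^k_1 \otimes_R M^k_t \to M^k_{t+1}$, which gives precisely the terminal cover relation in the ribbon $(1^i,t)$. This matches the kernel defining $\bbs^{(1^i)}_{(A^1,M^1),\dots,(A^n,M^n)}$, giving the second identification.

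The only real obstacle is bookkeeping: one needs to verify that the rearrangement isomorphism intertwines the canonical multiplication map on $B$ (and on $N$) with the tensor product across $k$ of the individual multiplications on $A^k$ (and $M^k$). This is immediate from the definition of the Segre product, whose algebra and module structures are inherited coordinate-wise from $A^1\otimes_R\cdots\otimes_R A^n$ and $M^1\otimes_R\cdots\otimes_R M^n$; the flatness assumption ensures that all tensor products and rearrangements behave as expected, so no nontrivial exactness argument is required beyond the ones already used to establish $(B^!)^*_i = \bbs^{(1^i)}_B$.
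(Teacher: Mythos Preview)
Your proposal is correct and takes essentially the same approach implied by the paper, which presents this statement as an ``observation'' with no proof, describing it as ``a straightforward translation of the quadratic dual for a Koszul $R$-algebra.'' Your careful unwinding of the Segre product multiplication, the rearrangement isomorphism, and the matching with Definition~\ref{def:multiSchurForSameLength} is exactly the intended translation; the only minor remark is that the paper's Definition~\ref{def:multiSchurForSameLength} allows the modules $M^i$ to have distinct initial degrees $t^i$, so your simplifying assumption of a common initial degree $t$ is not needed (though it does no harm to the argument).
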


\begin{example}
    Let $A$ be any Koszul algebra and consider the filtration of Theorem \ref{thm:multiSchurFactors} applied to computing $(A^{[2] \ !})^*_3 = \bbs^{(1^3)}_{A,A}$. The poset parametrizing the filtration terms are the same as those of Example \ref{ex:mSchurFiltrationEx1}, which yields the filtration factors
\[\begin{tikzcd}
	{\bbs^{(3)}_A \otimes_R \bbs^{(1^3)}_A} & {\bbs^{(2,1)}_A \otimes_R \bbs^{(1,2)}_A} & {\bbs^{(1,2)}_A \otimes_R \bbs^{(2,1)}_A} & {\bbs^{(1^3)}_A \otimes_R \bbs^{(3)}_A} \\
	{\bbs^{(2,1)}_A \otimes_R \bbs^{(1^3)}_A} & {\bbs^{(1,2)}_A \otimes_R \bbs^{(1^3)}_A} & {\bbs^{(1^3)}_A \otimes_R \bbs^{(2,1)}_A} & {\bbs^{(1^3)}_A \otimes_R \bbs^{(1,2)}_A} \\
	& {\bbs^{(1^3)}_A \otimes_R \bbs^{(1^3)}_A}
	\arrow[no head, from=3-2, to=2-1]
	\arrow[no head, from=3-2, to=2-2]
	\arrow[no head, from=3-2, to=2-3]
	\arrow[no head, from=3-2, to=2-4]
	\arrow[no head, from=2-1, to=1-1]
	\arrow[no head, from=2-1, to=1-2]
	\arrow[no head, from=1-1, to=2-2]
	\arrow[no head, from=2-2, to=1-3]
	\arrow[no head, from=2-3, to=1-4]
	\arrow[no head, from=2-4, to=1-4]
	\arrow[no head, from=2-4, to=1-2]
	\arrow[no head, from=2-3, to=1-3]
\end{tikzcd}\]
Suppose now that $A = S(V)$, the symmetric algebra on a vector space $V$. If $R$ is a field of characteristic $0$, this induces a $\gl (V) \times \gl (V)$-equivariant direct sum decomposition of $(A^{[2] \ !})^*_3$ into irreducibles:
$$(A^{[2] \ !})^*_3 = \left( \bigwedge^3 V \otimes \bigwedge^3 V \right) \oplus \left( \bigwedge^3 V \otimes_R \bbs^{(2,1)} (V) \right)^{\oplus 4}$$
$$\oplus \left( \bigwedge^3 V \otimes S_3 (V) \right)^{\oplus 2} \oplus \left( \bbs^{(2,1)} (V) \otimes \bbs^{(2,1)} (V) \right)^{\oplus 2}.$$
\end{example}

\begin{obs}\label{obs:multiFactorsForSameComp}
    Let $\alpha \in C(d)$ be any composition of some integer $d>0$ and $A^1 , \dots , A^n$ a sequence of Koszul $R$-algebras admitting a compatible $G^i$-action for each $1 \leq i \leq n$. Assume that $M^i$ (resp. $N^i$) is a Koszul right (resp. left) $A^i$-module admitting a compatible $G^i$-action. Then the multi-Schur module 
    $$\bbs^{\alpha}_{(M^1 , A^1 , N^1), \dots , (M^n,A^n,N^n)}$$
    admits a $G^1 \times \cdots \times G^n$-equivariant filtration with associated graded pieces of the form
    $$\bbs^{\alpha^1}_{(M^1,A^1,N^1)} \otimes_R \cdots \otimes_R \bbs^{\alpha^n}_{(M^n,A^n,N^n)},$$
    where $\alpha^1 , \dots , \alpha^n$ range over all compositions of $|\alpha|$ satisfying $\alpha^1 \w \cdots \w \alpha^n = \alpha$. 
    
    In particular, if each of the group rings $R[G^i]$ is semisimple, then there is a $G^1 \times \cdots \times G^n$-equivariant decomposition
    $$\bbs^{\alpha}_{(M^1 , A^1 , N^1), \dots , (M^n,A^n,N^n)} \cong \bigoplus_{\substack{(\alpha^1 , \dots , \alpha^n) \in C(d)^{\times n} \\ \alpha^1 \w \cdots \w \alpha^n = \alpha}} \bbs^{\alpha^1}_{(M^1,A^1,N^1)} \otimes_R \cdots \otimes_R \bbs^{\alpha^n}_{(M^n,A^n,N^n)}.$$
\end{obs}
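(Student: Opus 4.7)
The plan is to derive this observation as a direct corollary of Theorem \ref{thm:multiSchurFactors}, after a reindexing step. First, view the composition $\alpha = (\alpha_1, \ldots, \alpha_\ell)$ as an $\ell$-partitioned composition whose pieces are the singleton subcompositions $p_i(\alpha) = (\alpha_i)$. Taking all $n$ input compositions in Theorem \ref{thm:multiSchurFactors} equal to $\alpha$ in this partitioned form produces a canonical filtration of $\bbs^{\alpha}_{(M^1, A^1, N^1), \ldots, (M^n, A^n, N^n)}$ whose associated graded pieces are $\bbs^{\sigma_{I_1}(\alpha)}_{(M^1, A^1, N^1)} \otimes_R \cdots \otimes_R \bbs^{\sigma_{I_n}(\alpha)}_{(M^n, A^n, N^n)}$, indexed by tuples $(I_1, \ldots, I_n) \in (2^{[\ell-1]})^{\times n}$ satisfying $I_1 \cap \cdots \cap I_n = \varnothing$.

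The main step is then to identify this indexing set with the set of $n$-tuples $(\alpha^1, \ldots, \alpha^n)$ of compositions of $|\alpha|$ meeting to $\alpha$ in the refinement lattice. The key observation is the bijection $I_j \mapsto \sigma_{I_j}(\alpha)$: as $I_j$ ranges over $2^{[\ell-1]}$, its image $\sigma_{I_j}(\alpha)$ ranges over exactly those compositions $\beta \geq \alpha$ in the refinement poset, because near-concatenating at piece boundaries in $I_j$ amounts to deleting the corresponding cut points of $\alpha$. A short cut-set computation (recall that the refinement order corresponds to reverse containment of cut sets, so meeting corresponds to unioning cut sets) then shows that $\bigwedge_j \sigma_{I_j}(\alpha) = \alpha$ precisely when $\bigcap_j I_j = \varnothing$, completing the identification and establishing the filtration in the claimed indexing.

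Equivariance is immediate once one notices that the filtration of Theorem \ref{thm:multiSchurFactors} is constructed entirely out of iterated kernels, cokernels, and tensor products over $R$ applied to the inputs $(M^i, A^i, N^i)$; any compatible $G^i$-action on these inputs is preserved by each of these functorial operations, so the resulting filtration is automatically $(G^1 \times \cdots \times G^n)$-equivariant. In the semisimple case, every short exact sequence of $R[G^1 \times \cdots \times G^n]$-modules splits equivariantly, and an easy induction on the length of the filtration then upgrades the equivariant filtration to an equivariant direct sum decomposition. I do not anticipate any real obstacle here; the only step requiring care is the combinatorial bookkeeping in the bijection between subset-tuples and meet-decompositions, but the cut-set formulation makes this transparent.
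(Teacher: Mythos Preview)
Your proposal is correct and follows essentially the same approach as the paper's proof, which simply says the observation is a retranslation of Theorem \ref{thm:multiSchurFactors} together with the fact that the interval above $\alpha$ in the refinement poset is isomorphic to the Boolean poset on $[\ell(\alpha)-1]$ with meet corresponding to intersection. Your write-up spells out the bijection $I_j \mapsto \sigma_{I_j}(\alpha)$ and the equivariance/semisimplicity steps in more detail than the paper does, but the underlying argument is identical.
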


\begin{proof}
    This is just a retranslation of Theorem \ref{thm:multiSchurFactors} combined with the fact that the refinement poset on $\alpha$ is isomorphic to the Boolean poset on $[\ell(\alpha)-1]$, and the meet operation corresponds to intersection under this isomorphism.
\end{proof}

\begin{remark}
    The appearance of compositions ranging over the refinement poset as filtration factors is likely related to the connection between Segre products and internal cohomomorphisms as discovered by Manin (see \cite{manin1987some} and \cite{manin2014topics}). Indeed, another perspective on the filtration \ref{thm:multiSchurFactors} is as a canonical filtration of the graded pieces of the cohomomorphism algebra. 
\end{remark}

Next, we use Theorem \ref{thm:multiSchurFactors} to prove a symmetric function identity; we first need to recall some notation related to Schur polynomials and establish some multi-index conventions:

\begin{notation}\label{not:symmetricFunctionNotation}
    Let $n \geq 1$ be any integer and consider sets of indeterminates $\xx^1, \dots , \xx^n$. Recall that the \defi{Schur polynomial} associated to a skew shape $\lambda / \mu$ is the polynomial
    $$s_{\lambda / \mu} (\xx) := \sum_{T \in \operatorname{SST} (\lambda / \mu)} \xx_T,$$
    where $\xx_T$ denotes the multigraded character of the semistandard tableau $T$. If $\alpha$ is a composition, the notation $s_\alpha (\xx)$ denotes the Schur polynomial corresponding to the ribbon shape determined by $\alpha$. The complete symmetric polynomial $h_d (\xx)$ is defined to be $s_{(d)} (\xx)$. 

    Given a tuple of compositions $\underline{\alpha} = (\alpha^1 , \dots , \alpha^n)$, use the notation
    $$s_{\underline{\alpha}} (\underline{\xx}) := s_{\alpha^1} (\xx^1) \cdot s_{\alpha^2} (\xx^2) \cdots s_{\alpha^n} (\xx^n).$$
    In particular, for a single composition $\alpha = (\alpha_1 , \dots , \alpha_n)$ there is the equality
    $$h_\alpha (\underline{\xx}) = h_{\alpha_1} (\xx^1) \cdot h_{\alpha_2} (\xx^2) \cdots h_{\alpha_n} (\xx^n).$$
\end{notation}

\begin{cor}\label{cor:theCharacterId}
    With notation as in Notation \ref{not:symmetricFunctionNotation}, there is an equality of symmetric polynomials
    $$h_{d^n + \alpha} (\underline{\xx}) = \sum_{i=1}^d  \sum_{\substack{\beta^1 , \dots , \beta^n \\ |\beta^1| = \cdots = |\beta^n| = i \\ \beta^1 \w \cdots \w \beta^n = 1^i}} (-1)^{i+1} h_{(d-i)^n} (\underline{\xx} ) \cdot s_{\underline{\beta} \cdot \alpha} ( \underline{\xx}).$$
\end{cor}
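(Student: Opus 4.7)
The plan is to derive this symmetric function identity by taking $\gl(V^1) \times \cdots \times \gl(V^n)$-equivariant Euler characteristics of the Priddy resolution of the residue field $R$ over a Segre product of symmetric algebras, and then multiplying through by a prefactor that incorporates $\alpha$.

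First I would specialize to $A^j := S(V^j)$ for $j = 1, \ldots, n$, with $V^j$ a free $R$-module of sufficient rank, and form the Segre product $A := A^1 \circ \cdots \circ A^n$; this is a Koszul $R$-algebra (implicit in the setup of Theorem \ref{thm:introTorForMulti}, and verifiable from the distributivity characterization of Theorem \ref{thm:KoszulnessIsDistributive} applied to Segre subalgebras). The residue field $R$ is then a Koszul left $A$-module concentrated in degree zero, so Theorem \ref{thm:thePriddyCxWorks} gives an exact Priddy complex
$$\cdots \to A \otimes_R \bbs^{(1^i)}_{\underline{A}} \to \cdots \to A \otimes_R \bbs^{(1)}_{\underline{A}} \to A \to R \to 0.$$
Taking $\gl(V^1) \times \cdots \times \gl(V^n)$-equivariant Euler characteristics at internal degree $d \geq 1$ and using $[A]_{d-i} = h_{(d-i)^n}(\underline{\xx})$ and $[R]_d = 0$ then yields
$$0 = \sum_{i=0}^d (-1)^i \, h_{(d-i)^n}(\underline{\xx}) \cdot [\bbs^{(1^i)}_{\underline{A}}].$$

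Next I would apply Observation \ref{obs:multiFactorsForSameComp} (equivalently, Theorem \ref{thm:multiSchurFactors} specialized to a common composition): the canonical filtration of $\bbs^{(1^i)}_{\underline{A}}$ has associated graded pieces $\bigotimes_j \bbs^{\beta^j}_{A^j}$ indexed by tuples $\underline{\beta}$ with $|\beta^j| = i$ and $\beta^1 \wedge \cdots \wedge \beta^n = (1^i)$. Since characters are additive over short exact sequences (no semisimplicity needed), and since $[\bbs^{\beta^j}_{S(V^j)}] = s_{\beta^j}(\xx^j)$ under the classical identification, this gives
$$[\bbs^{(1^i)}_{\underline{A}}] = \sum_{\substack{\underline{\beta}:\, |\beta^j| = i \\ \beta^1 \wedge \cdots \wedge \beta^n = (1^i)}} s_{\underline{\beta}}(\underline{\xx}).$$
Isolating the $i = 0$ summand (which contributes $h_{d^n}(\underline{\xx})$) and rearranging then yields the base case
$$h_{d^n}(\underline{\xx}) = \sum_{i=1}^d (-1)^{i+1} \, h_{(d-i)^n}(\underline{\xx}) \sum_{\underline{\beta}} s_{\underline{\beta}}(\underline{\xx}),$$
which is the Corollary when $\alpha$ is the empty composition.

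Finally, to obtain the general case, I would multiply both sides by the prefactor $\prod_{i=1}^{\ell(\alpha)} h_{\alpha_i}(\xx^{n+i})$, corresponding on the module side to tensoring by the fixed representation $\bigotimes_i S_{\alpha_i}(V^{n+i})$. Under the paper's conventions (where $d^n + \alpha$ denotes the concatenation of compositions and $\underline{\beta} \cdot \alpha$ denotes the tuple obtained by appending the single-part compositions $(\alpha_1), \ldots, (\alpha_{\ell(\alpha)})$ to $\underline{\beta}$), this multiplication produces $h_{d^n + \alpha}(\underline{\xx})$ on the left and $s_{\underline{\beta} \cdot \alpha}(\underline{\xx})$ on the right, completing the identity. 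The main obstacle is essentially notational: pinning down the precise meaning of $d^n + \alpha$ and $\underline{\beta} \cdot \alpha$, and verifying that the sign produced upon isolating the $i = 0$ Euler characteristic term agrees with the $(-1)^{i+1}$ convention in the claim.
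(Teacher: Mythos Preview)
Your derivation of the base case (all $\alpha_j = 0$) is correct and is essentially the paper's argument specialized to the trivial module. The problem is entirely in your final step, where you misread the notation and then try to recover the general $\alpha$ by an operation that does not produce the right-hand side.

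First, the notational issue: in this corollary $\alpha = (\alpha_1,\dots,\alpha_n)$ is a composition with \emph{exactly $n$ parts}, one for each algebra, and the conventions are $d^n+\alpha = (d+\alpha_1,\dots,d+\alpha_n)$ and $\underline{\beta}\cdot\alpha = (\beta^1\cdot(\alpha_1),\dots,\beta^n\cdot(\alpha_n))$ (see the remark and the $n=2$ example following the corollary). No new variable sets $\xx^{n+1},\dots$ appear; everything lives in the original $\xx^1,\dots,\xx^n$. Your prefactor $\prod_i h_{\alpha_i}(\xx^{n+i})$ therefore produces an identity in the wrong ring. Second, even if you multiplied by $\prod_j h_{\alpha_j}(\xx^j)$ in the \emph{same} variables, you would not get the claimed right-hand side: by the concatenation/near-concatenation identity one has $s_{\beta^j}(\xx^j)\, h_{\alpha_j}(\xx^j) = s_{\beta^j\cdot(\alpha_j)}(\xx^j) + s_{\beta^j\odot(\alpha_j)}(\xx^j)$, not $s_{\beta^j\cdot(\alpha_j)}(\xx^j)$ alone, so the $s_{\underline{\beta}\cdot\alpha}$ terms cannot be obtained by a post-hoc product.

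The paper handles general $\alpha$ not by multiplying a prefactor but by changing the Koszul module: it resolves $A_+^{\alpha} := A_+^{\alpha_1}\circ\cdots\circ A_+^{\alpha_n}$ over $A^{[n]}$, so that the character identity $\ch(A_+^\alpha) = \ch(A^{[n]})\sum_i(-1)^i\ch\big(\tor_i^{A^{[n]}}(A_+^\alpha,k)\big)$ already has the $\alpha_j$'s built in. Then Theorem~\ref{thm:multiSchurFactors} applied to $\bbs^{(1^i)}_{(A^1,A_+^{\alpha_1}),\dots,(A^n,A_+^{\alpha_n})}$ yields graded pieces $\bigotimes_j \bbs^{\beta^j\cdot(\alpha_j)}_{A^j}$, whose characters are exactly $s_{\underline{\beta}\cdot\alpha}(\underline{\xx})$. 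Your Priddy-complex/Euler-characteristic framework is the right one; you just need to run it with the module $A_+^\alpha$ rather than with $R$ and a tensor afterthought.
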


\begin{remark}
    In the statement of Corollary \ref{cor:theCharacterId}, if $\underline{\beta} = (\beta^1 , \dots , \beta^n)$ is a tuple of compositions and $\alpha = (\alpha_1 , \dots , \alpha_n)$ is a composition, then we use the convention $$\underline{\beta} \cdot \alpha := \left( \beta^1 \cdot (\alpha_1) , \beta^2 \cdot (\alpha_2) , \dots , \beta^n \cdot (\alpha_n)\right).$$
\end{remark}

\begin{proof}
    Assume $R = k$ is a field of characteristic $0$ and let $A = S(V)$ for some vector space $V$. Note that $A$ and $A_{\geq d} = A_+^d$ are polynomial functors for all $d \geq 1$. Taking Segre products, this means that $A^{[n]}$ and $A_+^\alpha := A_+^{\alpha_1} \circ A_+^{\alpha_2} \circ \cdots \circ A_+^{\alpha_n}$ are also polynomial functors, and hence there is an equality.
    \begin{equation}\label{eqn:charIdentity}
    \ch (A_+^\alpha) = \ch (A^{[n]}) \cdot \sum_{i \geq 0} (-1)^i \ch \left( \tor_i^{A^{[n]}} (A_+^\alpha , k) \right),\end{equation}
    where $\ch (-)$ denotes the multigraded character. By definition there are equalities
    $$\ch (A_+^\alpha) = \sum_{d \geq 1} h_{d^n + \alpha} (\underline{\xx}),$$
    $$\ch (A^{[n]} ) = \sum_{d \geq 0} h_{d^n} (\underline{\xx}).$$
    On the other hand, by Theorem \ref{thm:multiSchurFactors} there is an equality
    $$\ch \left( \tor_i^{A^{[n]}} (A_+^\alpha , k) \right) = \sum_{\substack{\beta^1 , \dots , \beta^n \\ |\beta^1| = \cdots = |\beta^n| = i \\ \beta^1 \w \cdots \w \beta^n = 1^i}} s_{\underline{\beta} \cdot \alpha} (\underline{\xx}).$$
    Combining all of these expressions and comparing degrees on each side of the equality (\ref{eqn:charIdentity}) yields the result.
\end{proof}

\begin{remark}
    When $n = 1$, this reduces to the well-known classical character identity
    $$h_{d+a} (\xx) = \sum_{i=1}^d (-1)^{i+1} h_{d-i} (\xx) s_{(1^i,a)} (\xx).$$
    Moreover, taking $s$th Veronese powers of the polynomial ring and performing an identical argument yields the same identity, but every composition is replaced with its $s$th rescaling (ie, apply the operation $(-)^{(s)}$ to all compositions).
\end{remark}

\begin{example}
    Let $n=2$, $d=2$, and $\alpha = (\alpha_1 , 
    \alpha_2)$ be any composition. Then the identity of Corollary \ref{cor:theCharacterId} reads
    $$h_{2+\alpha_1} (\xx^1) h_{2+\alpha_2} (\xx^2) = h_{d-1} (\xx^1) h_{d-1} (\xx^2) s_{(1 , \alpha_1)} (\xx^1) s_{(1,\alpha_2)} (\xx^2)$$
    $$- s_{(1^2,\alpha_1)} (\xx^1) s_{(1^2 , \alpha_2)} (\xx^2) - s_{(1^2,\alpha_1)} (\xx^1) s_{(2,\alpha_2)} (\xx^2) - s_{(2,\alpha_1)} (\xx^1) s_{(1^2,\alpha_2)} (\xx^2).$$
\end{example}

Our next corollary is an evident consequence of Theorem \ref{thm:explicitTorExt} combined with Observation \ref{obs:multiFactorsForSameComp} (by setting $\alpha = (1^i)$).

\begin{cor}\label{cor:SegreDerivedInvariants}
Let $A^1 , \dots , A^n$ a sequence of Koszul $R$-algebras admitting a compatible $G^i$-action for each $1 \leq i \leq n$. Assume that $M^i$ (resp. $N^i$) is a Koszul right (resp. left) $A^i$-module admitting a compatible $G^i$-action. Then the module
$$\tor_i^{A^1 \circ \cdots \circ A^n} (M^1 \circ \cdots \circ M^n , N^1 \circ \cdots \circ N^n)$$
admits a $G^1 \times \cdots \times G^n$-equivariant filtration with associated graded pieces of the form
$$\bbs^{\alpha^1}_{M^1,A^1,N^1} \otimes_R \cdots \otimes_R \bbs^{\alpha^n}_{M^n,A^n,N^n},$$
where the compositions range over all tuples $(\alpha^1 , \dots , \alpha^n) \in C(i)^{\times n}$ with $\alpha^1 \w \cdots \w \alpha^n = (1^{i})$. 

Likewise, the module
$$\tor_i^{(A^1 \circ \cdots \circ A^n)^{(d)}} ((M^1 \circ \cdots \circ M^n)^{(d)}, (N^1 \circ \cdots \circ N^n)^{(d)})$$
admits a $G^1 \times \cdots \times G^n$-equivariant filtration with associated graded pieces of the form
$$\bbs^{\alpha^1}_{M^1,A^1,N^1} \otimes_R \cdots \otimes_R \bbs^{\alpha^n}_{M^n,A^n,N^n},$$
where the compositions range over all tuples $(\alpha^1 , \dots , \alpha^n) \in C(di)^{\times n}$ with $\alpha^1 \w \cdots \w \alpha^n = (d^{i})$.
\end{cor}

\begin{example}
Let $A$ be any Koszul algebra and let us compute the filtration factors of $(A^{[3]!})^*_2 = \bbs^{(1^2)}_{A^{[3]}}$ (unfortunately, trying this for $\bbs^{(1^3)}_{A^{[3]}}$ yields $49$ filtration factors, which is too big of an example). The poset parametrizing the filtration factors is:
\[\begin{tikzcd}
	{(\{1\} , \{ 1 \} , \varnothing)} & {(\{1 \} , \varnothing , \{ 1 \})} & {(\varnothing, \{1 \}  , \{ 1 \} )} \\
	{(\{1\} , \varnothing , \varnothing)} & {(\varnothing, \{ 1 \} , \varnothing)} & {(\varnothing , \varnothing , \{ 1 \} )} \\
	& {(\varnothing,\varnothing,\varnothing)}
	\arrow[no head, from=3-2, to=2-1]
	\arrow[no head, from=3-2, to=2-2]
	\arrow[no head, from=3-2, to=2-3]
	\arrow[no head, from=2-3, to=1-3]
	\arrow[no head, from=2-3, to=1-2]
	\arrow[no head, from=2-2, to=1-3]
	\arrow[no head, from=2-2, to=1-1]
	\arrow[no head, from=2-1, to=1-2]
	\arrow[no head, from=2-1, to=1-1]
\end{tikzcd}\]
This yields filtration factors:
\[\begin{tikzcd}
	{\bbs^{(2)}_A \otimes_R \bbs^{(2)}_A \otimes_R \bbs^{(1^2)}_A} & {\bbs^{(2)}_A \otimes_R \bbs^{(1^2)}_A \otimes_R \bbs^{(2)}_A} & {\bbs^{(1^2)}_A \otimes_R \bbs^{(2)}_A \otimes_R \bbs^{(2)}_A} \\
	{\bbs^{(2)}_A \otimes_R \bbs^{(1^2)}_A \otimes_R \bbs^{(1^2)}_A} & {\bbs^{(1^2)}_A \otimes_R \bbs^{(2)}_A \otimes_R \bbs^{(1^2)}_A} & {\bbs^{(1^2)}_A \otimes_R \bbs^{(1^2)}_A \otimes_R \bbs^{(2)}_A} \\
	& {\bbs^{(1^2)}_A \otimes_R \bbs^{(1^2)}_A \otimes_R \bbs^{(1^2)}_A}
	\arrow[no head, from=3-2, to=2-1]
	\arrow[no head, from=3-2, to=2-2]
	\arrow[no head, from=3-2, to=2-3]
	\arrow[no head, from=2-3, to=1-3]
	\arrow[no head, from=2-3, to=1-2]
	\arrow[no head, from=2-2, to=1-3]
	\arrow[no head, from=2-2, to=1-1]
	\arrow[no head, from=2-1, to=1-2]
	\arrow[no head, from=2-1, to=1-1]
\end{tikzcd}\]
\end{example}

\begin{example}
    Let $A$ be any Koszul algebra and let $T(V)$ denote the tensor algebra on some projective $R$-module $V$. Then by definition the $R$-module $\bbs^{(1^i)}_{A,T(V)}$ admits a filtration with graded pieces of the form
    $$\bbs^{\alpha^1}_{A} \otimes_R \bbs^{\alpha^2}_{T(V)},$$
    with $\alpha^1 \w \alpha^2 = (1^i)$. Note that the only choice of $\alpha^2$ for which $\bbs^{\alpha^2}_{T(V)}$ is nonzero is for $\alpha^2 = (i)$. Thus $\alpha^1 = (1^i)$ and we find
    $$\bbs^{(1^i)}_{A,T(V)} = \bbs^{(1^i)}_A \otimes_R \bbs^{(i)}_{T(V)} = (A^!)^*_i \otimes_R V^{\otimes i}.$$
    In particular, after dualizing and collecting graded pieces, there is an isomorphism of $R$-algebras
    $$(A \circ T(V))^! = A^! \circ T(V^*).$$
    Of course, this could be verified using more direct methods, but the point is to demonstrate the utility of Theorem \ref{thm:multiSchurFactors}.
\end{example}

\begin{example}
    Let $A = S(V)$, $B = S(W)$ be symmetric algebras on free $R$-modules $V$ and $W$ both of rank $2$. Let $M = A_+^r \circ B$ and $N = A \circ B_+^{r'}$, viewed as modules over the Segre product $A \circ B$. By Corollary \ref{cor:SegreDerivedInvariants} there is an equality
    $$\tor^{A \circ B}_i (M , N) = \bbs^{(1^i)}_{(A,A_+^r) \circ (B,B_+^{r'})}.$$
    The module $\bbs^{(1^i)}_{(A,A_+^r) \circ (B,B_+^{r'})}$ has a $\gl (V) \times \gl (W)$-equivariant filtration with associated graded pieces of the form
    $$\bbs^{\alpha}_{A,A_+^r} \otimes_R \bbs^{\beta}_{B,B_+^{r'}},$$
    where $\alpha$ and $\beta$ range over all partitions with $\alpha \w \beta = (1^i)$. Notice that since $V$ and $W$ have rank $2$, we are really only ranging over all partitions $\alpha$ and $\beta$ such that all parts of $\alpha^t$ and $\beta^t$ are at most $2$, and such that $\alpha^t \vee \beta^t = (i)$. Retranslating this in terms of subsets of the Boolean poset, this is asking for all ways to partition the set $[i-1]$ into the union of two totally disconnected sets $I_\alpha \cup I_\beta$, where $1 \in I_{\beta}$ and $i-1 \in I_\alpha$. One quickly sees that there is no such decomposition if $i-1$ is odd and only $1$ such decomposition when $i-1$ is even:
    $$[i-1] = \{2, 4 , \dots , i-1 \} \cup \{ 1 , \dots , i-2 \}.$$
    Retranslating this in terms of compositions, we find there there is a $\gl (V) \times \gl (W)$-equivariant isomorphism:
    $$\tor^{A \circ B}_i (M , N) = \begin{cases}
    S_{r-1} (V) \otimes_R \det(V)^{\frac{i+1}{2}} \otimes_R S_{r'-1} (W) \otimes_R \det(W)^{\frac{i+1}{2}} & \text{if} \ i > 0 \ \text{is odd}, \\
    0 & \text{if} \ i > 0 \ \text{is even.}
    \end{cases}$$
\end{example}

\subsection{Koszul Modules Built From Ribbons and General Skew Shapes}\label{subsec:buildingKoszulModules}

In this section we apply the Koszulness criterion of Lemma \ref{lem:SESandDualityMods} to deduce that a large class of modules parametrized by ribbons are Koszul modules. This immediately yields interesting Koszul modules over any Koszul algebra (generalizing powers of $A_+$), and in the case of the symmetric algebra we are able to give a quick and much more general proof of the Koszulness of certain classes of modules formed by attaching rows to a fixed Schur functor associated to a skew-partition. We conclude with an application of these results that allows us to compute the regularity of the sheaf $\bbs^\lambda (\R)$ in arbitrary characteristic.

\begin{notation}
    Let $A$ be any Koszul algebra and $\alpha$ any fixed composition. Define the right $A$-module $\bbs^{\alpha \odot \bullet}_A$ via
    $$\bbs^{\alpha \odot \bullet}_A := \bigoplus_{d \geq 0} \bbs^{\alpha \odot (d)}_A,$$
    with right $A$-module action induced by the canonical surjections $\bbs^{\alpha}_A \otimes_R A_d \twoheadrightarrow \bbs^{\alpha \odot (d)}_A$. Similarly, the right $(A^!)^*$-comodule $\bbs^{\alpha \cdot (1^\bullet)}_A$ is defined via
    $$\bbs^{\alpha \cdot (1^\bullet)}_A := \bigoplus_{d \geq 0} \bbs^{\alpha \cdot (1^d)}_A,$$
    with comodule action induced by the canonical injections $\bbs^{\alpha \cdot (1^d)}_A \hookrightarrow \bbs^\alpha_A \otimes_R (A^!_d)^*$. The left $A$-modules $\bbs^{\bullet \odot \alpha}_A$ and left $(A^!)^*$-comodules $\bbs^{(1^\bullet) \cdot \alpha}_A$ are defined identically, but with the appropriate concatenation/near-concatenation appearing on the left.
\end{notation}

\begin{example}\label{ex:ribbonModuleExample}
    If $\alpha = (e)$ is a single integer, then the module $\bbs^{(e) \odot \bullet}_A$ is simply $(A_+)^e$, viewed as a right $A$-module. More generally, let $\Omega^i_{\geq j} (R)$ denote the $i$th syzygy of the $A$-module $R$,\footnote{That is, the image of the $i$th differential in any $A$-projective resolution of $R$.} truncated past degree $j$. Then
    $$\Omega^i_{\geq j} (R) = \bbs^{(1^{i-1},j) \odot \bullet}_A.$$
\end{example}

\begin{theorem}\label{thm:compKoszulMods}
    Let $A$ be any Koszul algebra and $\alpha$ any composition. Then the right $A$-module $\bbs^{\alpha \odot \bullet}_A$ is a Koszul $A$-module, and the minimal free resolution over $A$ has the form
    $$ \cdots \to \bbs^{\alpha \cdot (1^i)}_A \otimes_R A(-i) \to \bbs^{\alpha \cdot (1^{i-1})}_A \otimes_R A(-i+1) \to \cdots \to \bbs^{\alpha \cdot (1)} \otimes_R A(-1) \to \bbs^\alpha_A \otimes_R A.$$
    In particular, the quadratic dual of $\bbs^{\alpha \odot \bullet}_A$ is precisely the left $A^!$-module $\bbs^{\bullet \odot \rev (\alpha^t)}_{A^!}$. 
\end{theorem}

\begin{proof}
    Let $\alpha$ be any fixed partition and set $M := \bbs^{\alpha \odot \bullet}_A$. We use the criterion of Lemma \ref{lem:SESandDualityMods}(2). Let $d \geq 0$ be any integer and observe first that
    $$\bbs^\beta_{M_{\geq d} , A} = \bbs^{\alpha \odot d \cdot \beta}_A.$$
    Thus for any compositions $\beta, \gamma$ the exactness of the sequence
    $$0 \to \bbs^{\beta \cdot \gamma}_{M_{\geq d} , A} \to \bbs^{\beta}_{M_{\geq d} , A} \otimes_R \bbs^\gamma_A \to \bbs^{\beta \odot \gamma}_{M_{\geq d} , A} \to 0$$
    is equivalent to the exactness of the sequence
    $$0 \to \bbs^{\delta \cdot \gamma}_A \to \bbs^\delta_A \otimes_R \bbs^{\gamma}_A \to \bbs^{\delta \odot \gamma}_A \to 0,$$
    where $\delta = \alpha \odot (d) \cdot \beta$. This latter sequence is evidently exact, since the algebra $A$ is assumed to be Koszul, whence the module $M$ is Koszul. The latter statements are trivial consequences of the Priddy complex associated to a Koszul module (see Theorem \ref{thm:thePriddyCxWorks}). 
\end{proof}

The modules $\bbs^{\alpha \odot \bullet}_A$ are constructed in such a way that they are totally compatible with any kind of ambient group actions, and the naturality of this construction leads one to wonder if there are classes of Koszul modules in the literature that are ``secretly" of the form $\bbs^{\alpha \odot \bullet}_A$ for some $\alpha$. We pose this question formally:

\begin{question}
    Are there interesting examples of Koszul modules in the literature of the form $\bbs^{\alpha \odot \bullet}_A$ for some composition $\alpha$? (One such class of examples arises as in Example \ref{ex:ribbonModuleExample}.)
\end{question}

For the remainder of this subsection, we assume that $A = S^\bullet (V)$ (the symmetric algebra) or $\bigwedge^\bullet V$ (the exterior algebra), where $V$ is any free $R$-module ($R$ is still assumed to be a commutative ring). In this setting, we have access to the classically defined Schur functors $\bbs^D (V)$ of Akin-Buchsbaum-Weyman \cite{akin1982schur}, where $D = \lambda / \mu$ is a skew partition. For a composition $\alpha$, the notation $D \odot \alpha$ will denote the skew partition obtained by attaching the bottom row of $\alpha$ to the top row of the diagram $D$. Likewise, the notation $D \cdot \alpha$ is defined to be the skew partition obtained by concatenating the ribbon diagram associated with $\alpha$ to the top row of $D$.

\begin{example}
    Let $D = (3,3,2)/(1)$ and $\alpha = (2,2)$. Then
    $$D \odot \alpha = \ytableausetup{boxsize=1em} \ydiagram{4+2,1+4,3,2} \quad \text{and} \quad D \cdot \alpha = \ydiagram{3+2,2+2,1+2,3,2}$$
\end{example}

\begin{remark}
    The concatenation/near-concatenation of arbitrary diagrams $D$ and $D'$ is defined in \cite[Definition 3.4]{almousa2022equivariant}, but we will not need this level of generality here.
\end{remark}

\begin{definition}
    Let $D = \lambda / \mu$ be a skew-partition. The notation $\bbs^{D \odot \bullet} (V)$ will denote the $S^\bullet (V)$-module with
    $$\bbs^{D \odot \bullet} (V) := \bigoplus_{d \geq 0} \bbs^{D \odot (d)} (V),$$
    with multiplication induced by the canonical surjections
    $$\bbs^D (V) \otimes_R S^d (V) \twoheadrightarrow \bbs^{D \odot d} (V).$$
     Likewise, the notation $\bbs^{D \cdot (1^\bullet)} (V)$ denotes the $\bigwedge^\bullet V$-comodule with
    $$\bbs^{D \cdot (1^\bullet)} (V) := \bigoplus_{d \geq 0} \bbs^{D \cdot (1^d)} (V)$$
    with comultiplication induced by the natural inclusions
    $$\bbs^{D \cdot (1^d)} (V) \hookrightarrow \bbs^{D} (V) \otimes_R \bigwedge^d V.$$
    The analogous definitions for $\bbs^D$ replaced by the Weyl functors $\bbw^D$ will be used, with the tacit knowledge that the module $\bbw^{D \odot \bullet} (V)$ is instead a $\bigwedge^\bullet V$-module. Likewise, the modules $\bbs^{\bullet \odot D} (V)$ (resp. $\bbw^{\bullet \odot D} (V)$) and $\bbs^{(1^\bullet) \cdot D} (V)$ (resp. $\bbw^{(1^\bullet) \cdot D} (V)$) are defined analogously.
\end{definition}

\begin{remark}
    A simple way to define the module structure on $\bbs^{D \odot \bullet} (V)$ is to take advantage of the short exact sequence (see \cite[Proposition 3.6]{almousa2022equivariant}):
    $$0 \to \bbs^{D \cdot (1)} (V) \to \bbs^{D} \otimes_R V \to \bbs^{D \odot (1)} (V) \to 0,$$
    and then define $\bbs^{D \odot \bullet} (V)$ to be the quadratic $S^\bullet (V)$-module induced by the above short exact sequence (there is always a standard way to do this -- see, for instance, \cite{polishchuk2005quadratic}).
\end{remark}

\begin{theorem}\label{thm:KoszulSkewShapes}
    Let $A := S^\bullet (V)$, the symmetric algebra on a free $R$-module $V$. For any skew partition $D := \lambda / \mu$, the $S^\bullet (V)$-module $\bbs^{D \odot \bullet} (V)$ is Koszul, and the minimal free resolution over $A$ has the form
    $$\cdots \to \bbs^{D \cdot (1^i)} \otimes_R A(-i) \to \bbs^{D \cdot (1^{i-1})} (V) \otimes_R A(-i+1) \to \cdots \to \bbs^{D \cdot (1)} (V) \otimes_R A(-1) \to \bbs^{D} (V) \otimes_R A.$$
    In particular, the quadratic dual of $\bbs^{D \odot \bullet} (V)$ is precisely the $\bigwedge^\bullet V^*$-module $\bbw^{\bullet \odot D^t} (V)$. The analogous statement for the $A = \bigwedge^\bullet V$-module $\bbw^{D \odot \bullet} (V)$ also holds.
\end{theorem}

\begin{proof}
    Define $M := \bbs^{D \odot \bullet} (V)$ and notice that by identical reasoning to the proof of Theorem \ref{thm:compKoszulMods}, for any integer $d \geq 0$ and composition $\alpha$ there is an equality
    $$\bbs^\alpha_{M_{\geq d},A} = \bbs^{D \odot (d) \cdot \alpha} (V),$$
    whence the sequences of Lemma \ref{lem:SESandDualityMods} read
    $$0 \to \bbs^{D \odot (d) \cdot \alpha \cdot \beta} (V) \to \bbs^{D \odot (d) \cdot \alpha} (V) \otimes_R \bbs^\beta_A \to \bbs^{D \odot (d) \cdot \alpha \odot \beta} (V) \to 0.$$
    By \cite[Proposition 3.6]{almousa2022equivariant}, this sequence is exact (in the notation of \cite{almousa2022equivariant}, the diagram denoted $D$ is the diagram $D \odot (d) \cdot \alpha$ in our notation and $D'$ is the ribbon diagram associated to $\beta$). By Lemma \ref{lem:SESandDualityMods}(2), the module $M$ is Koszul, and the latter statements are again an immediate consequence of the Priddy complex (see Theorem \ref{thm:thePriddyCxWorks}). 
\end{proof}

As a further application, we conclude this section with a characteristic-free computation of the regularity of a certain class of vector bundles on projective space by using the resolution of Theorem \ref{thm:KoszulSkewShapes}.

\begin{notation}
    Let $V$ be any $\kk$-vector space and $\bbp (V)$ denote projective space on $V$. The \defi{tautological subbundle} $\R$ on $\bbp (V)$ is the twisted sheaf $\Omega (1)$, where $\Omega$ denotes the cotangent bundle. More concretely, $\R$ is defined via a twist of the Euler sequence
    $$0 \to \R \to V \otimes_{\kk} \OO_{\bbp(V)} \to \OO_{\bbp (V)} (1) \to 0.$$
\end{notation}

For convenience, recall that a sheaf $\cat{F}$ on some variety $X$ is $r$-\defi{regular} if 
$$H^i (X , \cat{F} (r-i)) = 0$$
for all $i > 0$. The \defi{regularity} of a sheaf $\cat{F}$ is defined to be the minimal integer $r$ such that $\cat{F}$ is $r$-regular. We conclude our applications with a characteristic-free regularity computation for a canonical class of vector bundles on projective space:

\begin{theorem}\label{thm:regularityThm}
    Let $V$ be a $\kk$-vector space and $\R$ the tautological subbundle on $\bbp (V)$. Given any partition $\lambda = (\lambda_1 , \dots , \lambda_n)$ (where $n = \dim V$), there is an exact sequence of vector bundles of $\bbp (V)$:
    \begin{equation}\label{eqn:theSchurRes}
        \cdots \to \bbs^{(\lambda_1 , \lambda) \cdot 1^i} (V) \otimes_\kk \OO_{\bbp (V)} (-\lambda_1 - i) \to \cdots \to \bbs^{(\lambda_1,\lambda)\cdot 1} (V) \otimes_\kk \OO_{\bbp (V)} (-\lambda_1-1)
    \end{equation}
    $$  \to \bbs^{(\lambda_1 , \lambda)} (V) \otimes_\kk \OO_{\bbp (V)} (-\lambda_1) \to \bbs^\lambda (\R) \to 0.$$
    Moreover, writing $\lambda^t = (\lambda_1^t , \dots , \lambda_m^t)$ (where $\lambda_m^t > 0$)\footnote{In other words, $\lambda_m^t$ is the length of the rightmost column of the tableau associated to $\lambda$.}, there is an equality
    $$H^{\lambda_m^t} \left( \bbp (V) , \bbs^\lambda (\R) (\lambda_1-\lambda_m^t-1) \right) = \bbs^{\left( (\lambda_1-1)^{\lambda_m^t+1} ,\lambda_2 , \dots , \lambda_n \right)} (V).$$
    In particular, the sheaf $\bbs^\lambda (\R)$ has regularity $\lambda_1$. 
\end{theorem}

\begin{remark}
    Theorem \ref{thm:regularityThm} generalizes a theorem of Gao-Raicu \cite[Theorem 2.2]{gao2022cohomology}, where the authors used the Kempf-Weyman geometric technique \cite{weyman2003} to construct the above resolution in the case $\lambda = (a)$; they used this to prove that the regularity of the sheaf $S^a (\R)$ is precisely $a$.

    The subtlety of needing to concatenate the partition $\lambda$ on the rightmost column was not present in \cite{gao2022cohomology}, since concatenating a column of the left/right of the diagram for the partition $(a,a)$ yields isomorphic representations.
\end{remark}

\begin{example} 
    Consider the partition $$\lambda := (3,3,2,1) = \ytableausetup{boxsize=1em} \ydiagram{3,3,2,1}
    $$
    so that $\lambda^t = (4,3,2)$. Then Theorem \ref{thm:regularityThm} implies that there is a resolution of the form
    $$\cdots \to  \ytableausetup{boxsize = 0.5em} \ydiagram{2+1,2+1,3,3,3,2,1} \otimes_\kk \OO_{\bbp (V)} (-5) \to  \ydiagram{2+1,3,3,3,2,1} \otimes_\kk \OO_{\bbp (V)} (-4) \to \ydiagram{3,3,3,2,1} \otimes_\kk \OO_{\bbp (V)} (-3) \to \bbs^{(3,3,2,1)} (\R),$$
    where in the above complex a Young diagram for shape $\lambda/\mu$ corresponds to the Schur module $\bbs^{\lambda / \mu} (V)$. By stripping off the rightmost column of the shapes appearing in the above complex, Theorem \ref{thm:regularityThm} also implies that there is an isomorphism 
    $$H^2 (\bbp (V) , \bbs^{(3,3,2,1)} (\R) ) = \bbs^{(2,2,2,2,1)} (V).$$
    This is particularly evident if the ambient vector space $V$ has dimension $4$, since the above resolution becomes the short exact sequence
    $$0 \to \bbs^{(2,2,2,2,1)} (V) \otimes_\kk \det (V) \otimes_\kk \OO_{\bbp (V)} (-4) \to \bbs^{(3,3,3,2,1)} (V) \otimes_\kk \OO_{\bbp (V)} (-3) \to \bbs^{(3,3,2,1)} (\R) \to 0.$$
\end{example}

\begin{proof}[Proof of Theorem \ref{thm:regularityThm}]
    The complex (\ref{eqn:theSchurRes}) arises by taking sheaves associated to the resolutions of Theorem \ref{thm:KoszulSkewShapes} for the partition $\lambda$. The sheaf associated to the module $\bbs^{\lambda \odot \bullet} (V)$ is precisely $\bbs^\lambda (\R)$, whence the sequence (\ref{eqn:theSchurRes}) is indeed an exact sequence of vector bundles.

    Observe that it is of no loss of generality to assume that $\lambda_n = 0$, since if $\lambda_n >  0$ we may write 
    $$\bbs^{\lambda} (\R) = \det (\R)^{\lambda_n} \otimes_{\OO_{\bbp (V)}} \bbs^{(\lambda_1-\lambda_n , \lambda_2 - \lambda_n , \dots , \lambda_{n-1} - \lambda_n,0)} (\R).$$
    Using the fact that $\det (\R) = \OO_{\bbp (V)} (-1)$, we see that it indeed suffices to prove the statement of Theorem \ref{thm:regularityThm} with $\lambda_n = 0$. 
    
    Twist the complex (\ref{eqn:theSchurRes}) by $\lambda_1-\lambda_m^t-1$. The cohomology of each of the terms 
    $$\bbs^{(\lambda_1, \lambda) \cdot 1^i} (V) \otimes_\kk \OO_{\bbp (V)} (\underbrace{-i-\lambda_m^t-1}_{= -\lambda_1 - i + (\lambda_1-\lambda_m^t - 1)})$$
    is $0$ unless $i = n-\lambda_m^t-1$, since if $i < n - \lambda_1^t-1$ then $0 >-i-\lambda_m^t-1 > -n$ and hence the twists $\OO_{\bbp (V)} (-i - \lambda_m^t-1)$ have $0$ cohomology identically. If $i > n - \lambda_m^t-1$, then the Schur module $\bbs^{(\lambda_1,\lambda) \cdot (1^{n- \lambda_m^t-1} )} (V)$ is identically $0$, since the rightmost column has length strictly greater than $n$ (which is the rank of $V$). It follows that
    $$H^j \left( \bbp (V) , \bbs^{(\lambda_1, \lambda) \cdot (1^{n - \lambda_m^t-1})} (V) \otimes_\kk \OO_{\bbp (V)} (-n) \right) = \begin{cases}
        \bbs^{(\lambda_1 , \lambda) \cdot 1^{n-\lambda_m^t - 1} } (V) \otimes_\kk \det (V^*) & \text{if} \ j=n-1, \\
        0 & \text{otherwise.}
    \end{cases}$$
    Combining the fact that $\bbs^{(\lambda_1, \lambda) \cdot 1^{n-\lambda_m^t - 1}} (V) = \bbs^{\left( (\lambda_1-1)^{\lambda_m^t+1},\lambda_2, \dots , \lambda_n \right)} (V) \otimes_\kk \det (V)$ with the above equality, the hypercohomology spectral sequence implies that 
    $$H^j (\bbp (V) , \bbs^\lambda (\R) (\lambda_1-\lambda_m^t-1) ) = \begin{cases}
        \bbs^{\left( (\lambda_1-1)^{\lambda_m^t+1},\lambda_1-1,\lambda_2,\dots , \lambda_n \right)} (V) & \text{if} \ j=\lambda_m^t, \\
        0 & \text{otherwise.}
    \end{cases}$$
    The fact that $H^i (\bbp (V) , \bbs^\lambda (\R) ( r - i) ) = 0$ for all $r \geq \lambda_1$ is an immediate consequence of the complex (\ref{eqn:theSchurRes}), since twisting by any $s > \lambda_1 - \lambda_m^t  - 1$ will yield a complex of vector bundles whose terms have at most global sections.
\end{proof}

\appendix
\section{Koszul Algebras And Modules Over Commutative Rings}\label{sec:generalKoszulAlg}

The purpose of this appendix is to define Koszul algebras/modules and their quadratic duals and recall Backelin's theorem in the generality established in Section \ref{sec:augmentedBarComplexes}. After developing the machinery of refinement complexes, we relate Backelin's theorem to the exactness properties of these complexes. Much of the material in this section follows from straightforward extensions of the material of \cite{polishchuk2005quadratic}, but since we assume that $R$ is an arbitrary commutative ring and our algebras are only flat $R$-modules in each homogeneous component, there are some additional details/technicalities to be verified.

\subsection{Generalities on Quadratic Algebras and Modules}\label{subsec:generalitiesOnQuad}

\begin{definition}
    Let $A$ be any quadratic algebra and $M$ any graded (left) $A$-module $M$ of initial degree $t$. There is a canonical multiplication map $A_1^{\otimes d} \otimes_R M_t \to M_{t+d}$ for every $d \geq 0$; the kernel of this map will be denoted $Q^M_{t+d}$.

    The module $M$ is called \defi{quadratic} if:
    \begin{enumerate}
        \item The canonical map $A_1^{\otimes d} \otimes_R M_t \to M_{t+d}$ is surjective for all $d \geq 0$, and
        \item for every $d \geq 0$, there is an equality
        $$Q^M_{d+t} = Q_2^A \otimes_R A_1^{\otimes d-2} \otimes_R M_t + \cdots + \underbrace{A_1^{\otimes i} \otimes_R Q_2^A \otimes_R A_1^{\otimes d-i-2} \otimes_R M_t}_{(i+1)\text{th position}} + \cdots + A_1^{\otimes d-1} \otimes_R Q_{t+1}^M.$$ 
    \end{enumerate}
\end{definition}

\begin{definition}[Quadratic Duals]\label{def:quadraticDual}
    Let $A$ be a quadratic $R$-algebra and $M$ any quadratic left $A$-module of initial degree $t$. The \defi{quadratic dual} $A^! \subset \ext^\bullet_A (R,R)$ is defined to be the subalgebra
    $$\bigoplus_{i \in \bbz} \ext^i_A (R,R)_i \subset \ext^\bullet_A (R,R).$$
    Notice that this is indeed a well-defined subalgebra, since the Yoneda product respects both the cohomological and internal grading. Viewing $\ext^i_A (M,R)$ as a right $A^!$-module (via Yoneda composition), define the \emph{quadratic dual} $M^! \subset \ext^i_A (M,R)$ to be the $A^!$-submodule
    $$\bigoplus_{i \in \bbz} \ext^i_A (M,R)_{i+t} \subset \ext^\bullet_A (M,R).$$
    The quadratic dual $M^!$ of a right $A$-module $M$ is defined analogously and is a left $A^!$-module.
\end{definition}

\begin{remark}
    The above definition is indeed well-defined for right $A$-modules, since a right $A$-module is equivalently a left $A^\op$-module, and it is evident that there is an isomorphism of algebras
    $$\ext^\bullet_{A^\op} (R,R) \cong \ext^\bullet_A (R,R)^\op.$$
    Thus $\ext^\bullet_A (M,R)$ is a right $\ext^\bullet_A (R,R)^\op$-module, and hence a left $\ext^\bullet_A (R,R)$-module.
\end{remark}

\begin{remark}
    The notion of a \emph{quadratic dual} is typically only reserved for Koszul algebras. The modules $A^!$ and $M^!$ as defined in Definition \ref{def:quadraticDual} are sometimes referred to as diagonal subalgebras and diagonal submodules of the Ext algebra/module, but in view of the observation below it seems appropriate to use the name quadratic dual for the general construction.
\end{remark}

\begin{obs}
    Let $A$ be a quadratic $R$-algebra and $M$ any quadratic left $A$-module of initial degree $t$. Then the quadratic dual $A^!$ is a quadratic algebra, and likewise the quadratic dual $M^!$ is a quadratic right $A^!$-module. 
\end{obs}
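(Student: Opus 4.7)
The plan is to compute $(A^!)_n$ and $(M^!)_n$ explicitly by analyzing a single internal-degree strand of the cobar complex, and then to verify that the resulting presentations have exactly the form demanded by the definitions of quadratic algebra and quadratic module. Since $\ext^\bullet_A(-,R)$ is computed by the cohomology of $\cobar^A(-)$, and each graded piece $(A^!)_n = \ext^n_A(R,R)_n$ lies in a strand whose homological degrees are bounded above by $n$ for internal reasons, there is no cobar term in cohomological degree $n+1$ of internal degree $n$, so $(A^!)_n$ is simply the cokernel of the incoming differential.

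First, I would carry out the computation for $A^!$. In internal degree $n$, the unique cohomological-degree-$n$ summand of $\cobar^A(R)$ is $(A_1^*)^{\otimes n}$, while the cohomological-degree-$(n-1)$ summands are $(A_1^*)^{\otimes j} \otimes A_2^* \otimes (A_1^*)^{\otimes n-j-2}$ for $0 \le j \le n-2$. The differential on each summand is the dual of the multiplication $A_1 \otimes A_1 \to A_2$ inserted into the appropriate slot. Setting $Q_2^{A^!} := \operatorname{im}(A_2^* \to A_1^* \otimes A_1^*)$, the cokernel equals
\[
(A^!)_n \;\cong\; (A_1^*)^{\otimes n}\Big/\sum_{j=0}^{n-2}(A_1^*)^{\otimes j}\otimes Q_2^{A^!}\otimes (A_1^*)^{\otimes n-j-2},
\]
which is exactly the $n$-th graded component of $T(A_1^*)/(Q_2^{A^!})$. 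Because the cobar complex is a DG-algebra under tensor concatenation and this product represents the Yoneda product on $\ext^\bullet_A(R,R)$, the above identifications assemble into an isomorphism of graded $R$-algebras between $A^!$ and its quadratic presentation; in particular, $A^!$ is generated by $(A^!)_1 = A_1^*$ with quadratic relations $Q_2^{A^!}$.

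Next, I would repeat the analysis for $M^!$ using $\cobar^A(M)$ in internal degree $n+t$. The cohomological-degree-$n$ summand is $(A_1^*)^{\otimes n}\otimes M_t^*$, and the cohomological-degree-$(n-1)$ summands split into two families: those containing an $A_2^*$ in some $A$-slot (contributing relations involving $Q_2^{A^!}$), and the single term $(A_1^*)^{\otimes n-1}\otimes M_{t+1}^*$ whose incoming differential is dual to $A_1\otimes M_t \to M_{t+1}$. Defining $Q_1^{M^!} := \operatorname{im}(M_{t+1}^*\to A_1^*\otimes M_t^*)$ and taking the cokernel, the resulting presentation has precisely the shape of a quadratic right $A^!$-module of initial degree $0$, with $(M^!)_0 = M_t^*$ and $(M^!)_1 = (A_1^*\otimes M_t^*)/Q_1^{M^!}$. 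Compatibility with the right $A^!$-action follows from the DG-module structure of $\cobar^A(M)$ over $\cobar^A(A)$.

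The main obstacle is bookkeeping: carefully tracking the order of tensor factors when passing from $M$ as a left $A$-module to $M^!$ as a right $A^!$-module, handling signs in the cobar differential, and verifying that tensor concatenation on the cobar complex really does realize the Yoneda product (and its module version) after restricting to the diagonal strands. Every other step is a routine strand restriction of the augmented bar complex, relying only on the standing convention that each graded piece of $A$ and $M$ is $R$-flat so that graded $R$-duals behave as expected.
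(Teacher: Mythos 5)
Your proposal is correct and follows essentially the same route as the paper: dualize the relevant strand of the bar complex, observe that the diagonal Ext piece is a cokernel because there is no cobar term in higher cohomological degree at that internal degree, identify the resulting presentation as quadratic, and invoke the compatibility of the tensor concatenation product on the cobar construction with the Yoneda product (and its module variant) to get the algebra and right-module structures. The only differences are cosmetic (a degree-shift convention for the grading on $M^!$ and the ordering of tensor factors, which you already flag as bookkeeping); the substance matches the paper's argument.
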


\begin{proof}
    Dualizing the Bar complex $\bar^A (A)$, the $n$th graded piece of the quadratic dual $A^!$ is by definition defined to be the cokernel of the map
    $$\bigoplus_{i = 1}^{n-1} A_1^{* \otimes i-1} \otimes_R A_2^* \otimes_R A_1^{*\otimes n-i-1} \to A_1^{* \otimes n}$$
    Moreover, the Yoneda product is induced by the tensor algebra product on the cobar construction, in which case the algebra $A^!$ is by definition a quadratic algebra. Similarly, dualizing the bar complex $\bar^A (M)$ implies that
    $$M^!_n := \coker \left( \begin{matrix} M_{t+1}^* \otimes_R A_1^{* \otimes n-t-1} \\ \oplus \\ \bigoplus_{i=1}^{n-t-1} M_t^* \otimes_R A_1^{* \otimes i-1} \otimes_R A_2^* \otimes_R A_1^{* \otimes n-t-i-1} \end{matrix} \to M_t^* \otimes_R A_1^{* \otimes n-t} \right).$$
    Again, the right Yoneda module structure is induced by the right tensor algebra structure on the cobar complex, in which case $M^!$ is a quadratic right $A^!$-module.
\end{proof}

Finally, we conclude this section by defining a Koszul algebra/module:

\begin{definition}
    Let $A$ be a quadratic $R$-algebra. The algebra $A$ is \defi{Koszul} if there is an isomorphism of $R$-algebras
    $$A^! \cong \ext^\bullet_A (R,R).$$
    Likewise, let $M$ be any left $A$-module. Then the module $M$ is \defi{Koszul} if there is an isomorphism of right $A^!$-modules 
    $$M^! \cong \ext^\bullet_A (M ,R).$$
    In other words: the inclusions of Definition \ref{def:quadraticDual} are actually equalities.
\end{definition}

\subsection{Refinement Complexes}\label{subsec:refinementComplexes}

In this section, we recall the notion of \emph{refinement complexes}; it should be noted here that the terminology is new, but such complexes (often unnamed) have been studied before (see for instance \cite[Chapter 2.8]{polishchuk2005quadratic}). These complexes may be understood as (subquotients of) homogeneous strands of the augmented bar complex associated to a quadratic algebra $A$. 

Throughout this section, we will use the notation for compositions and the standard operations between them established in Subsection \ref{subsec:compositionOps}.

\begin{definition}[Refinement Complexes]
    Let $\alpha = (\alpha_1 , \dots , \alpha_n)$ be a composition of some integer $d$. Define the \defi{refinement (chain) complex} $R_\bullet^{A,M} (\alpha)$ to be the chain complex with
    $$R^{A,M}_i (\alpha) = \bigoplus_{\substack{\beta \geq \alpha \\ \ell (\beta) - \ell (\alpha) = i}} \left( (A^!)^* \otimes_R (M^!)^*\right)_\beta,$$
    and differential induced by the cobar differential on $\cobar^{A^!} (M^!)$. Likewise, define the \defi{refinement (cochain) complex} $R_{A,M}^\bullet (\alpha)$ to be the cochain complex with
    $$R^i_{A,M} (\alpha) = \bigoplus_{\substack{\beta \geq \alpha \\ \ell (\beta) - \ell (\alpha) = i}} (A \otimes_R M)_\beta,$$
    and differential induced by the bar differential on $\bar^A (M)$. The notation $R^A_i (\alpha)$ and $R_i^A (\alpha)$ will be shorthand for $R^{A,A_+}_i (\alpha)$ and $R^i_{A,A_+} (\alpha)$, respectively.
\end{definition}

\begin{example}
    If $A = S(V)$ is the symmetric algebra on some free $R$-module $V$, then
\[\begin{tikzcd}
	{R^{S(V)}_\bullet (3,2,4):} & {\bigwedge^9 V} & {\begin{matrix} \bigwedge^5 V \otimes \bigwedge^4 V \\ \oplus \\ \bigwedge^3 V \otimes \bigwedge^6 V \end{matrix}} & {\bigwedge^3 V \otimes \bigwedge^2 V \otimes \bigwedge^4 V.}
	\arrow[from=1-2, to=1-3]
	\arrow[from=1-3, to=1-4]
\end{tikzcd}\]
Likewise, using the notation of Definition \ref{def:gradedCompsConventions}:
\[\begin{tikzcd}
	&& {A_{(5,4,3)}} & {A_{(9,3)}} \\
	{R^\bullet_A (3,2,4,3) :} & {A_{(3,2,4,3)}} & {A_{(3,6,3)}} & {A_{(5,7)}} & {A_{12}} \\
	&& {A_{(3,2,7)}} & {A_{(3,9)}}
	\arrow["\bigoplus"{description}, draw=none, from=1-3, to=2-3]
	\arrow["\bigoplus"{description}, draw=none, from=3-3, to=2-3]
	\arrow["\bigoplus"{description}, draw=none, from=1-4, to=2-4]
	\arrow["\bigoplus"{description}, draw=none, from=2-4, to=3-4]
	\arrow[from=2-2, to=2-3]
	\arrow[from=2-3, to=2-4]
	\arrow[from=2-4, to=2-5]
\end{tikzcd}\]
\end{example}

\subsection{Koszulness and Distributivity}\label{subsec:KoszulnessAndDistr}

In this section, we recall Backelin's theorem for Koszul algebras. For convenience, we state explicitly the following equivalent conditions for Koszulness, which are trivial retranslations of the definition.

\begin{obs}
    Let $A$ be any quadratic algebra. Then the following are equivalent:
    \begin{enumerate}
        \item The algebra $A$ is Koszul.
        \item For all $j >i$, one has $\ext^i_A (R,R)_j = 0$.
        \item For all $j > i$, one has $\tor_i^A (R,R)_j = 0$. 
    \end{enumerate}
\end{obs}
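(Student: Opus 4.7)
The plan is to unpack the definitions directly; no deeper machinery is needed. By Definition \ref{def:quadraticDual}, the inclusion $A^! \hookrightarrow \ext^\bullet_A(R,R)$ identifies $A^!$ with the diagonal subalgebra $\bigoplus_i \ext^i_A(R,R)_i$. The definition of Koszulness is that this inclusion is an equality, so (1) is equivalent to the off-diagonal vanishing $\ext^i_A(R,R)_j = 0$ for every $i \neq j$.

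To upgrade the two-sided condition $i\neq j$ to the one-sided condition $j>i$ of (2), I would establish the elementary lower bound $\tor_i^A(R,R)_j = \ext^i_A(R,R)_j = 0$ for $j < i$. This is immediate from inspection of the bar complex: the $i$th term of $R \otimes_A \bar^A(R)$ equals $A_+^{\otimes i}$, and since $A_+$ lives in internal degrees $\geq 1$ this module is concentrated in internal degrees $\geq i$. Passing to homology yields the bound on $\tor$, and the same reasoning applied to the cobar complex $\hom_R(A_+^{\otimes \bullet},R)$ gives the Ext bound. Combined with the previous paragraph, this establishes $(1) \iff (2)$.

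For $(2) \iff (3)$, I would invoke the graded duality $\ext^i_A(R,R) \cong \tor_i^A(R,R)^*$ recorded in the previous proposition, specializing $\tor_i^A(N,M)^* \cong \ext^i_A(M,N^*)$ to $N=M=R$. This isomorphism preserves internal degree, so vanishing of $\tor_i^A(R,R)_j$ yields vanishing of $\ext^i_A(R,R)_j$. For the converse, one works one internal degree $n$ at a time: the strand $(R \otimes_A \bar^A(R))_n$ is a \emph{bounded} complex of $R$-flat modules (finite direct sums of tensor products $A_{\alpha_1} \otimes_R \cdots \otimes_R A_{\alpha_i}$ for compositions $\alpha$ of $n$), and taking the $R$-dual intertwines vanishing of homology and cohomology in each bidegree by the standard universal-coefficient argument for bounded degreewise-flat complexes.

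No step here is a genuine obstacle: the statement really is a formal retranslation of the defining inclusion $A^! \subset \ext^\bullet_A(R,R)$ together with the trivial degree bound $\tor_i^A(R,R)_j = 0$ for $j<i$ read off from the bar construction. If one assumes further that each $A_d$ is $R$-projective, the equivalence $(2) \iff (3)$ becomes completely formal via termwise dualization.
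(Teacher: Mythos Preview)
Your proposal is correct and in fact more detailed than the paper's own treatment: the paper offers no proof at all, simply introducing the observation as ``trivial retranslations of the definition.'' Your unpacking of the inclusion $A^! \subset \ext^\bullet_A(R,R)$ together with the degree bound $\tor_i^A(R,R)_j = 0$ for $j<i$ read off from the bar complex is exactly the intended argument for $(1)\iff(2)$, and the Tor--Ext duality you cite is precisely the content of the preceding proposition. The only place with any genuine content over a non-field base $R$ is the direction $(2)\Rightarrow(3)$, which you correctly isolate and handle by restricting to a fixed internal strand (a bounded complex of flats) --- the paper itself is silent on this point, so your caveat about the projective case is already more careful than what the paper provides.
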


\begin{definition}\label{def:KoszulSubmodcollection}
    Let $A$ be a quadratic $R$-algebra. Given positive integers $n , i > 0$, use the notation
    $$S_{A,i}^n := A_{1}^{\otimes i-1} \otimes_R Q_2^A \otimes_R A_1^{\otimes n-i-1} \subset A_1^{\otimes n}.$$
\end{definition}

The following crucial observation ties all the machinery introduced in Subsection \ref{subsec:refinementComplexes} to the theory of distributivity developed in Section \ref{sec:qdistributivity}.

\begin{obs}\label{obs:subspaceIsomorphisms}
    Let $A$ be a Koszul $R$-algebra with $n,i >0$ positive integers. Then there is an isomorphism of $R$-modules
    $$\frac{A_1^{\otimes n}}{\vee_{i \in I} S_{A,i}^n} \cong A_{\phi (I)}.$$
    In particular, with notation as in Definition \ref{def:booleanAndRefinement} and Construction \ref{cons:theDistributivityCx} there are isomorphisms of complexes
    $$R^A_\bullet (\alpha) = C_\bullet^{\phi^{-1} (\alpha)} (A_1^{\otimes |\alpha|} ; S_{A,1}^n , \dots , S_{A,n-1}^n), \quad \text{and}$$
    $$R^\bullet_A (\alpha) = C^\bullet_{\phi^{-1} (\alpha)} (A_1^{\otimes |\alpha|} ; S_{A,1}^n , \dots , S_{A,n-1}^n).$$
\end{obs}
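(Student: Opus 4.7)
The plan is to prove the module-level isomorphism of part one first, then promote it to the two complex identifications by matching terms and differentials.

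For the module isomorphism, I would decompose $I \subset [n-1]$ into its maximal consecutive runs $I = [a_1,b_1] \sqcup \cdots \sqcup [a_k,b_k]$ (with $b_j+1 < a_{j+1}$). The subspace $S_{A,i}^n$ has its copy of $Q_2^A$ in tensor positions $i$ and $i+1$, so for $i \in [a_j,b_j]$ the associated subspaces lie in positions $a_j,\dots,b_j+1$, and subspaces coming from different runs occupy disjoint tensor positions. Since each $A_1^{\otimes k}$ is a flat $R$-module, the sum $\sum_{i \in I} S_{A,i}^n$ distributes across the tensor product, and the quotient factors as
\[
A_1^{\otimes a_1-1} \otimes_R \frac{A_1^{\otimes b_1-a_1+2}}{\sum_{i=1}^{b_1-a_1+1} S_{A,i}^{b_1-a_1+2}} \otimes_R A_1^{\otimes a_2-b_1-2} \otimes_R \cdots.
\]
Each middle quotient is precisely the degree $b_j-a_j+2$ piece of $A$ by the defining presentation of a quadratic algebra, and reading off the lengths reproduces the composition $\phi(I)$.

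For the cochain identification $R^\bullet_A(\alpha) \cong C^\bullet_{\phi^{-1}(\alpha)}(A_1^{\otimes|\alpha|}; S_{A,1}^n,\dots,S_{A,n-1}^n)$, the degree $i$ terms on the refinement side are indexed by coarsenings $\beta \geq \alpha$ with $\ell(\alpha)-\ell(\beta)=i$; by part one each $A_\beta$ is identified with $A_1^{\otimes n}/\sum_{j \in \phi^{-1}(\beta)} S_{A,j}^n$, and $\beta \geq \alpha$ translates under $\phi^{-1}$ to $\phi^{-1}(\beta) \supset \phi^{-1}(\alpha)$. The differentials agree because the bar differential merges adjacent tensor factors via algebra multiplication, which after passing through the identification is exactly the canonical surjection $\rho_{J, J \cup \{j\}}$ appearing in the distributivity complex. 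The chain identification $R^A_\bullet(\alpha) \cong C_\bullet^{\phi^{-1}(\alpha)}(\ldots)$ proceeds by applying the same strategy to the quadratic dual: for any quadratic algebra, dualizing the defining presentation of $A^!$ and using $R$-projectivity of the relevant pieces yields $(A^!)^*_d = \bigcap_{i=1}^{d-1} S_{A,i}^d \subset A_1^{\otimes d}$. Consequently $((A^!)^*)_\beta$ corresponds to the appropriate intersection term indexed by $\phi^{-1}(\beta)$, and the cobar differential dual to multiplication in $A^!$ matches the inclusion of a smaller intersection into a larger one.

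The main obstacle is the sign-tracking and combinatorial bookkeeping needed to confirm that an elementary merge $\beta \to \beta'$ in the refinement poset corresponds, after composing with $\phi^{-1}$, to precisely the face operation $J \mapsto J \cup \{j\}$ in the Boolean poset appearing in Construction \ref{cons:theDistributivityCx}, with compatible signs. Once this is settled, all remaining ingredients reduce to flatness of the outer tensor factors together with the defining presentation of $A$; in particular Koszulness is not actually required to prove the observation itself, and becomes relevant only when the identification is subsequently combined with Backelin's distributivity criterion to extract exactness statements.
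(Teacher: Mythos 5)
Your proof is correct and supplies details the paper leaves implicit: the paper records this as an unproved Observation, and the run-decomposition of $I$, the tensor factorization of the quotient using the running flatness hypothesis, and the quadratic presentation $A_d = A_1^{\otimes d}/\sum_j S^d_{A,j}$ is exactly the natural argument. Two remarks are worth making. First, your identification $(A^!)^*_d = \bigcap_{i=1}^{d-1} S^d_{A,i}$ does require $A_1$ (and hence $Q_2^A$) to be finitely generated projective, not merely flat, so that double-dualization and passage from quotients to annihilator submodules behave. This goes slightly beyond the paper's default ``flat in each degree'' convention, but it is in any case implicitly needed for the chain refinement complex $R^A_\bullet$ to have the intended terms, so flagging it is appropriate. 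Second, note that as displayed the observation appears to pair the complexes the ``wrong way around'': the chain distributivity complex $C_\bullet^{\phi^{-1}(\alpha)}$ has quotient terms $A_1^{\otimes n}/\vee_J S^n_{A,j}\cong A_{\phi(J)}$, which are precisely the terms $A_\beta$ of the cochain refinement complex $R^\bullet_A(\alpha)$, while the cochain distributivity complex $C^\bullet_{\phi^{-1}(\alpha)}$ has intersection terms $\wedge_J S^n_{A,j}\cong (A^!)^*_{\phi(J)}$ matching the chain refinement complex $R^A_\bullet(\alpha)$; the same pairing is dictated by the differentials (multiplications/surjections versus comultiplications/inclusions). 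You inherit the paper's labelling, and this swap does not affect the downstream application to Theorem~\ref{thm:KoszulnessIsDistributive}, but you should be aware of it. Finally, your closing remark that Koszulness plays no role in the isomorphisms themselves — only quadraticity plus the projectivity/flatness hypotheses — is correct and a worthwhile clarification.
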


Finally, we state and prove the generalization of Backelin's theorem. Notice that the proof is deceptively short, but relies on the entirety of the material developed thus far in the paper.

\begin{theorem}\label{thm:KoszulnessIsDistributive}
    Let $A$ be any quadratic $R$-algebra. Then:
    $$A \ \text{is Koszul}$$
    $$\iff$$
    $$ \text{the collection} \ S_{A,1}^n , \dots , S_{A,n-1}^n \subset A_1^{\otimes n} \ \text{is distributive for all} \  n > 0. $$
    In particular, the refinement complexes $R^\bullet_A (\alpha)$ and $R^A_\bullet (\alpha)$ are exact in positive (co)homological degrees for all compositions $\alpha$.
\end{theorem}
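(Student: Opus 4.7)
The first step is to translate Koszulness into exactness of a single refinement complex. By Observation \ref{obs:subspaceIsomorphisms}, the refinement chain complex $R^A_\bullet((1^n))$ is canonically identified with $C_\bullet^\varnothing(A_1^{\otimes n}; S_{A,1}^n, \ldots, S_{A,n-1}^n)$, and this complex, up to a shift in indexing, is the internal-degree-$n$ strand of the augmented bar complex $R \otimes_A \bar^A(R)$. Its homology computes $\tor^A_\bullet(R,R)_n$, so $A$ is Koszul (equivalently, $\tor^A_i(R,R)_n = 0$ for all $i \neq n$ and all $n \geq 1$) if and only if $C_\bullet^\varnothing$ is exact in positive homological degrees for every $n$.

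The backward implication (distributivity implies Koszul) is now essentially free: distributivity of the collection forces exactness of $C_\bullet^\varnothing$ in positive degrees by Proposition \ref{prop:distributivityComplexes} (the case $I = \varnothing$), hence $A$ is Koszul.

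For the converse, assume $A$ is Koszul and argue by outer induction on $n$ that the collection $S_{A,1}^n, \ldots, S_{A,n-1}^n$ is distributive, assuming distributivity holds for all $n' < n$. For fixed $n$, exactness of $C_\bullet^\varnothing$ is provided by the Koszul hypothesis, and exactness of $C_\bullet^I$ for the remaining $I \subset [n-1]$ is obtained by a downward induction on $|I|$. The base case $|I| = n-1$ is trivial. For the inductive step, pick $j \notin I$ and invoke the short exact sequence of complexes (\ref{eqn:SES2}) to present $C_\bullet^I$ as an extension of $C_\bullet^{I \cup \{j\}}$ (exact by the inner inductive hypothesis) and a shifted copy of $C_\bullet^{d_j(I \cup \{j\})}(S_{A,j}^n; S_{A,j}^n \cap S_{A,i}^n, \, i \neq j)$. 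The long exact sequence in homology then reduces exactness of $C_\bullet^I$ to exactness of this third complex.

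The main obstacle is recognizing the third complex as a refinement-style complex for $A$ at smaller internal degree, so that the outer inductive hypothesis on $n$ may be applied. The tensor factorization $S_{A,j}^n = A_1^{\otimes j-1} \otimes_R Q_2^A \otimes_R A_1^{\otimes n-j-1}$ together with the fact that (under the Koszul hypothesis) the intersections $S_{A,j}^n \cap S_{A,i}^n$ decompose compatibly with this tensor structure allows one to identify this complex with a refinement complex at internal degree $n-1$, closing the induction; the adjacent-index cases $|j-i|=1$, where the intersection encodes the cubic piece $A_1 \otimes_R Q_2^A \cap Q_2^A \otimes_R A_1$, require particular care. Once distributivity is established for all $n$, the final assertion on arbitrary compositions $\alpha$ is immediate from Observation \ref{obs:subspaceIsomorphisms}: each $R^A_\bullet(\alpha)$ and $R^\bullet_A(\alpha)$ coincides with some $C_\bullet^I$ or $C^\bullet_I$, which is exact in positive degrees by distributivity together with Proposition \ref{prop:distributivityComplexes}.
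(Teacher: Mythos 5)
Your reduction is correct as far as it goes: Observation \ref{obs:subspaceIsomorphisms} identifies the refinement complexes with the complexes $C^I_\bullet$, the complex $C^\varnothing_\bullet$ is the degree-$n$ strand of the bar resolution, and distributivity $\Rightarrow$ Koszulness plus the final paragraph both follow formally from Proposition \ref{prop:distributivityComplexes}. The paper, however, does not attempt a direct argument for the converse at all --- the entire content of Theorem \ref{thm:KoszulnessIsDistributive} is outsourced to Backelin's theorem, and the ``proof'' in the paper is a one-line citation. So your proposal for Koszulness $\Rightarrow$ distributivity is not a paraphrase of the paper's argument; it is an attempt to reprove Backelin's theorem, and it is here that there is a genuine gap.

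The step that fails is the claim that the third term in the short exact sequence \eqref{eqn:SES2}, namely
\[
C_\bullet^{d_j(I\cup j)}\bigl(S_{A,j}^n\,;\, S_{A,j}^n\cap S_{A,i}^n,\ i\neq j\bigr),
\]
can be ``identified with a refinement complex at internal degree $n-1$'' so that the outer induction on $n$ applies. It cannot. The ambient module of this complex is $S_{A,j}^n \cong A_1^{\otimes j-1}\otimes_R Q_2^A\otimes_R A_1^{\otimes n-j-1}$, which is not $A_1^{\otimes n-1}$ (not even non-canonically, unless $Q_2^A$ happens to have rank one). Moreover the submodules $S_{A,j}^n\cap S_{A,j\pm 1}^n$ involve the cubic piece $A_1\otimes Q_2^A\cap Q_2^A\otimes A_1 = (A^!)^*_3$ sitting astride the middle $Q_2^A$ factor; these are not of the form $S_{A,k}^{n-1}$ inside $A_1^{\otimes n-1}$, nor are they obtainable by tensoring lower-degree $S_{A,k}$'s with flat factors. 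So the outer inductive hypothesis --- distributivity of $\{S_{A,1}^{n'},\dots,S_{A,n'-1}^{n'}\}\subset A_1^{\otimes n'}$ for $n'<n$ --- simply does not speak to this complex. The phrase ``require particular care'' for the adjacent-index cases is concealing the real content: this is precisely where Backelin's theorem lives. In the standard treatments (Backelin's thesis; Polishchuk--Positselski, Ch.~1--2) the converse proceeds through a genuinely different route, typically reducing $n$-fold distributivity to distributivity of all $3$-element subcollections via modularity, and then handling the triples using the Koszul hypothesis at bounded degree; none of that appears in your sketch. There is also a secondary issue: even granting exactness of the third complex in positive degrees, the long exact sequence only kills $H_k(C_\bullet^I)$ for $k\geq 2$ directly; for $k=1$ you still must show the connecting map out of $H_0$ of the third complex is injective, which requires an additional argument.

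In short: the easy direction and the ``in particular'' clause are fine, but the forward direction is not proved; you have reformulated the problem and correctly located the obstruction, but not surmounted it. The paper deliberately avoids this by citing Backelin's theorem as a black box.
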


\begin{remark}
    Notice that it is clear that distributivity implies that $A$ is Koszul, since this means that the complex $R^\bullet_{A} (1^d)$ is in particular exact in positive cohomological degrees. It is not obvious at all that Koszulness should be sufficient to imply exactness of the refinement complexes for \emph{all} choices of compositions.
\end{remark}

\begin{proof}
    This is an immediate consequence of Backelin's theorem combined with Observation \ref{obs:subspaceIsomorphisms}.
\end{proof}

\begin{remark}
Notice that with the distributivity perspective of Koszul algebras,
    $$A_n \longleftrightarrow A^!_n \quad \text{corresponds to}$$
    $$\frac{A_1^{\otimes n}}{S_{1,A}^n + \cdots + S_{n-1,A}^n} \longleftrightarrow \frac{A_1^{* \otimes n}}{{S_{1,A}^n}^\vee + \cdots + {S_{n-1,A}^n}^\vee}.$$
    This is another quick way to see that $A \cong (A^!)^!$ as $R$-algebras.
\end{remark}

\begin{cor}\label{cor:VeroneseKoszulness}
    Let $A$ be a quadratic $R$-algebra. Then $A$ is Koszul if and only if $\veralg{d}$ is Koszul for every $d > 0$. 
\end{cor}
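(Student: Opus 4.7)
The plan is to prove the nontrivial $(\Rightarrow)$ direction by translating Koszulness into distributivity via Theorem \ref{thm:KoszulnessIsDistributive}, and then showing that distributivity for $A$ passes to distributivity for $\veralg{d}$ through a quotient lemma. The converse direction is immediate, since $A = \veralg{1}$.

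First I would identify $A_d^{\otimes n}$ as a quotient of $A_1^{\otimes nd}$. Using the flatness conventions of the paper together with the fact that $A_d = A_1^{\otimes d}/\sum_{k=1}^{d-1} S_{A,k}^d$, one obtains a canonical isomorphism
$$A_d^{\otimes n} \;\cong\; A_1^{\otimes nd}\Big/\sum_{k \in K} S_{A,k}^{nd}, \qquad K := \{k \in [nd-1] : d \nmid k\}.$$
Under this identification, the Veronese quadratic submodule $S_{\veralg{d},i}^n \subset A_d^{\otimes n}$ corresponds to the image of $S_{A,id}^{nd} \subset A_1^{\otimes nd}$. In passing, this also verifies that $\veralg{d}$ is quadratic with $(\veralg{d})_n = A_{nd}$, which is needed before invoking Backelin's theorem on the Veronese side.

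The main step is then a quotient lemma for distributive collections: if $M_1,\dots,M_N \subset M$ is distributive and $K \subset [N]$, then the collection $\{(M_j + \sum_{k\in K} M_k)/\sum_{k\in K} M_k\}_{j\notin K}$ is distributive in $M/\sum_{k\in K} M_k$. I would prove this directly from Proposition \ref{prop:distributivityComplexes}, by observing that for any $I' \subset [N]\setminus K$ the distributivity chain complex $C_\bullet^{I'}$ of the smaller collection on the quotient, after reindexing $L = I' \cup K$, consists of the same terms $\bigoplus_{K \subset L,\, |L| = N-i} M/\sum_{j \in L} M_j$ as $C_\bullet^{I' \cup K}(M;M_1,\dots,M_N)$, shifted homologically by $|K|$, and with identical induced differentials. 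Positive-degree exactness therefore transfers.

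Putting the pieces together: since $A$ is Koszul, Theorem \ref{thm:KoszulnessIsDistributive} gives distributivity of $\{S_{A,k}^{nd}\}_{k=1}^{nd-1}$ in $A_1^{\otimes nd}$ for every $n > 0$, and the quotient lemma applied with the subset $K$ above yields distributivity of $\{S_{\veralg{d},i}^n\}_{i=1}^{n-1}$ in $A_d^{\otimes n}$ for every $n$. A final appeal to Theorem \ref{thm:KoszulnessIsDistributive} applied to $\veralg{d}$ then gives its Koszulness. The step I expect to be the main obstacle is the bookkeeping for the quotient lemma — aligning $C^{I'}_\bullet$ of the quotient with $C^{I' \cup K}_\bullet$ of the ambient collection under the correct homological shift, and verifying that the induced differentials match — though once the reindexing is set up the proof becomes essentially formal.
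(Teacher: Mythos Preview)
Your proposal is correct and follows precisely the route the paper intends: the paper does not give a formal proof but remarks that ``the distributivity criterion makes this a trivial consequence,'' and your argument spells out exactly why --- the complexes $C_\bullet^{I'}$ for the Veronese collection are, after your identifications, literally the complexes $C_\bullet^{I'\cup K}$ for the original collection in $A_1^{\otimes nd}$, so exactness transfers. The quotient lemma you prove is the content the paper leaves implicit, and your verification that $\veralg{d}$ is quadratic is a necessary preliminary that the paper also takes for granted.
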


\begin{remark}
    Of course, the nontrivial direction of Corollary \ref{cor:VeroneseKoszulness} is the fact that $A$ is Koszul implies that $\veralg{d}$ is Koszul for all $d > 0$. The distributivity criterion makes this a trivial consequence; it is worth mentioning that this was originally proved by Barcanescu and Manolache \cite{BarcanescuManolache}.
\end{remark}

\subsection{Koszul Modules Over Koszul Algebras}\label{subsec:KoszulMods}

The following section makes some additional observations about Koszul modules that will be useful to reference explicitly in earlier sections. We start with an analogous observation on equivalent conditions for Koszulness of a module:

\begin{obs}
    Let $A$ be a Koszul algebra and $M$ any quadratic left $A$-module of initial degree $t$. Then the following are equivalent:
    \begin{enumerate}
        \item The module $M$ is Koszul.
        \item For all $j > i$, one has $\ext^i_A (M,R)_{t+j} = 0$.
        \item For all $j > i$, one has $\tor_i^A (M,R)_{t+j} = 0$.
    \end{enumerate}
\end{obs}

The following submodule collections are the evident analog of the collection in Definition \ref{def:KoszulSubmodcollection} for quadratic modules:

\begin{definition}\label{def:koszulModSubmodCollection}
    Let $A$ be a quadratic algebra and $M$ any quadratic left $A$-module of initial degree $t$. Given positive integers $n,i > 0$, use the notation
    $$S_{A,M,i}^n := \begin{cases} A_1^{\otimes i-1} \otimes_R Q_2 \otimes_R A_1^{n-i-2} \otimes_R M_t & \text{if} \ i < n-1, \\
    A_1^{\otimes n-1} \otimes_R Q_{t+1}^M & \text{if} \ i = n-1 \\ 
    \end{cases} \subset A_1^{\otimes n-1} \otimes_R M_t.$$
    The submodules $S_{M,A,i}^n \subset M_t \otimes_R A_1^{\otimes n-1}$ for a right $A$-module $M$ are defined analogously.
\end{definition}

\begin{theorem}\label{thm:koszulModuleDistr}
    Let $M$ be a left (resp. right) $A$-module of initial degree $t$, where $A$ is a Koszul $R$-algebra. Then
    $$M \ \text{is Koszul}$$
    $$\iff$$
    $$ \text{the collection} \ S_{A,M,1}^n , \dots , S_{A,M,n-1}^n \subset A_1^{\otimes n} \otimes_R M_t \ \text{is distributive for all} \  n > 0. $$
    The analogous statement for right $A$-modules holds as well. In particular, the refinement complexes $R^\bullet_{A,M} (\alpha)$ and $R^{A,M}_\bullet (\alpha)$ are exact in positive (co)homological degrees for all compositions $\alpha$.
\end{theorem}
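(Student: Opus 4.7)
The plan is to repeat the scheme of the proof of Theorem \ref{thm:KoszulnessIsDistributive} in the module setting. The first step is to establish the module analog of Observation \ref{obs:subspaceIsomorphisms}: using the quadratic-module identity
$$Q^M_{t+d} = Q_2^A \otimes_R A_1^{\otimes d-2} \otimes_R M_t + \cdots + A_1^{\otimes d-1} \otimes_R Q_{t+1}^M,$$
one checks by induction on $|I|$ that for every $I \subset [n-1]$ there is a canonical isomorphism
$$\frac{A_1^{\otimes n-1} \otimes_R M_t}{\sum_{i \in I} S_{A,M,i}^n} \;\cong\; (A \otimes_R M)_{\phi_M(I)},$$
where $\phi_M$ is a poset bijection from $2^{[n-1]}$ onto the collection of coarsenings of $(1^{n-1},t)$, sending $\varnothing$ to $(1^{n-1},t)$ and $[n-1]$ to $(t+n-1)$. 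Explicitly, each index $i \in I$ with $i < n-1$ records the merging of two adjacent $A_1$ factors into $A_2$ (via $Q_2^A$), while $i = n-1$ records the absorption of the last $A_1$ into the $M_t$ factor (via $Q_{t+1}^M$).

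From this identification I obtain isomorphisms of complexes
$$R^\bullet_{A,M}(\alpha) \;\cong\; C^\bullet_{\phi_M^{-1}(\alpha)}\bigl(A_1^{\otimes n-1} \otimes_R M_t;\; S_{A,M,1}^n, \ldots, S_{A,M,n-1}^n\bigr),$$
and similarly for $R^{A,M}_\bullet(\alpha)$ with the chain-type complexes $C_\bullet^I$. Under this identification, the refinement complex $R^\bullet_{A,M}(1^{n-1},t)$ for the finest composition corresponds to the full cochain complex $C^\bullet_{\varnothing}$, and coincides with the internal-degree-$(t+n-1)$ strand of the reduced bar complex $R \otimes_A \bar^A(M)$, read as a cochain complex. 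Its exactness in positive cohomological degrees is therefore equivalent to the Tor-vanishing conditions $\tor_i^A(R,M)_{t+n-1} = 0$ for all $i < n-1$; aggregating over all $n > 0$ yields Koszulness of $M$.

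The easy direction is now immediate: distributivity of $S_{A,M,1}^n,\ldots,S_{A,M,n-1}^n$ for every $n > 0$ forces exactness of the finest refinement complex for every $n$, and hence Koszulness of $M$. For the converse, I plan to invoke the module analog of Backelin's theorem: Koszulness of $M$ implies distributivity of $\{S_{A,M,i}^n\}_i$ for all $n$. This analog can be established in two ways: either by directly adapting Backelin's inductive argument (using the short-exact-sequence criterion from Proposition \ref{prop:distributivityComplexes} together with the Tor-vanishing input) or by applying Theorem \ref{thm:KoszulnessIsDistributive} to an auxiliary Koszul algebra built from $A$ and $M$---for instance, a suitably renormalized trivial extension $A \ltimes M$ equipped with the zero right action on $M$---whose quadratic relations stratify into $Q_2^A$, $Q_{t+1}^M$, and identically-zero relations, and then restricting the resulting distributivity to the sub-lattice cut out by the module relations. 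The main obstacle lies here, in the direct combinatorial argument or the bookkeeping of the auxiliary reduction, both of which require careful tracking of internal degrees. Once the analog of Backelin's theorem is in hand, the final ``In particular'' clause---exactness of the refinement complexes for all compositions $\alpha$---follows from Proposition \ref{prop:distributivityComplexes} combined with the identifications above, since distributivity of the full collection implies exactness of every subquotient distributivity complex.
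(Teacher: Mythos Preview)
Your proposal is correct and follows essentially the same approach as the paper: the paper's proof is literally the single sentence ``The proof is identical to the proof of Theorem \ref{thm:KoszulnessIsDistributive},'' which in turn is just ``Backelin's theorem combined with Observation \ref{obs:subspaceIsomorphisms}.'' You have spelled out exactly what that sentence means in the module setting---the module analog of Observation \ref{obs:subspaceIsomorphisms}, the identification of refinement complexes with distributivity complexes, and the invocation of the module version of Backelin's theorem---and in fact you go further than the paper by sketching how one might actually establish that module analog (direct adaptation or an auxiliary-algebra trick), which the paper simply takes for granted.
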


\begin{proof}
    The proof is identical to the proof of Theorem \ref{thm:KoszulnessIsDistributive}.
\end{proof}

The following observation is immediate upon applying $- \otimes_R M$ to the augmented bar complex $\bar^A (A)$:

\begin{obs}\label{obs:ProjectivesAreKoszul}
    Let $A$ be a Koszul $R$-algebra. Then any flat $R$-module is a Koszul $A$-module by viewing the $R$-module as a left (or right) $A$-module concentrated in some fixed degree.
\end{obs}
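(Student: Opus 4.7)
The plan is to verify the equivalent condition that $\tor_i^A(R, M)_{t+j} = 0$ for every $j > i$, where $t$ denotes the fixed degree in which $M$ sits. Once this is established, the analog for right modules is immediate by the symmetric argument using the opposite bar construction.

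First I would observe that, because $A_+ \cdot M = 0$ (as $M$ is concentrated in the single degree $t$), the canonical maps $A_1^{\otimes d} \otimes_R M_t \to M_{t+d}$ are trivially surjective for all $d \geq 0$, and the kernel $Q^M_{t+d}$ for $d \geq 1$ is the whole of $A_1^{\otimes d} \otimes_R M$. In particular, the quadratic relation $A_1^{\otimes d-1} \otimes_R Q^M_{t+1} = A_1^{\otimes d} \otimes_R M$ already saturates $Q^M_{t+d}$, so $M$ is a quadratic $A$-module in the sense of Subsection \ref{subsec:generalitiesOnQuad}.

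The key step is to exploit the bar resolution. The augmented bar complex $\bar^A(R)$ is a flat $A$-resolution of $R$. Because $M$ is $R$-flat and $A_+$ acts trivially on $M$, the complex $\bar^A(R)[t] \otimes_R M$ (with the appropriate internal shift by $t$) is again a complex of flat $A$-modules whose differential agrees with that of $\bar^A(R)$ tensored over $R$ with $M$, and it is acyclic in positive degrees with $H_0 = M$. Applying $R \otimes_A (-)$ and using that $M$ is $R$-flat, I would then identify
\[
\tor_i^A(R,M)_{t+n} \;\cong\; \tor_i^A(R,R)_n \otimes_R M
\]
for every $n \geq 0$. Since $A$ is Koszul, $\tor_i^A(R,R)_n = 0$ whenever $n \neq i$, so $\tor_i^A(R,M)$ is concentrated in internal degree $t+i$, which is precisely the Koszul condition.

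There is essentially no obstacle here; the only point to verify carefully is that the bar differential on $\bar^A(R) \otimes_R M$ really does reduce to the bar differential of $\bar^A(R)$ tensored with $\id_M$, which is exactly where the hypothesis $A_+ M = 0$ is used (the module-action term in $d^B$ simply vanishes). The right-module case is handled identically by applying $- \otimes_R M$ to the opposite bar construction $\bar^A(R)^\op$.
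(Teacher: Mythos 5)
Your proof is correct and follows essentially the same route as the paper's (one-line) justification, namely tensoring the bar resolution of $R$ by $M$ over $R$ and using that the module-action term of the bar differential vanishes when $A_+M = 0$, so that $\tor^A_\bullet(R,M) \cong \tor^A_\bullet(R,R) \otimes_R M$ once $R$-flatness of $M$ is used. The only cosmetic point worth noting is that the internal shift $[t]$ you mention is already built into the tensor factor $M$ being concentrated in degree $t$, so no separate shift is needed; this does not affect the validity of the argument.
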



\begin{cor}\label{cor:truncationsAreKoszul}
    If $M$ is a Koszul left (resp. right) $A$-module, then the truncation $M_{\geq d}$ is a Koszul left (resp. right) $A$-module for all $d \in \bbz$. 
\end{cor}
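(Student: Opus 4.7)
The plan is to reduce to the case $d = t+1$, where $t$ is the initial degree of $M$, by induction on $d - t$ (if $d \leq t$ then $M_{\geq d} = M$ is already Koszul). Setting $N := M_{\geq t+1}$, I would split the argument into two parts: first, a direct verification that $N$ is quadratic; second, an application of the long exact sequence of $\tor$ to the canonical short exact sequence of $A$-modules
$$0 \to N \to M \to M_t \to 0,$$
in which $M_t$ is regarded as concentrated in degree $t$ with trivial $A_+$-action.

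For the quadraticity step, surjectivity of $A_1^{\otimes k} \otimes_R N_{t+1} \to N_{t+k+1}$ is immediate, since the corresponding map out of $A_1^{\otimes k+1} \otimes_R M_t$ factors through it and is surjective by quadraticity of $M$. Flatness of $A_1$ gives the short exact sequence
$$0 \to A_1^{\otimes k} \otimes_R Q_{t+1}^M \to A_1^{\otimes k+1} \otimes_R M_t \to A_1^{\otimes k} \otimes_R N_{t+1} \to 0,$$
and a diagram chase produces the identification $Q_{t+1+k}^N \cong Q_{t+1+k}^M /( A_1^{\otimes k} \otimes_R Q_{t+1}^M)$. Inserting the quadratic expansion of $Q_{t+1+k}^M$ supplied by the quadraticity of $M$, together with the analogous identification $Q_{t+2}^N \cong Q_{t+2}^M /( A_1 \otimes_R Q_{t+1}^M)$, yields the required quadratic expansion
$$Q_{t+1+k}^N = \sum_{i=0}^{k-2} A_1^{\otimes i} \otimes_R Q_2^A \otimes_R A_1^{\otimes k-2-i} \otimes_R N_{t+1} + A_1^{\otimes k-1} \otimes_R Q_{t+2}^N.$$

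For the Koszulness step, $M_t$ is itself a Koszul $A$-module by Observation \ref{obs:ProjectivesAreKoszul}. The long exact sequence of $\tor_\bullet^A(-,R)$ associated to $0 \to N \to M \to M_t \to 0$ then gives, in each bidegree, exact pieces
$$\tor_{i+1}^A(M_t, R)_{t+1+j} \to \tor_i^A(N, R)_{t+1+j} \to \tor_i^A(M, R)_{t+1+j}.$$
Koszulness of $M$ concentrates $\tor_i^A(M, R)$ in internal degree $t+i$, which kills the rightmost term whenever $j \geq i$; Koszulness of $M_t$ concentrates $\tor_{i+1}^A(M_t, R)$ in internal degree $t+i+1$, which kills the leftmost term whenever $j > i$. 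Hence $\tor_i^A(N, R)_{t+1+j} = 0$ for every $j > i$, which together with the quadraticity established above is precisely the Koszulness criterion recorded in the observation preceding Definition \ref{def:koszulModSubmodCollection}. The right-module case follows by applying the same argument to $A^{\op}$.

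The main conceptual input is the interplay between the long exact sequence of $\tor$ and the fact that an $R$-module placed in a single degree is Koszul over any Koszul algebra. The principal technical obstacle is the quadraticity verification in the first step: the argument itself is purely formal once the isomorphism $Q_{t+1+k}^N \cong Q_{t+1+k}^M/(A_1^{\otimes k} \otimes_R Q_{t+1}^M)$ is in hand, but some careful bookkeeping of submodules inside $A_1^{\otimes k+1} \otimes_R M_t$ (including verifying that the images of the various summands $A_1^{\otimes i} \otimes_R Q_2^A \otimes_R A_1^{\otimes k-1-i} \otimes_R M_t$ match the expected quadratic relations in $A_1^{\otimes k} \otimes_R N_{t+1}$) is required.
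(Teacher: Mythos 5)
Your proof is correct and follows essentially the same route as the paper's: reduce by induction to the truncation by one degree, place $M_{\geq d}$ in the short exact sequence $0 \to M_{\geq d} \to M_{\geq d-1} \to M_{d-1} \to 0$, invoke Observation \ref{obs:ProjectivesAreKoszul} on the flat $R$-module $M_{d-1}$ concentrated in a single degree, and then run the long exact sequence of $\tor$ to conclude the required vanishing in bidegrees $(i, t+1+j)$ with $j>i$. The one place where you go beyond what the paper records is the explicit quadraticity check for $N := M_{\geq t+1}$ via the snake-lemma identification $Q^N_{t+1+k} \cong Q^M_{t+1+k}/(A_1^{\otimes k}\otimes_R Q^M_{t+1})$ and the resulting quadratic expansion; the paper tacitly assumes this, but the bookkeeping you supply is correct and is genuinely needed before one can apply the $\tor$-vanishing characterization of Koszulness, which is stated only for quadratic modules.
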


\begin{proof}
    Let $t$ denote the initial degree of $M$, and proceed by induction on the difference $d-t$. When $d-t = 0$, it is by assumption that $M_{\geq t} = M$ is Koszul.

    Let $d-t > 0$. There is a short exact sequence
    $$0 \to M_{\geq d-t} \to M_{\geq d-t-1} \to M_{d-t-1} \to 0,$$ 
    where $M_{d-t-1}$ is a flat $R$-module viewed as being concentrated in degree $d-t-1$. By the inductive hypothesis, the truncation $M_{\geq d-t-1}$ is Koszul and by Observation \ref{obs:ProjectivesAreKoszul} the $A$-module $M_{d-t-1}$ is Koszul. By the long exact sequence of cohomology, the truncation $M_{\geq d-t}$ must also be Koszul.
\end{proof}

\begin{obs}\label{obs:gradedDualKoszul}
    If $M$ is a Koszul left $A$-module over a Koszul algebra $A$, then the graded dual $M^*$ is a Koszul right $A$-module.
\end{obs}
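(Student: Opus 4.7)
The plan is to reduce to Theorem \ref{thm:koszulModuleDistr} by passing through the duality property of distributive collections established in Lemma \ref{lem:dualDistributivity}. Since $M$ is Koszul, Theorem \ref{thm:koszulModuleDistr} tells us that for every $n>0$ the collection
$$S^n_{A,M,1},\ldots,S^n_{A,M,n-1}\subset A_1^{\otimes n-1}\otimes_R M_t$$
is distributive. My first step is to observe that $M^*$ is a right $A$-module that is generated in its initial degree with quadratic relations: the relation space in position $i$ is obtained by dualizing the multiplication $A_1^{\otimes i-1}\otimes A_1\otimes A_1\otimes A_1^{\otimes n-i-2}\otimes M_t\to A_1^{\otimes i-1}\otimes A_2\otimes A_1^{\otimes n-i-2}\otimes M_t$ (for interior indices) or the multiplication $A_1^{\otimes n-1}\otimes M_t\to A_1^{\otimes n-1}\otimes M_{t+1}$ (for the last one). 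Reordering tensor factors, these dual relation spaces live inside $M_t^*\otimes_R A_1^{*\otimes n-1}$ and are exactly the collection $\{S^n_{M^*,A,j}\}_{j=1}^{n-1}$.

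Next I would invoke Lemma \ref{lem:dualDistributivity}(1). The ambient module $A_1^{\otimes n-1}\otimes_R M_t$ is $R$-flat by our standing hypotheses, and each quotient $(A_1^{\otimes n-1}\otimes_R M_t)/\sum_{i\in I} S^n_{A,M,i}$ is a tensor product of graded pieces of $A$ and $M$ (namely a term appearing in the refinement complex $R^\bullet_{A,M}(\alpha)$ for the corresponding $\alpha$), hence $R$-flat as well. Thus the flatness hypotheses of Lemma \ref{lem:dualDistributivity} are met, and distributivity of $\{S^n_{A,M,i}\}$ is equivalent to distributivity of the dual collection $\{(S^n_{A,M,i})^\vee\}$ inside $(A_1^{\otimes n-1}\otimes_R M_t)^*\cong M_t^*\otimes_R A_1^{*\otimes n-1}$.

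The third step is the identification $(S^n_{A,M,i})^\vee = S^n_{M^*,A,n-i}$ under this canonical tensor-reversal isomorphism. This uses that $(A_2)^\vee = Q_2^{A^!}$ and $(M_{t+1})^\vee = Q_1^{M^*}$, together with the fact that dualization of a surjection with kernel $S^n_{A,M,i}$ produces an inclusion whose image is the submodule generated by the dual relation in the corresponding (reversed) position. Once this identification is made, the dual collection is precisely the collection $\{S^n_{M^*,A,j}\}_{j=1}^{n-1}$ whose distributivity by Theorem \ref{thm:koszulModuleDistr} is equivalent to $M^*$ being a Koszul right $A$-module, completing the proof.

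The main obstacle is the bookkeeping in the third step: making sure the reversal of tensor factors matches the dualized relation spaces to the defining relations of $M^*$ with the correct index shift $i\mapsto n-i$, and in particular that the boundary relation in position $n-1$ (involving $Q^M_{t+1}$) dualizes to the ``module-type'' relation of $M^*$ in position $1$ (involving $Q^{M^*}$ in the new initial degree). Once this dictionary is fixed, everything is forced by Lemma \ref{lem:dualDistributivity}(2)--(3) and Theorem \ref{thm:koszulModuleDistr}.
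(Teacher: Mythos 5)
There is a genuine gap, and it is a conceptual one: you have conflated the \emph{quadratic dual} $M^!$ with the \emph{graded dual} $M^*$. These are different objects over different algebras, and your argument proves Koszulness of the former, not the latter.

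Concretely, consider your third step. You correctly observe that dualizing the surjection $A_1^{\otimes n-1}\otimes_R M_t \twoheadrightarrow A_1^{\otimes i-1}\otimes A_2\otimes A_1^{\otimes n-i-2}\otimes M_t$ (for interior $i$) produces, inside $M_t^*\otimes_R A_1^{*\otimes n-1}$, a submodule whose ``$A$-type'' factor is $(A_2)^\vee = Q_2^{A^!}\subset A_1^*\otimes A_1^*$. But $Q_2^{A^!}$ is the relation space of the \emph{quadratic dual algebra} $A^!$, sitting inside $A_1^*\otimes A_1^*$. The defining collection for $M^*$ as a right $A$-module must instead involve $Q_2^A\subset A_1\otimes A_1$, and it must sit inside $(M^*)_{\mathrm{init}}\otimes_R A_1^{\otimes n-1}$ — not $M_t^*\otimes_R A_1^{*\otimes n-1}$. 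Note even the ambient spaces disagree: the dualized collection has $A_1^*$ where you need $A_1$, and it has $M_t^*$ (the \emph{top} degree piece of $M^*$) where you would need the lowest degree piece of $M^*$. So the identification $(S^n_{A,M,i})^\vee = S^n_{M^*,A,n-i}$ fails. What your dualization actually produces is the collection $\{S^n_{M^!,A^!,j}\}$ defining $M^!$ as a right $A^!$-module — this is precisely the content of Lemma \ref{lem:dualDistributivity}(3) as used in Lemma \ref{lem:SESandDualityMods}(3), and it is a theorem about $M^!$ over $A^!$, not about $M^*$ over $A$.

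The paper's proof is a purely homological reduction and never touches distributivity: it uses the chain of isomorphisms $\ext^\bullet_A(M,R)^\op\cong\ext^\bullet_{A^\op}(R,M)\cong\tor^{A^\op}_\bullet(R,M^*)^*$ to transfer the concentration-in-diagonal-degree property from $\ext^\bullet_A(M,R)$ to $\tor^{A^\op}_\bullet(R,M^*)$, which is exactly what Koszulness of the right $A$-module $M^*$ asks for. If you want to prove this via distributivity, you would need to produce the defining submodule collection for $M^*$ over $A$ directly (in $A_1$, not $A_1^*$; in the low degree piece of $M^*$, not $M_t^*$) and then argue distributivity of \emph{that} collection; the $(-)^\vee$ operation of Lemma \ref{lem:dualDistributivity} does not produce it.
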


\begin{proof}
    It is clear by definition that each graded component of $M^*$ is a flat $R$-module, so it remains to prove that $\ext^\bullet_{A^\op} (M^* , R)$ is generated in minimal internal degree. There is a string of isomorphisms of algebras:
    $$\ext^\bullet_{A} (M,R)^\op \cong \ext^\bullet_{A^\op} (R,M) \cong \tor_\bullet^{A^\op} (R,M^*)^*.$$
    Since $\ext^\bullet_A (M,R)$ is generated in minimal internal degrees, so is $\tor_\bullet^{A^\op} (R,M^*)$. This implies that $M^*$ is Koszul.
\end{proof}

We conclude this subsection with a statement about the Koszulness of tensor products of Koszul algebras. This will be most useful when dealing with multi-Schur functors.

\begin{cor}\label{cor:tensorsAreKoszul}
    Let $A$ and $B$ be two Koszul algebras and let $M$ (resp. $N$) a left $A$ (resp. $B$)-module. Then the tensor product $A \otimes_R B$ is a Koszul algebra and $M \otimes_R N$ is a Koszul left $A \otimes_R B$-module. 
\end{cor}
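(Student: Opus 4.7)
The plan is to deduce both claims from one Künneth-type computation on augmented bar constructions, working under the (implicit) hypothesis that $M$ and $N$ are Koszul over $A$ and $B$ respectively. First I would verify the structural prerequisites: $A \otimes_R B$ is quadratic with degree-one piece $A_1 \oplus B_1$ and quadratic relations generated by $Q_2^A$, $Q_2^B$, and the commutators $\{a \otimes b - b \otimes a\}$ inside $(A_1 \oplus B_1)^{\otimes 2}$; similarly, $M \otimes_R N$ is quadratic over $A \otimes_R B$ with initial degree $t_M + t_N$. Since $R$ viewed as a left $A$- or $B$-module is tautologically Koszul whenever the algebra is, the algebra statement reduces to the special case $M = R = N$ of the module statement, so I would focus on the module version.

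The core step is to establish that $\bar^A(M) \otimes_R \bar^B(N)$ is a flat $A \otimes_R B$-resolution of $M \otimes_R N$. Each bidegree term rearranges as
\[
(A \otimes_R B) \otimes_R \bigl( A_+^{\otimes i} \otimes_R M \otimes_R B_+^{\otimes j} \otimes_R N \bigr),
\]
which is $A \otimes_R B$-flat because base change of a flat $R$-module along $R \to A \otimes_R B$ is flat. To check that the augmentation is a quasi-isomorphism, I would restrict to a fixed internal degree; each internal-degree strand of $\bar^A(M)$ and $\bar^B(N)$ then becomes a bounded complex of flat $R$-modules resolving the corresponding strand of $M$ and $N$, so the classical Künneth theorem for bounded flat complexes shows that their tensor product resolves the relevant strand of $M \otimes_R N$.

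Applying $R \otimes_{A \otimes_R B} (-)$ to this resolution and invoking Künneth a second time yields
\[
\tor^{A \otimes_R B}_n(R, M \otimes_R N) \;\cong\; \bigoplus_{i+j=n} \tor^A_i(R, M) \otimes_R \tor^B_j(R, N).
\]
The $\Tor_1^R$ correction terms vanish because Koszulness identifies each $\tor^A_i(R, M)$ with a module of the form $L^I$ in a distributive collection (Theorem \ref{thm:koszulModuleDistr}), and such $L^I$ are $R$-flat by Lemma \ref{lem:dualDistributivity}(1); the same holds for $\tor^B_j(R, N)$. Since Koszulness forces $\tor^A_i(R, M)$ to be concentrated in internal degree $i + t_M$ and $\tor^B_j(R, N)$ in degree $j + t_N$, the right-hand side lies entirely in internal degree $n + t_M + t_N$—exactly the Koszul condition for $M \otimes_R N$ of initial degree $t_M + t_N$.

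The main obstacle is making both Künneth invocations rigorous under only flatness-in-each-degree rather than full $R$-projectivity hypotheses. This is handled by the strand-by-strand reduction to a fixed internal degree: each such strand is automatically a bounded complex of flat $R$-modules, so the classical Künneth formula applies without any $\Tor_1^R$ correction and without any convergence concerns coming from the unboundedness of the bar constructions.
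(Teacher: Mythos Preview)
Your proposal is correct and follows essentially the same strategy as the paper: both arguments establish that $\bar^A(M) \otimes_R \bar^B(N)$ is a flat $A \otimes_R B$-resolution of $M \otimes_R N$ and then read off the degree concentration of $\tor^{A \otimes_R B}(R, M \otimes_R N)$ via a K\"unneth argument. The paper's version additionally names the shuffle product $\nabla : \bar^A(M) \otimes_R \bar^B(N) \to \bar^{A \otimes_R B}(M \otimes_R N)$ but does not use it beyond observing that both sides are flat resolutions, whereas you are more explicit about the two K\"unneth steps and the flatness of the intermediate $\tor$ modules that the paper leaves implicit.
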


\begin{proof}
    Recall that for any (left) $A$-module $M$ and $B$-module $N$, the shuffle product
    $$\nabla: \bar^A (M) \otimes_R \bar^B (N) \to \bar^{A \otimes_R B} (M \otimes_R N)$$
    is a well-defined morphism of complexes. The complex $\bar^{A \otimes_R B} (M \otimes_R N)$ is a flat resolution of $M \otimes_R N$, and by the assumption that all the modules $A$, $B$, $M$, and $N$ are flat as $R$-modules the tensor product $\bar^A (M) \otimes_R \bar^B (N)$ is also a flat resolution of $M \otimes_R N$. Since $R \otimes_A \bar^A (M)$ and $R \otimes_B \bar^B (N)$ both have homology concentrated in minimal degrees, so does $R \otimes_{A \otimes_R B} \bar^{A \otimes_R B} (M \otimes_R N)$. 
    
\end{proof}

\subsection{The Priddy Complex}\label{subsec:PriddyCx}

In this section, we recall the well-known construction of the Priddy complex (originally proved in the work of Priddy \cite{priddy1970koszul}) for our setting. The definition is identical to that of the original definition over fields, but we include the definition along with the relevant properties for completeness.

\begin{construction}\label{cons:PriddyCons}
    Let $A$ be a Koszul $R$-algebra and $M$ a left Koszul module. For each $i \geq 0$, there is a canonical inclusion of left $A$-modules
    $$A \otimes_R (M^!)^*_{t+i} \hookrightarrow A \otimes_R A_1^{\otimes i} \otimes_R M_t = \bar^A_i (M)_i,$$
    where the left $A$-module structure on $A \otimes_R (M^!)^*$ comes from only allowing $A$ to act on the leftmost tensor factor. This inclusion thus lifts to an inclusion of complexes
    $$A \otimes_R (M^!)^*_{\bullet} \hookrightarrow \bar^A (M).$$
    The differential on the complex $A \otimes_R (M^!)^*_{\bullet}$ is induced by the Bar complex differential (this is indeed well-defined).
\end{construction}

\begin{theorem}\label{thm:thePriddyCxWorks}
    Let $A$ be a quadratic algebra and $M$ any left $A$-module. Then the following are equivalent:
    \begin{enumerate}
        \item The left $A$-module $M$ is Koszul.
        \item The complex $A \otimes_R (M^!)^*$ is a flat resolution of $M$ over $A$. 
    \end{enumerate}
\end{theorem}

\begin{proof}
    $(2) \implies (1)$: This implication is clear, since if $A \otimes_R (M^!)^*$ is a flat resolution of $M$ over $A$, tensoring with $R$ over $A$ implies that $\tor^A_i (R,M)$ is concentrated in degree $i + t$ (recall that Tor may be computed using flat resolutions). 

    $(1) \implies (2)$: Let $\iota : A \otimes_R (M^!)^*$ be the inclusion of Construction \ref{cons:PriddyCons} and consider the mapping cone $\cone (\iota)$. There is the tautological short exact sequence of complexes:
    $$0 \to R \otimes_A \bar^A (M) \to \cone (R \otimes_A \iota) \to (M^!)^*[-1] \to 0.$$
    The assumption that $M$ is Koszul implies that $R \otimes_A \iota$ is a quasi-isomorphism, so that $\cone (R \otimes_A \iota)$ is an exact complex of flat $R$-modules. However, this implies by Nakayama's lemma $\cone (\iota)$ must be an exact complex, whence $A \otimes_R (M^!)^*$ is an $A$-flat resolution of $M$. 
\end{proof}

\bibliographystyle{amsalpha}
\bibliography{biblio}
\addcontentsline{toc}{section}{Bibliography}

\end{document}